\documentclass[11pt]{amsart}
\usepackage[utf8x]{inputenc}
\usepackage[english]{babel}
\usepackage{microtype}

\usepackage{amsmath,amssymb,mathrsfs,amsthm,amsfonts,mathtools}
\usepackage[inline]{enumitem} % change enum. items
\usepackage[dvipsnames]{xcolor}
\usepackage{hyperref}
\usepackage{tikz}
\usepackage{bbm}
\usepackage[english]{babel}
\usepackage{times}
\tikzstyle{block} = [rectangle, draw, 
   ]
\usetikzlibrary{shapes,positioning}
\usetikzlibrary{decorations.pathreplacing}

\usepackage{comment}
\hypersetup{%
  colorlinks=true, linkcolor=blue,
  citecolor=Green
}
\usepackage[paper=letterpaper,margin=1in]{geometry}
\usepackage{acronym}
\usepackage{dsfont}
\usepackage[dvipsnames]{xcolor}

\DeclareMathOperator\Var{Var}
\DeclareMathOperator{\sgn}{sgn}

\newcommand{\ve}{\varepsilon}

\newcommand{\1}{\mathds{1}}

\newcommand\dP{\mathds{P}}

\newcommand\dE{\mathds{E}}

\newcommand\dS{\mathds{S}}

\newcommand\dX{\mathds{X}}

\newcommand\bR{\mathbb{R}}

\newcommand\bZ{\mathbb{Z}}

\newcommand\cA{\mathcal{A}}
\newcommand\cB{\mathcal{B}}
\newcommand\cC{\mathcal{C}}
\newcommand\cD{\mathcal{D}}
\newcommand\cE{\mathcal{E}}
\newcommand\cF{\mathcal{F}}
\newcommand\cG{\mathcal{G}}
\newcommand\cH{\mathcal{H}}
\newcommand\cI{\mathcal{I}}
\newcommand\cJ{\mathcal{J}}
\newcommand\cK{\mathcal{K}}
\newcommand\cL{\mathcal{L}}
\newcommand\cN{\mathcal{N}}

\newcommand\cR{\mathcal{R}}
\newcommand\cS{\mathcal{S}}
\newcommand\cM{\mathcal{M}}
\newcommand\cT{\mathcal{T}}

\newcommand\cQ{\mathcal{Q}}

\newcommand\cU{\mathcal{U}}

\newcommand\cZ{\mathcal{Z}}
\newcommand\fm{\mathfrak{m}}

\newcommand\fB{\mathfrak{B}}
\newcommand\fS{\mathfrak{S}}

\newcommand\fR{\mathfrak{R}}
\newcommand\fT{\mathfrak{T}}
\newcommand\fL{\mathfrak{L}}

\newtheorem{stat}{Statement}[section]
\newtheorem{proposition}[stat]{Proposition}
\newtheorem{corollary}[stat]{Corollary}

\newtheorem{theorem}[stat]{Theorem}
\newtheorem{lemma}[stat]{Lemma}
\theoremstyle{definition}

\newtheorem{remark}[stat]{Remark}

\numberwithin{equation}{section}

\makeatletter
\renewcommand{\tocsection}[3]{%
  \indentlabel{\@ifnotempty{#2}{\bfseries\ignorespaces#1 #2\quad}}\bfseries#3}
\renewcommand{\tocsubsection}[3]{%
  \indentlabel{\@ifnotempty{#2}{\ignorespaces#1 #2\quad}}#3}
\newcommand\@dotsep{4.5}
\def\@tocline#1#2#3#4#5#6#7{\relax
  \ifnum #1>\c@tocdepth % then omit
  \else
    \par \addpenalty\@secpenalty\addvspace{#2}%
    \begingroup \hyphenpenalty\@M
    \@ifempty{#4}{%
      \@tempdima\csname r@tocindent\number#1\endcsname\relax
    }{%
      \@tempdima#4\relax
    }%
    \parindent\z@ \leftskip#3\relax \advance\leftskip\@tempdima\relax
    \rightskip\@pnumwidth plus1em \parfillskip-\@pnumwidth
    #5\leavevmode\hskip-\@tempdima{#6}\nobreak
    \leaders\hbox{$\m@th\mkern \@dotsep mu\hbox{.}\mkern \@dotsep mu$}\hfill
    \nobreak
    \hbox to\@pnumwidth{\@tocpagenum{\ifnum#1=1\bfseries\fi#7}}\par% <-- \bfseries for \section page
    \nobreak
    \endgroup
  \fi}
\AtBeginDocument{%
\expandafter\renewcommand\csname r@tocindent0\endcsname{0pt}
}
\def\l@subsection{\@tocline{2}{0pt}{2.5pc}{5pc}{}}
\makeatother

\begin{document}%\onehalfspacing

\title[Limit theorems for the one-dimensional parabolic Anderson model with white noise potential]{Limit theorems for the one-dimensional parabolic Anderson model with white noise potential}

\author[K.\ Kim]{Kunwoo Kim }
\address{K.\ Kim,
  Pohang University of Science and Technology (POSTECH), South Korea
  }
\email{kunwoo@postech.ac.kr}

\author[U.\ Kim]{Uijun Kim }
\address{U.\ Kim,
  Pohang University of Science and Technology (POSTECH), South Korea
  }
\email{ujkim@postech.ac.kr}

\author[J.\ Yi]{Jaeyun Yi }
\address{J.\ Yi,
  Korea Institute for Advanced Study (KIAS), South Korea
  }
\email{jaeyun@kias.re.kr}

\date{\today}

\begin{abstract}
We consider the parabolic Anderson model
$\partial_{t}u=\partial_{x}^{2}u+\xi u$ on
$\bR_{+}\times\bR$ with $u(0,\cdot)\equiv1$, where $\xi$ is a
spatial white noise. We study the long-time behavior
of the spatial integral
$U(t):=\int_{-L(t)/2}^{L(t)/2}u(t,x)\,dx$, where
$L(t)=\exp(\alpha^{3}t^{3}/24)$ with $\alpha>0$.
We establish a weak law of large
numbers for $\alpha>1$ and a central limit theorem for
$\alpha>2$. Moreover, for every $\alpha\in(0,2)$,
we show that
$U(t)$ converges in distribution, after explicit
centering and scaling, to a totally asymmetric
$\alpha$-stable law. To
the best of our knowledge, these are the first stable
limit laws for the
continuous parabolic Anderson model. In addition, we
establish two
spectral results of independent interest for the
one-dimensional Anderson Hamiltonian
on a growing interval. They are the main ingredients
of the proofs.
The first gives the lower-tail asymptotics of
the lowest eigenvalue, including the exact prefactor.
The second shows that, on this lower-tail event, the $L^{1}$
norm of the corresponding eigenfunction concentrates around
an explicit deterministic value.

\noindent{\it Keywords:} parabolic Anderson model, Anderson Hamiltonian, weak law
of large numbers, central limit theorem, stable limit laws

\noindent{\it MSC 2020:}
Primary 60H15, 60F05; secondary 35R60, 60K37.

\end{abstract}

\maketitle

\tableofcontents

\section{Introduction and main results}\label{sec:intro}
 
\subsection{The problem}\label{subsec:the_problem}
 
We consider the parabolic Anderson model (PAM) in one dimension
\begin{equation}\label{eq:PAM}
    \begin{cases}
        \partial_{t}u(t,x) = \partial_{x}^{2} u(t,x)+\xi(x)\,u(t,x)\,,
        & t>0,\ x\in\bR\,,\\
        u(0,x) = 1\,, & x\in\bR\,,
    \end{cases}
\end{equation}
where $\xi$ is a spatial white noise on $\bR$. More precisely,
$\xi$ is a centered Gaussian process indexed by
$C_{0}^{\infty}(\bR)$ with covariance
\begin{equation*}
    \dE\bigl[\xi(f)\,\xi(g)\bigr]
    = \int_{\bR} f(x)\,g(x)\,dx\,,
    \qquad f,g\in C_{0}^{\infty}(\bR)\,,
\end{equation*}
which is formally written as
$\dE[\xi(x)\xi(y)]=\delta_{0}(x-y)$.
Since $\xi$ is not defined pointwise, the classical
Feynman--Kac formula does not apply directly. Let $j$ be a
smooth symmetric probability density supported on
$[-1,1]$ whose Fourier transform is nonnegative, and
set $j_{\ve}(x):=\ve^{-1}j(\ve^{-1}x)$ for $\ve>0$.
Then the mollified noise
$\xi_{\ve}(x):=\xi\bigl(j_{\ve}(x-\cdot)\bigr)$
is a smooth Gaussian field on $\bR$.
Hence, \eqref{eq:PAM}
with $\xi$ replaced by $\xi_{\ve}$ admits the classical
Feynman--Kac representation
\begin{equation*}
    u_{\ve}(t,x)
    = \dE_{x}\biggl[\exp\biggl(\int_{0}^{t}\xi_{\ve}(W_{s})\,ds
    \biggr)\biggr]\,.
\end{equation*}
Here, $(W_{t})_{t\ge0}$ is a Brownian motion on $\bR$ with
generator $\partial_{x}^{2}$, independent of $\xi$. We
write $\dP_{x}$ and $\dE_{x}$ for the law of $W$ started from
$x$ and the corresponding expectation, and $\dP$ and $\dE$ for
the law of $\xi$ and its expectation. 
By \cite[Lemma~A.1]{Chen14}, the limit
\begin{equation*}
    \int_{0}^{t}\xi(W_s)\,ds
    := \lim_{\ve\downarrow0}
    \int_{0}^{t}\xi_\ve(W_s)\,ds
\end{equation*}
exists in $L^{2}(\dP\otimes\dP_x)$ for every $t\ge0$.
The limit is independent of the mollifier, including
the Gaussian mollifiers used in \cite{HHNT15,GLGL23}
(see the proof of \cite[Lemma~A.1]{Chen14}
and \cite[Remark~1.6]{GLGL23}).
In addition, it has finite exponential moments
\cite[(3.3)]{Chen14}.
We thus define
\begin{equation}\label{eq:FK_solution}
    u(t,x)
    = \dE_{x}\biggl[\exp\biggl(\int_{0}^{t}\xi(W_{s})\,ds\biggr)
    \biggr]\,.
\end{equation}
By \cite[Theorem~5.7]{HHNT15}, the random field $u$ defined by \eqref{eq:FK_solution}
 is a mild solution to \eqref{eq:PAM}, where the product $\xi u$
is understood as a Stratonovich integral. Moreover, for every fixed $t\ge0$, $x\in\bR$, and
$p\in[1,\infty)$,
\begin{equation*}
    u_\ve(t,x)\longrightarrow u(t,x)
    \quad\text{in }L^p(\dP)
    \quad\text{as }\ve\downarrow0,
\end{equation*}
where $u_\ve$ solves \eqref{eq:PAM} with $\xi$
replaced by $\xi_\ve$
(see \cite[Section~4.1.2]{HHNT15}
and \cite[(1.3)]{GLGL23}).
This identifies $u$ with the Stratonovich solution
considered in \cite{GLGL23}, allowing us to apply
their moment estimates.
\begin{remark}\label{rem:convention}
We use the diffusion generator $\partial_x^2$
to match the normalization of the Anderson Hamiltonian
in the spectral results used below.
To apply the results of \cite{Chen14,HHNT15},
which use the generator $\tfrac12\partial_x^2$,
we make the following time change.
Writing $B$ for a standard Brownian motion,
we may take $W_s=B_{2s}$, and hence
\begin{equation*}
    \int_0^t \xi(W_s)\,ds
    = \frac12\int_0^{2t}\xi(B_r)\,dr\,.
\end{equation*}
Accordingly, we apply the results of
\cite{Chen14,HHNT15} at time $2t$ and with
the potential $\xi/2$.
The results of \cite{GLGL23} are stated for a general
diffusion constant $\kappa$ and apply directly
with $\kappa=1$.
\end{remark}

A fundamental problem for the PAM is to understand the
long-time behavior of its solution. In the quenched
setting, one fixes a realization of the potential and studies
the almost-sure behavior of the solution. For generalized
Gaussian potentials including white noise, Chen obtained the
precise asymptotics of $\log u(t,x)$ \cite{Chen14}.
In our case, the growth is of order
$t(\log t)^{2/3}$ \cite[Theorem~1.4]{Chen14}.
In the annealed setting, one averages over the potential
and studies the moments of the solution. For the model \eqref{eq:PAM}, a direct computation using
\cite[Theorems~2.6 and~2.7]{GLGL23} gives
\begin{equation}\label{eq:Lyapunov_exponent}
    \gamma(p) := \lim_{t\to\infty}\frac{1}{t^{3}}
    \log\dE\bigl[u(t,x)^{p}\bigr]
    = \frac{p^{3}}{48}\,,
    \qquad p>0\,,
\end{equation}
where the limit does not depend on $x$ by stationarity of
$u(t,\cdot)$. Since $\gamma(p)/p$ is strictly increasing in $p$, the
solution is fully intermittent in the sense of
\cite{CM94}. The moments are driven by
rare, exceptionally high peaks of $u(t,\cdot)$ rather than by
its typical values.

It is natural to ask how the high peaks affect spatial
averages over boxes growing with time. In particular,
when is the spatial average asymptotic to its expectation,
and what is the limiting law of its fluctuations?
The growth rate of the box plays a central role in
both questions.

Our work is inspired by \cite{BABM05}, which studies
sums of i.i.d.\ random exponentials
$\sum_{i=1}^{N}e^{tX_i}$ with $N=N(t)$ growing with $t$.
When the $X_i$ have Weibull upper tails,  two thresholds for the growth
rate of $N(t)$ arise: above the first, the law of large numbers holds,
and above the second, the central limit theorem holds. Below the central limit threshold,
suitable centering and scaling yield totally asymmetric
stable laws.

For the PAM, establishing analogous results requires
control of the spatial dependence of the solution.
In the discrete setting, laws of large numbers and
central limit theorems have been established for
Bernoulli obstacles \cite{BAMR05}, for a wide class of
i.i.d.\ potentials \cite{BAMR07}, and for a potential
white in time and space on $\mathbb{Z}^d$ \cite{CM07}. As far as we know, stable limits have been obtained only for Weibull
potentials \cite{BAMR19}.\footnote{An earlier stable limit theorem was
stated in \cite{GS15}. As noted in \cite{BAMR19}, its proof has a
problem in the treatment of the normalization.}

For continuous models, the picture is far less
complete. Most known results concern a fixed time, and the limiting fluctuations are then Gaussian (see, for instance,
\cite{HNV20,BY22, CKNP23}, and \cite{KT24} for further
references). To the best of our knowledge, averages over
boxes growing with time have been considered only in
\cite{KY22,CKNP23,KT24}. For the stochastic heat
equation driven by space--time white noise, \cite{KY22}
obtained laws of large numbers and a central limit theorem
for averages over intervals of length $e^{\Lambda t}$. The
thresholds for $\Lambda$ are expressed in terms of the
moment Lyapunov exponents. For a critical long-range stochastic heat equation in dimension
$d\ge3$, \cite{KT24} obtained a non-Gaussian limit for averages over a
ball of radius $R$ at times of order $R^{2}$. This limit has finite variance and is therefore not a stable law.
Stable limit laws are thus unknown for the continuous setting. As noted in \cite{KT24}, stable fluctuations for the stochastic heat equation driven by space--time white noise have long been expected in analogy with the discrete case, and the case of a spatial
Gaussian noise remains open as well.

The obstruction is methodological. The law of large numbers and the central limit theorem
can often be proved using moment estimates, after approximating spatial averages by sums of independent box contributions.
For a stable limit, however, one needs the tail of an individual contribution at the normalization scale of the full sum, up to a factor $1+o(1)$.
Moment estimates alone do not provide this precision.
In \cite{BAMR19}, the spectral representation of the
solution on a box relates these tail estimates to
the principal eigenvalue. For Weibull potentials on the lattice, their analysis
uses rank-one perturbation theory around a dominant
site to obtain the required precision. The corresponding $\ell^{2}$-normalized eigenfunction concentrates at that site, and its $\ell^{1}$ norm
tends to one.

The solution to \eqref{eq:PAM} also admits a spectral
representation in terms of an Anderson Hamiltonian,
since the noise is time-independent. However, the lattice argument does not apply directly  because  spatial white noise has no pointwise values, and
the relevant eigenfunction is localized over a small
interval. Its $L^{1}$ norm is therefore no longer close to one, but depends on the
localization scale and on the profile, and its square appears in the
leading spectral contribution to the spatial integral. In this paper, we control both the lower tail of the lowest eigenvalue and the $L^{1}$ norm of its eigenfunction with the precision needed to prove stable limit laws. We now state our main results.

\subsection{Main results}\label{subsec:main_results}

Our main results concern the spatial integral
\begin{equation}\label{eq:spatial_integral}
    U(t) := \int_{Q_{L(t)}} u(t,x)\,dx\,,
\end{equation}
where $Q_{L(t)}:=(-L(t)/2,\,L(t)/2)$ and $u$ is given by \eqref{eq:FK_solution}, which is the solution to \eqref{eq:PAM}. The length $L(t)$ is chosen
in terms of the moment Lyapunov exponents
\eqref{eq:Lyapunov_exponent}. We parametrize its
growth by an exponent $\alpha>0$ through
\begin{equation}\label{eq:box_size}
    L(t) := \exp\bigl(2\gamma(\alpha)\,t^{3}\bigr)
    = \exp\biggl(\frac{\alpha^{3}t^{3}}{24}\biggr)\,.
\end{equation}
We begin with the
weak law of large numbers and the central limit theorem.

\begin{theorem}[Weak law of large numbers]\label{thm:PAM_WLLN}
    Let $L(t)$ be as in \eqref{eq:box_size} with $\alpha>1$. Then,
    as $t\to\infty$,
    \begin{equation}\label{eq:PAM_WLLN}
        \frac{U(t)}{\dE[U(t)]} \xrightarrow{\ \dP\ } 1\,.
    \end{equation}
\end{theorem}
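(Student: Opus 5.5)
Since $\dE[U(t)]=L(t)\,\dE[u(t,0)]$ by stationarity, the claim is equivalent to $\bar U(t):=U(t)/(L(t)\dE[u(t,0)])\to1$ in probability. I will use the truncated second-moment method of \cite{BABM05}, adapted to the PAM by localization. The threshold $\alpha>1$ is what one expects: it gives $\log L(t)=2\gamma(\alpha)t^3$, and a truncated second moment at level $p=\alpha$ should be negligible in exactly this regime.

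\textbf{Step 1 (localization).} Replace $u$ by a version $\tilde u$ in which the Brownian path is confined to a window of size $R(t)=t^{k}$ for a suitable power $k$. For $x$ in a box $Q_{L(t)}$, take $\tilde u(t,x):=\dE_x\bigl[\exp(\int_0^t\xi(W_s)ds)\,\1\{\sup_{s\le t}|W_s-x|\le R(t)\}\bigr]$. Then $0\le u-\tilde u$. Using Cauchy--Schwarz in $\dE_x$, together with the finite exponential moments \cite[(3.3)]{Chen14} and the moment growth \eqref{eq:Lyapunov_exponent}, I aim to show
\[
\dE[u(t,0)-\tilde u(t,0)]\le e^{-R(t)^2/(8t)}\,e^{O(t^3)}=o\bigl(\dE[u(t,0)]\bigr).
\]
This holds because $R^2/t\gg t^3$ once $k>2$. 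Markov's inequality then shows that the error $(U-\tilde U)/\dE U$ tends to $0$ in $L^1$. Moreover, $\tilde u(t,x)$ and $\tilde u(t,y)$ are independent whenever $|x-y|>2R(t)$, since they depend on $\xi$ only through disjoint intervals.

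\textbf{Step 2 (truncation and second moment).} Split $\tilde u=\tilde u\1\{\tilde u\le M_t\}+\tilde u\1\{\tilde u>M_t\}$. The threshold $M_t$ is chosen so that, for some $p<\alpha$ close to $\alpha$, it lies exponentially in $t^3$ between $\dE[u]$-typical values and the level $e^{\gamma'(p)t^3}=e^{p^2t^3/16}$ that dominates the $p$-th moment.

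For the truncated part $\tilde U^{\le}$, I bound the variance using finite-range dependence. Only pairs with $|x-y|\le 2R$ contribute, so
\[
\Var(\tilde U^{\le})\le 2R\,L\,\dE[\tilde u^2\1\{\tilde u\le M_t\}]\le 2RL\,M_t^{2-p}\,\dE[u^p]\quad(1\le p\le2),
\]
or simply $2RL\,\dE[u^2]$ when $\alpha>2$. After dividing by $(L\,\dE u)^2=L^2e^{2t^3/48+o(t^3)}$, the result is $\exp\bigl(-2\gamma(\alpha)t^3+(2-p)\log M_t+\gamma(p)t^3-2\gamma(1)t^3+o(t^3)\bigr)$.

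For the upper tail, $\dE[\tilde u\1\{\tilde u>M_t\}]\le M_t^{1-q}\dE[u^q]$ for $q>1$. This must be $o(\dE u)$, which means $\log M_t>(\gamma(q)-\gamma(1))t^3/(q-1)$.

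\textbf{Step 3 (optimization).} Both constraints can be met simultaneously exactly when $\alpha>1$. To see this, choose $\log M_t=\gamma'(p_0)t^3=\tfrac{p_0^2}{16}t^3$ for some $p_0\in(1,\alpha)$. Convexity of $\gamma(p)=p^3/48$ makes the upper-tail exponent, optimized at $q\downarrow$ near $p_0$, negative. The variance exponent becomes $-2\gamma(\alpha)+2\gamma(p_0)-2\gamma(1)+$ [a convexity gap], which is negative since $p_0<\alpha$.

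The main obstacle is this exponent bookkeeping. In particular, it requires that the moments of the localized $\tilde u$ obey the same asymptotics as \eqref{eq:Lyapunov_exponent}, including the $o(t^3)$ corrections uniformly in $p$ on compacts. I would first obtain these from \cite[Theorems~2.6 and~2.7]{GLGL23}, together with the monotone bound $\tilde u\le u$ (upper bounds) and Step 1 (lower bound for the first moment). The conclusion then follows by Chebyshev's inequality applied to $\tilde U^{\le}$ and Markov's inequality applied to the other two pieces.
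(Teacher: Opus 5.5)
Your argument is correct, but it takes a different route from the paper.

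\textbf{The paper's route.} The paper partitions $Q_{L(t)}$ into $n(t)$ blocks of length $\ell(t)\approx e^{t}$ and replaces $U(t)$ by the sum $\tilde U(t)$ of i.i.d.\ Dirichlet block integrals. It bounds these through the spectral representation: Proposition~\ref{prop:moment_bounds_Dirichlet_PAM} gives $\dE[\tilde U_0(t)^p]\le C\ell(t)^{p+1}t^{5/2}e^{p^3t^3/48}$, together with a matching lower bound for $p=1$ that rests on Theorems~\ref{thm:sharp_lower_tail_lowest_eigenvalue} and~\ref{thm:eigenfunction_concentration}. For $\alpha\in(1,2]$, the von Bahr--Esseen inequality \cite{vBE65} at exponent $\alpha$ then yields the law of large numbers for $\tilde U(t)$ with no truncation: the normalized $\alpha$-th central moment decays like $\exp(-\alpha(\alpha-1)^2(2\alpha+1)t^3/48+O(t))$. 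Lemma~\ref{lem:WLLN_difference} then transfers the result to $U(t)$ via boundary layers of width $t^4$.

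\textbf{Your route.} You localize the Feynman--Kac path in a moving window of size $t^k$ with $k>2$. Then $\tilde u(t,x)$ coincides a.s.\ with the Dirichlet solution on $(x-t^k,x+t^k)$ evaluated at its center, so $\tilde u(t,\cdot)$ is stationary with dependence range $2t^k$. You then run the truncated second-moment method of \cite{BABM05}. This needs only:
\begin{itemize}
\item the pointwise asymptotics $\dE[u(t,0)^p]=e^{p^3t^3/48+o(t^3)}$ at finitely many fixed $p$ (upper bounds via $\tilde u\le u$, and the lower bound only for $p=1$ and for $u$ itself);
\item a Gaussian exit estimate;
\item the bound $\dE\dE_0[\exp(2\int_0^t\xi(W_s)\,ds)]=e^{O(t^3)}$. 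This is \cite[Theorem~2.6]{GLGL23} applied to $2\xi$, as in \eqref{eq:noise_exponential_moment}, rather than \eqref{eq:Lyapunov_exponent} itself.
\end{itemize}
Your proof is therefore more elementary and avoids all spectral input. The paper's route instead reuses block moment bounds that it needs anyway for Theorems~\ref{thm:PAM_CLT} and~\ref{thm:PAM_Stable}.

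\textbf{Two small corrections.} First, the uniformity in $p$ that you flag as the main obstacle is not needed, since only finitely many fixed exponents appear. Second, your sign argument for the variance exponent is loose. The term you call a ``convexity gap'' is $(2-p_0)\gamma'(p_0)-\gamma(p_0)=p_0^2/8-p_0^3/12$, which is positive for $p_0<3/2$, so it does not help the sign. The clean statement is as follows. With $\log M_t=\gamma'(p_0)t^3$ and $p=p_0$, the variance exponent equals $(3p_0^2-p_0^3-1-\alpha^3)/24$. Since $p\mapsto 3p^2-p^3$ is increasing on $[0,2]$, this exponent is smaller than $-(\alpha-1)^2(2\alpha+1)/24<0$ for every $p_0\in(1,\alpha)$.
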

 
\begin{theorem}[Central limit theorem]\label{thm:PAM_CLT}
    Let $L(t)$ be as in \eqref{eq:box_size} with $\alpha>2$. Then,
    as $t\to\infty$,
    \begin{equation}\label{eq:PAM_CLT}
        \frac{U(t)-\dE[U(t)]}{\sqrt{\Var\bigl(U(t)\bigr)}}
        \xrightarrow{\ d\ } \cN(0,1)\,.
    \end{equation}
\end{theorem}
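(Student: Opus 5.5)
The plan is to follow the standard strategy for sums of weakly dependent heavy-tailed variables: replace $U(t)$ by a sum of independent block contributions, truncate each block, and apply the Lindeberg--Feller central limit theorem to the triangular array of truncated blocks.

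\textbf{Localization and blocking.} Fix a mesoscopic scale $\ell=\ell(t)$ with $t^{3}\ll\log \ell$ allowed to be, say, $\ell=t^{k}$ for large $k$, or any scale much larger than the effective range $\sim t\cdot(\text{polylog})$ of the Brownian motion in the Feynman--Kac formula \eqref{eq:FK_solution}. Partition $Q_{L(t)}$ into $N\approx L(t)/\ell$ intervals $I_j$ of length $\ell$, separated by corridors of width $\ell^{1/2}$ which we discard. On each $I_j$ replace $u(t,x)$ by the localized solution $u^{(j)}(t,x)$, the Feynman--Kac expectation restricted to paths staying in an $\ell^{1/2}/3$-neighbourhood of $I_j$. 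The $u^{(j)}$ depend on $\xi$ restricted to disjoint sets, so $Y_j:=\int_{I_j}u^{(j)}(t,x)\,dx$ are i.i.d. The errors, namely the corridors and the difference $u-u^{(j)}$, are controlled in $L^{1}(\dP)$, or in $L^{2}$ where needed, using Cauchy--Schwarz together with the moment bound $\dE[u(t,x)^{p}]=e^{(p^{3}/48+o(1))t^{3}}$ from \eqref{eq:Lyapunov_exponent} and Gaussian exit estimates for $W$. These show the errors are $o(\sqrt{\Var U(t)})$, and in fact $o$ of the truncated standard deviation introduced next.

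\textbf{Truncation and Lindeberg.} Write $S=\sum_{j}Y_j$ and truncate each block at a level $M=M(t)$, setting $\bar Y_j=Y_j\1\{Y_j\le M\}$. For $\alpha>2$ choose $M=\exp(\beta t^{3})$ with $\beta$ slightly above the level $\gamma(2)$-scale where the second moment is realized. The second moment $\dE[u^{2}]\approx e^{t^{3}/6}$ is carried by events of probability $\approx e^{-t^{3}/6}$, whereas $N\approx e^{\alpha^{3}t^{3}/24}$. The condition $\alpha>2$, i.e. $\alpha^{3}/24>1/3$, is exactly what ensures that the typical maximum over the $N$ blocks exceeds the peaks responsible for $\dE[u^{2}]$. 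Concretely, I would verify three things. First, $N\,\dP(Y_1>M)\to0$, so truncation is invisible with high probability. Second, $N\,\dE[Y_1\1\{Y_1>M\}]=o(\sqrt{N\Var\bar Y_1})$. Third, the Lyapunov condition $N\dE[\bar Y_1^{3}]/(N\Var\bar Y_1)^{3/2}\to0$. Each of these reduces to comparing exponents $p^{3}/48$ in tail estimates obtained from $\dP(Y_1>e^{s t^{3}})\le e^{-p s t^{3}}\dE[Y_1^{p}]$ with optimal $p$, together with $\log\ell=o(t^{3})$. Optimizing in $p$ gives the Legendre transform of $p^{3}/48$, and the thresholds come out as $\alpha>2$. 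Finally, $\Var\bar Y_1\sim\Var Y_1$ and $N\Var Y_1\sim\Var U(t)$. The latter needs a covariance lower bound, $\Var U(t)\ge c\,L\,\ell\,\dE[u(t,0)^{2}]$ up to $e^{o(t^{3})}$, which follows from positive correlation of $u(t,x),u(t,y)$ for $|x-y|\le1$ via the FKG inequality for Gaussian fields with nonnegative covariance.

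\textbf{Main obstacle.} The delicate step is showing $\Var(S)/\Var(U(t))\to1$ and that the localization error is negligible relative to $\sqrt{\Var U(t)}$ rather than merely relative to $\dE U(t)$. This needs two-point moments $\dE[u(t,x)u(t,y)]$ on the exponential scale $e^{t^{3}/6}$ with subexponential precision, for which I would use the moment formulas of \cite{GLGL23}. It also needs the exit-probability error to be $e^{-c\ell/t}$, which is smaller than any $e^{-Ct^{3}}$ once $\ell\gg t^{4}$. If the second-moment asymptotics are only logarithmically precise, I would instead normalize $S$ by its own variance and show $\Var(U-S)/\Var(S)\to0$ directly. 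That suffices for \eqref{eq:PAM_CLT} by Slutsky's lemma.
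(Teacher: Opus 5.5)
There is a genuine gap. Your overall architecture (independent localized blocks, a Lyapunov-type CLT, then Slutsky) broadly matches the paper's. The problem is that the step everything hinges on is not established: the single-block variance lower bound $\Var(Y_1)\ge \ell\,e^{(1/6-o(1))t^{3}}$.

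\textbf{The FKG step does not give the variance bound.} The FKG/Harris inequality only gives $\mathrm{Cov}(u(t,x),u(t,y))\ge 0$, i.e.\ $\dE[u(t,x)u(t,y)]\ge \dE[u(t,0)]^{2}=e^{(1/24+o(1))t^{3}}$. It controls the sign of the covariance, not its size. At best it reduces the problem to $\Var\bigl(\int_{I}u\bigr)\ge \sum_k \Var\bigl(\int_{I_k}u\bigr)$ over unit subintervals. You still need $\dE\bigl[(\int_0^1 u(t,x)\,dx)^{2}\bigr]\ge e^{(1/6-o(1))t^{3}}$, and this does not follow from the one-point exponent $\gamma(2)=1/6$.

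That estimate is a statement about the spatial width of the peaks carrying the second moment: peaks of width $w$ lose a factor $w$ when integrated. In two-point form it reads $\dE[u(t,x)u(t,y)]\ge e^{-o(t^3)}\dE[u(t,0)^2]$ for $|x-y|\lesssim 1/t$, and you only defer it to \cite{GLGL23}.

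In the paper this is exactly where the spectral results enter. The ingredients are $\tilde U_0\ge e^{-t\lambda_1}\|\varphi_1\|_1^2$, the lower tail of $\lambda_1$ at $a=t^2/4$ (Theorem~\ref{thm:sharp_lower_tail_lowest_eigenvalue}), and the fact that on this event $\|\varphi_1\|_1\approx\frac{\pi}{\sqrt2}a^{-1/4}$ (Theorem~\ref{thm:eigenfunction_concentration}), so the peak has width of order $1/t$. Together these give \eqref{eq:moment_lower_bound} and hence \eqref{eq:CLT_variance_lower_tilde_U}. Without a substitute, you cannot check either your Lyapunov ratio or the negligibility of the localization errors, since both are measured against $\Var(Y_1)$.

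\textbf{The truncation level is wrong, and truncation is unnecessary.} The second moment is carried by an event of probability $e^{-t^3/3}$ (not $e^{-t^3/6}$) at values $e^{t^3/4}$. The block maximum sits near $e^{\alpha^2t^3/16}$. With $M=e^{\beta t^3}$ one has $N\dP(Y_1>M)\approx e^{(\alpha^3/24-\frac83\beta^{3/2})t^3}$. For $\alpha>2$ this diverges when $\beta$ is slightly above $1/4$, so you need $\beta>\alpha^2/16$.

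The paper avoids truncation altogether and checks Lyapunov's condition with $2+\delta$ moments directly. The ratio is $\exp\bigl(\frac{\delta t^3}{48}((\delta+2)(\delta+4)-\alpha^3)+O(t)\bigr)$, which decays for small $\delta$ exactly when $\alpha>2$.

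\textbf{The scale bookkeeping is inconsistent.} You write $t^3\ll\log\ell$ but also $\ell=t^k$ and $\log\ell=o(t^3)$. With only $e^{o(t^3)}$-precise moments and $\ell$ polynomial in $t$, you cannot show that corridors occupying a fraction $\ell^{-1/2}$ of the box are negligible in variance.

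The paper's bounds \eqref{eq:moment_upper_bound} and \eqref{eq:moment_lower_bound} are precise up to powers of $t$ and of the interval length. It compares boundary windows of length $e^{t/100}$ against blocks of length $e^{t}$, so the comparison ratio is $t^{7/2}e^{3t/100}/\ell(t)\to0$. You would need either comparable precision, or a choice of $\log\ell$ exceeding your precision loss while still being $o(t^3)$.
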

 
Before stating the stable limit theorem, we introduce
an intermediate scale $\ell(t)$ and the integral of
the solution over a single interval of that length, which
enter the centering in the case $\alpha=1$. 
Let
\begin{equation}\label{eq:block_number}
    n(t):=\bigl\lfloor L(t)e^{-t}\bigr\rfloor\,,\qquad
    \ell(t):=\frac{L(t)}{n(t)}\,,
\end{equation}
so that $\ell(t)e^{-t}\to1$ as $t\to\infty$, and write
\begin{equation}\label{eq:block_integral}
    U_{0}(t):=\int_{Q_{\ell(t)}}u(t,x)\,dx\,.
\end{equation}
By stationarity of $u(t,\cdot)$, $U_{0}(t)$ has the same law as
the integral of $u(t,\cdot)$ over any interval of length
$\ell(t)$.
 
\begin{theorem}[Stable limit theorem]\label{thm:PAM_Stable}
    Let $L(t)$ be as in \eqref{eq:box_size} with $\alpha\in(0,2)$.
    Define
    \begin{equation}\label{eq:stable_scale_B_alpha}
        B_{\alpha}(t) := \frac{\pi}{2}
        \biggl(\frac{4\pi}{\alpha t}\biggr)^{1-1/\alpha}
        \exp\biggl(\frac{\alpha^{2}\,t^{3}}{16}\biggr)
    \end{equation}
    and
    \begin{equation}\label{eq:stable_centering}
        A(t) :=
        \begin{cases}
            0\,, & 0<\alpha<1\,,\\[4pt]
            \ell(t)^{-1}\,\dE\bigl[U_{0}(t)\,;\,
            U_{0}(t)\le B_{\alpha}(t)\bigr]\,,
            & \alpha=1\,,\\[8pt]
            \dE\bigl[u(t,0)\bigr]\,, & 1<\alpha<2\,.
        \end{cases}
    \end{equation}
    Then, as $t\to\infty$,
    \begin{equation}\label{eq:stable_convergence}
        \frac{U(t)-L(t)\,A(t)}{B_{\alpha}(t)}
        \xrightarrow{\ d\ } \dS_{\alpha}\,,
    \end{equation}
    where $\dS_{\alpha}$ is the $\alpha$-stable law with
    characteristic function
    \begin{equation}\label{eq:stable_char_fn}
        \phi_{\alpha}(z)=
        \begin{cases}
            \exp\Bigl(-\Gamma(1-\alpha)\,|z|^{\alpha}
            \exp\bigl(-\tfrac{i\pi\alpha}{2}\sgn(z)\bigr)\Bigr)\,,
            & 0<\alpha<1\,,\\[6pt]
            \exp\Bigl(iz(1-\gamma_{\mathrm{E}})
            -\tfrac{\pi}{2}|z|\bigl(1+i\sgn(z)\,
            \tfrac{2}{\pi}\log|z|\bigr)\Bigr)\,,
            & \alpha=1\,,\\[6pt]
            \exp\Bigl(\tfrac{\Gamma(2-\alpha)}{\alpha-1}\,
            |z|^{\alpha}
            \exp\bigl(-\tfrac{i\pi\alpha}{2}\sgn(z)\bigr)\Bigr)\,,
            & 1<\alpha<2\,.
        \end{cases}
    \end{equation}
    Here $\Gamma$ denotes the gamma function, $\sgn$ the sign
    function with $\sgn(0):=0$, and
    $\gamma_{\mathrm{E}}\approx 0.5772$ the Euler--Mascheroni
    constant. In particular, $\dS_{\alpha}$ is totally
    asymmetric, i.e., totally skewed to the right.
\end{theorem}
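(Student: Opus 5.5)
The plan is to reduce $U(t)$ to a triangular array of i.i.d.\ block contributions and then apply the classical stable limit theorem for such arrays. We partition $Q_{L(t)}$ into the $n(t)$ intervals $I_1,\dots,I_{n(t)}$ of length $\ell(t)\approx e^{t}$. On each enlarged block we replace $u(t,x)$ by the solution $u^{(k)}(t,x)$ of the PAM restricted to a slightly larger interval $\tilde I_k$ with Dirichlet boundary conditions, say with margin of order $t^{2}$ or $e^{t/2}$. Feynman--Kac together with Brownian exit estimates, using the moment bounds of \cite{GLGL23}, shows that
\begin{equation*}
\dE\bigl|u(t,x)-u^{(k)}(t,x)\bigr|=o\bigl(B_\alpha(t)/L(t)\bigr),
\end{equation*}
so the replacement error is negligible after normalization. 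Taking every other block, or using margins smaller than the block length, gives independent contributions: the odd and even sub-sums are each sums of i.i.d.\ copies of a variable $Y(t)\approx U_0(t)$. It therefore suffices to prove that $n(t)$ i.i.d.\ copies of $Y(t)$, centered by $\ell(t)A(t)$ and scaled by $B_\alpha(t)$, converge to $\dS_\alpha$.

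By the standard criterion for triangular arrays (as in \cite{BABM05,BAMR19}), this reduces to two facts. The first is the tail asymptotic
\begin{equation*}
n(t)\,\dP\bigl(Y(t)>xB_\alpha(t)\bigr)\to x^{-\alpha}
\end{equation*}
uniformly for $x$ in compacts of $(0,\infty)$. The second is a truncated second moment condition,
\begin{equation*}
n(t)\,B_\alpha(t)^{-2}\,\dE\bigl[Y^2;\,Y\le \varepsilon B_\alpha\bigr]\to \frac{\alpha}{2-\alpha}\,\varepsilon^{2-\alpha},
\end{equation*}
or, for $\alpha<1$, a truncated first moment condition. The centering $A(t)$ then gives exactly the stated characteristic functions. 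These are the same computations as for Pareto-type arrays, and they also produce the Euler constant term when $\alpha=1$.

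The main step is the tail asymptotic. We use the spectral representation
\begin{equation*}
Y(t)=\sum_j e^{-t\lambda_j}\,\langle \phi_j,1\rangle^2 ,
\end{equation*}
where $\lambda_1<\lambda_2<\cdots$ are the eigenvalues of the Anderson Hamiltonian $-\partial_x^2-\xi$ on the box $\tilde I$. The event $Y>xB_\alpha$ forces $-\lambda_1\approx \alpha^2t^2/16$. On this event we invoke the two spectral results announced in the abstract. The first gives the exact lower tail $\dP(-\lambda_1>\lambda)\sim c\,|\tilde I|\,\lambda^{1/4}e^{-\frac43\lambda^{3/2}}$, including the prefactor; the exact form is whatever the paper's spectral theorem provides. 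The second gives concentration of $\|\phi_1\|_{L^1}^2$ around a deterministic value of order $\lambda^{-1/2}$, which is the $\mathrm{sech}$-profile value $\pi^2/(2\sqrt{\lambda})$ or similar. We also show that the remaining eigenvalues contribute $o(B_\alpha)$ with high probability on this event. This follows from a spectral gap, since a second deep eigenvalue in the same block costs an extra factor $e^{-c t^3}$.

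Combining these, $\dP(Y>xB_\alpha)\approx\dP\bigl(-t\lambda_1+\log m(\lambda_1)>\log(xB_\alpha)\bigr)$, where $m(\lambda_1)$ is the deterministic $L^{1}$ concentration value. Solving for $\lambda$ and Taylor expanding $\frac43\lambda^{3/2}$ around $\lambda_\ast=\alpha^2t^2/16$ gives the factor $x^{-\alpha}$ and the $t$-power prefactors. These prefactors are exactly what fixes $B_\alpha(t)$ in \eqref{eq:stable_scale_B_alpha}, and a check that $n(t)$ times the tail tends to $x^{-\alpha}$ confirms the constants. The truncated moments follow by integrating the tail over $x\ge$ a fixed threshold. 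Below that threshold we use the moment bounds \eqref{eq:Lyapunov_exponent} with exponents $p<\alpha$ or $p\in(\alpha,2)$, which ensure that the small-value region contributes negligibly after truncation.

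I expect the main obstacle to be obtaining the tail with relative precision $1+o(1)$. This requires the exact prefactor in the eigenvalue lower tail, the $L^1$ concentration to hold with error $o(1)$ relative to the main term, and uniform control of the higher spectral terms on the tail event.
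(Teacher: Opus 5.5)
Your single-block analysis (spectral representation, sharp lower tail of $\lambda_1$, concentration of $\|\varphi_1\|_1^2$ at $\pi^2/(2\sqrt a)$, spectral gap) is essentially the paper's. One correction: the tail you need is $\dP(\lambda_1(\cL)<-a)\sim\cL\,\frac{\sqrt a}{\pi}e^{-\frac{8}{3}a^{3/2}}$, and with $\frac43$ and $\lambda^{1/4}$ you would not recover $B_\alpha(t)$. The reductions around this core have two genuine gaps.

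\textbf{First gap: the reduction to an i.i.d.\ array.} The step ``it therefore suffices'' does not follow. Your boxes $\tilde I_k\supset I_k$ overlap their neighbours, so the block variables are only $1$-dependent. The odd and even sub-sums are each i.i.d.\ sums, but they are dependent on each other. Knowing that each converges to a stable law says nothing about the law of their sum. There is also a mismatch: the sum-of-squares formula and the $L^1$ concentration describe $\int_{\tilde I_k}u^{(k)}$, while your decomposition of $U(t)$ uses $\int_{I_k}u^{(k)}$. Summing the former double-counts the margins.

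Making the boxes disjoint restores independence but breaks your first-moment error bound. Near the $n(t)$ grid points the Dirichlet solution is far from $u$, and in expectation these layers carry mass $n(t)e^{t^3/48+o(t^3)}$. For $\alpha\neq1$ this exceeds $B_\alpha(t)$ by a factor $e^{(\alpha-1)^2(2\alpha+1)t^3/48+o(t^3)}$.

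The paper proceeds as follows:
\begin{itemize}
\item it uses Dirichlet solutions on the disjoint blocks $Q_i$, which are exactly i.i.d.;
\item it compares $u$ in the boundary layers with Dirichlet solutions on windows $\fS_i$ of length $2e^{t/100}$;
\item it shows $n(t)B_\alpha(t)^{-\alpha}\dE\bigl[(\int_{\fS_1}\tilde u_{\fS_1}(t,x)\,dx)^{\alpha}\bigr]\to0$;
\item it exploits this via subadditivity for $\alpha<1$, via centering and von Bahr--Esseen on even and odd indices for $\alpha\in(1,2)$, and via a truncation argument for $\alpha=1$ (Lemma~\ref{lem:Stable_difference}).
\end{itemize}
You need this, or a limit theorem for $1$-dependent arrays whose anti-clustering and covariance hypotheses you would then have to check.

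\textbf{Second gap: the truncated moments.} These cannot be controlled by moment bounds at a fixed $p\neq\alpha$. The $p$-th moment of a block is carried by peaks of height $e^{p^2t^3/16+o(t^3)}$, far from the scale $B_\alpha(t)$. Indeed, both bounds of Proposition~\ref{prop:moment_bounds_Dirichlet_PAM} give
$n(t)B_\alpha(t)^{-p}\dE[\tilde U_0(t)^p]=\exp\bigl((p-\alpha)^2(p+2\alpha)t^3/48+O(t)\bigr)\to\infty$,
and your block variable dominates $\tilde U_0(t)$. Hence estimates such as $nB_\alpha^{-2}\dE[Y^2;\,Y\le\delta B_\alpha]\le\delta^{2-p}\,nB_\alpha^{-p}\dE[Y^p]$ diverge, for $p<\alpha$ and $p>\alpha$ alike. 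The same problem affects the region $Y\ge MB_\alpha$ needed when $\alpha\in(1,2)$.

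What is required is a bound $n(t)\dP(\tilde Y_0(t)>x)\le Cx^{-\alpha}$ that holds uniformly over a range of $x$ growing with $t$. Only then does dominated convergence apply to $\int 2x\,n(t)\dP(\tilde Y_0(t)>x)\,dx$. The paper gets this bound in three ranges (Lemma~\ref{lem:uniform_tail_bound}):
\begin{itemize}
\item on $[e^{-t^{3/2}},e^{t^{3/2}}]$, from the nonasymptotic bound $\dP(\lambda_1(\cL)<-a)\le e\,\cL/m(a)$ of Lemma~\ref{lem:left_tail_upper_bound_fixed_L}, which holds for all $\cL,a$ unlike the window-restricted sharp asymptotics, together with $x$-independent bounds on the failure of the gap and of the concentration;
\item for $x<e^{-t^{3/2}}$, from the Parseval inclusion and a tangent-line bound for $\frac{8}{3}r^{3/2}$;
\item for $x>e^{t^3}$, from the second moment.
\end{itemize}
Your plan provides only pointwise tail convergence on compacts. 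That does not give the variance condition, nor the centering when $\alpha\neq1$.
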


\begin{remark}
    Theorem~\ref{thm:PAM_WLLN} covers the case $\alpha=2$,
    but the limiting fluctuations remain open. For sums of i.i.d.\ random exponentials, a central limit theorem still holds at the boundary between the Gaussian and stable regimes, provided the variance is replaced by a truncated second moment
\cite[Theorem~2.5]{BABM05}. We expect an analogous behavior
    for $U(t)$ when $\alpha=2$.
\end{remark}

We now outline the proofs. The overall strategy follows the discrete case \cite{BAMR19}. 
We partition $Q_{L(t)}$ into $n(t)$ blocks of length $\ell(t)$, with $n(t)$ and $\ell(t)$ as in
\eqref{eq:block_number}. On each block, we consider the solution of
\eqref{eq:PAM} with Dirichlet boundary conditions.
Let $\tilde{U}_{0}(t)$ denote the integral of the Dirichlet solution over $Q_{\ell(t)}$.
Since $\xi$ is a white noise, its restrictions to
disjoint blocks are independent and identically distributed.
Hence, the integrals of the Dirichlet solutions over
their blocks are i.i.d., each with the same law as $\tilde{U}_{0}(t)$.
We establish the limit theorems for sums of these block integrals.
We then show that the difference between $U(t)$ and
these sums is negligible after centering and scaling, which transfers the limit theorems to $U(t)$. The advantage of this reduction is that each of these
integrals admits a spectral representation in terms
of the eigenpairs $(\lambda_{k},\varphi_{k})$ of the Anderson Hamiltonian on an interval of length $\ell(t)$. Since $-\xi$ has the same law as $\xi$, we state all spectral results for the operator $-\partial_{x}^{2}+\xi$ rather than $-\partial_{x}^{2}-\xi$.

We use the upper and lower moment bounds on
$\tilde{U}_{0}(t)$ from Proposition~\ref{prop:moment_bounds_Dirichlet_PAM}
to prove the weak law of large numbers and the central limit theorem for the block sums. For the stable limit theorem, we apply a classical
convergence criterion for triangular arrays \cite{Pet75} to
the block sums, as in \cite{BABM05,BAMR19}. The core of the
proof is the tail asymptotics
\begin{equation}\label{eq:intro_block_tail}
    \lim_{t\to\infty}n(t)\,
    \dP\bigl(\tilde{U}_{0}(t)>xB_{\alpha}(t)\bigr)=x^{-\alpha}\,,
    \qquad x>0\,.
\end{equation}
The parametrization \eqref{eq:box_size} of $L(t)$ is chosen
precisely so that the limit holds with index $\alpha$. 

To establish \eqref{eq:intro_block_tail}, we need
tail asymptotics with a relative error tending to zero.
The moment Lyapunov exponents give only the leading
exponential rate and do not provide this precision.
In particular, they do not determine the polynomial
prefactor or the constant in $B_{\alpha}(t)$. We obtain the required precision through the spectral representation. We show that, on the rare event in \eqref{eq:intro_block_tail}, the
approximation
\begin{equation*}
    \tilde{U}_{0}(t) \approx
    \|\varphi_{1}\|_{1}^{2}\,e^{-t\lambda_{1}}
\end{equation*}
holds with conditional probability tending to one.
This rests on two results of independent interest, the lower tail of
$\lambda_{1}$
(Theorem~\ref{thm:sharp_lower_tail_lowest_eigenvalue}) and the
concentration of $\|\varphi_{1}\|_{1}$ around an explicit deterministic
value on this lower-tail event
(Theorem~\ref{thm:eigenfunction_concentration}).
The constants in these two theorems determine $B_{\alpha}(t)$.

Theorem~\ref{thm:sharp_lower_tail_lowest_eigenvalue}
gives the exact asymptotics of $\dP(\lambda_{1}<-a)$, with the
precise Kramers prefactor, as $a$ and the interval length tend to
infinity together. McKean proved that $\lambda_{1}$ converges
in law to a Gumbel distribution after centering and scaling
\cite{McK94}. This weak convergence gives no information in
this regime. The best available estimates are the two-sided bounds of Hsu and
Labb\'e \cite{HL23}. These bounds determine the exponent only up to a
factor $1\pm\eta$ for a fixed $\eta>0$. The resulting error is
exponentially large, so the prefactor cannot be identified. 
Our proof starts from the Riccati transform
\cite{McK94,AD14,DL20}. It turns the lower-tail probability of $\lambda_{1}$ into the probability that a diffusion explodes before a given time. The diffusion is
attracted to a potential well, and an explosion requires
the crossing of a potential barrier, a Kramers escape. This picture suggests a renewal structure with many failed attempts near the well followed by one crossing.
The difficulty is that the given time and the level $a$ grow together, so the
well, the barrier, and the law of every attempt move with
them. Classical renewal theorems require a fixed increment
distribution and do not apply. We instead build a renewal structure
adapted to this movement and make every estimate explicit
in the level (see Section~\ref{subsec:sharp_lower_tail} for
details).

The localization theory of Dumaz and Labb\'e \cite{DL20} describes
the eigenfunctions at the bottom of the spectrum on events of
probability tending to one, but gives no information
conditionally on the lower-tail event, whose probability
vanishes. Theorem~\ref{thm:eigenfunction_concentration}
provides exactly this information. Our proof starts from
the variational characterization of $\lambda_{1}$, which
assigns a Gaussian cost to the lower-tail event. The
Gagliardo--Nirenberg inequality identifies the minimizers
of this cost. They are the translates of an explicit sech profile, all with the same rescaled $L^{1}$ norm $\pi/\sqrt{2}$. The difficulty is that the cost bound concerns one fixed profile, while the center of the eigenfunction is random and the $L^{1}$ norm does not control the shape on a long interval. We handle the randomness with a
mesh of candidate centers. For each center, the spectral gap confines
the mass of the eigenfunction to a bounded neighborhood, which reduces
the problem to profiles on a fixed window, and the Borell--TIS
inequality shows that any profile far from a minimizer of the cost is
too expensive to occur (see
Section~\ref{subsec:eigenfunction_concentration} for details).

The remainder of the paper is organized as follows.
Section~\ref{sec:Anderson_Hamiltonian_finite_interval} proves the two spectral results, Theorems~\ref{thm:sharp_lower_tail_lowest_eigenvalue} and~\ref{thm:eigenfunction_concentration}.
Section~\ref{sec:Dirichlet_PAM_finite_interval} introduces the
Dirichlet solutions, recalls the spectral representation,
proves the moment bounds for their spatial integrals, and
prepares the comparison of $U(t)$ with the block sums.
Sections~\ref{sec:proof_PAM_WLLN}, \ref{sec:proof_PAM_CLT},
and~\ref{sec:proof_PAM_Stable} prove
Theorems~\ref{thm:PAM_WLLN}, \ref{thm:PAM_CLT},
and~\ref{thm:PAM_Stable}, respectively. Finally, Appendix~\ref{apx:Auxiliary_estimates} collects auxiliary
estimates on the diffusion associated with the Riccati
transform, which are used in the proofs of
Section~\ref{sec:Anderson_Hamiltonian_finite_interval}.

\medskip
\noindent\textbf{Notation.}
Throughout the paper, for positive functions $f$ and
$g$, we write $f\sim g$ if $f/g\to1$, and $f\ll g$ if
$f/g\to0$, as the argument tends to infinity. 
  
\section{The Anderson Hamiltonian on a finite interval}
\label{sec:Anderson_Hamiltonian_finite_interval}
In this section, we write $t$ for the spatial coordinate of the Anderson Hamiltonian. This coordinate also serves as the time parameter of the associated Riccati diffusion. For $\cL>0$, we consider the Anderson Hamiltonian on $[0,\cL]$ with
Dirichlet boundary conditions,
\begin{equation}\label{eq:Anderson_Hamiltonian}
    \cH_{\cL} := -\partial_{t}^{2}+\xi\,,
\end{equation}
where $\xi$ is a white noise on $[0,\cL]$. We adopt the sign convention
\eqref{eq:Anderson_Hamiltonian}, which is standard in the
literature.

This operator has long been studied, beginning with Frisch
and Lloyd \cite{FL60} and Halperin \cite{Hal65}, who
computed the density of states. A rigorous
construction was given by Fukushima and Nakao, who showed that
the operator is self-adjoint with discrete spectrum \cite{FN77}.
McKean proved that the lowest eigenvalue has Gumbel fluctuations
as $\cL\to\infty$ \cite{McK94}.
More recently, Dumaz and Labb\'e obtained a detailed description of the
spectrum in the large-volume limit, from the localized bottom through the
crossover to the delocalized regime at high energies \cite{DL20,DL23,DL24}.

\subsection{The eigenvalue problem and the Riccati transform}
\label{subsec:eigenvalue_problem_Riccati}
In this subsection, we review the construction of $\cH_{\cL}$,
the Riccati transform, and the results of \cite{McK94,AD14} on
the first explosion time. The Riccati transform is the main
tool of this section. Let
\begin{equation*}
    H_{0}^{1}(0,\cL) := \bigl\{\varphi\in H^{1}(0,\cL)
    \,:\,\varphi(0)=\varphi(\cL)=0\bigr\}\,.
\end{equation*}
We realize the white noise as $\xi=B'$, the distributional
derivative of a Brownian motion $B$ on $[0,\cL]$.
Thus, for every $w\in H_{0}^{1}(0,\cL)$,
\begin{equation}\label{eq:def_noise_pairing}
    \langle\xi,w\rangle = -\int_{0}^{\cL}w'(t)\,B(t)\,dt\,.
\end{equation}
Following Fukushima and Nakao \cite{FN77}, we define $\cH_{\cL}$ through the quadratic form
\begin{equation}\label{eq:FN_form}
    \cE_{\cL}(u,v) := \int_{0}^{\cL}u'v'\,dt
    - \int_{0}^{\cL}(uv)'(t)\,B(t)\,dt
    = \int_{0}^{\cL}u'v'\,dt + \langle\xi,uv\rangle\,,
    \qquad u,v\in H_{0}^{1}(0,\cL)\,.
\end{equation}
Almost surely, $\cE_{\cL}$ is closed and bounded below, and
$\cH_{\cL}$ is the associated self-adjoint operator on $L^{2}(0,\cL)$. 
Its resolvent is compact, so the spectrum is discrete. 
A pair $(\lambda,\varphi)\in\bR\times H_{0}^{1}(0,\cL)$ with
$\varphi\neq0$ is an eigenpair if, for all
$v\in H_{0}^{1}(0,\cL)$,
\begin{equation}\label{eq:weak_eigen_relation}
    \cE_{\cL}(\varphi,v) = \lambda\,\langle\varphi,v\rangle\,.
\end{equation}
The eigenpairs can also be described through an integrated form of the
eigenvalue equation. For $\lambda\in\bR$, we say that $\phi$ solves
$\cH_{\cL}\phi=\lambda\phi$ in the integrated sense if
$\phi\in C^{1}([0,\cL])$ and, for all $t\in[0,\cL]$,
\begin{equation}\label{eq:integrated_eigenvalue_equation}
    \phi'(t)-\phi'(0)
    = \phi(t)B(t)-\int_{0}^{t}\phi'(s)B(s)\,ds
    - \lambda\int_{0}^{t}\phi(s)\,ds\,.
\end{equation}
Then $(\lambda,\varphi)$ is an eigenpair if and only if
$\varphi\neq0$ solves $\cH_{\cL}\varphi=\lambda\varphi$ in the
integrated sense and $\varphi(0)=\varphi(\cL)=0$.
The eigenvalues are simple, since a solution of
\eqref{eq:integrated_eigenvalue_equation} with $\phi(0)=0$ is determined by
$\phi'(0)$ (see also \cite[Theorem~2.1]{AC15}). We denote them by
\begin{equation*}
    \lambda_{1}(\cL) < \lambda_{2}(\cL) < \cdots\,,
\end{equation*}
with corresponding eigenfunctions $\varphi_{k}$, normalized by
$\|\varphi_{k}\|_{2}=1$.
The first two eigenvalues admit the variational
characterization
\begin{equation}\label{eq:minimax_characterization}
    \lambda_{1}(\cL)=\inf_{\substack{u\in H_{0}^{1}(0,\cL)\\ \|u\|_{2}=1}}
    \cE_{\cL}(u,u)\,,
    \qquad
    \lambda_{2}(\cL)=\inf_{\substack{u\in H_{0}^{1}(0,\cL)\\
    \|u\|_{2}=1,\ u\perp\varphi_{1}}}
    \cE_{\cL}(u,u)\,.
\end{equation}
For each $a\in\bR$, let $\phi_{a}$ be the solution of
$\cH_{\cL}\phi_{a}=-a\phi_{a}$ in the integrated sense with
$\phi_{a}(0)=0$ and $\phi_{a}'(0)=1$. Then $-a$ is an
eigenvalue of $\cH_{\cL}$ if and only if $\phi_{a}(\cL)=0$.
We extend $B$ to a Brownian motion on $[0,\infty)$.
On each interval on which $\phi_{a}$ does not vanish, the Riccati
transform $X_{a}:=\phi_{a}'/\phi_{a}$ satisfies
\begin{equation}\label{eq:Riccati_SDE}
    dX_{a}(t) = \bigl(a-X_{a}(t)^{2}\bigr)\,dt + dB(t)\,,
    \qquad t\ge0\,,
\end{equation}
with $X_{a}(0)=+\infty$ (see \cite[Section~2.1]{DL20} for the
construction of the diffusion started from $+\infty$).
Each zero of $\phi_{a}$ corresponds to an explosion of $X_{a}$ to $-\infty$,
after which the diffusion restarts from $+\infty$.
Moreover, the number of explosions of $X_{a}$ before time
$\cL$ equals the number of eigenvalues of $\cH_{\cL}$ below
$-a$ \cite[Sections~2 and~3]{AD14} (see also \cite{McK94,DL20}).
For $a=-\lambda_{1}(\cL)$ there are no eigenvalues below $-a$,
so $X_{a}$ does not explode before time $\cL$ and $\phi_{a}$
has no zeros in $(0,\cL)$. Since $\varphi_{1}$ is a multiple
of $\phi_{a}$, it has a constant sign, and we take
$\varphi_{1}\ge0$.
Set $\zeta_{a}^{(0)}:=0$, and for $k\ge1$ let $\zeta_{a}^{(k)}$
denote the $k$-th explosion time of the restarted diffusion, which is
finite almost surely \cite[Section~3.2]{AD14}.
Consequently, for $k\ge1$,
\begin{equation}\label{eq:eigenvalue_explosion}
    \dP\bigl(\lambda_{k}(\cL)<-a\bigr)
    = \dP\bigl(\zeta_{a}^{(k)}<\cL\bigr)\,.
\end{equation}
By the strong Markov property, the increments
$\zeta_{a}^{(k)}-\zeta_{a}^{(k-1)}$, $k\ge1$, are independent
and each distributed as $\zeta_{a}^{(1)}$. We write
\begin{equation}\label{eq:first_explosion_mean}
    m(a) := \dE\bigl[\zeta_{a}^{(1)}\bigr]
\end{equation}
for the mean of the first explosion time.
McKean studied the first explosion time through its Laplace
transform, which solves a boundary-value problem for the
generator of $X_{a}$ \cite{McK94}. In our notation, his
result reads
\begin{equation}\label{eq:first_explosion_exp_limit}
    \frac{\zeta_{a}^{(1)}}{m(a)} \xrightarrow{\ d\ }
    \mathrm{Exp}(1) \qquad\text{as } a\to\infty\,,
\end{equation}
and we refer to \cite[Theorem~3.3]{AD14} for a proof.
We also recall from \cite{McK94} the exact formula
\begin{equation}\label{eq:first_explosion_mean_formula}
    m(a) = \sqrt{2\pi}\int_{0}^{\infty}\frac{1}{\sqrt{v}}
    \exp\Bigl(2av-\frac{v^{3}}{6}\Bigr)\,dv\,.
\end{equation}
Its asymptotic behavior as $a\to\infty$, noted in \cite{McK94}
and computed in detail in \cite[Appendix~3]{AD14}, is
\begin{equation}\label{eq:first_explosion_mean_asymptotics}
    m(a) = \frac{\pi}{\sqrt{a}}\exp\Bigl(\frac{8}{3}a^{3/2}\Bigr)
    \bigl(1+o(1)\bigr)\,.
\end{equation}

\subsection{Sharp lower-tail asymptotics for the lowest eigenvalue}
\label{subsec:sharp_lower_tail}
We begin with a heuristic for the lower tail of
$\lambda_{1}(\cL)$. By \eqref{eq:eigenvalue_explosion} with $k=1$,
$\dP(\lambda_{1}(\cL)<-a)$ is the probability that $X_{a}$
explodes before time $\cL$. The convergence
\eqref{eq:first_explosion_exp_limit} suggests that this
probability is close to $1-e^{-\cL/m(a)}$. Therefore, in the
joint regime $a,\cL\to\infty$ with $\cL\ll m(a)$, we expect
\begin{equation}\label{eq:relation_lambda_zeta}
    \dP\bigl(\lambda_{1}(\cL)<-a\bigr)
    \approx \frac{\cL}{m(a)}\,.
\end{equation}
However, \eqref{eq:first_explosion_exp_limit} gives no
information in this regime. Weak convergence
controls $\dP(\zeta_{a}^{(1)}<x\,m(a))$ at each fixed $x>0$,
whereas in \eqref{eq:relation_lambda_zeta} the relevant point
$x=\cL/m(a)$ tends to zero as $a$ and $\cL$ vary simultaneously.
On the other hand, the joint regime was studied by Hsu and Labb\'e \cite[Theorem~2]{HL23}. In dimension one, and for the lowest eigenvalue, their result states that for every $\eta\in(0,1)$ there exist $\gamma_{2}>\gamma_{1}>0$ and $a_{0}>0$ such that, for all $\cL\ge1$ and $a\ge a_{0}$,
\begin{equation}\label{eq:HL_tail_bounds}
    \exp\Bigl(-\gamma_{2}\,\cL\sqrt{a}\,
    e^{-\frac{8}{3}(1-\eta)a^{3/2}}\Bigr)
    \le \dP\bigl(\lambda_{1}(\cL)\ge-a\bigr)
    \le \exp\Bigl(-\gamma_{1}\,\cL\sqrt{a}\,
    e^{-\frac{8}{3}(1+\eta)a^{3/2}}\Bigr)\,.
\end{equation}
These bounds hold with no restriction on the relative growth
of $a$ and $\cL$. When $\cL\sqrt{a}\,e^{-\frac{8}{3}a^{3/2}}\to0$,
the bounds \eqref{eq:HL_tail_bounds} yield
\begin{equation*}
    \gamma_{1}\,\cL\sqrt{a}\,e^{-\frac{8}{3}(1+\eta)a^{3/2}}
    \bigl(1+o(1)\bigr)
    \le \dP\bigl(\lambda_{1}(\cL)<-a\bigr)
    \le \gamma_{2}\,\cL\sqrt{a}\,e^{-\frac{8}{3}(1-\eta)a^{3/2}}\,.
\end{equation*}
This determines neither the exponent, due to the loss $\eta>0$,
nor the prefactor, since $\gamma_{1}<\gamma_{2}$.
The following theorem provides the sharp asymptotics in the window \eqref{eq:admissible_window} and makes \eqref{eq:relation_lambda_zeta} precise.

\begin{theorem}\label{thm:sharp_lower_tail_lowest_eigenvalue}
    Let $(\cL_{a})_{a>0}$ be a family of positive numbers satisfying, as $a\to\infty$,
    \begin{equation}\label{eq:admissible_window}
        a\exp\bigl(4(\log a)^{2}\bigr) 
        \ll \cL_{a} \ll m(a)\,,
    \end{equation}
    where $m(a)$ is defined in \eqref{eq:first_explosion_mean}. Then, as $a\to\infty$,
    \begin{equation}\label{eq:lower_tail_asymptotics}
        \dP\bigl(\lambda_{1}(\cL_{a})<-a\bigr)
        \sim \frac{\cL_{a}}{m(a)}
        \sim \cL_{a}\,\frac{\sqrt{a}}{\pi}
        \exp\Bigl(-\frac{8}{3}a^{3/2}\Bigr)\,.
    \end{equation}
\end{theorem}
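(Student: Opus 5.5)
By \eqref{eq:eigenvalue_explosion} with $k=1$, we have to show that $\dP(\zeta_a^{(1)}<\cL_a)\sim \cL_a/m(a)$. The second equivalence in \eqref{eq:lower_tail_asymptotics} then follows from \eqref{eq:first_explosion_mean_asymptotics}. The plan is to split the first explosion into a renewal sequence of excursions of $X_a$ near the stable well $\sqrt a$ of the potential $V(x)=x^3/3-ax$, ending with a single successful crossing of the barrier at $-\sqrt a$.

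\textbf{Cycle decomposition.} Fix levels $x_+=\sqrt a-\delta_a$ (inside the well) and $x_-=-\sqrt a+\delta_a'$ (just past the barrier top), with $\delta_a,\delta_a'$ of order $a^{-1/4}$ up to logarithmic factors. Define a cycle as follows. Starting from $x_+$, run the diffusion until it either returns to $x_+$ after an excursion that fails to reach $x_-$, or reaches $x_-$. Rather than using a fixed cycle, it is cleaner to use the stopping time structure directly: starting at $x_+$, let $\tau$ be the hitting time of $\{x_-\}$. By the strong Markov property,
\[
\zeta_a^{(1)}=T_0+\tau+S,
\]
where $T_0$ is the time to come down from $+\infty$ to $x_+$ and $S$ is the time from $x_-$ to $-\infty$. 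The estimates I need are the following.
\begin{enumerate}
\item[(a)] $T_0$ is $O(a^{-1/2}\log a)$ with overwhelming probability; this is a standard comparison with $\dot x=a-x^2$.
\item[(b)] From $x_-$, explosion happens within time $O(\log a/\sqrt a)$ with probability $1-o(1)$, and returns to the well are rare. In any case, a return only adds an extra independent attempt, so it can be absorbed.
\item[(c)] The scale function gives $\dP_{x_+}(\text{hit }x_-\text{ before time }s)$ exactly in the form of a renewal/geometric count.
\end{enumerate}
These explicit diffusion estimates are presumably the content of Appendix~\ref{apx:Auxiliary_estimates}.

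\textbf{Key step: short-time explosion probability.} The heuristic is that $\tau$ is approximately exponential with mean $m(a)(1+o(1))$, even on scales $s\ll m(a)$. To prove this, I will decompose $[0,s]$ into many blocks of length $r_a$, where $r_a$ is large compared to the relaxation time ($\asymp a^{-1/2}\log$) but small compared to $\cL_a$. A natural choice is $r_a=\exp(2(\log a)^2)$; this is where the lower bound $a\,e^{4(\log a)^2}\ll\cL_a$ in \eqref{eq:admissible_window} enters. On each block, the probability $p_a$ of crossing the barrier, starting from the quasi-stationary distribution near the well, will be shown to satisfy
\[
p_a=\frac{r_a}{m(a)}\bigl(1+o(1)\bigr).
\]
The argument for this uses the Laplace/quasi-stationary eigenvalue: the principal Dirichlet eigenvalue $\mu_a$ of the generator on $(x_-,\infty)$ satisfies $\mu_a\,m(a)\to1$ by the Kramers formula, which we can derive from McKean's exact formula \eqref{eq:first_explosion_mean_formula}. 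Mixing within each block happens in time $\ll r_a$, with error $e^{-c r_a\sqrt a}$ in total variation.

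Chaining the blocks with the Markov property then gives upper and lower bounds
\[
1-(1-p_a(1\pm o(1)))^{\cL_a/r_a}\sim \frac{\cL_a}{m(a)},
\]
using $\cL_a\ll m(a)$. For the upper bound, the initial entrance from $+\infty$ and the non-stationary start of the first block contribute at most $O(r_a/m(a))=o(\cL_a/m(a))$.

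\textbf{Main obstacle.} I expect the main difficulty to be controlling the mixing errors uniformly in $a$. The errors from leaving quasi-stationarity accumulate over $\cL_a/r_a$ blocks, so they must be $o(p_a)$ per block, that is, smaller than $e^{-\frac83 a^{3/2}}$ relative to the main term. My first attempt will be a coupling argument: two copies of the diffusion in the well, under the strongly convex potential, coalesce within time $O(\log a/\sqrt a)$, except on an event of probability $e^{-c a^{3/2}}$ with $c>8/3$. Such an event requires a large deviation of a different type, namely exiting upward or staying far away. If this fails, my fallback is to bound the probability that the first passage to $x_-$ within a block depends on the starting point in the well, using hitting-time monotonicity (order preservation of one-dimensional diffusions). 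Starting points above $x_+$ are dominated by the start at $x_+$, and the start from $+\infty$ sandwiches everything.
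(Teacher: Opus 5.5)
Your reduction to $\dP(\zeta_a^{(1)}<\cL_a)$ and your idea of comparing with a quasi-stationary start are a legitimate alternative to the paper's renewal argument. As written, however, the proposal does not prove the sharp asymptotics, because the two facts everything rests on are asserted rather than established.

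\textbf{The chaining step.} It needs the law at the start of each block, conditioned on survival, to give crossing probability $p_a(1\pm o(1))$ uniformly. That is an absolute error $o(r_a/m(a))$, which is exponentially small in $a^{3/2}$. You flag this as the main obstacle, but the fix you propose does not work. In the well the relaxation rate is of order $\sqrt a$, so the probability that two coupled copies have not coalesced by time $C\log a/\sqrt a$ is only polynomially small in $a$, not $e^{-ca^{3/2}}$. Likewise, a total-variation rate $e^{-cr_a\sqrt a}$ presupposes a spectral gap of order $\sqrt a$ for the killed generator, and you do not prove one.

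\textbf{The monotonicity fallback.} It helps on one side only. It gives the upper bound directly, with no blocks: the start at $+\infty$ is the slowest, so $\dP(T_{x_-}<\cL_a)\le\dP_\nu(T_{x_-}<\cL_a)=1-e^{-\mu_a\cL_a}$. For the lower bound, dominating each block by the start at $+\infty$ reduces everything to $\dP(T_{x_-}<r_a)\ge(1-o(1))\,r_a/m(a)$. This is the original claim at a scale $r_a$ below the admissible window, so the argument is circular.

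\textbf{The identity $\mu_a m(a)\to1$.} This does not follow from McKean's formula \eqref{eq:first_explosion_mean_formula}, which computes a mean explosion time, not a Dirichlet eigenvalue. You would need $1/\mu_a=\dE_\nu[T_{x_-}]$, the identity $\dE_y[T_{x_-}]=\dE[T_{x_-}]-\dE[T_y]$, concentration of $\nu$ in the well, and $\dE_{x_-}[\zeta_a^{(1)}]\ll m(a)$ (Lemma~\ref{lem:mean_explosion_from_x_minus}).

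\textbf{Placement of the levels.} With $x_-=-\sqrt a+\delta_a'$ as written, $x_-$ lies on the well side of the barrier. From there the diffusion returns to the well with probability bounded away from zero (about $1/2$ at the top). So (b) fails, and the hitting time of $x_-$ underestimates the explosion time by a non-negligible factor; $x_-$ must lie beyond $-\sqrt a$ by $\gg a^{-1/4}$. Likewise, $x_+=\sqrt a-\delta_a$ is not reached by the deterministic descent from $+\infty$, which approaches $\sqrt a$ from above. So (a) fails when $\delta_a\gg a^{-1/4}$. This is why the paper takes $x_\pm=\pm(\sqrt a+\delta(a))$.

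\textbf{How the paper avoids mixing.} Every failed attempt ends exactly at $x_+$, so the attempts are exactly i.i.d. Wald's identity then gives $\dE_{x_+}[\cS_N]=\dE_{x_+}[\sigma_1]/\dP_{x_+}(\chi_1=1)$ exactly, and Lemma~\ref{lem:renewal_mean_asymptotics} identifies this with $m(a)$. The only errors are the overshoot, controlled by Lorden's inequality, and the second Bonferroni term. This is where $a e^{4(\log a)^2}\ll\cL_a$ enters.

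\textbf{A possible repair.} If you prefer to keep the quasi-stationary law $\nu$, you can also avoid mixing for the lower bound. For every $y>x_-$, $\dP_y(T_{x_-}<s)\le\dP_y(T_{x_-}<T_{x_+})+\dP_{x_+}(T_{x_-}<s)$. Integrating against $\nu$ gives $\dP_{x_+}(T_{x_-}<s)\ge1-e^{-\mu_a s}-\dP_\nu(T_{x_-}<T_{x_+})$. For every $t>0$, the last term is at most $1-e^{-\mu_a t}+\sup_{y\in[x_-,x_+]}\dP_y(H>t)$, which is controlled by \eqref{eq:H_geometric_tail}. This still requires a proof that $\mu_a m(a)\to1$.
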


\begin{remark}\label{rem:admissible_window}
    The two bounds in \eqref{eq:admissible_window} play different
    roles. The upper bound cannot be removed. If
    $\cL_{a}/m(a)$ converges
    to some $c>0$, then by \eqref{eq:first_explosion_exp_limit} the
    probability $\dP(\lambda_{1}(\cL_{a})<-a)$ converges to
    $1-e^{-c}$. The event is then no longer rare, and
    \eqref{eq:lower_tail_asymptotics} fails. The lower bound is technical. It comes from the error terms arising in the proof (see, e.g., the overshoot
    estimate \eqref{eq:sharp_estimate_overshoot}), and we have
    not attempted to optimize it. 
\end{remark}

We now outline the proof of
Theorem~\ref{thm:sharp_lower_tail_lowest_eigenvalue}. Thanks to \eqref{eq:eigenvalue_explosion}, we will estimate $\dP\bigl(\zeta_{a}^{(1)}<\cL\bigr)$. The drift in
\eqref{eq:Riccati_SDE} can be written as $a-x^{2}=-V'(x)$, where
\begin{equation}\label{eq:Riccati_potential}
    V(x) = \frac{x^{3}}{3}-ax\,.
\end{equation}

\begin{figure}[t]
\centering
\begin{tikzpicture}[scale=2.2]
% Axes
\draw[->] (-2.4,0) -- (2.4,0) node[right] {$x$};
\draw[->] (0,-1.15) -- (0,1.15);
% Potential
\draw[thick, smooth, domain=-1.95:1.95, samples=100]
    plot (\x, {\x*\x*\x/3 - \x});
\node[left] at (-1.95,-0.53) {$V(x)$};
% Barrier and well locations
\draw (-1,0.035) -- (-1,-0.035)
    node[below=2pt] {$-\sqrt{a}$};
\draw (1,0.035) -- (1,-0.035)
    node[below=2pt] {$\sqrt{a}$};
% Auxiliary levels
\draw (-1.30,0.025) -- (-1.30,-0.025);
\node[above=3pt] at (-1.30,0) {$x_{-}$};
\draw (1.30,0.025) -- (1.30,-0.025);
\node[above=3pt] at (1.30,0) {$x_{+}$};
% Barrier height
\draw[dotted] (-1,0.6667) -- (2.7,0.6667);
\draw[dotted] (1,-0.6667) -- (2.7,-0.6667);
\draw[<->] (2.7,0.6667) -- (2.7,-0.6667)
    node[midway, right] {$\Delta V$};
\end{tikzpicture}
\caption{The potential $V(x)=\frac{x^{3}}{3}-ax$, with the well
at $\sqrt{a}$, the barrier at $-\sqrt{a}$, and the auxiliary
levels $x_{\pm}$ defined in \eqref{eq:def_delta_x_pm}.}
\label{fig:Riccati_potential}
\end{figure}

The potential $V$ has a local minimum at $\sqrt{a}$ and a local
maximum at $-\sqrt{a}$, with barrier height
\begin{equation*}
    \Delta V = V\bigl(-\sqrt{a}\bigr)-V\bigl(\sqrt{a}\bigr)
    = \frac{4}{3}\,a^{3/2}\,.
\end{equation*}
The diffusion is therefore attracted to the well at $\sqrt{a}$, and
an explosion to $-\infty$ requires crossing the barrier at
$-\sqrt{a}$. Heuristically, $X_{a}$ falls quickly from $+\infty$
into the well, stays there for an exponentially long time,
and eventually crosses the barrier, after which it is driven
to $-\infty$ almost immediately. This is the classical picture of the Kramers escape \cite{HTB90}. The exponent $2\Delta V=\frac{8}{3}a^{3/2}$ and
the prefactor
$\sqrt{V''(\sqrt{a})\,\lvert V''(-\sqrt{a})\rvert}/(2\pi)
=\sqrt{a}/\pi$ are precisely those appearing in
\eqref{eq:lower_tail_asymptotics}.
We make the heuristic precise through a renewal structure.
Starting near the well, the diffusion makes successive
attempts to cross the barrier. An attempt succeeds if the
diffusion reaches the far side of the barrier, and fails if it
returns to its starting level. Each new attempt then starts
from the same level, so successive attempts are i.i.d.\ by the
strong Markov property. The proof thus reduces to estimating
the probability of success in a single attempt and the time
accumulated over the failed ones. The first step reduces
$\dP\bigl(\zeta_{a}^{(1)}<\cL_{a}\bigr)$ to a
hitting probability. To state this reduction, we fix some notation. For $a>1$, set
\begin{equation}\label{eq:def_delta_x_pm}
    \delta(a) := \frac{\log a}{a^{1/4}}\,,
    \qquad
    x_{\pm}(a) := \pm\bigl(\sqrt{a}+\delta(a)\bigr)\,,
\end{equation}
and write $x_{\pm}$ instead of $x_{\pm}(a)$ for notational
simplicity. We also define
\begin{equation}\label{eq:deterministic_time_scale}
    \tau(a)
    := \int_{x_{+}}^{\infty}\frac{dz}{z^{2}-a}
    = \int_{-\infty}^{x_{-}}\frac{dz}{z^{2}-a}
    = \frac{1}{2\sqrt{a}}
    \log\biggl(\frac{2\sqrt{a}+\delta(a)}{\delta(a)}\biggr)\,,
\end{equation}
the time required for the solution of the ODE $x'=a-x^{2}$ to
descend from $+\infty$ to $x_{+}$ or, equivalently, from
$x_{-}$ to $-\infty$. For $y\in\bR$, write
\begin{equation}\label{eq:def_hitting_time}
    T_{y} := \inf\bigl\{t\ge0\,:\,X_{a}(t)=y\bigr\}\,,
\end{equation}
and set $H:=T_{x_{-}}\wedge T_{x_{+}}$.
In the rest of this subsection and in
Appendix~\ref{apx:Auxiliary_estimates}, $\dP$ and $\dE$ denote the
probability and expectation for the solution of \eqref{eq:Riccati_SDE}
started from the entrance boundary $+\infty$, and, for $z\in\bR$, we write
$\dP_{z}$ and $\dE_{z}$ when the diffusion starts from $X_{a}(0)=z$.
Probabilities and expectations under $\dP$ are the limits of the
corresponding quantities under $\dP_{z}$ as $z\to+\infty$.

We replace the boundaries $\pm\infty$ by the finite levels
$x_{\pm}$. The renewal argument requires the attempts to
restart from a level that the diffusion revisits before the
explosion. The entrance boundary $+\infty$ is left immediately
and never revisited, whereas the diffusion returns to $x_{+}$
after every failed attempt. With our choice of $\delta(a)$, the two outer passages, from $+\infty$ to $x_+$ and from $x_-$ to the explosion, each take at most $\tau(a)+\sqrt a$ with probability tending to one, as shown in the proof below. Since $2\tau(a)+2\sqrt a=o(\cL_a)$, the explosion time is governed by the passage from $x_+$ to $x_-$. The following proposition makes this reduction precise.

\begin{proposition}\label{prop:hitting_reduction}
    Let $(\cL_{a})_{a>0}$ satisfy \eqref{eq:admissible_window} and
    set $\bar{\cL}_{a}:=\cL_{a}-2\tau(a)-2\sqrt{a}$. Then, as
    $a\to\infty$,
    \begin{equation}\label{eq:hitting_reduction}
        (1-o(1))\,\dP_{x_{+}}\bigl(T_{x_{-}}<\bar{\cL}_{a}\bigr)
        \le \dP\bigl(\zeta_{a}^{(1)}<\cL_{a}\bigr)
        \le \dP_{x_{+}}\bigl(T_{x_{-}}<\cL_{a}\bigr)\,.
    \end{equation}
\end{proposition}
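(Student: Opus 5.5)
The plan is to decompose the explosion time at the hitting times of $x_{+}$ and $x_{-}$ and compare each outer piece with the deterministic flow $x'=a-x^{2}$.

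\emph{Upper bound.} Started from $+\infty$, the path is continuous until explosion, and it explodes to $-\infty$. So before $\zeta_{a}^{(1)}$ it must pass through $x_{+}$ and then through $x_{-}$. Hence $\zeta_{a}^{(1)}\ge T_{x_{+}}+(T_{x_{-}}\circ\theta_{T_{x_{+}}})$. By the strong Markov property at $T_{x_{+}}$ (finite a.s. from the entrance boundary),
\begin{equation*}
\dP\bigl(\zeta_{a}^{(1)}<\cL_{a}\bigr)\le \dP_{x_{+}}\bigl(T_{x_{-}}<\cL_{a}\bigr).
\end{equation*}

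\emph{Lower bound.} We have $\zeta_{a}^{(1)}\le T_{x_{+}}+S+R$. Here $S$ is the time from $x_{+}$ to $x_{-}$ and $R$ is the time from $x_{-}$ to explosion. These three pieces are independent by the strong Markov property. Therefore
\begin{equation*}
\dP\bigl(\zeta_{a}^{(1)}<\cL_{a}\bigr)\ge \dP\bigl(T_{x_{+}}\le\tau+\sqrt a\bigr)\,\dP_{x_{+}}\bigl(T_{x_{-}}<\bar\cL_{a}\bigr)\,\dP_{x_{-}}\bigl(\zeta<\tau+\sqrt a\bigr).
\end{equation*}
It remains to show that both outer factors tend to one.

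\emph{Descent from $+\infty$ to $x_{+}$.} Set $Y=X_{a}-B$. On $\{X_{a}>x_{+}\}$ the process $Y$ satisfies $Y'=a-(Y+B)^{2}$. Let $\ve_{a}:=\sup_{s\le\tau+\sqrt a}|B_{s}|$, which is of order $a^{1/4}$ with high probability. By Gaussian tails, $\ve_{a}\le a^{1/4}(\log a)^{1/2}$ with probability $1-o(1)$. Compare with the ODE $z'=a-(z-\ve_{a})^{2}$ started at $+\infty$. It descends to the level $x_{+}+\ve_{a}$ within time $\int_{x_{+}}^{\infty}dz/(z^{2}-a)=\tau$. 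Then $X_{a}=Y+B\le x_{+}+2\ve_{a}$ after that time. This is not quite $x_{+}$, because $\ve_{a}\gg\delta(a)$.

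So we use the extra time $\sqrt a$. Once $X_{a}$ is within $O(a^{1/4}\log a)$ of the well, $X_{a}-\sqrt a$ behaves like an Ornstein--Uhlenbeck process with restoring rate $2\sqrt a$ and stationary spread $a^{-1/4}$. The fluctuation $a^{-1/4}$ is much smaller than $\delta(a)=a^{-1/4}\log a$, but an OU process crosses $\sqrt a+\delta$ downward in time $O(a^{-1/2}\log a)\ll\sqrt a$. Concretely:
\begin{itemize}
\item compare $X_{a}$ on $[x_{+}, x_{+}+2\ve_{a}]$ with a drift $\le-c\sqrt a(X_{a}-\sqrt a)$;
\item use the hitting-time bound for a Brownian motion with drift at most $-c\sqrt a\,\delta$, giving hitting time $O(\ve_{a}/(\sqrt a\,\delta))=o(\sqrt a)$ with probability $1-o(1)$;
\item if needed, invoke the auxiliary estimates of the Appendix.
\end{itemize}

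\emph{Passage from $x_{-}$ to $-\infty$.} This is the symmetric argument, with one difference. From $x_{-}=-\sqrt a-\delta$ the drift is $a-x^{2}<0$, but only of size $2\sqrt a\,\delta=2a^{1/4}\log a$ initially. Noise could push the process back above the barrier. We must show that the return probability $\dP_{x_{-}}(T_{-\sqrt a}<\zeta)$ is $o(1)$. Use the scale function of $V$ near the barrier: $V$ is locally quadratic with curvature $2\sqrt a$, so the scale function behaves like a Gaussian integral with width $a^{-1/4}$. The return probability is then roughly $\exp(-2\cdot\sqrt a\,\delta^{2})=\exp(-2(\log a)^{2})\to0$; this is presumably the reason for the choice of $\delta$. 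Once the process is not returning, $Y=X_{a}-B$ satisfies an ODE comparison that reaches $-\infty$ within $\tau+\sqrt a$, where the Brownian shift only helps because $x^{2}$ grows. The explosion time of the comparison ODE $z'=a-(z+\ve_{a})^{2}$ from $x_{-}$ is finite and at most $\tau+O(\ve_{a}/(a^{1/4}\log a))$ with the buffer.

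\emph{Main obstacle.} The hardest step is these near-well and near-barrier refinements. The barrier part is the more delicate one: the slack $\delta(a)$ is only a logarithmic factor above the natural fluctuation scale $a^{-1/4}$. There I would use the exact scale-function formula together with a local linearization of $a-x^{2}$ around $-\sqrt a$.
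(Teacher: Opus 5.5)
Your upper bound and the three-factor lower bound match the paper's. Your descent from $+\infty$ is sound and is a genuine alternative to the paper's It\^o argument with $F_1$. Comparing $Y=X_a-B$ with the shifted flow $w'=a-w^2$, using $\ve_a=\sup_{s\le\tau(a)+\sqrt a}|B_s|$, brings $X_a$ below $x_++2\ve_a$ by time $\tau(a)$. After that, the drift bound $a-x^2\le-2\sqrt a\,\delta(a)$ above $x_+$ gives a displacement of $-2a^{3/4}\log a$ over the extra time $\sqrt a$. On the event $\ve_a\le a^{1/4}(\log a)^{1/2}$, this beats $4\ve_a$ pathwise.

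\textbf{The gap.} It lies in the last factor $\dP_{x_-}(\zeta_a^{(1)}<\tau(a)+\sqrt a)$. With the same $\ve_a\asymp a^{1/4}$, the comparison ODE $z'=a-(z+\ve_a)^2$ with $z(0)=x_-$ does not explode at all. Indeed, $w:=z+\ve_a$ solves $w'=a-w^2$ with $w(0)=-\sqrt a-\delta(a)+\ve_a>-\sqrt a$, because $\ve_a\gg\delta(a)$, so $w$ increases to the well at $\sqrt a$. Near the well, a shift by $\ve_a$ only delays the process. Near the barrier, it puts the comparison on the wrong side of the unstable point and reverses the flow. So both the bound $\tau+O(\ve_a/(a^{1/4}\log a))$ and the remark that the Brownian shift only helps are false.

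The scale-function estimate $\dP_{x_-}(T_{-\sqrt a}<\zeta_a^{(1)})=o(1)$ does not repair this. Non-return only gives $X_a<-\sqrt a$, where the drift $a-X_a^2$ can be arbitrarily close to $0$. It therefore says nothing about how long the process lingers before exploding.

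\textbf{What is missing.} You need control at distance of order $\delta(a)$ from the barrier, with Brownian fluctuations measured on the short time scale $\tau(a)$ rather than on $\tau(a)+\sqrt a$.

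A quick fix along your own lines:
\begin{enumerate}
\item Since $\tau(a)\le\frac{\log a}{2\sqrt a}$ while $\delta(a)^2=\frac{(\log a)^2}{\sqrt a}$, the event $\{\sup_{s\le2\tau(a)}|B_s|\le\delta(a)/2\}$ has probability at least $1-4\exp(-\delta(a)^2/(16\tau(a)))\ge1-4a^{-1/8}$.
\item On this event, $Y'\le a-(Y+\delta(a)/2)^2$ as long as $Y\le-\delta(a)/2$.
\item The comparison ODE then starts at $-\sqrt a-\delta(a)/2$, below the barrier.
\item It explodes by time $\frac{1}{2\sqrt a}\log\frac{4\sqrt a+\delta(a)}{\delta(a)}\le\tau(a)+\frac{\log 2}{2\sqrt a}<2\tau(a)$.
\end{enumerate}
This also makes the scale-function step unnecessary.

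The paper proceeds differently. It stops at $T_{\mathrm{up}}$, the hitting time of $x_-+\delta(a)/2$, and shows $\dP_{x_-}(T_{\mathrm{up}}\le\tau_*(a))\le e^{-(\log a)^2}$, where $\tau_*(a)=\tau(a)+\sqrt a$. This uses domination of $X_a$ by a Brownian motion with drift $-\sqrt a\,\delta(a)$. On $\{T_{\mathrm{up}}>\tau_*(a)\}$ it applies It\^o's formula to $F_2(x)=\int_{-\infty}^x dz/(z^2-a)$. There $|F_2'|\le 1/(\sqrt a\,\delta(a))$ and $0\le F_2''\le 2/(\log a)^2$. This gives $\zeta_a^{(1)}\wedge\tau_*(a)\le\tau(a)+M^{(2)}_{\tau_*(a)}+R_{\tau_*(a)}$, with both error terms below $\sqrt a/4$ with high probability.
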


\begin{proof}
    We first prove the upper bound in \eqref{eq:hitting_reduction}.
    Since $X_{a}$ starts from $+\infty$, on the event
    $\bigl\{\zeta_{a}^{(1)}<\cL_{a}\bigr\}$ it hits $x_{+}$
    and then $x_{-}$ before exploding. Hence, by the strong
    Markov property at $T_{x_{+}}$,
    \begin{equation*}
        \dP\bigl(\zeta_{a}^{(1)}<\cL_{a}\bigr)
        \le \dP_{x_{+}}\bigl(\zeta_{a}^{(1)}<\cL_{a}\bigr)
        \le \dP_{x_{+}}\bigl(T_{x_{-}}<\cL_{a}\bigr)\,.
    \end{equation*}
    We now prove the lower bound. Write
    \begin{equation}\label{eq:tau_*}
    \tau_{*}(a):=\tau(a)+\sqrt{a}\,,
    \end{equation} and define the events
    \begin{equation*}
        E_{1} := \bigl\{T_{x_{+}}<\tau_{*}(a)\bigr\}\,,\quad
        E_{2} := \bigl\{T_{x_{-}}-T_{x_{+}}<\bar{\cL}_{a}\bigr\}\,,\quad
        E_{3} := \bigl\{\zeta_{a}^{(1)}-T_{x_{-}}<\tau_{*}(a)\bigr\}\,.
    \end{equation*}
    On $E_{1}\cap E_{2}\cap E_{3}$, we have
    $\zeta_{a}^{(1)}<\bar{\cL}_{a}+2\tau_{*}(a)=\cL_{a}$.
    Hence, by the strong Markov property at $T_{x_{+}}$ and then
    at $T_{x_{-}}$,
    \begin{equation*}
        \dP\bigl(\zeta_{a}^{(1)}<\cL_{a}\bigr)
        \ge \dP\bigl(E_{1}\cap E_{2}\cap E_{3}\bigr)
        = \dP(E_{1})\,\dP_{x_{+}}\bigl(T_{x_{-}}<\bar{\cL}_{a}\bigr)\,
          \dP_{x_{-}}\bigl(\zeta_{a}^{(1)}<\tau_{*}(a)\bigr)\,.
    \end{equation*}
    Thus it suffices to show that, as $a\to\infty$,
    \begin{equation}\label{eq:two_remaining_limits}
        \dP(E_{1})\to1\,,\qquad
        \dP_{x_{-}}\bigl(\zeta_{a}^{(1)}<\tau_{*}(a)\bigr)\to1\,.
    \end{equation}
    We begin with the first limit in
    \eqref{eq:two_remaining_limits}. For $x\ge x_{+}$, define
    \begin{equation*}
        F_{1}(x) := \int_{x_{+}}^{x}\frac{dz}{a-z^{2}}\,.
    \end{equation*}
    Then $F_{1}\le0$ on $[x_{+},\infty)$, and $F_{1}$
    extends continuously to $+\infty$ with
    $F_{1}(+\infty)=-\tau(a)$. Moreover,
    \begin{equation*}
        F_{1}'(x)=\frac{1}{a-x^{2}}\,,\qquad
        F_{1}''(x)=\frac{2x}{(a-x^{2})^{2}}\,.
    \end{equation*}
    Applying It\^o's formula to $F_{1}(X_{a})$ stopped at
    $\tau_{*}(a)\wedge T_{x_{+}}$, and using the continuous
    extension of $F_{1}$, we obtain
    \begin{equation}\label{eq:F1_Ito_formula}
        F_{1}\bigl(X_{a}(\tau_{*}(a)\wedge T_{x_{+}})\bigr)
        = -\tau(a)
        + \int_{0}^{\tau_{*}(a)\wedge T_{x_{+}}}
          \biggl(1+\frac{X_{a}(s)}{(a-X_{a}(s)^{2})^{2}}\biggr)\,ds
        + M^{(1)}_{\tau_{*}(a)}\,,
    \end{equation}
    where
    \begin{equation*}
        M^{(1)}_{t} := \int_{0}^{t\wedge T_{x_{+}}}
        \frac{dB(s)}{a-X_{a}(s)^{2}}\,.
    \end{equation*}
    We claim that
    \begin{equation*}
        \Bigl\{M^{(1)}_{\tau_{*}(a)}>-\frac{\sqrt{a}}{2}\Bigr\}
        \subset E_{1}\,.
    \end{equation*}
    Indeed, for $s\le T_{x_{+}}$ we have $X_{a}(s)\ge x_{+}>0$, so
    the drift integrand in \eqref{eq:F1_Ito_formula} is at least
    $1$. Since $F_{1}\le0$ on $[x_{+},\infty)$, it follows that
    \begin{equation*}
        -\tau(a)+\bigl(\tau_{*}(a)\wedge T_{x_{+}}\bigr)
        +M^{(1)}_{\tau_{*}(a)}
        \le F_{1}\bigl(X_{a}(\tau_{*}(a)\wedge T_{x_{+}})\bigr)
        \le 0\,.
    \end{equation*}
    On the event $\bigl\{M^{(1)}_{\tau_{*}(a)}>-\sqrt{a}/2\bigr\}$, this
    yields $\tau_{*}(a)\wedge T_{x_{+}}<\tau_{*}(a)$, and hence $E_{1}$ occurs.
    Consequently,
    \begin{equation*}
        \dP\bigl(E_{1}^{c}\bigr)
        \le \dP\Bigl(M^{(1)}_{\tau_{*}(a)}\le-\frac{\sqrt{a}}{2}\Bigr)\,.
    \end{equation*}
    To bound the right-hand side, note that $X_{a}(s)\ge x_{+}$ for
    $s\le T_{x_{+}}$ implies
    \begin{equation*}
        \bigl|a-X_{a}(s)^{2}\bigr|
        \ge x_{+}^{2}-a
        \ge 2\sqrt{a}\,\delta(a)\,,
    \end{equation*}
    so that
    \begin{equation*}
        \bigl\langle M^{(1)}\bigr\rangle_{\tau_{*}(a)}
        \le \frac{\tau_{*}(a)}{4a\,\delta(a)^{2}}\,.
    \end{equation*}
    Applying Markov's inequality to the supermartingale
    $\exp\bigl(-\lambda M^{(1)}_{t}
    -\tfrac{\lambda^{2}}{2}\bigl\langle M^{(1)}\bigr\rangle_{t}\bigr)$
    and optimizing over $\lambda>0$, we obtain
    \begin{equation*}
        \dP\Bigl(M^{(1)}_{\tau_{*}(a)}\le-\frac{\sqrt{a}}{2}\Bigr)
        \le \exp\biggl(-\frac{a^{2}\,\delta(a)^{2}}
        {2\,\tau_{*}(a)}\biggr)\,.
    \end{equation*}
    By the definition of $\tau_*(a)$ and $\delta(a)$ (see \eqref{eq:def_delta_x_pm} and \eqref{eq:tau_*}), we have $\dP(E_{1})\to1$.

    We turn to the second limit in
    \eqref{eq:two_remaining_limits}, that is,
    $\dP_{x_{-}}\bigl(\zeta_{a}^{(1)}<\tau_{*}(a)\bigr)\to1$. For
    $x<-\sqrt{a}$, define
    \begin{equation*}
        F_{2}(x) := \int_{-\infty}^{x}\frac{dz}{z^{2}-a}\,.
    \end{equation*}
    Then $F_{2}\ge0$ on $(-\infty,-\sqrt{a})$, $F_{2}$
    extends continuously to $-\infty$ with $F_{2}(-\infty)=0$,
    and $F_{2}(x_{-})=\tau(a)$ by \eqref{eq:deterministic_time_scale}. 
    Moreover,
    \begin{equation*}
        F_{2}'(x)=\frac{1}{x^{2}-a}\,,\qquad
        F_{2}''(x)=-\frac{2x}{(x^{2}-a)^{2}}\,.
    \end{equation*}
    Since $F_{2}$ diverges as $x\uparrow-\sqrt{a}$, we stop the
    diffusion $X_{a}$ at the level $x_{-}+\delta(a)/2$, i.e., we set
    \begin{equation*}
        T_{\mathrm{up}}
        := \inf\bigl\{t<\zeta_{a}^{(1)}\,:\,
        X_{a}(t)=x_{-}+\delta(a)/2\bigr\}\,,
    \end{equation*}
    with the convention $\inf\emptyset=\infty$. Applying It\^o's formula to $F_{2}(X_{a})$ under $\dP_{x_{-}}$, stopped at $\tau_{*}(a)\wedge\zeta_{a}^{(1)}\wedge T_{\mathrm{up}}$, and using the continuous extension of $F_{2}$, we
    obtain
    \begin{equation}\label{eq:F2_Ito_formula}
        F_{2}\bigl(X_{a}(\tau_{*}(a)\wedge\zeta_{a}^{(1)}
        \wedge T_{\mathrm{up}})\bigr)
        = \tau(a)
        - \bigl(\tau_{*}(a)\wedge\zeta_{a}^{(1)}
        \wedge T_{\mathrm{up}}\bigr)
        + M^{(2)}_{\tau_{*}(a)} + R_{\tau_{*}(a)}\,,
    \end{equation}
    where
    \begin{equation*}
        M^{(2)}_{t} :=
        \int_{0}^{t\wedge\zeta_{a}^{(1)}\wedge T_{\mathrm{up}}}
        \frac{dB(s)}{X_{a}(s)^{2}-a}\,,
        \qquad
        R_{t} :=
        -\int_{0}^{t\wedge\zeta_{a}^{(1)}\wedge T_{\mathrm{up}}}
        \frac{X_{a}(s)}{\bigl(X_{a}(s)^{2}-a\bigr)^{2}}\,ds\,.
    \end{equation*}
    Consider the event
    \begin{equation*}
        E_{4} := \bigl\{T_{\mathrm{up}}>\tau_{*}(a)\bigr\}
        \cap \Bigl\{\bigl|M^{(2)}_{\tau_{*}(a)}\bigr|
        <\frac{\sqrt{a}}{4}\Bigr\}
        \cap \Bigl\{\bigl|R_{\tau_{*}(a)}\bigr|
        <\frac{\sqrt{a}}{4}\Bigr\}\,.
    \end{equation*}
    We claim that
    $E_{4}\subset\bigl\{\zeta_{a}^{(1)}<\tau_{*}(a)\bigr\}$.
    Indeed, since $F_{2}\ge0$, \eqref{eq:F2_Ito_formula} gives
    \begin{equation*}
        \tau_{*}(a)\wedge\zeta_{a}^{(1)}\wedge T_{\mathrm{up}}
        \le \tau(a)+M^{(2)}_{\tau_{*}(a)}+R_{\tau_{*}(a)}\,.
    \end{equation*}
    On $E_{4}$, the left-hand side equals
    $\tau_{*}(a)\wedge\zeta_{a}^{(1)}$ because
    $T_{\mathrm{up}}>\tau_{*}(a)$, while the right-hand side is
    less than $\tau(a)+\sqrt{a}/2<\tau_{*}(a)$. Hence
    $\zeta_{a}^{(1)}<\tau_{*}(a)$, and the claim follows. It therefore
    suffices to show that $\dP_{x_{-}}\bigl(E_{4}^{c}\bigr)\to0$.
    By the union bound,
    \begin{equation}\label{eq:E4_complement_bound}
        \dP_{x_{-}}\bigl(E_{4}^{c}\bigr)
        \le \dP_{x_{-}}\bigl(T_{\mathrm{up}}\le\tau_{*}(a)\bigr)
        + \dP_{x_{-}}\Bigl(\bigl|M^{(2)}_{\tau_{*}(a)}\bigr|
        \ge\frac{\sqrt{a}}{4}\Bigr)
        + \dP_{x_{-}}\Bigl(\bigl|R_{\tau_{*}(a)}\bigr|
        \ge\frac{\sqrt{a}}{4}\Bigr)\,.
    \end{equation}
    We estimate the three terms on the right-hand side of
    \eqref{eq:E4_complement_bound} in turn. For the first
    term, note that $X_{a}(s)\le x_{-}+\delta(a)/2$ for all
    $s<T_{\mathrm{up}}$, and hence
    $a-X_{a}(s)^{2}\le-\sqrt{a}\,\delta(a)$. Integrating
    \eqref{eq:Riccati_SDE} up to $T_{\mathrm{up}}$ gives
    \begin{equation*}
        x_{-}+\frac{\delta(a)}{2}
        = X_{a}(T_{\mathrm{up}})
        \le x_{-}-\sqrt{a}\,\delta(a)\,T_{\mathrm{up}}
        +B(T_{\mathrm{up}})\,,
    \end{equation*}
    so that
    \begin{equation*}
        \bigl\{T_{\mathrm{up}}\le\tau_{*}(a)\bigr\}
        \subset\Bigl\{\sup_{s\ge0}
        \bigl(B(s)-\sqrt{a}\,\delta(a)\,s\bigr)
        \ge\frac{\delta(a)}{2}\Bigr\}\,.
    \end{equation*}
    Applying Doob's maximal inequality to the exponential
    martingale
    $\exp\bigl(2\sqrt{a}\,\delta(a)B(s)-2a\,\delta(a)^{2}s\bigr)$,
    we obtain
    \begin{equation*}
        \dP_{x_{-}}\bigl(T_{\mathrm{up}}\le\tau_{*}(a)\bigr)
        \le \exp\bigl(-\sqrt{a}\,\delta(a)^{2}\bigr)
        = \exp\bigl(-(\log a)^{2}\bigr)\,.
    \end{equation*}
    For the second term in \eqref{eq:E4_complement_bound}, since $X_{a}(s)\le x_{-}+\delta(a)/2$ for
    $s<\zeta_{a}^{(1)}\wedge T_{\mathrm{up}}$, we have
    $\bigl|F_{2}'\bigl(X_{a}(s)\bigr)\bigr|
    \le1/\bigl(\sqrt{a}\,\delta(a)\bigr)$, and hence
    \begin{equation*}
        \bigl\langle M^{(2)}\bigr\rangle_{\tau_{*}(a)}
        = \int_{0}^{\tau_{*}(a)\wedge\zeta_{a}^{(1)}
        \wedge T_{\mathrm{up}}}
          F_{2}'\bigl(X_{a}(s)\bigr)^{2}\,ds
        \le \frac{\tau_{*}(a)}{a\,\delta(a)^{2}}\,.
    \end{equation*}
    The same argument as for $M^{(1)}$, applied to
    $M^{(2)}$ and $-M^{(2)}$, gives
    \begin{equation*}
        \dP_{x_{-}}\Bigl(\bigl|M^{(2)}_{\tau_{*}(a)}\bigr|
        \ge\frac{\sqrt{a}}{4}\Bigr)
        \le 2\exp\biggl(-\frac{a^{2}\,\delta(a)^{2}}
        {32\,\tau_{*}(a)}\biggr)\,,
    \end{equation*}
    whose exponent is of order $-a(\log a)^{2}$. For the third
    term in \eqref{eq:E4_complement_bound}, we first claim that for every $x\le x_{-}+\delta(a)/2$,
    \begin{equation*}
        0\le F_{2}''(x)\le\frac{2}{\sqrt{a}\,\delta(a)^{2}}
        =\frac{2}{(\log a)^{2}}\,.
    \end{equation*}
    Indeed, for such $x$ we have
    $|x|-\sqrt{a}\ge\delta(a)/2$ and
    $(|x|+\sqrt{a})^{2}\ge4\sqrt{a}\,|x|$, so that
    \begin{equation*}
        F_{2}''(x)
        = \frac{2|x|}
        {(|x|-\sqrt{a})^{2}\,(|x|+\sqrt{a})^{2}}
        \le \frac{2|x|}{(\delta(a)/2)^{2}\cdot4\sqrt{a}\,|x|}
        = \frac{2}{\sqrt{a}\,\delta(a)^{2}}\,.
    \end{equation*}
    Since $\tau(a)\to0$, we have $\tau_{*}(a)\le2\sqrt{a}$
    for all sufficiently large $a$. As the integrand of $R$
    equals $\tfrac{1}{2}F_{2}''\bigl(X_{a}(s)\bigr)$, it follows that
    \begin{equation*}
        \bigl|R_{\tau_{*}(a)}\bigr|
        \le \frac{\tau_{*}(a)}{(\log a)^{2}}
        \le \frac{2\sqrt{a}}{(\log a)^{2}}
        < \frac{\sqrt{a}}{4}\,,
    \end{equation*}
    so that
    $\dP_{x_{-}}\bigl(\bigl|R_{\tau_{*}(a)}\bigr|
    \ge\sqrt{a}/4\bigr)=0$ for all sufficiently large $a$.
    Combining the three estimates
    with~\eqref{eq:E4_complement_bound} yields
    $\dP_{x_{-}}\bigl(E_{4}^{c}\bigr)\to0$. By the inclusion
    $E_{4}\subset\bigl\{\zeta_{a}^{(1)}<\tau_{*}(a)\bigr\}$, the
    second limit in \eqref{eq:two_remaining_limits} follows, and
    the proof is complete.
\end{proof}

By \eqref{eq:eigenvalue_explosion} and
Proposition~\ref{prop:hitting_reduction},
Theorem~\ref{thm:sharp_lower_tail_lowest_eigenvalue} reduces
to showing that for every family $(\cL_{a})_{a>0}$ satisfying
\eqref{eq:admissible_window},
\begin{equation}\label{eq:hitting_probability_asymptotics}
    \dP_{x_{+}}\bigl(T_{x_{-}}<\cL_{a}\bigr)
    \sim \frac{\cL_{a}}{m(a)}\,.
\end{equation}
Indeed, $\bar{\cL}_{a}\sim\cL_{a}$ and $\bar{\cL}_{a}$ also
satisfies \eqref{eq:admissible_window}, so
\eqref{eq:hitting_probability_asymptotics} applies to both
sides of \eqref{eq:hitting_reduction}.
We now construct a renewal structure for $X_{a}$ started
from $x_{+}$. Set $\cS_{0}:=0$, fix $k\ge1$, and suppose that
$\cS_{k-1}$ has been defined and is an almost surely finite
stopping time. By the strong Markov property of Brownian motion,
\begin{equation*}
    B^{(k)}(t) := B(\cS_{k-1}+t)-B(\cS_{k-1})\,,\qquad t\ge0\,,
\end{equation*}
is a standard Brownian motion independent of
$\cF_{\cS_{k-1}}$. Let $X_{a}^{(k)}$ denote the solution of
\begin{equation*}
    dX_{a}^{(k)}(t)
    = \Bigl(a-\bigl(X_{a}^{(k)}(t)\bigr)^{2}\Bigr)\,dt
    +dB^{(k)}(t)\,,
    \qquad X_{a}^{(k)}(0)=x_{+}\,,
\end{equation*}
defined up to its explosion time. Let $I(a)$ be the
neighborhood
\begin{equation}\label{eq:def_interval_I}
    I(a) := \Bigl(x_{+}-\frac{\delta(a)}{2},\,
    x_{+}+\frac{\delta(a)}{2}\Bigr)
\end{equation}
of $x_{+}$, and set
\begin{equation}\label{eq:def_rho_sigma}
    \rho_{k} := \inf\bigl\{t\ge0\,:\,
    X_{a}^{(k)}(t)\notin I(a)\bigr\}\,,
    \qquad
    \sigma_{k} := \inf\bigl\{t\ge\rho_{k}\,:\,
    X_{a}^{(k)}(t)\in\{x_{-},x_{+}\}\bigr\}\,.
\end{equation}
We call the path of $X_{a}^{(k)}$ on $[0,\sigma_{k}]$ the
$k$-th attempt.
Since explosion to $-\infty$ requires first exiting
$I(a)$ and then hitting $x_{-}$, the time $\sigma_{k}$ is
bounded by the first explosion time of $X_{a}^{(k)}$. As the
latter is finite almost surely, we conclude that
$\sigma_{k}<\infty$ almost surely. We define
\begin{equation*}
    \cS_{k} := \cS_{k-1}+\sigma_{k}\,,
\end{equation*}
which is again an almost surely finite stopping time.
We set
\begin{equation*}
    \chi_{k} := \1_{\{X_{a}^{(k)}(\sigma_{k})=x_{-}\}}\,,
\end{equation*}
and call the $k$-th attempt \emph{successful} if
$\chi_{k}=1$ and \emph{failed} otherwise. This completes the
recursive construction. 
For every $k\ge1$, the pair $(\sigma_{k},\chi_{k})$ is the
same measurable functional of $B^{(k)}$. Since $B^{(k)}$ is
independent of $\cF_{\cS_{k-1}}$ and
$(\sigma_{j},\chi_{j})_{j<k}$ are
$\cF_{\cS_{k-1}}$-measurable, the pairs
$(\sigma_{k},\chi_{k})_{k\ge1}$ are i.i.d.
We now express the hitting event
$\bigl\{T_{x_{-}}<\cL_{a}\bigr\}$ in terms of the first
successful attempt. Let
\begin{equation*}
    N := \inf\bigl\{k\ge1\,:\,\chi_{k}=1\bigr\}
\end{equation*}
be the index of the first successful attempt.
Since the $\chi_{k}$ are i.i.d.\ with $\dP(\chi_{1}=1)>0$, the index
$N$ is geometrically distributed and in particular finite
almost surely. Every failed attempt ends at $x_{+}$, so under
$\dP_{x_{+}}$ we have $X_{a}(\cS_{k-1}+t)=X_{a}^{(k)}(t)$ for
$t\in[0,\sigma_{k}]$ and $k\le N$. Moreover, $X_{a}$ first hits $x_{-}$ at the
end of the first successful attempt, that is,
\begin{equation}\label{eq:hitting_time_renewal_identity}
    T_{x_{-}} = \cS_{N} = \sum_{k=1}^{N}\sigma_{k}\,.
\end{equation}
The estimate \eqref{eq:hitting_probability_asymptotics} is
thus reduced to the following proposition.

\begin{proposition}\label{prop:renewal_hitting_asymptotics}
    Let $(\cL_{a})_{a>0}$ satisfy \eqref{eq:admissible_window}.
    Then, as $a\to\infty$,
    \begin{equation}\label{eq:renewal_hitting_asymptotics}
        \dP_{x_{+}}\bigl(\cS_{N}<\cL_{a}\bigr)
        \sim \frac{\cL_{a}}{\dE_{x_{+}}\bigl[\cS_{N}\bigr]}
        \sim \frac{\cL_{a}}{m(a)}\,.
    \end{equation}
\end{proposition}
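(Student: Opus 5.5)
We work with the i.i.d.\ pairs $(\sigma_k,\chi_k)$ and write $p(a):=\dP(\chi_1=1)$, so that $N$ is geometric with parameter $p(a)$. The key fact, to be established first, is that $p(a)\to0$ (exponentially in $a^{3/2}$) and that the typical attempt length is tiny compared to $\cL_a$. In that regime
\begin{equation*}
\dE_{x_+}[\cS_N]=\dE[N]\,\dE[\sigma_1]=\frac{\dE[\sigma_1]}{p(a)}
\end{equation*}
by Wald's identity.

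For the second equivalence $\dE_{x_+}[\cS_N]\sim m(a)$, we decompose $\zeta^{(1)}_a$ under $\dP$ into three parts: the passage from $+\infty$ to $x_+$, then $T_{x_-}$ started at $x_+$, then the passage from $x_-$ to explosion. By \eqref{eq:hitting_time_renewal_identity} this gives
\begin{equation*}
m(a)=\dE[T_{x_+}]+\dE_{x_+}[\cS_N]+\dE_{x_-}[\zeta_a^{(1)}].
\end{equation*}
The outer passages have expectations of order at most $\tau_*(a)$ plus a correction. For the passage from $x_-$, which can fall back into the well and restart, we bound the correction by the probability of returning to $x_+$ (exponentially small? no — only $e^{-(\log a)^2}$-small by the proof of Proposition~\ref{prop:hitting_reduction}) times $m(a)$. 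Either way the outer terms are $o(m(a))$, so $\dE_{x_+}[\cS_N]\sim m(a)$. Here we use the Appendix estimates to control the expectations rather than just the probabilities.

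For the first equivalence, we write
\begin{equation*}
\dP(\cS_N<\cL)=\sum_{n\ge1}(1-p)^{n-1}p\,\dP\Bigl(\sum_{k<n}\sigma_k^{F}+\sigma_n^{S}<\cL\Bigr),
\end{equation*}
where the $\sigma_k^F$ are i.i.d.\ with the law of $\sigma_1$ given $\chi_1=0$, and $\sigma^S$ has the law of $\sigma_1$ given $\chi_1=1$. Let $\mu_F:=\dE[\sigma_1\mid\chi_1=0]$. We need three ingredients.

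\begin{enumerate}
\item[(i)] $\mu_F$ is polynomial in $a$, and $\sigma^F$ has exponential moments uniformly on the scale $\mu_F$. The reason is that a failed attempt stays near the well and returns quickly, and the confinement is controlled by the Appendix bounds.
\item[(ii)] $\sigma^S\le$ some polynomial in $a$ with conditional probability tending to one.
\item[(iii)] $p\,\mu_F^{-1}\cdot$ (this term) yields $\dE[\sigma_1]/p\sim\mu_F/p$, since $p\to0$.
\end{enumerate}

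With $n_\pm:=(1\pm\epsilon)\cL/\mu_F$, Bernstein/Chernoff bounds on $\sum_{k<n}\sigma^F_k$ show that the partial sums are below $\cL$ for $n\le n_-$ and above $\cL$ for $n\ge n_+$, up to errors $\exp(-c\epsilon^2 n)$. This requires $\cL/\mu_F\to\infty$ fast enough, which is where the lower bound $\cL_a\gg a e^{4(\log a)^2}$ enters. Since $\cL\ll m(a)$, we have $pn_\pm\to0$, so $1-(1-p)^{n_\pm}\sim pn_\pm$. It follows that
\begin{equation*}
\dP(\cS_N<\cL)=(1+O(\epsilon))\frac{p\,\cL}{\mu_F}+o\Bigl(\frac{p\cL}{\mu_F}\Bigr)\sim\frac{\cL}{\dE_{x_+}[\cS_N]}.
\end{equation*}
Here we also absorb the tail errors: their sum over $n$ weighted by $p$ is at most $p\sum_n e^{-c\epsilon^2 n}\mathbf 1\{n\ge n_+\}$, which is $o(p\,\cL/\mu_F)$ as $n_+\to\infty$.

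The main obstacle is step (i): proving uniform-in-$a$ concentration of the failed-attempt durations, i.e.\ an exponential moment bound of the form $\dE[e^{\theta\sigma^F/\mu_F}]\le C$. Without it the large-deviation control of the partial sums fails, and with it the whole argument works. The approach is to use that, after exiting $I(a)$, returning to $x_+$ is a recurrent excursion in a well of curvature $2\sqrt a$, which has a time scale of $a^{-1/2}$ with Gaussian-type tails. The rare long excursions toward the barrier that do not succeed are bounded via the Appendix occupation estimates. We also need the overshoot-type estimate \eqref{eq:sharp_estimate_overshoot}, which controls how close $\rho_k$ lands to $x_+$ and hence the conditional law; its error terms are what force the lower bound in \eqref{eq:admissible_window}.
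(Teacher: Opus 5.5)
Your conditioning on the value of $N$ is a sound skeleton. Your treatment of $\dE_{x_+}[\cS_N]\sim m(a)$ is essentially the paper's decomposition behind Lemma~\ref{lem:renewal_mean_asymptotics}. Bounding $\dE_{x_-}[\zeta_a^{(1)}]$ by $\dE_{x_-}[\zeta_a^{(1)}\wedge T_{x_+}]+e^{-(\log a)^2}m(a)$ is a workable substitute for Lemma~\ref{lem:mean_explosion_from_x_minus}, although the first term still needs an argument.

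The gap is in the first equivalence, where the time scales you assume are wrong. The level $x_+$ sits $\delta(a)=a^{-1/4}\log a$ above the bottom $\sqrt a$ of the well, i.e.\ about $\log a$ fluctuation widths $a^{-1/4}$ away, and $V(x_+)-V(\sqrt a)=\sqrt a\,\delta(a)^2+\delta(a)^3/3\approx(\log a)^2$. A failed attempt that exits $I(a)$ downward (probability at least $1/2$ by \eqref{eq:downward_exit_probability}) falls into the well. Climbing back to $x_+$ is then itself a rare event. The return time is roughly exponential with mean of order $e^{2(\log a)^2}$ up to polynomial factors, which is superpolynomial since $e^{2(\log a)^2}=a^{2\log a}$. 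It is not an $a^{-1/2}$-scale excursion with Gaussian tails. This is why Lemma~\ref{lem:H_moment_bounds} carries the factors $e^{2(\log a)^2}$ and $e^{4(\log a)^2}$, and why \eqref{eq:admissible_window} has the lower bound $a\,e^{4(\log a)^2}$.

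So \textup{(i)} (``$\mu_F$ polynomial'') is false, and so is \textup{(ii)}. Conditionally on success, the attempt still spends a geometric number of well cycles before crossing. Hence $\sigma^S$ is typically of the same quasi-polynomial order and exceeds any fixed polynomial with conditional probability tending to one. What is true, and sufficient, is $\dE_{x_+}[\sigma_1\mid\chi_1=1]\ll\cL_a$. This follows from Lemmas~\ref{lem:moment_bounds_exit_time_rho}\,\textup{(i)} and~\ref{lem:H_moment_bounds}\,\textup{(iv)}, as in \eqref{eq:rho_cond_tail}--\eqref{eq:post_exit_cond_tail}. Also, \eqref{eq:sharp_estimate_overshoot} is the renewal overshoot $\dE_{x_+}[\cS_{\cK(\cL_a)}-\cL_a]$ from Lorden's inequality. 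It says nothing about where $\rho_k$ lands, which is always an endpoint of $I(a)$.

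Consequently, the step you call the main obstacle, the uniform bound $\dE[e^{\theta\sigma^F/\mu_F}]\le C$, is left unproved, and the heuristic you give for it does not apply. Proving it would in any case need a sharp lower bound on $\mu_F$, whereas the available bound is only $\mu_a:=\dE_{x_+}[\sigma_1]\ge\delta(a)^2/16$. But you do not need exponential moments. Write $S^F_n:=\sum_{k\le n}\sigma^F_k$ and $\cK^F(\cL_a):=\min\{n:S^F_n\ge\cL_a\}$.

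For the upper bound, discard $\sigma^S_n$: $\dP_{x_+}(\cS_N<\cL_a)\le p\sum_{n\ge1}\dP(S^F_{n-1}<\cL_a)=p\,\dE[\cK^F(\cL_a)]$. Wald's identity with Lorden's inequality then gives $\mu_F\,\dE[\cK^F(\cL_a)]\le\cL_a+\dE[(\sigma^F)^2]/\mu_F$. For the lower bound, restrict to $n\le n_-$ and use monotonicity and Chebyshev at $n_-$: $\dP(S^F_{n_-}\ge(1-\epsilon/2)\cL_a)\le 4\,\dE[(\sigma^F)^2]/(\epsilon^2\mu_F\cL_a)$.

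Both errors are governed by the single ratio $\dE_{x_+}[\sigma_1^2]/\dE_{x_+}[\sigma_1]\le C\,a\,e^{4(\log a)^2}=o(\cL_a)$ from the proof of Lemma~\ref{lem:mean_counting_asymptotics}. Passing from $\sigma_1$ to $\sigma^F$ costs only factors $1+o(1)$, because $p$ is exponentially small and $\dE_{x_+}[\sigma_1;\chi_1=1]\le(p\,\dE_{x_+}[\sigma_1^2])^{1/2}=o(\mu_a)$.

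With these second-moment inputs your route closes and is a genuine alternative to the paper's. The paper computes $\dE_{x_+}[\cM(\cL_a)]$, passes to $\dP_{x_+}(\cM(\cL_a)\ge1)$ by Bonferroni (Lemma~\ref{lem:successful_attempt_count}), and then removes the terminal attempt (Lemma~\ref{lem:terminal_attempt_negligible}). Your route gets the upper bound for free by dropping the successful attempt, and needs the success-duration estimate only for the lower bound.
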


We briefly outline the proof.
The law of $(\sigma_{k},\chi_{k})$ varies with $a$, so
classical renewal theorems, which require a fixed increment
distribution, do not apply. We instead estimate all quantities
with explicit dependence on $a$.
We introduce
\begin{equation}\label{eq:def_counting_process}
    \cM(t) := \sum_{k\ge1}\1_{\{\cS_{k-1}<t\}}\,\chi_{k}\,,
    \qquad t\ge0\,,
\end{equation}
the number of successful attempts that start before time $t$.
Since $N$ is the index of the first successful attempt,
\begin{equation}\label{eq:success_counting_identity}
    \bigl\{\cS_{N-1}<\cL_{a}\bigr\}
    = \bigl\{\cM(\cL_{a})\ge1\bigr\}\,.
\end{equation}

Before using \eqref{eq:success_counting_identity}, we record an identity for the mean time to the first successful attempt. Although $\sigma_k$ and $\chi_k$ may be dependent, the event $\{N\ge k\}$ depends only on $(\chi_j)_{j<k}$ and is therefore independent of $\sigma_k$, since the pairs $(\sigma_k,\chi_k)_{k\ge1}$ are i.i.d. Wald's identity thus gives \begin{equation}\label{eq:wald_identity} \dE_{x_{+}}\bigl[\cS_N\bigr] = \dE_{x_{+}}[N]\,\dE_{x_{+}}[\sigma_1] = \frac{\mu_a}{p_a}\,, \end{equation} where $p_a:=\dP_{x_{+}}(\chi_1=1)$ and $\mu_a:=\dE_{x_{+}}[\sigma_1]$.

We now use \eqref{eq:success_counting_identity} to estimate $\dP_{x_{+}}(\cS_{N-1}<\cL_a)$. The advantage of this formulation is that $\{\cS_{k-1}<\cL_a\}$ depends only on the preceding attempts and is therefore independent of $\chi_k$. The argument proceeds in four steps. First, using the renewal structure of the i.i.d.\
pairs, Lemma~\ref{lem:mean_counting_asymptotics} establishes $\dE_{x_{+}}\bigl[\cM(\cL_{a})\bigr] \sim \frac{p_{a}}{\mu_{a}}\,\cL_{a}.$
We then show
that $\mu_{a}/p_{a}\sim m(a)$
(Lemma~\ref{lem:renewal_mean_asymptotics}). Next, we prove
that $\dP_{x_{+}}\bigl(\cM(\cL_{a})\ge1\bigr)$ is asymptotic
to $\dE_{x_{+}}\bigl[\cM(\cL_{a})\bigr]$
(Lemma~\ref{lem:successful_attempt_count}). Finally,
replacing $\cS_{N-1}$ by $\cS_{N}$ produces only a negligible
error (Lemma~\ref{lem:terminal_attempt_negligible}).
We begin with the first step, the mean of $\cM(\cL_{a})$.

\begin{lemma}\label{lem:mean_counting_asymptotics}
    Let $(\cL_{a})_{a>0}$ satisfy \eqref{eq:admissible_window}.
    Then, as $a\to\infty$,
    \begin{equation*}
        \dE_{x_{+}}\bigl[\cM(\cL_{a})\bigr]
        \sim \frac{p_{a}}{\mu_{a}}\,\cL_{a}
        = \frac{\cL_{a}}{\dE_{x_{+}}\bigl[\cS_{N}\bigr]}\,,
    \end{equation*}
    where $p_{a}$ and $\mu_{a}$ are as in \eqref{eq:wald_identity}.
\end{lemma}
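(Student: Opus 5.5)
The plan is to reduce the lemma to an elementary renewal estimate for the attempt durations $\sigma_k$, uniform in $a$. Write
\begin{equation*}
K(t):=\#\{k\ge1:\cS_{k-1}<t\}.
\end{equation*}
Since $\{\cS_{k-1}<\cL_a\}\in\cF_{\cS_{k-1}}$ and $\chi_k$ is a functional of $B^{(k)}$, which is independent of $\cF_{\cS_{k-1}}$, we get
\begin{equation*}
\dE_{x_{+}}[\cM(\cL_a)]=\sum_{k\ge1}\dP_{x_+}(\cS_{k-1}<\cL_a)\,p_a=p_a\,\dE_{x_+}[K(\cL_a)].
\end{equation*}
The identity $p_a/\mu_a=1/\dE_{x_+}[\cS_N]$ is \eqref{eq:wald_identity}. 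So the lemma is equivalent to
\begin{equation*}
\dE_{x_+}[K(\cL_a)]\sim \cL_a/\mu_a,
\end{equation*}
and the only issue is that the law of $\sigma_1$ moves with $a$.

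\emph{Lower bound.} The index $K(\cL_a)$ is a stopping time for the filtration generated by the i.i.d.\ sequence $(\sigma_k)$, because $\{K\ge k\}=\{\cS_{k-1}<\cL_a\}$. It is integrable, since $\sigma_1>0$ almost surely. Wald's identity then gives $\mu_a\,\dE[K(\cL_a)]=\dE[\cS_{K(\cL_a)}]\ge\cL_a$, hence $\dE[K(\cL_a)]\ge\cL_a/\mu_a$.

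\emph{Upper bound.} I would truncate. Fix a level $c=c(a)$ and set $\sigma_k':=\sigma_k\wedge c$, with partial sums $\cS'_k\le\cS_k$. Then $K(\cL_a)\le K'(\cL_a)$. Moreover $\cS'_{K'}\le \cL_a+c$, so Wald's identity gives
\begin{equation*}
\dE[K(\cL_a)]\le \frac{\cL_a+c}{\dE[\sigma_1\wedge c]}.
\end{equation*}
Therefore it suffices to find $c$ with $c\ll\cL_a$ and $\dE[(\sigma_1-c)_+]\ll\mu_a$. A convenient sufficient condition is
\begin{equation*}
\dE[\sigma_1^2]/c\ll\mu_a .
\end{equation*}
So the work consists of two moment bounds on a single attempt, explicit in $a$:
\begin{enumerate*}[label=(\roman*)]
\item a lower bound $\mu_a\ge\dE_{x_+}[\rho_1]\gtrsim\delta(a)^2$, or at least $\mu_a\ge a^{-C}$;
\item an upper bound $\dE_{x_+}[\sigma_1^2]\le \exp(C'(\log a)^2)$, or a polynomial bound in $a$.
\end{enumerate*}
With these, choosing $c=a\exp(3(\log a)^2)$, say, gives $c\ll\cL_a$ by the lower end of \eqref{eq:admissible_window}, and $\dE[\sigma_1^2]/(c\mu_a)\to0$. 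This is presumably why the window has the form $a\exp(4(\log a)^2)$.

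For (i), I would use the time needed to exit $I(a)$. Near $x_+$ the drift is about $-2\sqrt a\,\delta(a)$ over a half-width $\delta(a)/2$. A Brownian comparison, or Itô's formula applied to $(X-x_+)^2$, then gives $\dE[\rho_1]\gtrsim\delta(a)^2\wedge\bigl(\delta(a)/\sqrt a\bigr)$, which is polynomial in $a$.

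For (ii), I would decompose $\sigma_1=\rho_1+(\sigma_1-\rho_1)$.
\begin{itemize}
\item The time $\rho_1$ is dominated by the exit time of an interval of length $\delta(a)$.
\item After exiting above $I(a)$, the diffusion returns to $x_+$ quickly because of the strong restoring drift. The bound is as in the $F_1$ argument of Proposition~\ref{prop:hitting_reduction}, with exponential tails.
\item After exiting below $I(a)$, the diffusion must hit $x_+$ or $x_-$. On $[x_-,x_+]$ it is a diffusion in a bounded interval of length about $2\sqrt a$. Its exit time has exponential moments at a rate comparable to $1/(\text{typical return time to the well})$, and this rate is only polynomially small in $a$, because the well attracts at rate $2\sqrt a$ and is entered well before any barrier crossing.
\end{itemize}
Concretely, I would prove $\sup_{z\in[x_-,x_+]}\dP_z(\sigma>s)\le 2^{-\lfloor s/s_0\rfloor}$ with $s_0=\mathrm{poly}(a)$, by showing that from any starting point the diffusion reaches $x_+$ within time $s_0$ with probability at least $1/2$. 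I expect this step, the second moment of $\sigma_1$, to be the main obstacle, since it needs uniform control from all starting points in $[x_-,x_+]$, including near the barrier top where the drift vanishes. There I would use a scale-function/Green's function bound for $\dE_z[\sigma]$ and $\dE_z[\sigma^2]$ on $[x_-,x_+]$, which are explicit for one-dimensional diffusions, and bound them by $\exp(C(\log a)^2)$ using the auxiliary estimates collected in Appendix~\ref{apx:Auxiliary_estimates}.
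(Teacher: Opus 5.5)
Your reduction to $\dE_{x_+}[\cK(\cL_a)]\sim\cL_a/\mu_a$ and your Wald lower bound coincide with the paper. Truncating at a level $c$ is a legitimate elementary substitute for Lorden's inequality, which the paper uses to bound the overshoot by $\dE_{x_+}[\sigma_1^2]/\mu_a$. Either way the lemma reduces to $\dE_{x_+}[\sigma_1^2]/\mu_a\ll\cL_a$. The gap is in the quantitative inputs, and the window \eqref{eq:admissible_window} is calibrated exactly to them. The paper proves $\dE_{x_+}[\sigma_1^2]\le Ce^{4(\log a)^2}a^{1/2}(\log a)^2$ and $\mu_a\ge\delta(a)^2/16$, whose ratio is $O(ae^{4(\log a)^2})$, precisely the lower end of the window.

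Your claim $\mu_a\ge\dE_{x_+}[\rho_1]\gtrsim\delta(a)^2$ is false. By Lemma~\ref{lem:moment_bounds_exit_time_rho}\,(i), $\dE_{x_+}[\rho_1]\le1/(2\sqrt a)\ll\delta(a)^2=(\log a)^2/\sqrt a$, and your own computation only gives $\delta(a)/\sqrt a$. Bounding $\mu_a$ below through $\rho_1$ therefore loses at least a factor $(\log a)^2$ against $ae^{4(\log a)^2}$, so admissible families such as $\cL_a=ae^{4(\log a)^2}\log a$ are not covered. The $\delta(a)^2$ comes instead from the remainder of the attempt. The exit from $I(a)$ is downward with probability at least $1/2$, because the scale density $e^{2V}$ increases on $I(a)$. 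From $x_+-\delta(a)/2$ the diffusion must then climb back to $x_+$ against the drift, which takes expected time at least $\delta(a)^2/8$ (Lemma~\ref{lem:H_moment_bounds}\,(iii)).

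The step you flag as the main obstacle would also fail as described, because the return to $x_+$ from below is not polynomial in $a$. The level $x_+$ lies $\delta(a)$ above the bottom of the well, and $V(x_+)-V(\sqrt a)=\sqrt a\,\delta(a)^2+\delta(a)^3/3\approx(\log a)^2$. So reaching $x_+$ from the well is itself a small Kramers escape, taking time of order $e^{2(\log a)^2}$ up to polynomial factors. Hence no $s_0=\mathrm{poly}(a)$ satisfies $\sup_{z\in[x_-,x_+]}\dP_z(H>s_0)\le1/2$; it fails at $z=\sqrt a$. Since the attempt exits downward with probability at least $1/2$, $\dE_{x_+}[\sigma_1^2]$ is genuinely of order $e^{4(\log a)^2}$ up to polynomial factors, so no polynomial bound exists. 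This, not the barrier at $-\sqrt a$, is where the $e^{4(\log a)^2}$ in the window comes from.

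The geometric-tail argument does go through with $s_0=2\sup_y\dE_y[H]\le4\cD(a)\lesssim e^{2(\log a)^2}a^{1/4}\log a$, with the supremum controlled by the Green's function; this is Lemma~\ref{lem:H_moment_bounds}\,(ii). For the same reason, your choice $c=ae^{3(\log a)^2}$ is too small when $\mu_a$ is bounded below only polynomially. You must take $c$ with $ae^{4(\log a)^2}\ll c\ll\cL_a$, e.g.\ $c=(\cL_a\,ae^{4(\log a)^2})^{1/2}$. Such a $c$ exists exactly because of the lower bound in \eqref{eq:admissible_window}.
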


\begin{proof}
    We define
    \begin{equation*}
        \cK(t) := \sum_{k\ge1}\1_{\{\cS_{k-1}<t\}}\,,
        \qquad t\ge0\,,
    \end{equation*}
    the number of attempts that start before time $t$. Since $\1_{\{\cS_{k-1}<t\}}$ is $\cF_{\cS_{k-1}}$-measurable and $\chi_{k}$ is independent of
    $\cF_{\cS_{k-1}}$ with the same law as $\chi_{1}$,
    \begin{equation}\label{eq:M_K_mean_identity}
        \dE_{x_{+}}\bigl[\cM(t)\bigr]
        = p_{a}\sum_{k\ge1}
        \dP_{x_{+}}\bigl(\cS_{k-1}<t\bigr)
        = p_{a}\,\dE_{x_{+}}\bigl[\cK(t)\bigr]\,,
        \qquad t\ge0\,.
    \end{equation}
    By \eqref{eq:M_K_mean_identity} with $t=\cL_{a}$ and Wald's identity
    \eqref{eq:wald_identity}, it is enough to prove that, as
    $a\to\infty$,
    \begin{equation}\label{eq:attempt_count_asymptotics}
        \dE_{x_{+}}\bigl[\cK(\cL_{a})\bigr]
        \sim \frac{\cL_{a}}{\mu_{a}}\,.
    \end{equation}
    Note that $\{\cK(\cL_{a})\le n\}=\{\cS_{n}\ge\cL_{a}\}$ for
    every $n\ge1$, so $\cK(\cL_{a})$ is a stopping time with respect to 
    the filtration $(\cG_{n})_{n\ge1}$ with
    $\cG_{n}:=\sigma(\sigma_{1},\ldots,\sigma_{n})$. Applying
    Wald's identity to the bounded stopping times
    $\cK(\cL_{a})\wedge m$ and letting $m\to\infty$, we obtain
    \begin{equation}\label{eq:wald_identity_K}
        \dE_{x_{+}}\bigl[\cS_{\cK(\cL_{a})}\bigr]
        = \mu_{a}\,\dE_{x_{+}}\bigl[\cK(\cL_{a})\bigr]\,.
    \end{equation}
    Let $\cR(\cL_{a})$ denote the overshoot,
    \begin{equation*}
        \cR(\cL_{a}) := \cS_{\cK(\cL_{a})}-\cL_{a}\ge0\,,
    \end{equation*}
    so that
    \begin{equation*}
        \dE_{x_{+}}\bigl[\cK(\cL_{a})\bigr]
        = \frac{\cL_{a}}{\mu_{a}}
        + \frac{\dE_{x_{+}}\bigl[\cR(\cL_{a})\bigr]}{\mu_{a}}\,.
    \end{equation*}
    We show that the second term is negligible, that is,
    \begin{equation}\label{eq:estimate_overshoot}
        \dE_{x_{+}}\bigl[\cR(\cL_{a})\bigr] = o(\cL_{a})\,.
    \end{equation}
    By Lorden's inequality \cite[Theorem~1]{Lor70},
    \begin{equation}\label{eq:Lorden_inequality}
        \dE_{x_{+}}\bigl[\cR(\cL_{a})\bigr]
        \le \frac{\dE_{x_{+}}\bigl[\sigma_{1}^{2}\bigr]}
        {\dE_{x_{+}}\bigl[\sigma_{1}\bigr]}\,.
    \end{equation}
    We estimate the two moments in turn. Under $\dP_{x_{+}}$, the first attempt follows $X_{a}$ itself, so the estimates of Appendix~\ref{apx:Auxiliary_estimates} apply to $(\rho_{1},\sigma_{1})$. By continuity, the diffusion exits
    $I(a)$ through one of its endpoints,
    \begin{equation*}
        X_{a}(\rho_{1})
        \in \Bigl\{x_{+}-\frac{\delta(a)}{2},\;
        x_{+}+\frac{\delta(a)}{2}\Bigr\}\,.
    \end{equation*}
    By the elementary bound $\sigma_{1}^{2}\le2\rho_{1}^{2}+2(\sigma_{1}-\rho_{1})^{2}$,
    the strong Markov property at $\rho_{1}$, and
    Lemmas~\ref{lem:moment_bounds_exit_time_rho}\,\textup{(ii)}
    and~\ref{lem:H_moment_bounds}\,\textup{(i),(ii)}, there exists $C>0$ such that
    \begin{equation*}
        \begin{aligned}
            \dE_{x_{+}}\bigl[\sigma_{1}^{2}\bigr]
            &\le 2\,\dE_{x_{+}}\bigl[\rho_{1}^{2}\bigr]
            + 2\max\Bigl\{
            \dE_{x_{+}-\delta(a)/2}\bigl[H^{2}\bigr],\,
            \dE_{x_{+}+\delta(a)/2}\bigl[H^{2}\bigr]\Bigr\}\\
            &\le C\exp\bigl(4(\log a)^{2}\bigr)\,
            a^{1/2}(\log a)^{2}\,.
        \end{aligned}
    \end{equation*}
    On the other hand, since $I(a)\subset(\sqrt{a},\infty)$,
    the drift $a-x^{2}$ is negative and the scale density
    $e^{2V}$ is increasing on $I(a)$. Since $x_{+}$ is the
    midpoint of $I(a)$, the exit distribution satisfies
    \begin{equation}\label{eq:downward_exit_probability}
        \dP_{x_{+}}\Bigl(X_{a}(\rho_{1})
        =x_{+}-\frac{\delta(a)}{2}\Bigr)\ge\frac{1}{2}\,.
    \end{equation}
    Hence, by
    Lemma~\ref{lem:H_moment_bounds}\,\textup{(iii)} and the
    strong Markov property at $\rho_{1}$,
    \begin{equation*}
        \dE_{x_{+}}[\sigma_{1}]
        \ge \dP_{x_{+}}\Bigl(X_{a}(\rho_{1})
        =x_{+}-\frac{\delta(a)}{2}\Bigr)\,
        \dE_{x_{+}-\delta(a)/2}[H]
        \ge \frac{\delta(a)^{2}}{16}\,.
    \end{equation*}
    Combining the last two estimates with Lorden's
    inequality~\eqref{eq:Lorden_inequality}, and recalling
    $\delta(a)^{2}=(\log a)^{2}/\sqrt{a}$, we obtain
    \begin{equation}\label{eq:sharp_estimate_overshoot}
        \dE_{x_{+}}\bigl[\cR(\cL_{a})\bigr]
        \le C'\exp\bigl(4(\log a)^{2}\bigr)\,a
        = o(\cL_{a})\,,
    \end{equation}
    where the last step follows from the lower bound in
    \eqref{eq:admissible_window}. This proves
    \eqref{eq:estimate_overshoot}, and hence
    \eqref{eq:attempt_count_asymptotics}, which completes the
    proof.
\end{proof}

We turn to the second step of the proof of
Proposition~\ref{prop:renewal_hitting_asymptotics}. The next
lemma shows that $\dE_{x_{+}}[\cS_{N}]$ is asymptotic to the
mean of the first explosion time.

\begin{lemma}\label{lem:renewal_mean_asymptotics}
    As $a\to\infty$,
    \begin{equation*}
        \dE_{x_{+}}\bigl[\cS_{N}\bigr] \sim m(a)\,.
    \end{equation*}
\end{lemma}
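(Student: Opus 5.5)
We compare $\dE_{x_{+}}[\cS_{N}]=\dE_{x_{+}}[T_{x_{-}}]$ (by \eqref{eq:hitting_time_renewal_identity}) with $m(a)=\dE[\zeta_{a}^{(1)}]$. The comparison goes through the decomposition of the first explosion time started from $+\infty$:
\begin{equation*}
    \zeta_{a}^{(1)} = T_{x_{+}} + (T_{x_{-}}-T_{x_{+}}) + (\zeta_{a}^{(1)}-T_{x_{-}})\,.
\end{equation*}
By the strong Markov property at $T_{x_{+}}$ and at $T_{x_{-}}$, this gives
\begin{equation*}
    m(a) = \dE\bigl[T_{x_{+}}\bigr] + \dE_{x_{+}}\bigl[T_{x_{-}}\bigr] + \dE_{x_{-}}\bigl[\zeta_{a}^{(1)}\bigr]\,.
\end{equation*}
This holds because the middle passage starts exactly at $x_{+}$, and after hitting $x_{-}$ the diffusion restarts from $x_{-}$. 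Hence $m(a)-\dE_{x_{+}}[\cS_{N}]$ equals the sum of the two outer mean passage times. It suffices to show that each is $o(m(a))$; in fact we will show each is at most polynomial in $a$. Since $m(a)$ grows like $\exp(\frac83 a^{3/2})$ by \eqref{eq:first_explosion_mean_asymptotics}, the claim then follows.

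For the first outer term, $\dE[T_{x_{+}}]$, we work with the Itô identity \eqref{eq:F1_Ito_formula}. Stopping at $t\wedge T_{x_{+}}$, taking expectations (the martingale part has bounded quadratic variation up to $t$), and using that the drift integrand is at least $1$ and that $F_{1}\le 0$, we get $\dE[t\wedge T_{x_{+}}]\le\tau(a)$. Letting $t\to\infty$ gives $\dE[T_{x_{+}}]\le\tau(a)\to0$. The passage from $+\infty$ is handled by taking limits of $\dP_{z}$ as $z\to+\infty$, as the paper stipulates.

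For the last outer term, $\dE_{x_{-}}[\zeta_{a}^{(1)}]$, an upward excursion back above $-\sqrt{a}$ is possible, so we cannot simply bound it by $\tau(a)$. We use a restart argument instead. Let $q_a:=\dP_{x_{-}}(\zeta_{a}^{(1)}<\tau_{*}(a))$, which tends to $1$ by \eqref{eq:two_remaining_limits}. On the complement, we stop at time $\tau_{*}(a)$ and the process sits at some position $y$. From any position, the expected remaining time to explosion is at most $\sup_{z}\dE_{z}[\zeta_{a}^{(1)}]$. This supremum is attained in the limit $z\to+\infty$ by monotonicity (comparison of SDE solutions driven by the same $B$), and so it equals $m(a)$. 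This gives
\begin{equation*}
    \dE_{x_{-}}[\zeta_{a}^{(1)}]\le \tau_{*}(a) + (1-q_a)\,m(a)\,,
\end{equation*}
which is $o(m(a))$ because $q_a\to1$ and $\tau_{*}(a)\sim\sqrt a$. The monotonicity claim needs care: a process started lower is closer to explosion, but after an explosion it restarts, and we only need the first explosion. Pathwise comparison up to the first explosion of the lower process does give $\zeta^{(1)}$ from a lower start $\le$ $\zeta^{(1)}$ from a higher start.

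Combining the three steps yields $m(a)\ge \dE_{x_{+}}[\cS_{N}]\ge m(a)-\tau(a)-\tau_*(a)-(1-q_a)m(a)=m(a)(1-o(1))$. The main obstacle is the monotonicity and supremum bound in the third step. If the coupling is delicate, a fallback is the exact scale/speed formula: the Green function computation behind \eqref{eq:first_explosion_mean_formula} applied from a finite starting point. This gives $\dE_{z}[\zeta_{a}^{(1)}]$ explicitly and shows it is nondecreasing in $z$ and bounded by $m(a)$.
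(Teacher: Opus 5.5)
Your proof is correct. It follows the paper's decomposition, but it treats the term $\dE_{x_{-}}[\zeta_{a}^{(1)}]$ by a genuinely different and softer argument.

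\textbf{The term $\dE[T_{x_{+}}]$.} The paper bounds $m(a)-\dE_{x_{+}}[\zeta_{a}^{(1)}]$, which by the strong Markov property is exactly your $\dE[T_{x_{+}}]$, by $\tau(a)$. It does so directly from the explicit double integral of \cite[Eq.~(3.7)]{AD14}. You obtain the same bound $\tau(a)$ from the It\^o identity \eqref{eq:F1_Ito_formula}, so this part is equivalent.

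\textbf{The term $\dE_{x_{-}}[\zeta_{a}^{(1)}]$.} The paper proves Lemma~\ref{lem:mean_explosion_from_x_minus} by an explicit computation: Gaussian integration in $x$, the error-function factor, and a four-region Laplace-type analysis around $v_{0}=2\sqrt{a}$. You replace this with the restart bound $\dE_{x_{-}}[\zeta_{a}^{(1)}]\le\tau_{*}(a)+\dP_{x_{-}}\bigl(\zeta_{a}^{(1)}\ge\tau_{*}(a)\bigr)\,m(a)$. It uses only three facts:
\begin{enumerate}
\item the Markov property at the deterministic time $\tau_{*}(a)$;
\item the monotonicity of $z\mapsto\dE_{z}[\zeta_{a}^{(1)}]$ with limit $m(a)$, which needs no delicate coupling, since the right-hand side of \cite[Eq.~(3.7)]{AD14}, $2\int_{-\infty}^{z}dx\int_{x}^{\infty}du\,(\cdots)$, is visibly increasing in $z$, and the paper already uses this monotonicity in Lemma~\ref{lem:left_tail_upper_bound_fixed_L};
\item the second limit in \eqref{eq:two_remaining_limits}, which is proved in Proposition~\ref{prop:hitting_reduction} independently of the present lemma, so there is no circularity.
\end{enumerate}

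\textbf{What each route buys.} Your route makes the appendix Lemma~\ref{lem:mean_explosion_from_x_minus} unnecessary. It also gives a rate of the same type: the proof of Proposition~\ref{prop:hitting_reduction} yields $1-q_{a}\le e^{-(\log a)^{2}}(1+o(1))$. The paper's route is self-contained in the explicit formula and does not rely on the It\^o estimates.

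\textbf{A small correction.} You announce that each outer term is at most polynomial in $a$. Your bound for the third term, $\tau_{*}(a)+(1-q_{a})\,m(a)$, is not polynomial. It is, however, $o(m(a))$, and that is all the argument needs.
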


\begin{proof}
    By \eqref{eq:hitting_time_renewal_identity},
    $\cS_{N}=T_{x_{-}}$ under $\dP_{x_{+}}$, and by the strong
    Markov property at $T_{x_{-}}$,
    \begin{equation}\label{eq:explosion_time_decomposition}
        \dE_{x_{+}}\bigl[T_{x_{-}}\bigr]
        = \dE_{x_{+}}\bigl[\zeta_{a}^{(1)}\bigr]
        - \dE_{x_{-}}\bigl[\zeta_{a}^{(1)}\bigr]\,.
    \end{equation}
    For the first term, the explicit formula \cite[Eq.~(3.7)]{AD14} gives
    \begin{equation*}
        0 \le m(a)-\dE_{x_{+}}\bigl[\zeta_{a}^{(1)}\bigr]
        = 2\int_{x_{+}}^{\infty}\!dx\int_{x}^{\infty}\!du\,
          \exp\Bigl(2a(u-x)+\frac{2}{3}\bigl(x^{3}-u^{3}\bigr)\Bigr)\,.
    \end{equation*}
    Since $u^{3}-x^{3}\ge3x^{2}(u-x)$ for $u\ge x\ge0$, the exponent is at
    most $-2(x^{2}-a)(u-x)$. The inner integral is then at most
    $1/(2(x^{2}-a))$, so the difference is at most $\tau(a)$ by
    \eqref{eq:deterministic_time_scale}. As $\tau(a)\to0$,
    \begin{equation*}
        \dE_{x_{+}}\bigl[\zeta_{a}^{(1)}\bigr] \sim m(a)\,.
    \end{equation*}
    For the second term, Lemma~\ref{lem:mean_explosion_from_x_minus} gives
    $\dE_{x_{-}}\bigl[\zeta_{a}^{(1)}\bigr]\ll m(a)$.
    Combining the two estimates with
    \eqref{eq:explosion_time_decomposition} completes the proof.
\end{proof}

The third step shows that the probability of at least one
success is asymptotic to the mean number of successes.

\begin{lemma}\label{lem:successful_attempt_count}
    Let $(\cL_{a})_{a>0}$ satisfy \eqref{eq:admissible_window}.
    Then, as $a\to\infty$,
    \begin{equation}\label{eq:successful_attempt_count}
        \dP_{x_{+}}\bigl(\cM(\cL_{a})\ge1\bigr)
        \sim \dE_{x_{+}}\bigl[\cM(\cL_{a})\bigr]\,,
    \end{equation}
    where $\cM$ is defined in \eqref{eq:def_counting_process}.
\end{lemma}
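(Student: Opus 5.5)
The upper bound is immediate: since $\cM(\cL_{a})$ is integer valued, Markov's inequality gives
\[
\dP_{x_{+}}\bigl(\cM(\cL_{a})\ge1\bigr)\le\dE_{x_{+}}\bigl[\cM(\cL_{a})\bigr].
\]
So the task is the matching lower bound. For this I will use the elementary identity
\[
\dE_{x_{+}}[\cM]-\dP_{x_{+}}(\cM\ge1)=\dE_{x_{+}}\bigl[(\cM-1)_{+}\bigr]\le\dE_{x_{+}}\Bigl[\tbinom{\cM}{2}\Bigr]
\]
with $\cM=\cM(\cL_{a})$. It then suffices to show that the second factorial moment is $o\bigl(\dE_{x_{+}}[\cM(\cL_{a})]\bigr)$. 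By Lemma~\ref{lem:mean_counting_asymptotics} and Lemma~\ref{lem:renewal_mean_asymptotics}, $\dE_{x_{+}}[\cM(\cL_{a})]\sim\cL_{a}/m(a)\to0$ by \eqref{eq:admissible_window}, so I expect the second factorial moment to be of order $(\cL_{a}/m(a))^{2}$.

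To bound it, I will expand
\[
\dE_{x_{+}}\Bigl[\tbinom{\cM}{2}\Bigr]=\sum_{j<k}\dE_{x_{+}}\bigl[\chi_{j}\,\1_{\{\cS_{j-1}<\cL_{a}\}}\,\chi_{k}\,\1_{\{\cS_{k-1}<\cL_{a}\}}\bigr].
\]
The event $\{\cS_{k-1}<\cL_{a}\}$ and $\chi_{j}$, $j<k$, are $\cF_{\cS_{k-1}}$-measurable, while $\chi_{k}$ is independent of $\cF_{\cS_{k-1}}$. Hence each summand equals $p_{a}\,\dE_{x_{+}}\bigl[\chi_{j}\1_{\{\cS_{k-1}<\cL_{a}\}}\bigr]$, where I have dropped $\1_{\{\cS_{j-1}<\cL_a\}}$ since it is implied by $\{\cS_{k-1}<\cL_a\}$. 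Next I split $\cS_{k-1}=\cS_{j}+(\cS_{k-1}-\cS_{j})$. The increment $\cS_{k-1}-\cS_{j}$ is independent of $\cF_{\cS_{j}}$, which contains $\chi_{j}$, and is distributed as $\cS_{k-1-j}$. Using $\cS_{j}\ge0$, I get
\[
\dE_{x_{+}}\bigl[\chi_{j}\1_{\{\cS_{k-1}<\cL_{a}\}}\bigr]\le\dE_{x_{+}}\bigl[\chi_{j}\1_{\{\cS_{j-1}<\cL_{a}\}}\bigr]\,\dP_{x_{+}}\bigl(\cS_{k-1-j}<\cL_{a}\bigr).
\]
Here I bound $\1_{\{\cS_{k-1}<\cL_a\}}\le\1_{\{\cS_{j-1}<\cL_a\}}\1_{\{\cS_{k-1}-\cS_j<\cL_a\}}$ and use independence of the second indicator from $\cF_{\cS_j}$. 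Summing over $k>j$ and then over $j$ gives
\[
\dE_{x_{+}}\Bigl[\tbinom{\cM}{2}\Bigr]\le p_{a}\,\dE_{x_{+}}\bigl[\cK(\cL_{a})\bigr]\cdot\dE_{x_{+}}\bigl[\cM(\cL_{a})\bigr],
\]
since $\sum_{i\ge0}\dP_{x_{+}}(\cS_{i}<\cL_{a})=\dE_{x_{+}}[\cK(\cL_{a})]$.

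Finally, \eqref{eq:M_K_mean_identity} gives $p_{a}\dE_{x_{+}}[\cK(\cL_{a})]=\dE_{x_{+}}[\cM(\cL_{a})]$. This quantity is $\sim\cL_{a}p_{a}/\mu_{a}\sim\cL_{a}/m(a)\to0$ by Lemmas~\ref{lem:mean_counting_asymptotics} and \ref{lem:renewal_mean_asymptotics} together with the upper bound in \eqref{eq:admissible_window}. Therefore the second factorial moment is bounded by $\dE_{x_+}[\cM]^{2}=o(\dE_{x_+}[\cM])$, and \eqref{eq:successful_attempt_count} follows.

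The only delicate point is the conditioning step. The pair $(\sigma_{j},\chi_{j})$ may be dependent, so I must condition at $\cS_{j}$ rather than at $\cS_{j-1}$, which is what the argument above does. Because $\chi_j$ is coupled with $\sigma_j$, I do not try to exploit the independence of $\chi_{j}$ from $\sigma_{j}$; I simply bound $\cS_{j}\ge0$, and this crude bound costs nothing. No overshoot estimate is needed beyond what Lemma~\ref{lem:mean_counting_asymptotics} already provides.
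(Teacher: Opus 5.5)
Your proof is correct and follows essentially the paper's argument. The bound $\dE_{x_+}[(\cM-1)_+]\le\dE_{x_+}\bigl[\tbinom{\cM}{2}\bigr]=\sum_{j<k}\dP_{x_+}(\cA_j\cap\cA_k)$, with $\cA_k=\{\cS_{k-1}<\cL_a,\,\chi_k=1\}$, is exactly the second-order Bonferroni inequality the paper uses. Your two-stage conditioning, first at $\cS_{k-1}$ and then at $\cS_j$ using $\cS_j\ge0$, is a term-by-term version of the paper's single conditioning at $\cS_i$ together with the monotonicity bound $g(\cL_a-\cS_i)\le g(\cL_a)$, and it yields the same bound $\dE_{x_+}[\cM(\cL_a)]^2$.
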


\begin{proof}
    Set $\cA_{k}:=\bigl\{\cS_{k-1}<\cL_{a},\,\chi_{k}=1\bigr\}$. Then
    \begin{equation*}
        \cM(\cL_{a}) = \sum_{k\ge1}\1_{\cA_{k}}\,,
        \qquad
        \bigl\{\cM(\cL_{a})\ge1\bigr\}
        = \bigcup_{k\ge1}\cA_{k}\,.
    \end{equation*}
    By the Bonferroni inequalities, applied to the
    finite unions $\bigcup_{k\le n}\cA_{k}$ and letting
    $n\to\infty$,
    \begin{equation*}
        \sum_{k\ge1}\dP_{x_{+}}(\cA_{k})
        -\sum_{i<j}\dP_{x_{+}}\bigl(\cA_{i}\cap\cA_{j}\bigr)
        \le \dP_{x_{+}}\Bigl(\bigcup_{k\ge1}\cA_{k}\Bigr)
        \le \sum_{k\ge1}\dP_{x_{+}}(\cA_{k})\,.
    \end{equation*}
    Since $\sum_{k\ge1}\dP_{x_{+}}(\cA_{k})
    =\dE_{x_{+}}\bigl[\cM(\cL_{a})\bigr]$, it remains to show
    that
    \begin{equation}\label{eq:second_moment_negligible}
        \sum_{i<j}\dP_{x_{+}}\bigl(\cA_{i}\cap\cA_{j}\bigr)
        \ll \dE_{x_{+}}\bigl[\cM(\cL_{a})\bigr]\,.
    \end{equation}
    Let $g(t):=\dE_{x_{+}}[\cM(t)]$. Since
    $(\sigma_{j},\chi_{j})_{j>i}$ is independent of
    $\cF_{\cS_{i}}$ and has the same law as
    $(\sigma_{j},\chi_{j})_{j\ge1}$, the strong Markov
    property at $\cS_{i}$ gives
    \begin{equation*}
        \dE_{x_{+}}\Bigl[\sum_{j>i}\1_{\cA_{j}}\,\Big|\,
        \cF_{\cS_{i}}\Bigr]
        = \1_{\{\cS_{i}<\cL_{a}\}}\,
        g\bigl(\cL_{a}-\cS_{i}\bigr)\,,
    \end{equation*}
    and hence, since $\cA_{i}\in\cF_{\cS_{i}}$,
    \begin{equation*}
        \sum_{i<j}\dP_{x_{+}}\bigl(\cA_{i}\cap\cA_{j}\bigr)
        = \sum_{i\ge1}\dE_{x_{+}}\bigl[\1_{\cA_{i}}\,
          \1_{\{\cS_{i}<\cL_{a}\}}\,
          g\bigl(\cL_{a}-\cS_{i}\bigr)\bigr]\,.
    \end{equation*}
    Since $g$ is nondecreasing, on the event
    $\{\cS_{i}<\cL_{a}\}$ we have
    $g(\cL_{a}-\cS_{i})\le g(\cL_{a})$, so that
    \begin{equation*}
        \sum_{i<j}\dP_{x_{+}}\bigl(\cA_{i}\cap\cA_{j}\bigr)
        \le g(\cL_{a})
        \sum_{i\ge1}\dP_{x_{+}}(\cA_{i})
        = \dE_{x_{+}}\bigl[\cM(\cL_{a})\bigr]^{2}\,.
    \end{equation*}
    By Lemmas~\ref{lem:mean_counting_asymptotics}
    and~\ref{lem:renewal_mean_asymptotics} and the upper bound
    in \eqref{eq:admissible_window}, we have
    $\dE_{x_{+}}\bigl[\cM(\cL_{a})\bigr]
    \sim\cL_{a}/m(a)\to0$. Hence the right-hand side is
    negligible compared with
    $\dE_{x_{+}}\bigl[\cM(\cL_{a})\bigr]$, which proves
    \eqref{eq:second_moment_negligible}, and
    \eqref{eq:successful_attempt_count} follows.
\end{proof}

We conclude with the fourth step, the comparison of
$\cS_{N-1}$ and $\cS_{N}$. The events
$\{\cS_{N-1}<\cL_{a}\}$ and $\{\cS_{N}<\cL_{a}\}$ differ
precisely when the first successful attempt starts before
time $\cL_{a}$ but ends at or after it. The next
lemma shows that this contribution is negligible.

\begin{lemma}\label{lem:terminal_attempt_negligible}
    Let $(\cL_{a})_{a>0}$ satisfy \eqref{eq:admissible_window}.
    Then, as $a\to\infty$,
    \begin{equation}\label{eq:terminal_attempt_negligible}
        \dP_{x_{+}}\bigl(\cS_{N-1}<\cL_{a}\le\cS_{N}\bigr)
        \ll \dP_{x_{+}}\bigl(\cS_{N-1}<\cL_{a}\bigr)\,.
    \end{equation}
    Consequently,
    \begin{equation}\label{eq:S_N_replacement}
        \dP_{x_{+}}\bigl(\cS_{N-1}<\cL_{a}\bigr)
        \sim \dP_{x_{+}}\bigl(\cS_{N}<\cL_{a}\bigr)\,.
    \end{equation}
\end{lemma}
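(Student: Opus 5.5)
The plan is to split the event according to the duration of the terminal (successful) attempt. Fix a threshold $s_a$ with $s_a\ll\cL_a$ and $\mu_a\ll s_a$; since $\mu_a\le\dE_{x_+}[\sigma_1^2]^{1/2}\le C\exp(2(\log a)^2)a^{1/4}\log a$ and $\cL_a\gg a\exp(4(\log a)^2)$, we can take, for instance, $s_a:=\sqrt{\cL_a\,\mu_a}$. On $\{\cS_{N-1}<\cL_a\le\cS_N\}$, either $\cS_{N-1}\in[\cL_a-s_a,\cL_a)$, or $\sigma_N>s_a$. We bound the two resulting probabilities separately, each by a quantity that is $o(\cL_a/m(a))$. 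The denominator satisfies $\dP_{x_+}(\cS_{N-1}<\cL_a)=\dP_{x_+}(\cM(\cL_a)\ge1)\sim\cL_a/m(a)$ by \eqref{eq:success_counting_identity} and Lemmas~\ref{lem:mean_counting_asymptotics}, \ref{lem:renewal_mean_asymptotics}, \ref{lem:successful_attempt_count}.

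For the first term, we use a union bound over $k$ together with the independence of $\chi_k$ from $\cF_{\cS_{k-1}}$:
\begin{equation*}
\dP_{x_+}\bigl(\cS_{N-1}\in[\cL_a-s_a,\cL_a)\bigr)\le p_a\sum_{k\ge1}\dP_{x_+}\bigl(\cS_{k-1}\in[\cL_a-s_a,\cL_a)\bigr)
= p_a\bigl(\dE_{x_+}[\cK(\cL_a)]-\dE_{x_+}[\cK(\cL_a-s_a)]\bigr).
\end{equation*}
The proof of Lemma~\ref{lem:mean_counting_asymptotics} gives the two-sided estimate $t/\mu_a\le\dE_{x_+}[\cK(t)]\le t/\mu_a+\dE[\sigma_1^2]/\mu_a^2$ for every $t$, via Wald and Lorden. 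Hence the difference is at most $(s_a+\dE[\sigma_1^2]/\mu_a)/\mu_a$. Multiplying by $p_a$ and using $p_a/\mu_a\sim1/m(a)$ yields $O\bigl((s_a+Ca\,e^{4(\log a)^2})/m(a)\bigr)=o(\cL_a/m(a))$, by the choice of $s_a$ and the lower bound in \eqref{eq:admissible_window}.

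For the second term, a union bound over $k$ together with independence of the $k$-th pair from $\cF_{\cS_{k-1}}$ gives
\begin{equation*}
\dP_{x_+}\bigl(\cS_{N-1}<\cL_a,\ \sigma_N>s_a\bigr)\le\dE_{x_+}[\cK(\cL_a)]\,\dP_{x_+}(\chi_1=1,\ \sigma_1>s_a).
\end{equation*}
It therefore suffices to show $\dP_{x_+}(\chi_1=1,\sigma_1>s_a)\ll p_a$. This is the main obstacle, because a crude Chebyshev bound $\dE[\sigma_1^2]/s_a^2$ need not be small relative to the exponentially small $p_a$. Instead, we condition on success, i.e.\ use the $h$-transform with $h(x)=\dP_x(\text{hit }x_-\text{ before }x_+)$. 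Under $\dP_{x_+}(\cdot\mid\chi_1=1)$, the attempt consists of leaving $I(a)$ and then crossing from $x_+-\delta(a)/2$ to $x_-$ without returning to $x_+$. The conditioned diffusion has drift $a-x^2+h'/h$, which is pushed towards $x_-$, and the Green function of the conditioned process gives
\begin{equation*}
\dE[H\mid\text{success}]=\int G(x,y)\,h(y)/h(x)\,dy .
\end{equation*}
We expect this to be polynomial in $a$ (the crossing time through the barrier, $O(a^{-1/2}\log a)$), with a corresponding exit-time bound from $I(a)$ via Lemma~\ref{lem:moment_bounds_exit_time_rho}. Markov's inequality then gives $\dP(\sigma_1>s_a\mid\chi_1=1)\le\mathrm{poly}(a)e^{2(\log a)^2}/s_a\to0$. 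Combined with $p_a\dE[\cK(\cL_a)]\sim\cL_a/m(a)$, this makes the second term $o(\cL_a/m(a))$.

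Summing the two bounds gives \eqref{eq:terminal_attempt_negligible}. Finally, $\{\cS_N<\cL_a\}\subset\{\cS_{N-1}<\cL_a\}$, and the difference of these two events is exactly the event estimated above, so \eqref{eq:S_N_replacement} follows immediately.
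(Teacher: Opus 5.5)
Your reduction matches the paper's. The window piece is handled by the same Wald--Lorden bound $p_a(\dE_{x_+}[\cK(\cL_a)]-\dE_{x_+}[\cK(\cL_a-s_a)])\le (p_a/\mu_a)(s_a+\dE_{x_+}[\cR(\cL_a)])$. The long-terminal-attempt piece is reduced, by a union bound and the strong Markov property, to $\dP_{x_+}(\sigma_1>s_a\mid\chi_1=1)\to0$.

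The gap is that this last statement, which is the real content of the lemma, is never proved. You only say you expect the conditional duration of a successful attempt to be polynomial in $a$, and that expectation is false. Conditioned on $\{T_{x_-}<T_{x_+}\}$, the path from $x_+-\delta(a)/2$ still relaxes into the well and stays there for a time comparable to a failed attempt before the final crossing.

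Concretely, take the Green function formula with the speed density $w=2e^{-2V}$, which you omitted: $\dE_x[H\mid T_{x_-}<T_{x_+}]=h(x)^{-1}\int_{x_-}^{x_+}G(x,y)h(y)w(y)\,dy$. Its part $y\le x$ equals $\int_{x_-}^{x}q(y)h(y)w(y)\,dy$. The region $|y-\sqrt a|\lesssim a^{-1/4}$ alone contributes an amount of order $e^{2(\log a)^2}/(\sqrt a\log a)$.

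So your closing bound ``$\mathrm{poly}(a)e^{2(\log a)^2}/s_a\to0$'' rests on an unproved and unspecified prefactor, and this matters. With $s_a=\sqrt{\cL_a\mu_a}$, what you actually know ($\mu_a\ge\delta(a)^2/16$ and $\cL_a\gg a e^{4(\log a)^2}$) gives only $s_a\gg a^{1/4}\log a\,e^{2(\log a)^2}$. A prefactor larger than $a^{1/4}\log a$ would therefore break the argument at the lower end of the window.

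The missing estimate is Lemma~\ref{lem:H_moment_bounds}\,(iv). Split the Green integral at $y=x$:
\begin{itemize}
\item for $y\le x$, $G(x,y)=h(x)q(y)$ and $q(y)h(y)\le q(x_+)-q(y)$;
\item for $y\ge x$, $h(y)\le h(x)$ and $G(x,y)\le q(x_+)-q(y)$.
\end{itemize}
Each piece is then at most $\int_{x_-}^{x_+}(q(x_+)-q(y))w(y)\,dy=2\cD(a)$. Bounding $V(z)-V(y)$ region by region gives $\cD(a)\le Ce^{2(\log a)^2}a^{1/4}\log a$. Markov's inequality then works for any $s_a$ with $e^{2(\log a)^2}a^{1/4}\log a\ll s_a\ll\cL_a$, and your $\sqrt{\cL_a\mu_a}$ qualifies by the computation above.

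The $\rho_1$ part also needs one explicit observation. A successful attempt exits $I(a)$ through $x_+-\delta(a)/2$, so the success probability from that point cancels in the conditional probability. This gives $\dP_{x_+}(\rho_1>s_a/2\mid\chi_1=1)\le 2\dE_{x_+}[\rho_1]/\bigl(s_a\,\dP_{x_+}(X_a(\rho_1)=x_+-\delta(a)/2)\bigr)\le 2/(s_a\sqrt a)$, by Lemma~\ref{lem:moment_bounds_exit_time_rho}\,(i) and \eqref{eq:downward_exit_probability}.
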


\begin{proof}
    Choose $\Delta(a)>0$ such that, as $a\to\infty$,
    \begin{equation}\label{eq:def_Delta}
        \exp\bigl(2(\log a)^{2}\bigr)\,a^{1/4}\log a
        \ll \Delta(a) \ll \cL_{a}\,.
    \end{equation}
    This is possible by the lower bound in
    \eqref{eq:admissible_window}. Splitting the event according to the position of $\cS_{N-1}$, we obtain
    \begin{equation}\label{eq:split_Delta}
        \begin{aligned}
            \dP_{x_{+}}\bigl(\cS_{N-1}<\cL_{a}\le\cS_{N}\bigr)
            &\le \dP_{x_{+}}\bigl(\cL_{a}-\Delta(a)
            \le\cS_{N-1}<\cL_{a}\bigr)\\
            &\quad+ \dP_{x_{+}}\bigl(\cS_{N-1}<\cL_{a}-\Delta(a),\,
            \sigma_{N}>\Delta(a)\bigr)\,.
        \end{aligned}
    \end{equation}
    For the first term on the right-hand side of \eqref{eq:split_Delta}, 
    if $\cL_{a}-\Delta(a)\le\cS_{N-1}<\cL_{a}$, then
    $\cM(\cL_{a})-\cM(\cL_{a}-\Delta(a))\ge1$. Hence, by Markov's inequality and
    \eqref{eq:M_K_mean_identity} together with Wald's
    identity~\eqref{eq:wald_identity_K} applied at $\cL_{a}$ and
    $\cL_{a}-\Delta(a)$,
    \begin{equation*}
        \begin{aligned}
            \dP_{x_{+}}\bigl(\cL_{a}-\Delta(a)
            \le\cS_{N-1}<\cL_{a}\bigr)
            &\le p_{a}\,\dE_{x_{+}}\bigl[\cK(\cL_{a})
            -\cK(\cL_{a}-\Delta(a))\bigr]\\
            &\le \frac{p_{a}}{\mu_{a}}
            \bigl(\Delta(a)
            +\dE_{x_{+}}\bigl[\cR(\cL_{a})\bigr]\bigr)\,.
        \end{aligned}
    \end{equation*}
    On the other hand, since $\{\cS_{N-1}<\cL_{a}\}=\{\cM(\cL_{a})\ge1\}$ by \eqref{eq:success_counting_identity}, Lemmas~\ref{lem:mean_counting_asymptotics}
    and~\ref{lem:successful_attempt_count} give
    \begin{equation}\label{eq:denominator_asymptotics}
        \dP_{x_{+}}\bigl(\cS_{N-1}<\cL_{a}\bigr)
        \sim \frac{p_{a}}{\mu_{a}}\,\cL_{a}\,.
    \end{equation}
    Since $\Delta(a)=o(\cL_{a})$ and
    $\dE_{x_{+}}\bigl[\cR(\cL_{a})\bigr]=o(\cL_{a})$ by
    \eqref{eq:sharp_estimate_overshoot}, the first term in \eqref{eq:split_Delta} is negligible compared with $\dP_{x_{+}}\bigl(\cS_{N-1}<\cL_{a}\bigr)$.
    For the second term in \eqref{eq:split_Delta}, 
    the union bound and the strong Markov property at the times $\cS_{k-1}$ give
    \begin{equation}\label{eq:terminal_tail_factorization}
        \begin{aligned}
            \dP_{x_{+}}\bigl(\cS_{N-1}<\cL_{a}-\Delta(a),\,
            \sigma_{N}>\Delta(a)\bigr)
            &\le \sum_{k\ge1}\dP_{x_{+}}
            \bigl(\cS_{k-1}<\cL_{a}-\Delta(a),\,
            \sigma_{k}>\Delta(a),\,\chi_{k}=1\bigr)\\
            &= \dP_{x_{+}}\bigl(\sigma_{1}>\Delta(a),\,
            \chi_{1}=1\bigr)\,
            \dE_{x_{+}}\bigl[\cK(\cL_{a}-\Delta(a))\bigr]\,.
        \end{aligned}
    \end{equation}
    The first factor in
    \eqref{eq:terminal_tail_factorization} equals
    $p_{a}\,\dP_{x_{+}}\bigl(\sigma_{1}>\Delta(a)\mid
    \chi_{1}=1\bigr)$. For the second factor, since $\cK$ is
    nondecreasing, \eqref{eq:attempt_count_asymptotics} gives
    \begin{equation*}
        \dE_{x_{+}}\bigl[\cK(\cL_{a}-\Delta(a))\bigr]
        \le \dE_{x_{+}}\bigl[\cK(\cL_{a})\bigr]
        \sim \frac{\cL_{a}}{\mu_{a}}\,.
    \end{equation*}
    Hence
    \begin{equation*}
        \dP_{x_{+}}\bigl(\cS_{N-1}<\cL_{a}-\Delta(a),\,
        \sigma_{N}>\Delta(a)\bigr)
        \le (1+o(1))\,
        \dP_{x_{+}}\bigl(\sigma_{1}>\Delta(a)
        \mid\chi_{1}=1\bigr)\,
        \frac{p_{a}}{\mu_{a}}\,\cL_{a}\,.
    \end{equation*}
    Comparing this bound with \eqref{eq:denominator_asymptotics}, we
    need only prove that, as $a\to\infty$,
    \begin{equation}\label{eq:successful_attempt_long_tail}
        \dP_{x_{+}}\bigl(\sigma_{1}>\Delta(a)
        \mid\chi_{1}=1\bigr)\to0\,.
    \end{equation}
    Since $\sigma_{1}=\rho_{1}+(\sigma_{1}-\rho_{1})$, the union bound gives
    \begin{equation*}
        \dP_{x_{+}}\bigl(\sigma_{1}>\Delta(a)\mid\chi_{1}=1\bigr)
        \le \dP_{x_{+}}\Bigl(\rho_{1}>\frac{\Delta(a)}{2}
        \,\Big|\,\chi_{1}=1\Bigr)
        + \dP_{x_{+}}\Bigl(\sigma_{1}-\rho_{1}
        >\frac{\Delta(a)}{2}\,\Big|\,\chi_{1}=1\Bigr)\,.
    \end{equation*}
    A successful attempt exits $I(a)$ through its lower endpoint
    $x_{+}-\delta(a)/2$, since a path from the upper endpoint
    must return to $x_{+}$ before reaching $x_{-}$. Hence, by the
    strong Markov property at $\rho_{1}$,
    \begin{equation*}
        \dP_{x_{+}}\Bigl(\rho_{1}>\frac{\Delta(a)}{2},\,
        \chi_{1}=1\Bigr)
        = \dP_{x_{+}}\Bigl(\rho_{1}>\frac{\Delta(a)}{2},\,
        X_{a}(\rho_{1})=x_{+}-\frac{\delta(a)}{2}\Bigr)\,
        \dP_{x_{+}-\delta(a)/2}\bigl(T_{x_{-}}<T_{x_{+}}\bigr)
    \end{equation*}
    and
    \begin{equation*}
        \dP_{x_{+}}\bigl(\chi_{1}=1\bigr)
        = \dP_{x_{+}}\Bigl(X_{a}(\rho_{1})
        =x_{+}-\frac{\delta(a)}{2}\Bigr)\,
        \dP_{x_{+}-\delta(a)/2}\bigl(T_{x_{-}}<T_{x_{+}}\bigr)\,,
    \end{equation*}
    so the hitting probability cancels in the ratio. By Markov's
    inequality,
    Lemma~\ref{lem:moment_bounds_exit_time_rho}\,\textup{(i)},
    and \eqref{eq:downward_exit_probability},
    \begin{equation}\label{eq:rho_cond_tail}
        \begin{aligned}
            \dP_{x_{+}}\Bigl(\rho_{1}>\frac{\Delta(a)}{2}
            \,\Big|\,\chi_{1}=1\Bigr)
            &= \dP_{x_{+}}\Bigl(\rho_{1}>\frac{\Delta(a)}{2}
            \,\Big|\,
            X_{a}(\rho_{1})=x_{+}-\frac{\delta(a)}{2}\Bigr)\\
            &\le \frac{2}{\Delta(a)}\,
            \frac{\dE_{x_{+}}[\rho_{1}]}
            {\dP_{x_{+}}\bigl(X_{a}(\rho_{1})
            =x_{+}-\frac{\delta(a)}{2}\bigr)}\\
            &\le \frac{2}{\Delta(a)\sqrt{a}}
            \to 0\,.
        \end{aligned}
    \end{equation}
    Similarly, by the strong Markov property at $\rho_{1}$ and
    the same cancellation,
    \begin{equation}\label{eq:post_exit_cond_tail}
        \begin{aligned}
            \dP_{x_{+}}\Bigl(\sigma_{1}-\rho_{1}
            >\frac{\Delta(a)}{2}\,\Big|\,\chi_{1}=1\Bigr)
            &= \dP_{x_{+}-\delta(a)/2}\Bigl(H>\frac{\Delta(a)}{2}
            \,\Big|\,T_{x_{-}}<T_{x_{+}}\Bigr)\\
            &\le \frac{2}{\Delta(a)}\,
            \dE_{x_{+}-\delta(a)/2}\bigl[H\mid
            T_{x_{-}}<T_{x_{+}}\bigr]\\
            &\le \frac{C}{\Delta(a)}\,
            \exp\bigl(2(\log a)^{2}\bigr)\,a^{1/4}\log a
            \to 0\,,
        \end{aligned}
    \end{equation}
    where the last inequality follows from
    Lemma~\ref{lem:H_moment_bounds}\,\textup{(iv)}, and the convergence 
    from the choice of $\Delta(a)$ in
    \eqref{eq:def_Delta}. Combining \eqref{eq:rho_cond_tail} and
    \eqref{eq:post_exit_cond_tail} yields
    \eqref{eq:successful_attempt_long_tail}, so the second term
    in \eqref{eq:split_Delta} is negligible as well. This proves
    \eqref{eq:terminal_attempt_negligible}. Since
    $\{\cS_{N}<\cL_{a}\}\subset\{\cS_{N-1}<\cL_{a}\}$,
    \begin{equation*}
        \dP_{x_{+}}\bigl(\cS_{N-1}<\cL_{a}\bigr)
        -\dP_{x_{+}}\bigl(\cS_{N}<\cL_{a}\bigr)
        = \dP_{x_{+}}\bigl(\cS_{N-1}<\cL_{a}\le\cS_{N}\bigr)\,,
    \end{equation*}
    so \eqref{eq:S_N_replacement} follows.
\end{proof}

We now combine the four lemmas to prove the proposition.

\begin{proof}[Proof of
Proposition~\ref{prop:renewal_hitting_asymptotics}]
    Combining \eqref{eq:success_counting_identity} with
    Lemmas~\ref{lem:mean_counting_asymptotics},
    \ref{lem:successful_attempt_count},
    and~\ref{lem:terminal_attempt_negligible} gives
    \begin{equation*}
        \dP_{x_{+}}\bigl(\cS_{N}<\cL_{a}\bigr)
        \sim \dP_{x_{+}}\bigl(\cM(\cL_{a})\ge1\bigr)
        \sim \frac{\cL_{a}}{\dE_{x_{+}}\bigl[\cS_{N}\bigr]}
        \sim \frac{\cL_{a}}{m(a)}\,,
    \end{equation*}
    where the last equivalence follows from
    Lemma~\ref{lem:renewal_mean_asymptotics}. This proves
    \eqref{eq:renewal_hitting_asymptotics}.
\end{proof}

We are now ready to prove
Theorem~\ref{thm:sharp_lower_tail_lowest_eigenvalue} by combining
Propositions~\ref{prop:hitting_reduction}
and~\ref{prop:renewal_hitting_asymptotics}.

\begin{proof}[Proof of
Theorem~\ref{thm:sharp_lower_tail_lowest_eigenvalue}]
    By \eqref{eq:eigenvalue_explosion} with $k=1$, it suffices to
    show that
    $\dP\bigl(\zeta_{a}^{(1)}<\cL_{a}\bigr)\sim\cL_{a}/m(a)$.
    Proposition~\ref{prop:hitting_reduction} gives
    \begin{equation*}
        (1-o(1))\,\dP_{x_{+}}\bigl(T_{x_{-}}<\bar{\cL}_{a}\bigr)
        \le \dP\bigl(\zeta_{a}^{(1)}<\cL_{a}\bigr)
        \le \dP_{x_{+}}\bigl(T_{x_{-}}<\cL_{a}\bigr)\,.
    \end{equation*}
    Since $T_{x_{-}}=\cS_{N}$ under $\dP_{x_{+}}$ by
    \eqref{eq:hitting_time_renewal_identity},
    Proposition~\ref{prop:renewal_hitting_asymptotics} establishes
    \eqref{eq:hitting_probability_asymptotics}, and we apply it to
    each side separately. The right-hand side is asymptotic to
    $\cL_{a}/m(a)$, since $(\cL_{a})_{a>0}$ satisfies
    \eqref{eq:admissible_window} by assumption. 
    For the left-hand side, recall that
    $\bar{\cL}_{a}=\cL_{a}-2\tau(a)-2\sqrt{a}$. Since
    $2\tau(a)+2\sqrt{a}=o(\cL_{a})$ under
    \eqref{eq:admissible_window}, $\bar{\cL}_{a}\sim\cL_{a}$
    and $(\bar{\cL}_{a})_{a>0}$ satisfies
    \eqref{eq:admissible_window} as well. Hence
    \begin{equation*}
        \dP_{x_{+}}\bigl(T_{x_{-}}<\bar{\cL}_{a}\bigr)
        \sim \frac{\bar{\cL}_{a}}{m(a)}
        \sim \frac{\cL_{a}}{m(a)}\,.
    \end{equation*}
    The two bounds thus agree to leading order, which proves the
    first equivalence in
    \eqref{eq:lower_tail_asymptotics}. The second follows
    from \eqref{eq:first_explosion_mean_asymptotics}.
\end{proof}

Theorem~\ref{thm:sharp_lower_tail_lowest_eigenvalue} requires
the window \eqref{eq:admissible_window}. The next lemma and
its corollary give upper bounds that hold for every $\cL>0$
and every $a>0$, with no restriction between the two. The
proof of the lemma is a Laplace transform argument of McKean
\cite{McK94}, in the form of the fixed-point equation
\cite[Eq.~(3.6)]{AD14}.

\begin{lemma}\label{lem:left_tail_upper_bound_fixed_L}
    For every $\cL>0$ and every $a>0$,
    \begin{equation*}
        \dP\bigl(\lambda_{1}(\cL)<-a\bigr)
        \le e\,\frac{\cL}{m(a)}\,.
    \end{equation*}
\end{lemma}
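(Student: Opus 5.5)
By \eqref{eq:eigenvalue_explosion} with $k=1$, it suffices to show $\dP(\zeta_{a}^{(1)}<\cL)\le e\,\cL/m(a)$ for all $\cL,a>0$. If $\cL\ge m(a)/e$ the right-hand side is at least $1$ and there is nothing to prove, so assume $\cL<m(a)/e$. The plan is a Chernoff-type bound through the Laplace transform of $\zeta:=\zeta_{a}^{(1)}$ under $\dP$ (started from $+\infty$). For every $\theta>0$, Markov's inequality applied to $e^{-\theta\zeta}$ gives
\begin{equation*}
    \dP(\zeta<\cL)\le e^{\theta\cL}\,\dE\bigl[e^{-\theta\zeta}\bigr]\,.
\end{equation*}
So we need an upper bound on $\dE[e^{-\theta\zeta}]$ that is of order $1/(\theta m(a))$, uniformly in $a$.

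For that bound, use McKean's fixed-point equation \cite[Eq.~(3.6)]{AD14}. It expresses $g_\theta(x):=\dE_x[e^{-\theta\zeta}]$ as the solution of the boundary-value problem $\tfrac12 g''+(a-x^2)g'=\theta g$, with $g(-\infty)=1$, and writes it through the scale density $e^{2V}$ and speed measure $e^{-2V}$. Integrating the equation twice from $-\infty$ gives, at $x=+\infty$,
\begin{equation*}
    1-g_\theta(+\infty)=2\theta\int_{-\infty}^{\infty}\!dx\int_{x}^{\infty}\!du\;e^{2V(x)-2V(u)}\,g_\theta(u)\,.
\end{equation*}
Since $g_\theta$ is nonincreasing in the starting point ($\zeta$ is stochastically larger from higher starting points), $g_\theta(u)\ge g_\theta(+\infty)$. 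With $g_\theta\equiv 1$ the same double integral equals $m(a)/2$, matching the formula quoted in the proof of Lemma~\ref{lem:renewal_mean_asymptotics}, i.e.\ $m(a)=2\int\!\int e^{2V(x)-2V(u)}$. We therefore obtain
\begin{equation*}
    1-g_\theta(+\infty)\ge\theta\,m(a)\,g_\theta(+\infty)\,,\qquad\text{that is,}\qquad \dE\bigl[e^{-\theta\zeta}\bigr]\le\frac{1}{1+\theta m(a)}\le\frac{1}{\theta m(a)}\,.
\end{equation*}

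Combining the two displays gives $\dP(\zeta<\cL)\le e^{\theta\cL}/(\theta m(a))$. Choosing $\theta=1/\cL$ yields exactly $e\,\cL/m(a)$.

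The main obstacle is the middle step: stating the fixed-point equation correctly in this normalization, with the right boundary behaviour at $\pm\infty$ for the entrance/exit boundaries, and justifying that $g_\theta$ is monotone in the starting point so that $g_\theta(u)\ge g_\theta(+\infty)$. Monotonicity should follow from a pathwise comparison argument: solutions of \eqref{eq:Riccati_SDE} driven by the same $B$ are ordered up to explosion. Alternatively, since the path from $+\infty$ must pass through $u$ before exploding, the strong Markov property gives $\zeta\ge T_u+\zeta\circ\theta_{T_u}$, hence $g_\theta(+\infty)\le g_\theta(u)$ directly.
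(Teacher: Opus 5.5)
Your proposal is correct and follows essentially the same route as the paper. The paper also combines Markov's inequality for $e^{-\zeta_a^{(1)}/\cL}$ with McKean's fixed-point equation \cite[Eq.~(3.6)]{AD14}, the monotonicity $g_\beta(u)\ge g_\beta(\infty)$, and the identification of the double integral with $m(a)/2$, which give $g_\beta(\infty)\le 1/(1+\beta m(a))$ at $\beta=1/\cL$ (your preliminary case split $\cL\ge m(a)/e$ is harmless but unnecessary).
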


\begin{proof}
    By \eqref{eq:eigenvalue_explosion} with $k=1$,
    $\dP\bigl(\lambda_{1}(\cL)<-a\bigr)
    =\dP\bigl(\zeta_{a}^{(1)}<\cL\bigr)$. For $\beta>0$,
    define
    \begin{equation*}
        g_{\beta}(y)
        := \dE_{y}\bigl[\exp\bigl(-\beta\zeta_{a}^{(1)}\bigr)
        \bigr]\,.
    \end{equation*}
    Since $\zeta_{a}^{(1)}$ is nondecreasing in the starting
    point, $g_{\beta}$ is nonincreasing and the limit
    $g_{\beta}(\infty):=\lim_{y\to\infty}g_{\beta}(y)$ exists.
    By monotone convergence, $g_{\beta}(\infty)$ is the Laplace
    transform of $\zeta_{a}^{(1)}$ for the diffusion started from
    $+\infty$. Recall from \cite[Eq.~(3.6)]{AD14} the fixed-point equation
    \begin{equation*}
        g_{\beta}(y)
        = 1-2\beta\int_{-\infty}^{y}\!dx\int_{x}^{\infty}\!du\,
        \exp\Bigl(2a(u-x)
        +\frac{2}{3}\bigl(x^{3}-u^{3}\bigr)\Bigr)\,
        g_{\beta}(u)\,.
    \end{equation*}
    Letting $y\to\infty$ and using
    $g_{\beta}(u)\ge g_{\beta}(\infty)$ together with
    \cite[Eqs.~(3.7) and (3.8)]{AD14}, we obtain
    \begin{equation*}
        g_{\beta}(\infty)
        \le 1-\beta\,m(a)\,g_{\beta}(\infty)\,,
    \end{equation*}
    and hence
    \begin{equation*}
        g_{\beta}(\infty) \le \frac{1}{1+\beta\,m(a)}\,.
    \end{equation*}
    Taking $\beta=1/\cL$ and applying Markov's inequality, we have
    \begin{equation*}
        \dP\bigl(\zeta_{a}^{(1)}<\cL\bigr)
        = \dP\bigl(\exp\bigl(-\zeta_{a}^{(1)}/\cL\bigr)
        >e^{-1}\bigr)
        \le e\,g_{1/\cL}(\infty)
        \le \frac{e}{1+m(a)/\cL}
        \le e\,\frac{\cL}{m(a)}\,.
    \end{equation*}
\end{proof}

\begin{corollary}\label{cor:second_eigenvalue_left_tail}
    For every $\cL>0$ and every $a>0$,
    \begin{equation*}
        \dP\bigl(\lambda_{2}(\cL)<-a\bigr)
        \le \Bigl(e\,\frac{\cL}{m(a)}\Bigr)^{2}\,.
    \end{equation*}
\end{corollary}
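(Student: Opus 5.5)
By \eqref{eq:eigenvalue_explosion} with $k=2$, we have $\dP(\lambda_{2}(\cL)<-a)=\dP(\zeta_{a}^{(2)}<\cL)$. We will write $\zeta_{a}^{(2)}=\zeta_{a}^{(1)}+\bigl(\zeta_{a}^{(2)}-\zeta_{a}^{(1)}\bigr)$. Both summands are nonnegative, and by the strong Markov property they are independent and each distributed as $\zeta_{a}^{(1)}$ under the law started from $+\infty$. This is exactly the renewal property recorded right after \eqref{eq:eigenvalue_explosion}: after the first explosion the diffusion restarts from $+\infty$.

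Since both increments are nonnegative, the event $\{\zeta_{a}^{(2)}<\cL\}$ is contained in
\begin{equation*}
\bigl\{\zeta_{a}^{(1)}<\cL\bigr\}\cap\bigl\{\zeta_{a}^{(2)}-\zeta_{a}^{(1)}<\cL\bigr\}.
\end{equation*}
By independence, the probability of this intersection is $\dP(\zeta_{a}^{(1)}<\cL)^{2}$. The proof of Lemma~\ref{lem:left_tail_upper_bound_fixed_L} shows that $\dP(\zeta_{a}^{(1)}<\cL)\le e\,\cL/m(a)$ for every $\cL,a>0$, so squaring gives the corollary.

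As an alternative that avoids the containment step, one can bound the event directly with the Laplace transform. Markov's inequality gives
\begin{equation*}
\dP(\zeta_{a}^{(2)}<\cL)\le e\,\dE\bigl[e^{-\zeta_{a}^{(2)}/\cL}\bigr]=e\,g_{1/\cL}(\infty)^{2}\le e\,\bigl(1+m(a)/\cL\bigr)^{-2}.
\end{equation*}
This is even slightly better than the stated bound, since $e\le e^{2}$. Either route works.

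There is no real obstacle. The only point to check is that the i.i.d.\ structure of the explosion increments holds for the diffusion started at the entrance boundary $+\infty$, and this was already stated in the paper via the strong Markov property and the restart from $+\infty$.
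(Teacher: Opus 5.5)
Your main argument matches the paper's proof: containment of $\{\zeta_a^{(2)}<\cL\}$ in the intersection of the two increment events, independence of the increments, and squaring the bound of Lemma~\ref{lem:left_tail_upper_bound_fixed_L}. Your alternative, Markov's inequality applied to $e^{-\zeta_a^{(2)}/\cL}$ together with the factorization $\dE\bigl[e^{-\zeta_a^{(2)}/\cL}\bigr]=g_{1/\cL}(\infty)^{2}$, is also correct and yields the slightly sharper constant $e$ in place of $e^{2}$.
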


\begin{proof}
    By \eqref{eq:eigenvalue_explosion} with $k=2$,
    $\dP\bigl(\lambda_{2}(\cL)<-a\bigr)
    =\dP\bigl(\zeta_{a}^{(2)}<\cL\bigr)$.
    Recall that
    $\zeta_{a}^{(1)}$ and $\zeta_{a}^{(2)}-\zeta_{a}^{(1)}$ are
    i.i.d.\ by the strong Markov property. Since
    $\{\zeta_{a}^{(2)}<\cL\}\subset
    \{\zeta_{a}^{(1)}<\cL\}\cap
    \{\zeta_{a}^{(2)}-\zeta_{a}^{(1)}<\cL\}$, we have
    \begin{equation*}
        \dP\bigl(\zeta_{a}^{(2)}<\cL\bigr)
        \le \dP\bigl(\zeta_{a}^{(1)}<\cL\bigr)^{2}
        \le \Bigl(e\,\frac{\cL}{m(a)}\Bigr)^{2}\,,
    \end{equation*}
    where the last inequality follows from
    Lemma~\ref{lem:left_tail_upper_bound_fixed_L}.
\end{proof}

\subsection{Concentration of the \texorpdfstring{$L^{1}$}{L1} norm of
the eigenfunction \texorpdfstring{$\varphi_{1}$}{phi1}}
\label{subsec:eigenfunction_concentration}
We close this section with the second main result, the
concentration of $\|\varphi_{1}\|_{1}$ on the rare event of
Theorem~\ref{thm:sharp_lower_tail_lowest_eigenvalue}.
Since the localization theory of \cite{DL20} describes the
bottom of the spectrum on events of probability tending to
one, it gives no information conditionally on the event
$\{\lambda_{1}<-a\}$, whose probability vanishes
under \eqref{eq:concentration_window} below.
Conditioning on $\{\lambda_{1}<-a\}$ also changes
the picture of the spectrum. Without the conditioning,
the gap
$\lambda_{2}-\lambda_{1}$ is of order $|\lambda_{1}|^{-1/2}$
\cite[Theorem~1]{DL20}, which at $\lambda_{1}=-a$
would be of order $a^{-1/2}$. In contrast, the
following theorem shows
that, conditionally on $\{\lambda_{1}<-a\}$, the bound
$\lambda_{2}\ge-(1-\theta)a$ holds with probability tending
to one, so the gap is at least $\theta a$. It also
gives the concentration of $a^{1/4}\|\varphi_{1}\|_{1}$ on
the same event.
\begin{theorem}\label{thm:eigenfunction_concentration}
    Let $(\cL_{a})_{a>0}$ satisfy, for some $\eta\in(0,1)$,
    \begin{equation}\label{eq:concentration_window}
        a\exp\bigl(4(\log a)^{2}\bigr) \ll \cL_{a}
        \qquad\text{and}\qquad
        \limsup_{a\to\infty}
        \frac{\log\cL_{a}}{\tfrac{8}{3}\,a^{3/2}} \le 1-\eta\,.
    \end{equation}
    Fix $\theta\in\bigl(0,\,1-(1-\eta/2)^{2/3}\bigr)$
    and $\ve>0$. Then, as $a\to\infty$,
    \begin{equation}\label{eq:eigenfunction_concentration}
        \dP\bigl(\lambda_{1}(\cL_{a})<-a\bigr)
        \sim\dP\bigl(\lambda_{1}(\cL_{a})<-a\,,\
        \lambda_{2}(\cL_{a})\ge-(1-\theta)a\,,\
        \bigl|a^{1/4}\|\varphi_{1}\|_{1}
        -\pi/\sqrt{2} \bigr|\le\ve\bigr)\,.
    \end{equation}
\end{theorem}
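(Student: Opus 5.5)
The plan is to show that each of the two "bad" events has probability $o\bigl(\cL_{a}\sqrt{a}\,e^{-\frac83 a^{3/2}}\bigr)$. The first is $\{\lambda_{2}(\cL_a)<-(1-\theta)a\}$. The second is $\{\lambda_{1}<-a,\ \lambda_{2}\ge-(1-\theta)a,\ |a^{1/4}\|\varphi_{1}\|_{1}-\pi/\sqrt2|>\ve\}$. The second condition in \eqref{eq:concentration_window} implies $\cL_a\ll m(a)$, so \eqref{eq:concentration_window} lies inside \eqref{eq:admissible_window}. Theorem~\ref{thm:sharp_lower_tail_lowest_eigenvalue} therefore gives $\dP(\lambda_{1}(\cL_a)<-a)\sim \cL_a\frac{\sqrt a}{\pi}e^{-\frac83 a^{3/2}}$, and this is the reference scale for both bad events.

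For the second-eigenvalue event I would use Corollary~\ref{cor:second_eigenvalue_left_tail} at level $(1-\theta)a$, together with \eqref{eq:first_explosion_mean_asymptotics}. This gives
\begin{equation*}
\dP\bigl(\lambda_2<-(1-\theta)a\bigr)\le e^{2}\,\cL_a^{2}\,m\bigl((1-\theta)a\bigr)^{-2}.
\end{equation*}
Dividing by $\cL_a/m(a)$ leaves, up to polynomial factors,
\begin{equation*}
\exp\Bigl(\tfrac83a^{3/2}\bigl[(1-\eta+o(1))+1-2(1-\theta)^{3/2}\bigr]\Bigr).
\end{equation*}
The exponent is negative precisely when $(1-\theta)^{3/2}>1-\eta/2$, which is the stated constraint on $\theta$. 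So this event is negligible.

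For the $L^{1}$ event I would use the variational principle \eqref{eq:minimax_characterization}. On $\{\lambda_1<-a\}$ the function $\varphi_1$ satisfies
\begin{equation*}
\langle\xi,\varphi_1^2\rangle<-a-\|\varphi_1'\|_2^2.
\end{equation*}
I would rescale around a center $c$ by writing $\varphi(x)=a^{1/4}\psi(\sqrt a(x-c))$ with $\|\psi\|_2=1$. Then $\langle\xi,\varphi^2\rangle$ is centered Gaussian with variance $a^{1/2}\|\psi\|_4^4$, so the Gaussian cost of the inequality is
\begin{equation*}
a^{3/2}\,\frac{(1+\|\psi'\|_2^2)^2}{2\|\psi\|_4^4}.
\end{equation*}
By the sharp Gagliardo--Nirenberg inequality this cost is at least $\frac83a^{3/2}$. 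Equality holds only for the translates of $\psi_*(y)=2^{-1/2}\operatorname{sech}(y)$, and each of these has $\|\psi_*\|_1=\pi/\sqrt2$. Compactness then gives a $\kappa=\kappa(\ve,K)>0$ such that every $\psi$ supported in $[-K,K]$ with $|\|\psi\|_1-\pi/\sqrt2|>\ve/2$ has cost at least $(\frac83+\kappa)a^{3/2}$.

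The argument then proceeds in three steps, each with a $\kappa$ fixed in advance.

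First, localization. I take $c$ to be a maximizer of $\varphi_1$, which is unique up to a mesh of spacing $a^{-1/2}$; there are $O(\cL_a\sqrt a)$ mesh points. On the gap event $\lambda_2-\lambda_1\ge\theta a$, I would show that the mass of $\varphi_1$ outside $[c-K/\sqrt a,c+K/\sqrt a]$ is $O(e^{-cK})$, except on an event of cost $\ge(\frac83+\kappa)a^{3/2}$. For this I would use an IMS-type cutoff: if substantial mass sat far from $c$, splitting $\varphi_1$ by a smooth partition would produce two nearly orthogonal test functions, each with energy below $-(1-\theta)a$, contradicting $\lambda_2\ge-(1-\theta)a$. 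The errors from the cutoff gradient are $O(a/K^2)$.

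Second, truncation. Replacing $\varphi_1$ by its cutoff $\tilde\varphi$ changes the energy by $o_K(1)a$ and the rescaled $L^1$ norm by $o_K(1)$. Hence on the bad event, for some mesh point $c$,
\begin{equation*}
\sup_{\psi\in\mathcal B_K}\bigl(-\langle\xi,\varphi_{c,\psi}^2\rangle-a\|\psi'\|_2^2\bigr)\ge a(1-o_K(1)),
\end{equation*}
where $\mathcal B_K$ is the set of profiles supported in $[-K,K]$, with $\|\psi\|_2=1$, bounded $H^1$ norm, and $L^1$ norm more than $\ve/2$ away from $\pi/\sqrt2$. Large $\|\psi'\|_2$ only helps the bound, so the $H^1$ restriction is harmless.

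Third, Gaussian concentration. By Dudley's bound the expected supremum of this Gaussian field over $\mathcal B_K$ is $O(a^{1/4})\ll a$. Borell--TIS, applied after splitting $\mathcal B_K$ into finitely many pieces according to the value of $\|\psi'\|_2$, then bounds each mesh point's probability by $e^{-(\frac83+\kappa/2)a^{3/2}}$. The union bound over mesh points gives $O(\cL_a\sqrt a)\,e^{-(\frac83+\kappa/2)a^{3/2}}$, which is negligible.

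The main obstacle is the localization step: turning the spectral gap into a bound on the mass of $\varphi_1$ far from its peak that is uniform over the random center. The difficulty is that this must hold with error costing strictly more than $\frac83a^{3/2}$, and $K$ and $\kappa$ have to be chosen consistently. If the IMS argument does not close, my fallback is the Riccati representation. On $[c,\cL_a]$ the function $\varphi_1'/\varphi_1$ follows \eqref{eq:Riccati_SDE} at level $-\lambda_1>a$. Within $O(K/\sqrt a)$ of the peak it must therefore enter the region below $-\sqrt{a}/2$, and the estimates of Appendix~\ref{apx:Auxiliary_estimates} would give exponential decay of $\varphi_1$ at rate of order $\sqrt a$.
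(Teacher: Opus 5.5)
Your reduction to the two bad events, the reference scale $\cL_a\sqrt a\,e^{-\frac83a^{3/2}}/\pi$, and your treatment of $\{\lambda_2(\cL_a)<-(1-\theta)a\}$ via Corollary~\ref{cor:second_eigenvalue_left_tail} coincide with the paper's proof. The $L^1$ event, however, has a genuine gap, exactly at the point you call the main obstacle.

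\textbf{The cost gap on your class degenerates.} Your class $\mathcal B_K$ (support in $[-K,K]$, $\|\psi\|_2=1$, bounded $H^1$ norm, $L^1$ deviation $>\ve/2$) carries no tightness. Take $g\ge0$ a flat profile of unit $L^2$ norm spread over $[-K,K]$ and set $\delta=\ve K^{-1/2}$. The normalized $\psi\propto\cQ+\delta g$ (with $\cQ$ cut off at $\pm K$) satisfies $\|\psi\|_1\approx\pi/\sqrt2+\sqrt2\,\ve$, yet $\cJ(\psi)=\frac83+O(\ve/K)$. Hence $\kappa(\ve,K)\to0$ as $K\to\infty$. The soft compactness argument gives no rate to compare with the $o_K(1)$ cutoff losses. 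Fixing ``a $\kappa$ in advance'' is also circular, since $\kappa$ depends on $K$.

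\textbf{The localization is not established.} Three problems remain. First, an exceptional event of cost $(\frac83+\kappa)a^{3/2}$ ties the localization to the same $\kappa$. Second, the gap does not directly control where a maximizer of $\varphi_1$ sits, since you have no a priori bound on $\|\varphi_1'\|_2$ on the event. Third, unless your decay holds at every scale, a bound on the $L^2$ mass outside one window does not control the $L^1$ mass of the tail. That tail lives on an interval of length $\cL_a$, possibly of order $e^{\frac83(1-\eta)a^{3/2}}$. A small, spread-out tail can move $\|\varphi_1\|_1$ at almost no cost, which is exactly the failure mode the paper warns about.

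\textbf{The missing idea.} On the gap event, localization is deterministic and comes from a Poincar\'e inequality for $\varphi_1^2\,dt$. Combine the IMS identity $\cE_{\cL}(g\varphi_1,g\varphi_1)=\lambda_1\|g\varphi_1\|_2^2+\|g'\varphi_1\|_2^2$ (the noise terms cancel) with the min-max characterization of $\lambda_2$ applied to $g\varphi_1\perp\varphi_1$. This gives $(\lambda_2-\lambda_1)\int g^2\varphi_1^2\le\int(g')^2\varphi_1^2$ for Lipschitz $g$ with $\int g\varphi_1^2=0$.
\begin{itemize}
\item With $g(t)=t-\fm_{\varphi_1}$ you get $\int(t-\fm_{\varphi_1})^2\varphi_1^2\le1/(\theta a)$.
\item A weighted Cauchy--Schwarz then gives $\int_{\{|t-\fm_{\varphi_1}|>r\}}\varphi_1\le(2/(\theta ar))^{1/2}$, independently of the interval length.
\end{itemize}

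\textbf{How the paper builds the class.} The paper centers at the mesh point $U$ nearest the center of mass $\fm_{\varphi_1}$. It builds the second-moment bound $\int(t-U)^2f^2\le C_\theta/a$ into the deterministic class. It imposes the $L^1$ deviation only on $[U-Ra^{-1/2},U+Ra^{-1/2}]$, with $R=R(\theta,\ve)$ fixed by the tail bound above. After rescaling this is a single class independent of $a$ and $U$. The moment bound gives tightness, hence $L^2(\bR)$ compactness and a gap $c_1(\ve,\theta)>0$, with no truncation parameter at all.

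\textbf{The Gaussian step.} The paper works with $Y_f=-\langle\xi,f^2\rangle/(a+\|f'\|_2^2)$, for which $\Var(Y_f)=1/(2J_a(f))$ and $\dE\sup_f Y_f=O(a^{-3/4})$. One application of Borell--TIS per mesh point then covers all values of $\|f'\|_2$. Your splitting by $\|\psi'\|_2$ and the restriction to bounded $H^1$ norm become unnecessary.
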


\begin{remark}\label{rem:conditions_and_constant}
We comment on the conditions of
Theorem~\ref{thm:eigenfunction_concentration}, the
normalization $a^{1/4}$, and the constant $\pi/\sqrt{2}$.
\begin{enumerate}
    \item[\textup{(i)}] The lower bound on $\cL_{a}$ is the same as
    the lower bound in \eqref{eq:admissible_window}. As noted in
    Remark~\ref{rem:admissible_window}, it is technical.
        \item[\textup{(ii)}] The upper bound on $\cL_{a}$ and the
    condition on $\theta$ are used for the second event of
    \eqref{eq:eigenfunction_concentration}. By
    Corollary~\ref{cor:second_eigenvalue_left_tail} and
    \eqref{eq:first_explosion_mean_asymptotics},
    $\dP\bigl(\lambda_{2}(\cL_{a})<-(1-\theta)a\bigr)$ is negligible
    compared with $\dP\bigl(\lambda_{1}(\cL_{a})<-a\bigr)$ whenever
    \begin{equation*}
        \limsup_{a\to\infty}\frac{\log\cL_{a}}{a^{3/2}}
        < \frac{8}{3}\bigl(2(1-\theta)^{3/2}-1\bigr)\,,
    \end{equation*}
    and the second condition in
    \eqref{eq:concentration_window}, together with the range of
    $\theta$, ensures this inequality (see the proof of
    Theorem~\ref{thm:eigenfunction_concentration}).
    Combined with the first event, this gives the gap
    $\lambda_{2}-\lambda_{1}>\theta a$.
    \item[\textup{(iii)}]  By the variational characterization
\eqref{eq:minimax_characterization}, the event
$\{\lambda_{1}(\cL)<-a\}$ occurs if and only if
there exists $f\in H_{0}^{1}(0,\cL)$ with $\|f\|_{2}=1$
such that $-\langle\xi,f^{2}\rangle>a+\|f'\|_{2}^{2}.$ For a fixed deterministic $f$, the pairing $-\langle\xi,f^{2}\rangle$ is a centered Gaussian with variance $\|f\|_{4}^{4}$. Hence the probability of this inequality is at most $\exp(-J_{a}(f))$, where \begin{equation}\label{eq:def_J_functional} J_{a}(f) := \frac{\bigl(a+\|f'\|_{2}^{2}\bigr)^{2}} {2\,\|f\|_{4}^{4}}\,. \end{equation} 
Thus $J_a(f)$ describes the Gaussian cost associated with the above inequality for the fixed profile $f$. This suggests identifying the most favorable profiles by minimizing $J_a$.  For the corresponding variational problem on $\bR$, the infimum over all $L^{2}$-normalized functions in $H^{1}(\bR)$ equals $\frac{8}{3}a^{3/2}$, and the minimizers are, up to sign, the translates of $t\mapsto a^{1/4}\cQ(\sqrt{a}\,t)$, where $\cQ:=\frac{1}{\sqrt{2}}\operatorname{sech}$ (see \cite[Theorem~A.1]{Fra14} and the proof of Lemma~\ref{lem:variational_gap}). Every minimizer $f$ satisfies 
\[ a^{1/4}\|f\|_{1} = \int_{\bR}\cQ(y)\,dy = \frac{\pi}{\sqrt{2}}\,. \]
    
The minimizing profiles therefore predict the value
$\pi/\sqrt{2}$ for the rescaled $L^{1}$ norm of the ground
state conditional on $\{\lambda_{1}(\cL)<-a\}$.
The third event of \eqref{eq:eigenfunction_concentration}
makes this prediction precise. The same variational structure governs the ground state of Hill's operator with white noise potential on a fixed circle, through the exact formula of \cite{CM99} and the Laplace analysis of \cite{CRR06}. The value is also consistent with the shape of $\varphi_{1}$ in the typical regime \cite[Theorem~2]{DL20}. The minimizing profile $\cQ$ also agrees with the rescaled eigenfunction profile obtained in the typical regime \cite[Theorem~2]{DL20}.

\end{enumerate}
\end{remark}

The proof of Theorem~\ref{thm:eigenfunction_concentration}
consists of two estimates. The first bounds the
probability that the spectral gap fails and is provided
by Corollary~\ref{cor:second_eigenvalue_left_tail}. 
The following proposition provides the second, a bound on the
probability that $a^{1/4}\|\varphi_{1}\|_{1}$ is far from
$\pi/\sqrt{2}$ while the gap holds. The bound holds for every
$\cL$ and all sufficiently large $a$, with no restriction
between the two.
\begin{proposition}\label{prop:eigenfunction_L1_norm}
    Fix $\theta\in(0,1)$ and $\ve>0$. There exist
    $c_{1},c_{2},a_{0}>0$, depending only on $\theta$ and $\ve$,
    such that for every $\cL>0$ and every $a\ge a_{0}$,
    \begin{equation}\label{eq:eigenfunction_L1_norm_bound}
        \begin{aligned}
            \dP\Bigl(\lambda_{1}(\cL)<-a\,,\
            \lambda_{2}(\cL)&\ge-(1-\theta)a\,,\
            \bigl|a^{1/4}\|\varphi_{1}\|_{1}
            -\pi/\sqrt{2}\bigr|>\ve\Bigr) \\
            &\le (\cL\sqrt{a}+2)
            \exp\Bigl(-\Bigl(\frac{8}{3}+c_{1}\Bigr)a^{3/2}
            +c_{2}\,a^{3/4}\Bigr)\,.
        \end{aligned}
    \end{equation}
\end{proposition}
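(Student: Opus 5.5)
We plan to follow the strategy sketched in the introduction: a mesh of candidate centers, spectral-gap localization, a reduction to profiles on a fixed window, and a Borell--TIS bound. We work in rescaled variables $s=\sqrt{a}\,(t-t_{0})$, under which $a^{-1/4}\varphi_{1}(t_{0}+s/\sqrt{a})=:\psi(s)$ is $L^{2}$-normalized and $a^{1/4}\|\varphi_{1}\|_{1}=\|\psi\|_{1}$. The rescaled noise is again white noise with intensity $a^{-3/4}$. This is where the prefactor $a^{3/2}=(a^{3/4})^{2}$ in the Gaussian cost comes from, and why the error terms are of order $a^{3/4}$.

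\textbf{Step 1: mesh of centers.} Place a mesh $t_{j}=j/\sqrt{a}$, $0\le j\le\cL\sqrt{a}+1$, which gives at most $\cL\sqrt a+2$ centers; this accounts for the prefactor. On the event in \eqref{eq:eigenfunction_L1_norm_bound}, let $t^{*}$ be a point where $\varphi_{1}$ attains its maximum and let $t_{j}$ be the nearest mesh point. We then apply a union bound over $j$, so it suffices to bound the probability for a fixed center $t_j$, uniformly in $j$.

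\textbf{Step 2: localization from the gap.} On $\{\lambda_{1}<-a,\ \lambda_{2}\ge-(1-\theta)a\}$ we want to show that, for a large constant $R$ depending on $\theta,\ve$, the rescaled mass of $\psi$ outside $[-R,R]$ is at most $\ve'$. Both the $L^{2}$ mass and the $L^{1}$ mass need to be controlled there.

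\begin{itemize}
\item \emph{Approach.} We would try an Agmon-type estimate applied to the Riccati description. Away from the region where $\varphi_{1}$ is large, $X=\varphi_{1}'/\varphi_{1}$ is close to $\mp\sqrt{a}$ in the typical sense, which gives exponential decay of $\varphi_1$ at rate $\sqrt{a}$.
\item \emph{Alternative.} Compare with the operator truncated away from the window. By the gap, the operator restricted to the complement has bottom $\ge-(1-\theta)a$ up to an exponentially small boundary error. The eigenfunction equation then gives decay $e^{-\sqrt{\theta a}\,|t-t^{*}|}$ for the tail mass, via an IMS localization: write $\varphi_1=\chi\varphi_1+(1-\chi)\varphi_1$ with a cutoff $\chi$ of width $R/\sqrt a$, and use that the localization error costs $O(a/R^{2})$.
\end{itemize}

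Concretely, we would show that the quadratic form of $(1-\chi)\varphi_{1}$ is at least $-(1-\theta)a\|(1-\chi)\varphi_1\|^{2}-O(a/R^{2})$, while the total form equals $\lambda_{1}<-a$. Since $(1-\chi)\varphi_1$ is not orthogonal to $\varphi_1$, we would instead argue by contradiction with the min-max characterization \eqref{eq:minimax_characterization}. If both $\chi\varphi_{1}$ and $(1-\chi)\varphi_{1}$ carried non-negligible mass with energy below $-(1-\theta/2)a$, then, being nearly disjointly supported, they would span a two-dimensional space with form below $-(1-\theta)a$. This contradicts $\lambda_{2}\ge-(1-\theta)a$. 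Hence the rescaled tail $L^{2}$ mass is small. $L^{1}$ control of the tail then follows from exponential decay of $\psi$ beyond $R$, obtained from the ODE in the classically forbidden region.

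\textbf{Step 3: variational gap and Borell--TIS.} With the mass localized, $\psi$ restricted to $[-R,R]$ (after cutoff and renormalization) is a profile $f$ satisfying the following three conditions:
\begin{itemize}
\item $\|f\|_2=1$;
\item $\bigl|\|f\|_{1}-\pi/\sqrt2\bigr|>\ve/2$;
\item $-\langle\xi,f^{2}\rangle>a+\|f'\|_{2}^{2}-o(a)$ in original units.
\end{itemize}
We would prove a \emph{variational gap}, Lemma~\ref{lem:variational_gap}. Among $H^{1}_0(-R,R)$ profiles at $L^{1}$-distance $>\ve/2$ from the translates of $\cQ$, the minimum of $J_{a}$ exceeds $(\frac83+2c_{1})a^{3/2}$. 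The proof combines the sharp Gagliardo--Nirenberg inequality $\|f\|_4^4\le C\|f'\|_2\|f\|_2^3$ with compactness on the bounded window: concentration-compactness is trivial there, and $L^{1}$ is continuous under $H^{1}$ convergence.

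We then bound the supremum of the Gaussian field $f\mapsto-\langle\xi,f^{2}\rangle/(a+\|f'\|^{2})$ over this compact set $\mathcal K$. After rescaling, its variance is at most $1/(2\inf J_a)$. Its expectation is $O(a^{-3/4})$ in the normalized scale, by Dudley's bound on $\mathcal K$; here we restrict to $\|f'\|_2\le Ka^{1/2}$, since large gradients are too expensive. Borell--TIS then yields probability at most $\exp(-(\frac83+c_{1})a^{3/2}+c_{2}a^{3/4})$. The $c_{2}a^{3/4}$ term comes from the expected supremum multiplied by the inverse variance scale, and also absorbs the $o(a)$ localization error. For the latter we need the error to be $O(a^{1/4})$ relative to $a$; we would choose $R$ and the cutoffs so that this holds.

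\textbf{Main obstacle.} The main obstacle is Step 2: extracting quantitative localization, uniformly on the rare event, with errors small enough that they cost only $O(a^{3/4})$ in the exponent. If the min-max argument yields only an $o(1)$ relative error in energy, we would instead keep the energy error as a separate small parameter. It would then be absorbed by shrinking $c_{1}$, because the variational gap is strict and stable under small perturbations of $a$.
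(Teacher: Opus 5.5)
Your overall architecture (mesh of centers, gap-based localization, a variational gap from sharp Gagliardo--Nirenberg plus compactness, then Borell--TIS) matches the paper. However, Step~2, which you correctly flag as the crux, has a genuine gap, namely control of the $L^1$ tail of $\varphi_1$.

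Your min-max/IMS sketch, even if completed, shows only that the \emph{unweighted} $L^2$ mass outside one fixed window is small. That cannot control the $L^1$ norm: a tail of $L^2$ mass $\mu$ spread over an interval whose length is comparable to $\cL$ (arbitrary in the statement) has $L^1$ mass up to $\sqrt{\mu\,\cL}$. In addition, the ``near-disjointness'' of $\chi\varphi_1$ and $(1-\chi)\varphi_1$ is unjustified when the transition zone carries mass.

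The mechanism you invoke for the $L^1$ tail is not available. You propose pointwise exponential decay from the Riccati equation (``$X$ close to $\mp\sqrt a$ in the typical sense'') or from the ODE ``in the classically forbidden region''. But $\xi$ is a distribution, you are working on a rare event, and nothing is known about the noise off the window beyond $\lambda_2\ge-(1-\theta)a$. Anchoring the mesh at the argmax $t^*$ adds a further unproven link, since nothing forces a non-negligible part of the mass to lie within $O(a^{-1/2})$ of $t^*$.

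The missing idea is a \emph{weighted} localization, and your own IMS computation gives it if you apply it to $g\varphi_1$ rather than to a cutoff. Take a Lipschitz $g$ with $\int g\varphi_1^2=0$. Then
\[
\cE_{\cL}(g\varphi_1,g\varphi_1)=\cE_{\cL}(\varphi_1,g^2\varphi_1)+\int(g')^2\varphi_1^2=\lambda_1\int g^2\varphi_1^2+\int(g')^2\varphi_1^2,
\]
because the noise terms coincide (both involve $g^2\varphi_1^2$). Since $g\varphi_1\perp\varphi_1$, min-max bounds the left side below by $\lambda_2\int g^2\varphi_1^2$.

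With $g(t)=t-\fm_{\varphi_1}$ this gives $\int(t-\fm_{\varphi_1})^2\varphi_1^2\le1/(\theta a)$. Cauchy--Schwarz against $(t-\fm_{\varphi_1})^{-2}$ then yields $\int_{|t-\fm_{\varphi_1}|>r}\varphi_1\le(2/(\theta a r))^{1/2}$, i.e.\ a rescaled $L^1$ tail of at most $\sqrt{2/\theta}\,(R-1)^{-1/2}$, with no decay estimate needed.

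This also lets you drop the cutoff. Center the mesh at $\fm_{\varphi_1}$ and place $\varphi_1$ itself in the deterministic class defined by $f\ge0$, $\|f\|_2=1$, $\int(t-U)^2f^2\le C_\theta/a$, and $|a^{1/4}\int_{I_{U,R}}f-\pi/\sqrt2|>\ve/2$. Then $-\langle\xi,\varphi_1^2\rangle>a+\|\varphi_1'\|_2^2$ holds exactly. The energy error of order $\kappa(R)\,a$ from truncation never appears; otherwise you would have to balance it quantitatively against an $R$-dependent variational gap on $H_0^1(-R,R)$, which you did not address. The second-moment constraint is also what supplies tightness for the compactness argument on $\bR$.

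Finally, restricting to $\|f'\|_2\le Ka^{1/2}$ because ``large gradients are too expensive'' is a statement about variances, not a bound on the expected supremum. Instead, integrate by parts against the rescaled Brownian path $W_U$, a standard two-sided Brownian motion. This gives $|Y_f|\le a^{-3/4}(1+C_\theta)^{1/2}\sup_y|W_U(y)|/\sqrt{1+y^2}$ uniformly in $\|\psi'\|_2$, because $2\|\psi'\|_2/(1+\|\psi'\|_2^2)\le1$. Hence $\dE[Z_U]=O(a^{-3/4})$ without any entropy bound.
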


We outline the proof. In view of the Gaussian estimate
for a fixed function in
Remark~\ref{rem:conditions_and_constant}\,\textup{(iii)},
we seek a uniform increase over the minimum cost
$\frac{8}{3}a^{3/2}$ for the profiles that can occur
on the event in
\eqref{eq:eigenfunction_L1_norm_bound}.
Such an increase will provide the additional
exponential decay in the desired probability bound.
There are two difficulties in implementing this idea.

First, the global $L^{1}$ deviation alone need not
yield a uniform increase in the cost.
A tail of small amplitude spread over a long interval
can change the $L^{1}$ norm significantly while
having little effect on $J_a$. We use the spectral gap to control this tail contribution. More precisely,  let $\fm_{\varphi_{1}}$ be the expectation of $t$
under the probability measure $\varphi_{1}^{2}\,dt$,
\begin{equation}\label{eq:def_center_of_mass}
    \fm_{\varphi_{1}}
    := \int_{0}^{\cL}t\,\varphi_{1}(t)^{2}\,dt\,.
\end{equation}
On the event in Proposition~\ref{prop:eigenfunction_L1_norm},
the spectral gap is at least $\theta a$.
Lemma~\ref{lem:spectral_gap_concentration} then
controls the $L^{1}$ mass away from
$\fm_{\varphi_{1}}$.
This allows us to localize the global $L^{1}$
deviation to an interval of length of order
$a^{-1/2}$.

Second, the Gaussian bound $\exp(-J_a(f))$
applies to a fixed deterministic function $f$,
whereas $\varphi_{1}$ depends on $\xi$.
We must therefore control all possible profiles,
including their locations.
We cover the possible localization centers by
a deterministic mesh of scale $a^{-1/2}$.
For each mesh point, we consider the deterministic
class of functions satisfying the localization
and local $L^{1}$ constraints above.
Lemma~\ref{lem:global_to_local} shows that
$\varphi_{1}$ belongs to one of these classes
on the event under consideration.

On each class, the characterization of the minimizers
and a compactness argument yield
\[
    J_a(f)\ge
    \left(\frac{8}{3}+c_{1}\right)a^{3/2}
\]
for some $c_{1}>0$
(Lemma~\ref{lem:variational_gap}).
We then apply the Borell--TIS inequality to the
Gaussian supremum over each class, using this
cost gap and a bound on the expected supremum.
Finally, a union bound over the mesh points gives
\eqref{eq:eigenfunction_L1_norm_bound},
with the prefactor $\cL\sqrt{a}+2$.

The following lemma is the Poincar\'e inequality for
$\varphi_{1}^{2}\,dt$. For Schr\"odinger operators with Kato
class potentials, it is the variational representation of
the spectral gap in \cite[Corollary~1.3]{KS87}. For general
symmetric diffusion operators, we refer to
\cite[Section~4.2]{BGL14}. Since $\xi$ is a distribution,
these results do not apply directly, and we verify the
inequality through the form \eqref{eq:FN_form}.
\begin{lemma}\label{lem:spectral_gap_concentration}
    Almost surely, for every Lipschitz function $g$
    on $[0,\cL]$ with $\int_{0}^{\cL}g\,\varphi_{1}^{2}\,dt=0$,
    \begin{equation}\label{eq:poincare_ground_state}
        \bigl(\lambda_{2}(\cL)-\lambda_{1}(\cL)\bigr)
        \int_{0}^{\cL}g^{2}\varphi_{1}^{2}\,dt
        \le \int_{0}^{\cL}(g')^{2}\varphi_{1}^{2}\,dt\,.
    \end{equation}
    In particular,
    \begin{equation}\label{eq:gap_localization}
        \int_{0}^{\cL}\bigl(t-\fm_{\varphi_{1}}\bigr)^{2}
        \varphi_{1}(t)^{2}\,dt
        \le \frac{1}{\lambda_{2}(\cL)-\lambda_{1}(\cL)}\,,
    \end{equation}
    and, for every $r>0$,
    \begin{equation}\label{eq:L1_tail}
        \int_{\{|t-\fm_{\varphi_{1}}|>r\}}
        \bigl|\varphi_{1}(t)\bigr|\,dt
        \le \biggl(\frac{2}
        {\bigl(\lambda_{2}(\cL)-\lambda_{1}(\cL)\bigr)\,r}
        \biggr)^{1/2}\,.
    \end{equation}
\end{lemma}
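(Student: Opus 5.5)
The plan is to establish \eqref{eq:poincare_ground_state} through the quadratic form \eqref{eq:FN_form}, using a ground-state transform. Fix a Lipschitz $g$ with $\int g\varphi_{1}^{2}\,dt=0$ and set $u:=g\varphi_{1}$. Then $u\in H_{0}^{1}(0,\cL)$, since $\varphi_{1}\in H_{0}^{1}$ and $g$ is Lipschitz. Moreover $\langle u,\varphi_{1}\rangle=\int g\varphi_{1}^{2}=0$, so the second identity in \eqref{eq:minimax_characterization} gives
\begin{equation*}
\lambda_{2}(\cL)\,\|u\|_{2}^{2}\le\cE_{\cL}(u,u)\,.
\end{equation*}

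The key computation is the identity
\begin{equation*}
\cE_{\cL}(g\varphi_{1},g\varphi_{1})
=\lambda_{1}(\cL)\int g^{2}\varphi_{1}^{2}\,dt+\int (g')^{2}\varphi_{1}^{2}\,dt\,.
\end{equation*}
To derive it, expand
\begin{equation*}
\bigl((g\varphi_{1})'\bigr)^{2}=(g')^{2}\varphi_{1}^{2}+2gg'\varphi_{1}\varphi_{1}'+g^{2}(\varphi_{1}')^{2}\,.
\end{equation*}
Then apply the weak eigen-relation \eqref{eq:weak_eigen_relation} with the test function $v=g^{2}\varphi_{1}\in H_{0}^{1}$. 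This gives
\begin{equation*}
\int\varphi_{1}'(g^{2}\varphi_{1})'\,dt+\langle\xi,g^{2}\varphi_{1}^{2}\rangle=\lambda_{1}\int g^{2}\varphi_{1}^{2}\,dt\,,
\end{equation*}
and we have $\int\varphi_{1}'(g^{2}\varphi_{1})'=\int g^{2}(\varphi_{1}')^{2}+2\int gg'\varphi_{1}\varphi_{1}'$. The noise pairing in $\cE_{\cL}(u,u)$ is $\langle\xi,u^{2}\rangle=\langle\xi,g^{2}\varphi_{1}^{2}\rangle$, which is exactly the same pairing. Subtracting the two expressions therefore cancels all terms except $\int (g')^{2}\varphi_{1}^{2}$. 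Combining the identity with the variational inequality yields \eqref{eq:poincare_ground_state}. The only delicate point is justifying that the distributional pairing \eqref{eq:def_noise_pairing} depends only on the product $uv$, which holds by the definition \eqref{eq:FN_form}. Products of $H^{1}$ functions on a bounded interval with Lipschitz functions stay in $H_{0}^{1}$, so no further regularity issue arises. I expect this justification to be the only real obstacle, and it is mild.

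For \eqref{eq:gap_localization}, take $g(t)=t-\fm_{\varphi_{1}}$. It is Lipschitz with $g'=1$, and it is centered by the definition \eqref{eq:def_center_of_mass} together with $\|\varphi_{1}\|_{2}=1$. The right side of \eqref{eq:poincare_ground_state} then equals $1$.

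For \eqref{eq:L1_tail}, apply Cauchy--Schwarz on $\{|t-\fm|>r\}$:
\begin{equation*}
\int_{\{|t-\fm|>r\}}|\varphi_{1}|\,dt\le\Bigl(\int (t-\fm)^{2}\varphi_{1}^{2}\,dt\Bigr)^{1/2}\Bigl(\int_{\{|t-\fm|>r\}}(t-\fm)^{-2}\,dt\Bigr)^{1/2}.
\end{equation*}
The last integral is at most $2\int_{r}^{\infty}s^{-2}\,ds=2/r$. Inserting \eqref{eq:gap_localization} gives the bound $\bigl(2/((\lambda_{2}-\lambda_{1})r)\bigr)^{1/2}$.
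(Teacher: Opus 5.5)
Your proposal is correct and follows the paper's proof essentially line by line. The paper also derives $\cE_{\cL}(g\varphi_{1},g\varphi_{1})=\cE_{\cL}(\varphi_{1},g^{2}\varphi_{1})+\int(g')^{2}\varphi_{1}^{2}\,dt$ from the fact that the noise pairing depends only on $g^{2}\varphi_{1}^{2}$, evaluates the first term with the test function $g^{2}\varphi_{1}$, invokes the min--max characterization of $\lambda_{2}$, and then uses the same choice $g(t)=t-\fm_{\varphi_{1}}$ and the same Cauchy--Schwarz step; the only detail to add is that dividing by $\lambda_{2}(\cL)-\lambda_{1}(\cL)$ in \eqref{eq:gap_localization} uses that this gap is almost surely positive, by simplicity of the eigenvalues.
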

\begin{proof}
    The eigenvalues are almost surely simple, so
    $\lambda_{2}(\cL)-\lambda_{1}(\cL)>0$.
    Fix such a realization and a Lipschitz function
    $g$ on $[0,\cL]$ with $\int_{0}^{\cL}g\,\varphi_{1}^{2}\,dt=0$.
    Since $g$ is Lipschitz and
    $\varphi_{1}\in H_{0}^{1}(0,\cL)$, both $g\varphi_{1}$ and
    $g^{2}\varphi_{1}$ belong to $H_{0}^{1}(0,\cL)$. A
    computation gives
    \begin{equation}\label{eq:IMS_identity}
        \cE_{\cL}\bigl(g\varphi_{1},\,g\varphi_{1}\bigr)
        = \cE_{\cL}\bigl(\varphi_{1},\,g^{2}\varphi_{1}\bigr)
        + \int_{0}^{\cL}|g'|^{2}\varphi_{1}^{2}\,dt\,.
    \end{equation}
    The noise term of \eqref{eq:FN_form} contributes
    equally to both sides, since it depends only on the
    product of its arguments, which equals
    $g^{2}\varphi_{1}^{2}$ on both sides.
    By \eqref{eq:weak_eigen_relation} with the test function
    $g^{2}\varphi_{1}$, the first term on the right-hand side of
    \eqref{eq:IMS_identity} equals
    $\lambda_{1}(\cL)\int_{0}^{\cL}g^{2}\varphi_{1}^{2}\,dt$.
    Moreover, $g\varphi_{1}\perp\varphi_{1}$ by the
    assumption on $g$, so
    \eqref{eq:minimax_characterization} gives
    \begin{equation*}
        \cE_{\cL}\bigl(g\varphi_{1},\,g\varphi_{1}\bigr)
        \ge \lambda_{2}(\cL)\int_{0}^{\cL}g^{2}
        \varphi_{1}^{2}\,dt\,.
    \end{equation*}
    Combining these two facts with
    \eqref{eq:IMS_identity} yields
    \eqref{eq:poincare_ground_state}.

    For \eqref{eq:gap_localization}, take
    $g(t):=t-\fm_{\varphi_{1}}$. Then $g$ is Lipschitz with
    $g'\equiv1$, and $\int_{0}^{\cL}g\,\varphi_{1}^{2}\,dt=0$
    by \eqref{eq:def_center_of_mass} and $\|\varphi_{1}\|_{2}=1$.
    The right-hand side of \eqref{eq:poincare_ground_state}
    equals $\|\varphi_{1}\|_{2}^{2}=1$, which proves
    \eqref{eq:gap_localization}.
    For \eqref{eq:L1_tail}, the Cauchy--Schwarz inequality gives
    \begin{equation*}
        \int_{\{|t-\fm_{\varphi_{1}}|>r\}}
        \bigl|\varphi_{1}(t)\bigr|\,dt
        \le \biggl(\int_{0}^{\cL}
        g^{2}\varphi_{1}^{2}\,dt\biggr)^{1/2}
        \biggl(\int_{\{|t-\fm_{\varphi_{1}}|>r\}}
        \frac{dt}{(t-\fm_{\varphi_{1}})^{2}}\biggr)^{1/2}\,,
    \end{equation*}
    where the first factor is bounded by
    \eqref{eq:gap_localization} and the second is at most
    $(2/r)^{1/2}$ by extending the integral to $\bR$.
\end{proof}

We now reduce the proof of
Proposition~\ref{prop:eigenfunction_L1_norm} to
deterministic classes of functions, using
Lemma~\ref{lem:spectral_gap_concentration}. Each class
consists of the functions that share, around a fixed
point, the properties of $\varphi_{1}$ on the event of the
proposition. Lemma~\ref{lem:global_to_local} below shows
that $\varphi_{1}$ itself belongs to one of the classes.
Fix
$\theta\in(0,1)$ and $\ve>0$. Set $C_{\theta}:=2/\theta+2$
and choose $R>1$ so large that
\begin{equation}\label{eq:choice_R}
    \sqrt{\frac{2}{\theta}}\,(R-1)^{-1/2} < \frac{\ve}{4}
    \qquad\text{and}\qquad
    \sup_{|z|\le\sqrt{C_{\theta}}}
    \biggl|\int_{-R}^{R}\cQ(y-z)\,dy
    - \frac{\pi}{\sqrt{2}}\biggr|
    < \frac{\ve}{4}\,.
\end{equation}
The first condition will control the $L^{1}$ norm of $\varphi_{1}$
outside an interval around $\fm_{\varphi_{1}}$,
and the second the $L^{1}$ norm of a minimizer inside it.
Both conditions hold once $R$ is large enough,
since $\cQ$ is positive and integrable.
We define the mesh
\begin{equation}\label{eq:def_mesh}
    \cU_{a} := \bigl(a^{-1/2}\bZ\cap[0,\cL]\bigr)\cup\{\cL\}\,,
\end{equation}
so that $|\cU_{a}| \le \cL\sqrt{a}+2$ and every point of $[0,\cL]$ lies within distance
$a^{-1/2}$ of $\cU_{a}$. For $U\in\cU_{a}$, define the interval
\begin{equation*}
    I_{U,R} := \bigl[U-Ra^{-1/2},\,U+Ra^{-1/2}\bigr]
    \cap[0,\cL]
\end{equation*}
and the class
\begin{equation}\label{eq:def_bad_class}
\begin{aligned}
    \cB_{a,\ve,R}(U) := \Bigl\{f\in H_{0}^{1}(0,\cL)\,:\,
    &f\ge0\,,\ \|f\|_{2}=1\,,\
    \int_{0}^{\cL}(t-U)^{2}f(t)^{2}\,dt
    \le\frac{C_{\theta}}{a}\,,\\
    &\Bigl|a^{1/4}\int_{I_{U,R}}f(t)\,dt
    -\frac{\pi}{\sqrt{2}}\Bigr|>\frac{\ve}{2}\Bigr\}\,.
\end{aligned}
\end{equation}
The first three conditions are properties of $\varphi_{1}$,
with $\fm_{\varphi_{1}}$ replaced by $U$.
The last condition localizes the $L^{1}$ condition
of \eqref{eq:eigenfunction_L1_norm_bound} to $I_{U,R}$,
with $\ve/2$ instead of $\ve$.
For $f\in\cB_{a,\ve,R}(U)$, set
\begin{equation}\label{eq:def_Y_f}
    Y_{f} := \frac{-\langle\xi,f^{2}\rangle}
    {a+\|f'\|_{2}^{2}}\,,
    \qquad
    Z_{U} := \sup_{f\in\cB_{a,\ve,R}(U)}Y_{f}\,.
\end{equation}
For a fixed $f$, the event $\{Y_{f}\ge1\}$ is the
condition $-\langle\xi,f^{2}\rangle\ge a+\|f'\|_{2}^{2}$,
which comes from $\lambda_{1}(\cL)<-a$ as in
Remark~\ref{rem:conditions_and_constant}\,\textup{(iii)}.
Finally, write
\begin{equation*}
    E_{a} := \bigl\{\lambda_{1}(\cL)<-a,\
    \lambda_{2}(\cL)\ge-(1-\theta)a\bigr\}
    \qquad \text{and} \qquad 
    \fB_{a,\ve} := \Bigl\{\bigl|a^{1/4}\|\varphi_{1}\|_{1}
    -\pi/\sqrt{2}\bigr|>\ve\Bigr\}\,,
\end{equation*}
so that the event of
Proposition~\ref{prop:eigenfunction_L1_norm} is
$E_{a}\cap \fB_{a,\ve}$. The following lemma also
shows $Y_{\varphi_{1}}\ge1$ on this event, and reduces the
proof of Proposition~\ref{prop:eigenfunction_L1_norm} to
bounding $\dP\bigl(Z_{U}\ge1\bigr)$ uniformly in $U$.

\begin{lemma}\label{lem:global_to_local}
    For every $\cL>0$ and $a>0$,
    \begin{equation}\label{eq:global_to_local}
        E_{a}\cap \fB_{a,\ve}
        \subset\bigcup_{U\in\cU_{a}}\{Z_{U}\ge1\}\,,
    \end{equation}
    and consequently
    \begin{equation}\label{eq:union_bound_reduction}
        \dP\bigl(E_{a}\cap \fB_{a,\ve}\bigr)
        \le \bigl(\cL\sqrt{a}+2\bigr)
        \sup_{U\in\cU_{a}}\dP\bigl(Z_{U}\ge1\bigr)\,.
    \end{equation}
\end{lemma}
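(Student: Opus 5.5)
On the event $E_{a}\cap\fB_{a,\ve}$ we will exhibit a mesh point $U\in\cU_{a}$ such that $\varphi_{1}\in\cB_{a,\ve,R}(U)$ and $Y_{\varphi_{1}}\ge1$. This gives $Z_{U}\ge Y_{\varphi_{1}}\ge1$. The bound \eqref{eq:union_bound_reduction} then follows from the union bound over $\cU_{a}$ together with $|\cU_{a}|\le\cL\sqrt{a}+2$.

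Fix a realization in $E_{a}\cap\fB_{a,\ve}$.

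\textbf{Step 1: $Y_{\varphi_{1}}\ge1$.} Recall that $\varphi_{1}\ge0$, $\|\varphi_{1}\|_{2}=1$, and $\varphi_{1}\in H_{0}^{1}(0,\cL)$. From \eqref{eq:FN_form}, $\lambda_{1}(\cL)=\cE_{\cL}(\varphi_{1},\varphi_{1})=\|\varphi_{1}'\|_{2}^{2}+\langle\xi,\varphi_{1}^{2}\rangle$. Since $\lambda_{1}(\cL)<-a$, this rearranges to $-\langle\xi,\varphi_{1}^{2}\rangle>a+\|\varphi_{1}'\|_{2}^{2}$, so $Y_{\varphi_{1}}>1$.

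\textbf{Step 2: choice of $U$ and localization.} On $E_{a}$ the gap satisfies $\lambda_{2}-\lambda_{1}>-(1-\theta)a+a=\theta a$. Choose $U\in\cU_{a}$ with $|U-\fm_{\varphi_{1}}|\le a^{-1/2}$; this is possible because $\fm_{\varphi_{1}}\in[0,\cL]$. Expanding the square,
\[
\int_{0}^{\cL}(t-U)^{2}\varphi_{1}^{2}\,dt=\int_{0}^{\cL}(t-\fm_{\varphi_{1}})^{2}\varphi_{1}^{2}\,dt+(\fm_{\varphi_{1}}-U)^{2},
\]
since the cross term vanishes by the definition of $\fm_{\varphi_{1}}$. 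By \eqref{eq:gap_localization} the first term is at most $1/(\theta a)$, and the second is at most $1/a$. The total is at most $(1/\theta+1)/a\le C_{\theta}/a$.

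\textbf{Step 3: the local $L^{1}$ condition.} For $t\notin I_{U,R}$ we have $|t-U|>Ra^{-1/2}$, so $|t-\fm_{\varphi_{1}}|>(R-1)a^{-1/2}$. Apply \eqref{eq:L1_tail} with $r=(R-1)a^{-1/2}$ and gap at least $\theta a$:
\[
a^{1/4}\int_{[0,\cL]\setminus I_{U,R}}\varphi_{1}\,dt\le a^{1/4}\Bigl(\frac{2}{\theta a\,(R-1)a^{-1/2}}\Bigr)^{1/2}=\sqrt{\frac{2}{\theta}}\,(R-1)^{-1/2}<\frac{\ve}{4},
\]
by the first condition in \eqref{eq:choice_R}. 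Write $\Phi:=a^{1/4}\int_{I_{U,R}}\varphi_{1}\,dt$. Since $\varphi_{1}\ge0$, we have $a^{1/4}\|\varphi_{1}\|_{1}=\Phi+a^{1/4}\int_{[0,\cL]\setminus I_{U,R}}\varphi_{1}\,dt$. On $\fB_{a,\ve}$ the triangle inequality then gives
\[
\Bigl|\Phi-\frac{\pi}{\sqrt{2}}\Bigr|>\ve-\frac{\ve}{4}>\frac{\ve}{2}.
\]
Together with Step 2, this shows $\varphi_{1}\in\cB_{a,\ve,R}(U)$, and with Step 1, $Z_{U}\ge1$. This proves \eqref{eq:global_to_local}.

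\textbf{Remarks.} The only point needing care is that \eqref{eq:L1_tail} is stated around $\fm_{\varphi_{1}}$ rather than $U$. The shift by one mesh spacing is absorbed by using $R-1$ in the radius, which is exactly why \eqref{eq:choice_R} is stated with $(R-1)^{-1/2}$. The second condition in \eqref{eq:choice_R} is not used here; it serves the later variational lemma. Measurability of $Z_{U}$, a supremum over an uncountable class, can be handled by restricting to a countable dense subclass. Alternatively, one can read $\dP(Z_{U}\ge1)$ as an outer probability, which is all that \eqref{eq:union_bound_reduction} requires.
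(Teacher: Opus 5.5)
Your proof is correct and follows the paper's argument step for step: pick the mesh point within $a^{-1/2}$ of $\fm_{\varphi_1}$, verify $\varphi_1\in\cB_{a,\ve,R}(U)$ via the gap localization and the $L^1$ tail bound with radius $(R-1)a^{-1/2}$, obtain $Y_{\varphi_1}>1$ from $\lambda_1=\|\varphi_1'\|_2^2+\langle\xi,\varphi_1^2\rangle<-a$, and finish with the union bound. The only difference is cosmetic: in Step 2 you use the exact identity (the cross term vanishes), which gives the sharper constant $1/\theta+1$, whereas the paper uses $(t-U)^2\le 2(t-\fm_{\varphi_1})^2+2(\fm_{\varphi_1}-U)^2$ and lands exactly on $C_\theta$.
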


\begin{proof}
    We work on the event $E_{a}\cap \fB_{a,\ve}$. 
    Since $\fm_{\varphi_{1}}\in[0,\cL]$, 
    by \eqref{eq:def_mesh} we may choose $U\in\cU_{a}$ with
    $|U-\fm_{\varphi_{1}}|\le a^{-1/2}$.

    We first show $\varphi_{1}\in\cB_{a,\ve,R}(U)$.
    The first two conditions in \eqref{eq:def_bad_class} hold
    by construction. For the third, on $E_{a}$ the gap
    satisfies $\lambda_{2}(\cL)-\lambda_{1}(\cL)\ge\theta a$,
    so Lemma~\ref{lem:spectral_gap_concentration} gives
    \begin{equation*}
        \int_{0}^{\cL}\bigl(t-\fm_{\varphi_{1}}\bigr)^{2}
        \varphi_{1}(t)^{2}\,dt \le \frac{1}{\theta a}\,.
    \end{equation*}
    Combined with $(t-U)^{2}\le2(t-\fm_{\varphi_{1}})^{2}
    +2(\fm_{\varphi_{1}}-U)^{2}$ and
    $|U-\fm_{\varphi_{1}}|\le a^{-1/2}$, this yields
    \begin{equation*}
        \int_{0}^{\cL}(t-U)^{2}\varphi_{1}(t)^{2}\,dt
        \le \frac{2}{\theta a}+\frac{2}{a}
        = \frac{C_{\theta}}{a}\,,
    \end{equation*}
    which explains the choice of $C_{\theta}$.
    For the last condition, for $t\notin I_{U,R}$,
    \begin{equation*}
        |t-\fm_{\varphi_{1}}|
        \ge |t-U|-|U-\fm_{\varphi_{1}}|
        > (R-1)\,a^{-1/2}\,.
    \end{equation*}
    Lemma~\ref{lem:spectral_gap_concentration} with
    $r=(R-1)a^{-1/2}$ and the gap bound on $E_{a}$ give
    \begin{equation*}
        a^{1/4}\int_{[0,\cL]\setminus I_{U,R}}
        \varphi_{1}(t)\,dt
        \le \sqrt{\frac{2}{\theta}}\,(R-1)^{-1/2}\,,
    \end{equation*}
    and the right-hand side is smaller than $\ve/4$ by the
    first condition in \eqref{eq:choice_R}. On $\fB_{a,\ve}$,
    the triangle inequality gives
    \begin{equation*}
        \Bigl|a^{1/4}\int_{I_{U,R}}\varphi_{1}\,dt
        -\frac{\pi}{\sqrt{2}}\Bigr|
        \ge \Bigl|a^{1/4}\|\varphi_{1}\|_{1}
        -\frac{\pi}{\sqrt{2}}\Bigr|
        -a^{1/4}\int_{[0,\cL]\setminus I_{U,R}}\varphi_{1}\,dt
        > \frac{\ve}{2}\,.
    \end{equation*}
    Thus $\varphi_{1}\in\cB_{a,\ve,R}(U)$.

    It remains to show $Y_{\varphi_{1}}\ge1$. The weak
    eigenvalue relation \eqref{eq:weak_eigen_relation} with
    the test function $\varphi_{1}$ gives
    \begin{equation*}
        \lambda_{1}(\cL)
        = \cE_{\cL}(\varphi_{1},\varphi_{1})
        = \|\varphi_{1}'\|_{2}^{2}
        +\langle\xi,\varphi_{1}^{2}\rangle\,.
    \end{equation*}
    On $E_{a}$ we have $\lambda_{1}(\cL)<-a$, so
    $-\langle\xi,\varphi_{1}^{2}\rangle
    >a+\|\varphi_{1}'\|_{2}^{2}$, that is,
    $Y_{\varphi_{1}}>1$. Hence
    $Z_{U}\ge Y_{\varphi_{1}}\ge1$,
    which proves \eqref{eq:global_to_local}. The union bound
    and \eqref{eq:def_mesh} give
    \eqref{eq:union_bound_reduction}.
\end{proof}

We turn to the bound on $\dP(Z_{U}\ge1)$. It consists of two
steps, a lower bound on the cost over $\cB_{a,\ve,R}(U)$ and
the Borell--TIS inequality. The family
$(Y_{f})_{f}$ in \eqref{eq:def_Y_f} is a centered Gaussian
process, and $\Var\langle\xi,f^{2}\rangle=\|f\|_{4}^{4}$
gives
\begin{equation}\label{eq:Var_Y_f}
    \Var(Y_{f}) = \frac{1}{2\,J_{a}(f)}\,,
\end{equation}
where $J_{a}$ is defined in \eqref{eq:def_J_functional}. The
following lemma gives a lower bound on $J_{a}(f)$ over the
class $\cB_{a,\ve,R}(U)$, which implies an upper bound on
$\Var(Y_{f})$.
\begin{lemma}\label{lem:variational_gap}
    There exists $c_{1}>0$, depending only on $\ve$ and
    $\theta$, such that for every $a>0$, every $U\in\cU_{a}$,
    and every $f\in\cB_{a,\ve,R}(U)$,
    \begin{equation}\label{eq:cost_gap}
        J_{a}(f) \ge \Bigl(\frac{8}{3}+c_{1}\Bigr)a^{3/2}\,.
    \end{equation}
\end{lemma}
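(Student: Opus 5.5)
We rescale to remove $a$. For $f\in\cB_{a,\ve,R}(U)$ set $g(y):=a^{-1/4}f(U+a^{-1/2}y)$, extended by zero outside the rescaled image of $[0,\cL]$. Then $g\in H^{1}(\bR)$ and $g\ge0$, and
\begin{equation*}
\|g\|_{2}=1,\qquad \|f'\|_{2}^{2}=a\|g'\|_{2}^{2},\qquad \|f\|_{4}^{4}=a^{1/2}\|g\|_{4}^{4},
\end{equation*}
so that $J_{a}(f)=a^{3/2}J_{1}(g)$. The constraints become
\begin{equation*}
\int y^{2}g(y)^{2}\,dy\le C_{\theta}
\qquad\text{and}\qquad
\Bigl|\int_{-R}^{R}g\,dy-\frac{\pi}{\sqrt2}\Bigr|>\frac{\ve}{2},
\end{equation*}
where the window $[-R,R]$ may be truncated by the boundary, which the zero extension handles since $g=0$ there. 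The lemma therefore reduces to an $a$-independent statement. Let $\cC$ be the set of $g\in H^{1}(\bR)$ with $g\ge0$, $\|g\|_{2}=1$, $\int y^{2}g^{2}\le C_{\theta}$ and $|\int_{-R}^{R}g-\pi/\sqrt2|\ge\ve/2$. We must show $\inf_{\cC}J_{1}>8/3$, and then set $c_{1}:=\inf_{\cC}J_{1}-8/3$, which depends only on $\theta$ and $\ve$ (through $R$ and $C_{\theta}$).

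First we record the variational fact. Minimizing $(1+\|g'\|_{2}^{2})^{2}/(2\|g\|_{4}^{4})$ over unit-$L^{2}$ functions gives $8/3$, with minimizers exactly the translates of $\pm\cQ$. This follows from the sharp Gagliardo--Nirenberg inequality $\|g\|_{4}^{4}\le C_{\mathrm{GN}}\|g'\|_{2}\|g\|_{2}^{3}$, whose optimizers are rescaled translated sech's (\cite[Theorem~A.1]{Fra14}), followed by optimizing $(1+s^{2})^{2}/(2C_{\mathrm{GN}}s)$ over $s=\|g'\|_{2}$. The optimum occurs at $s^{2}=1/3$ and fixes the scale, which identifies the minimizer as $\cQ$ up to translation. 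This is the computation alluded to in Remark~\ref{rem:conditions_and_constant}(iii).

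Next we argue by contradiction and compactness. Suppose $g_{n}\in\cC$ with $J_{1}(g_{n})\to8/3$. Since $\|g_{n}\|_{4}^{4}\le C\|g_{n}'\|_{2}$, the quantity $J_{1}$ grows at least like $\|g'\|_{2}^{3}$, so $\|g_{n}'\|_{2}$ is bounded. Hence $(g_{n})$ is bounded in $H^{1}$. The second-moment bound $\int y^{2}g_{n}^{2}\le C_{\theta}$ gives tightness of $g_{n}^{2}$, and the second-moment constraint also pins the location, so no translation is needed. By Rellich on bounded windows plus tightness, a subsequence converges in $L^{2}(\bR)$ and weakly in $H^{1}$ to some $g$. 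The limit satisfies $\|g\|_{2}=1$ and $g\ge0$. Since $g_{n}\to g$ in $L^{2}$ and $L^{\infty}$ norms are bounded via $H^{1}$, we also get $L^{4}$ convergence. Moreover $\int_{-R}^{R}g_{n}\to\int_{-R}^{R}g$ by Cauchy--Schwarz on the bounded window, and Fatou gives $\int y^{2}g^{2}\le C_{\theta}$. By weak lower semicontinuity of $\|g'\|_{2}$, $J_{1}(g)\le\liminf J_{1}(g_{n})=8/3$, so $g$ is a minimizer. Being nonnegative, $g=\cQ(\cdot-z)$ for some $z$.

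Finally we locate $z$ to reach the contradiction. From $\int y^{2}\cQ(y-z)^{2}\,dy=z^{2}+\int y^{2}\cQ^{2}\ge z^{2}$ (using $\|\cQ\|_{2}=1$ and the symmetry of $\cQ$) we get $|z|\le\sqrt{C_{\theta}}$. The second condition in \eqref{eq:choice_R} then gives $|\int_{-R}^{R}g-\pi/\sqrt2|<\ve/4$, contradicting the limit of the constraint, which is $\ge\ve/2$. The main obstacle is the compactness step: ruling out loss of mass at infinity and upgrading to $L^{4}$ and local-$L^{1}$ convergence. The second-moment constraint inherited from the spectral gap is exactly what makes this work, and we handle it with tightness plus Rellich as described above. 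A secondary care point is that the $H^{1}_{0}(0,\cL)$ boundary is harmless after zero extension, which holds for every $\cL$ and every $U$, giving uniformity.
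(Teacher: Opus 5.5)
Your proposal is correct and follows essentially the same route as the paper. Both arguments rescale to $J_{a}(f)=a^{3/2}\cJ(\psi)$, use the sharp Gagliardo--Nirenberg inequality with equality cases $\cQ(\cdot-z)$, and run a compactness-by-contradiction argument in which the second-moment constraint upgrades local to global $L^{2}$ (hence $L^{4}$) convergence. Both then finish with $|z|\le\sqrt{C_{\theta}}$ and the second condition in \eqref{eq:choice_R}. Your closed constraint $\ge\ve/2$, which defines a larger class, and your $L^{\infty}$ bound in place of Gagliardo--Nirenberg for the $L^{4}$ upgrade are harmless variations.
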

\begin{proof}
    Fix $U\in\cU_{a}$ and $f\in\cB_{a,\ve,R}(U)$, and define
    $\psi\in H^{1}(\bR)$ by
    \begin{equation}\label{eq:rescaling}
        \psi(y) := a^{-1/4}f\bigl(U+a^{-1/2}y\bigr)\,,
    \end{equation}
    extending $f$ by zero outside $[0,\cL]$. A change of
    variables gives
    \begin{equation*}
        \begin{aligned}
            &\|\psi\|_{2}^{2}=1\,,\qquad
            \|\psi'\|_{2}^{2}=a^{-1}\|f'\|_{2}^{2}\,,\qquad
            \|\psi\|_{4}^{4}=a^{-1/2}\|f\|_{4}^{4}\,,\\
            &\int_{\bR}y^{2}\psi(y)^{2}\,dy
            = a\int_{0}^{\cL}(t-U)^{2}f(t)^{2}\,dt
            \le C_{\theta}\,,
        \end{aligned}
    \end{equation*}
    so that $J_{a}(f)=a^{3/2}\cJ(\psi)$, where
    \begin{equation*}
        \cJ(\psi)
        := \frac{\bigl(1+\|\psi'\|_{2}^{2}\bigr)^{2}}
        {2\,\|\psi\|_{4}^{4}}\,.
    \end{equation*}
    Under \eqref{eq:rescaling}, the interval $I_{U,R}$
    corresponds to $[-R,R]$, so
    $a^{1/4}\int_{I_{U,R}}f\,dt=\int_{-R}^{R}\psi\,dy$, and
    $f\in\cB_{a,\ve,R}(U)$ translates into $\psi\in\cC_{R}$,
    where
    \begin{equation}\label{eq:def_K_R_class}
        \cC_{R} := \Bigl\{\psi\in H^{1}(\bR)\,:\,
        \psi\ge0,\ \|\psi\|_{2}=1,\
        \int_{\bR}y^{2}\psi^{2}\le C_{\theta},\
        \Bigl|\int_{-R}^{R}\psi-\frac{\pi}{\sqrt{2}}\Bigr|
        >\frac{\ve}{2}\Bigr\}\,.
    \end{equation}
    Since $\cC_{R}$ does not depend on $a$ or $U$, it suffices
    to show
    \begin{equation}\label{eq:variational_gap}
        \inf_{\psi\in\cC_{R}}\cJ(\psi) > \frac{8}{3}\,.
    \end{equation}
    If $\cC_{R}=\varnothing$, there is nothing to
    prove. The proof of \eqref{eq:variational_gap} has two
    steps. We first identify the equality cases of the bound
    $\cJ\ge\frac{8}{3}$, and then show by compactness that
    the bound is strict on $\cC_{R}$.
    We use the sharp one-dimensional Gagliardo--Nirenberg
    inequality
    \begin{equation}\label{eq:GN}
        \|v\|_{4}^{4}
        \le \frac{1}{\sqrt{3}}\,\|v'\|_{2}\,\|v\|_{2}^{3}\,,
        \qquad v\in H^{1}(\bR)\,,
    \end{equation}
    see \cite{Nag41}, and \cite[Theorem~A.1]{Fra14} for the
    form used here.
    For $\|\psi\|_{2}=1$ and $r:=\|\psi'\|_{2}$,
    \begin{equation}\label{eq:J_lower}
        \cJ(\psi)
        \ge \frac{\sqrt{3}\,(1+r^{2})^{2}}{2\,r}
        \ge \frac{8}{3}\,,
    \end{equation}
    the middle expression being minimized at $r^{2}=1/3$.
    Equality in \eqref{eq:J_lower} thus requires equality in
    \eqref{eq:GN} and $r^{2}=1/3$.
    By \cite[Theorem~A.1]{Fra14}, the nonnegative equality
    cases of \eqref{eq:GN} are exactly
    $\psi(y)=A\operatorname{sech}\bigl(b(y-z)\bigr)$ with
    $A,b>0$ and $z\in\bR$.
    Since $\|\psi\|_{2}^{2}=2A^{2}/b$
    and $\|\psi'\|_{2}^{2}=2A^{2}b/3$, the constraints
    $\|\psi\|_{2}=1$ and $r^{2}=1/3$ force $b=1$, $A=1/\sqrt{2}$. 
    Therefore
    \begin{equation}\label{eq:J_equality_case}
        \cJ(\psi)=\frac{8}{3}\,,\ \psi\ge0\,,\
        \|\psi\|_{2}=1
        \quad\Longleftrightarrow\quad
        \psi=\cQ(\cdot-z)\ \text{for some } z\in\bR\,.
    \end{equation}
    By \eqref{eq:J_lower}, the infimum in
    \eqref{eq:variational_gap} is at least $8/3$. If
    \eqref{eq:variational_gap} failed, there would exist
    $\psi_{n}\in\cC_{R}$ with $\cJ(\psi_{n})\to8/3$. Set
    $r_{n}:=\|\psi_{n}'\|_{2}$. Again by \eqref{eq:J_lower},
    $(r_{n})$ is bounded and bounded away from $0$, so
    $(\psi_{n})$ is bounded in $H^{1}(\bR)$. After passing to a
    subsequence, $\psi_{n}\rightharpoonup\psi$ weakly in
    $H^{1}(\bR)$, and, by the Rellich--Kondrachov theorem on
    each $[-M,M]$ and a diagonal argument, $\psi_{n}\to\psi$ in
    $L^{2}_{\mathrm{loc}}(\bR)$. The moment bound in
    \eqref{eq:def_K_R_class} improves this to convergence in
    $L^{2}(\bR)$. Indeed, since
    $\int_{|y|>M}\psi_{n}^{2}\le C_{\theta}/M^{2}$ for
    every $M$,
    \begin{equation*}
        \int_{\bR}|\psi_{n}-\psi_{m}|^{2}
        \le \int_{-M}^{M}|\psi_{n}-\psi_{m}|^{2}
        + \frac{4C_{\theta}}{M^{2}}\,,
    \end{equation*}
    and letting $n,m\to\infty$ and then $M\to\infty$ shows that
    $(\psi_{n})$ is Cauchy in $L^{2}(\bR)$. Hence
    $\psi_{n}\to\psi$ in $L^{2}(\bR)$, $\psi\ge0$, and
    $\|\psi\|_{2}=1$.
    Applying \eqref{eq:GN} to $\psi_{n}-\psi$, together with
    $\psi_{n}\to\psi$ in $L^{2}(\bR)$ and the boundedness of
    $\|\psi_{n}'-\psi'\|_{2}$, gives $\psi_{n}\to\psi$ in
    $L^{4}(\bR)$. Together with
    $\|\psi'\|_{2}^{2}\le\liminf_{n}\|\psi_{n}'\|_{2}^{2}$, this
    yields $\cJ(\psi)\le\lim_{n}\cJ(\psi_{n})=8/3$. Since
    $\cJ(\psi)\ge8/3$ by \eqref{eq:J_lower}, equality holds,
    and $\psi=\cQ(\cdot-z)$ for some $z\in\bR$ by
    \eqref{eq:J_equality_case}. Moreover, by Fatou's lemma and
    $\int_{\bR}y\,\cQ(y)^{2}\,dy=0$,
    \begin{equation*}
        z^{2}
        \le \int_{\bR}y^{2}\cQ(y-z)^{2}\,dy
        \le \liminf_{n}\int_{\bR}y^{2}\psi_{n}^{2}\,dy
        \le C_{\theta}\,,
    \end{equation*}
    so $|z|\le\sqrt{C_{\theta}}$. Since
    $\psi_{n}\to\cQ(\cdot-z)$ in $L^{2}(\bR)$, the
    Cauchy--Schwarz inequality on $[-R,R]$ and the second
    condition in \eqref{eq:choice_R} give, for all large $n$,
    \begin{equation*}
        \Bigl|\int_{-R}^{R}\psi_{n}-\frac{\pi}{\sqrt{2}}\Bigr|
        \le \Bigl|\int_{-R}^{R}
        \bigl(\psi_{n}-\cQ(\cdot-z)\bigr)\Bigr|
        + \sup_{|z'|\le\sqrt{C_{\theta}}}
        \Bigl|\int_{-R}^{R}\cQ(\cdot-z')
        -\frac{\pi}{\sqrt{2}}\Bigr|
        < \frac{\ve}{2}\,,
    \end{equation*}
    contradicting the constraint in \eqref{eq:def_K_R_class}.
    This proves \eqref{eq:variational_gap}, and
    \eqref{eq:cost_gap} follows with
    $c_{1}:=\inf_{\cC_{R}}\cJ-\frac{8}{3}>0$, which
    depends only on $\ve$, $R$, and $C_{\theta}$, and
    hence only on $\ve$ and $\theta$.
\end{proof}

We now prove
Proposition~\ref{prop:eigenfunction_L1_norm} by combining
Lemmas~\ref{lem:global_to_local}
and~\ref{lem:variational_gap} with the Borell--TIS inequality.
\begin{proof}[Proof of Proposition~\ref{prop:eigenfunction_L1_norm}]
    By Lemma~\ref{lem:global_to_local}, it suffices to
    bound $\dP(Z_{U}\ge1)$ uniformly over
    $U\in\cU_{a}$.
    We first show that, for every $a>0$, uniformly in
    $U\in\cU_{a}$,
    \begin{equation}\label{eq:E_Z_bound}
        \dE[Z_{U}] \le 8\,(1+C_{\theta})^{1/2}\,a^{-3/4}\,,
    \end{equation}
    and then conclude by the Borell--TIS inequality.
    The last condition of \eqref{eq:def_bad_class} plays
    no role here, so we work with the larger class
    \begin{equation*}
        \cD_{a}(U) := \Bigl\{f\in H_{0}^{1}(0,\cL)\,:\,
        f\ge0,\ \|f\|_{2}=1,\
        \int_{0}^{\cL}(t-U)^{2}f(t)^{2}\,dt
        \le\frac{C_{\theta}}{a}\Bigr\}\,.
    \end{equation*}
    Since $\cB_{a,\ve,R}(U)\subset\cD_{a}(U)$, it suffices to
    bound $\dE\bigl[\sup_{f\in\cD_{a}(U)}Y_{f}\bigr]$, where
    $Y_{f}$ is given by the formula \eqref{eq:def_Y_f}.
    For $f\in\cD_{a}(U)$ with rescaling $\psi$ as in
    \eqref{eq:rescaling}, a change of variables in
    \eqref{eq:def_noise_pairing}, together with
    $\int_{\bR}h_{\psi}'(y)\,dy=0$, gives
    \begin{equation*}
        Y_{f}
        = a^{-3/4}\int_{\bR}h_{\psi}'(y)\,W_{U}(y)\,dy\,,
        \qquad
        h_{\psi} := \frac{\psi^{2}}{1+\|\psi'\|_{2}^{2}}\,,
    \end{equation*}
    where $W_{U}(y):=a^{1/4}\bigl(B(U+a^{-1/2}y)-B(U)\bigr)$
    and $B$ is extended to a two-sided Brownian motion on
    $\bR$. Set
    \begin{equation*}
        W_{U}^{*} := \sup_{y\in\bR}
        \frac{|W_{U}(y)|}{\sqrt{1+y^{2}}}\,.
    \end{equation*}
    Since
    $h_{\psi}'=2\psi\psi'/\bigl(1+\|\psi'\|_{2}^{2}\bigr)$, the
    Cauchy--Schwarz inequality gives
    \begin{equation*}
        \int_{\bR}\sqrt{1+y^{2}}\,|h_{\psi}'(y)|\,dy
        \le \frac{2\|\psi'\|_{2}}{1+\|\psi'\|_{2}^{2}}
        \biggl(\int_{\bR}(1+y^{2})\,\psi^{2}\,dy\biggr)^{1/2}
        \le (1+C_{\theta})^{1/2}\,,
    \end{equation*}
    using $2\|\psi'\|_{2}\le1+\|\psi'\|_{2}^{2}$,
    $\|\psi\|_{2}=1$, and
    $\int_{\bR}y^{2}\psi^{2}\le C_{\theta}$.
    Hence, for every $f\in\cD_{a}(U)$,
    \begin{equation*}
        |Y_{f}|
        \le a^{-3/4}\,W_{U}^{*}
        \int_{\bR}\sqrt{1+y^{2}}\,|h_{\psi}'|\,dy
        \le a^{-3/4}\,(1+C_{\theta})^{1/2}\,W_{U}^{*}\,.
    \end{equation*}
    It remains to bound $\dE[W_{U}^{*}]$.
    By the stationarity of Brownian increments and
    Brownian scaling, $W_{U}$ is a standard two-sided
    Brownian motion, so its law does not depend on $U$. We
    use $\sqrt{1+y^{2}}\ge1$ for $|y|\le1$ and
    $\sqrt{1+y^{2}}\ge|y|$ for $|y|\ge1$. By the time
    inversion $y\mapsto1/y$,
    \begin{equation*}
        \sup_{|y|\ge1}\frac{|W_{U}(y)|}{|y|}
        = \sup_{0<|u|\le1}\bigl|u\,W_{U}(1/u)\bigr|\,,
    \end{equation*}
    and $u\,W_{U}(1/u)$ is again a standard two-sided
    Brownian motion. Hence
    \begin{equation*}
        \dE\bigl[W_{U}^{*}\bigr]
        \le 2\,\dE\Bigl[\sup_{|y|\le1}|W_{U}(y)|\Bigr]
        \le 8\,.
    \end{equation*}
    Combining the last two bounds proves \eqref{eq:E_Z_bound}.
    Choose $a_{0}=a_{0}(\theta,\ve)$ so large that
    the right-hand side of \eqref{eq:E_Z_bound} is at most
    $1/2$ for $a\ge a_{0}$, so that $\dE[Z_{U}]\le1/2$.
    The map $f\mapsto Y_{f}$ is continuous on the separable
    class $\cB_{a,\ve,R}(U)\subset H^{1}$, so the
    Borell--TIS inequality (see, e.g.,
    \cite[Theorem~2.1.1]{AT07}) gives
    \begin{equation*}
        \dP\bigl(Z_{U}\ge1\bigr)
        \le \exp\Bigl(-\frac{(1-\dE[Z_{U}])^{2}}
        {2\sigma_{a}^{2}}\Bigr)\,,
        \qquad
        \sigma_{a}^{2}
        := \sup_{f\in\cB_{a,\ve,R}(U)}\Var(Y_{f})\,.
    \end{equation*}
    By \eqref{eq:Var_Y_f} and
    Lemma~\ref{lem:variational_gap}, we have
    \begin{equation*}
        \sigma_{a}^{2}
        \le \frac{1}{2\bigl(\frac{8}{3}+c_{1}\bigr)
        a^{3/2}}\,.
    \end{equation*}
    Set $c_{2}:=16\,(1+C_{\theta})^{1/2}
    \bigl(\tfrac{8}{3}+c_{1}\bigr)$.
    Since $(1-\dE[Z_{U}])^{2}\ge1-2\,\dE[Z_{U}]$, the bound on
    $\sigma_{a}^{2}$ and \eqref{eq:E_Z_bound} give, uniformly in
    $U\in\cU_{a}$,
    \begin{equation*}
        \frac{(1-\dE[Z_{U}])^{2}}{2\sigma_{a}^{2}}
        \ge \Bigl(\frac{8}{3}+c_{1}\Bigr)a^{3/2}
        - c_{2}\,a^{3/4}\,.
    \end{equation*}
    Combining this with \eqref{eq:union_bound_reduction} yields
    \eqref{eq:eigenfunction_L1_norm_bound}. Since $\theta$,
    $\ve$, and $R=R(\theta,\ve)$ determine $c_{1}$, $c_{2}$, and
    $a_{0}$, these constants depend only on $\theta$ and $\ve$.
\end{proof}

We now prove
Theorem~\ref{thm:eigenfunction_concentration}.
\begin{proof}[Proof of Theorem~\ref{thm:eigenfunction_concentration}]
    Notice that the event on the
    right-hand side of \eqref{eq:eigenfunction_concentration}
    is $E_{a}\cap \fB_{a,\ve}^{c}$.
    Decomposing the event
    $\{\lambda_{1}(\cL_{a})<-a\}
    \setminus\bigl(E_{a}\cap \fB_{a,\ve}^{c}\bigr)$ according
    to whether $\lambda_{2}(\cL_{a})<-(1-\theta)a$, we obtain
    \begin{equation*}
        \dP\bigl(\lambda_{1}(\cL_{a})<-a\bigr)
        -\dP\bigl(E_{a}\cap \fB_{a,\ve}^{c}\bigr)
        \le \dP\bigl(\lambda_{2}(\cL_{a})<-(1-\theta)a\bigr)
        + \dP\bigl(E_{a}\cap \fB_{a,\ve}\bigr)\,,
    \end{equation*}
    so it suffices to show that both terms on the right-hand
    side are negligible compared with
    $\dP(\lambda_{1}(\cL_{a})<-a)$.
    Since \eqref{eq:concentration_window} implies
    \eqref{eq:admissible_window},
    Theorem~\ref{thm:sharp_lower_tail_lowest_eigenvalue} and
    \eqref{eq:first_explosion_mean_asymptotics} give
    \begin{equation*}
        \dP\bigl(\lambda_{1}(\cL_{a})<-a\bigr)
        \sim \frac{\cL_{a}}{m(a)}
        = \cL_{a}\,\frac{\sqrt{a}}{\pi}\,
        e^{-\frac{8}{3}a^{3/2}}\bigl(1+o(1)\bigr)\,.
    \end{equation*}
    For the first term,
    Corollary~\ref{cor:second_eigenvalue_left_tail} gives
    \begin{equation*}
        \dP\bigl(\lambda_{2}(\cL_{a})<-(1-\theta)a\bigr)
        \le \Bigl(e\,\frac{\cL_{a}}{m((1-\theta)a)}\Bigr)^{2}\,.
    \end{equation*}
    Taking logarithms and using
    \eqref{eq:first_explosion_mean_asymptotics} at
    $(1-\theta)a$, we obtain
    \begin{equation*}
        \log\frac{\dP\bigl(\lambda_{2}(\cL_{a})
        <-(1-\theta)a\bigr)}
        {\dP\bigl(\lambda_{1}(\cL_{a})<-a\bigr)}
        \le \log\cL_{a}
        + \frac{8}{3}a^{3/2}
        - \frac{16}{3}(1-\theta)^{3/2}a^{3/2}
        + O(\log a)\,.
    \end{equation*}
    By the second condition in \eqref{eq:concentration_window},
    $\log\cL_{a}\le\frac{8}{3}(1-\eta)a^{3/2}\bigl(1+o(1)\bigr)$,
    and the condition $\theta<1-(1-\eta/2)^{2/3}$ is equivalent
    to $2-\eta<2(1-\theta)^{3/2}$.
    Therefore, the right-hand side tends to $-\infty$.
    For the second term,
    Proposition~\ref{prop:eigenfunction_L1_norm}
    together with $\cL_{a}\sqrt{a}+2\le2\cL_{a}\sqrt{a}$,
    valid for all sufficiently large $a$, gives
    \begin{equation*}
        \frac{\dP\bigl(E_{a}\cap \fB_{a,\ve}\bigr)}
        {\dP\bigl(\lambda_{1}(\cL_{a})<-a\bigr)}
        \le 2\pi\,
        e^{-c_{1}a^{3/2}+c_{2}a^{3/4}}\bigl(1+o(1)\bigr)
        \longrightarrow 0\,.
    \end{equation*}
    The proof is complete.
\end{proof}

\section{The Dirichlet PAM on a finite interval}
\label{sec:Dirichlet_PAM_finite_interval}
In this section, we study the PAM on a bounded interval with
Dirichlet boundary conditions. We first define its
solution and recall a spectral representation in terms of the
Anderson Hamiltonian of
Section~\ref{sec:Anderson_Hamiltonian_finite_interval}. The
lower-tail estimates for $\lambda_{1}$ and the
concentration of $\|\varphi_{1}\|_{1}$ from that section then
yield bounds on the moments of the spatial integral. We next partition
$Q_{L(t)}$ into $n(t)$ blocks of length $\ell(t)$
and prepare the comparison of $U(t)$ with the sum of the Dirichlet block
integrals.

Let $Q\subset\bR$ be a bounded open interval. We consider the
PAM on $Q$ with Dirichlet boundary conditions and constant
initial condition,
\begin{equation}\label{eq:Dirichlet_PAM}
    \begin{cases}
        \partial_{t}\tilde{u}_{Q}(t,x)
        = \partial_{x}^{2}\tilde{u}_{Q}(t,x)
        +\xi(x)\,\tilde{u}_{Q}(t,x)\,,
        & t>0\,,\ x\in Q\,,\\
        \tilde{u}_{Q}(t,x) = 0\,,
        & t>0\,,\ x\in\partial Q\,,\\
        \tilde{u}_{Q}(0,x) = 1\,,
        & x\in Q\,.
    \end{cases}
\end{equation}
As in Section~\ref{sec:intro}, the problem
\eqref{eq:Dirichlet_PAM} is formal, since $\xi$ is not defined
pointwise. We again define the solution by the Feynman--Kac
formula. With $W$, $\dE_{x}$, and the integral
$\int_{0}^{t}\xi(W_{s})\,ds$ as in Section~\ref{sec:intro}, set
\begin{equation*}
    \tau_{Q} := \inf\bigl\{s\ge0\,:\,W_{s}\notin Q\bigr\}\,,
\end{equation*}
and define the Dirichlet solution by
\begin{equation}\label{eq:def_Dirichlet_solution}
    \tilde{u}_{Q}(t,x)
    := \dE_{x}\Bigl[\exp\Bigl(\int_{0}^{t}\xi(W_{s})\,ds\Bigr)
    \,;\,\tau_{Q}>t\Bigr]\,,
    \qquad t>0\,,\ x\in Q\,.
\end{equation}
Define $\cH_{Q}:=-\partial_{x}^{2}-\xi$ through the form
\eqref{eq:FN_form} on $H_{0}^{1}(Q)$ with $\xi$ replaced by
$-\xi$, and write $(\lambda_{k}(Q),\varphi_{k})_{k\ge1}$ for
its eigenpairs, with $(\varphi_{k})_{k\ge1}$ an orthonormal
basis of $L^{2}(Q)$. Since $-\xi$ is
equal in law to $\xi$ and the law of $\xi$ is invariant under
translations, the eigenpairs
$(\lambda_{k}(Q),\varphi_{k})_{k\ge1}$ have the same joint law
as those of $\cH_{\cL}$ with $\cL=|Q|$. All applications of the
results of Section~\ref{sec:Anderson_Hamiltonian_finite_interval}
below are through this equality in law. For every $t>0$,
almost surely,
\begin{equation}\label{eq:spectral_rep_Dirichlet_PAM}
    \tilde{u}_{Q}(t,\cdot)
    = e^{-t\cH_{Q}}\1_{Q}
    = \sum_{k\ge1}e^{-t\lambda_{k}(Q)}
    \langle\varphi_{k},\1_{Q}\rangle\,\varphi_{k}
    \qquad\text{in } L^{2}(Q)\,,
\end{equation}
and hence
\begin{equation}\label{eq:spatial_integral_spectral_rep}
    \int_{Q}\tilde{u}_{Q}(t,x)\,dx
    = \sum_{k\ge1}e^{-t\lambda_{k}(Q)}
    \Bigl(\int_{Q}\varphi_{k}(x)\,dx\Bigr)^{2}\,.
\end{equation}
The first equality in \eqref{eq:spectral_rep_Dirichlet_PAM} follows from the Feynman--Kac formula of \cite[Theorem~2.24]{GL21} for a bounded interval with Dirichlet boundary conditions at both endpoints (Case~3-D in that paper). We apply this formula to $\tfrac{1}{2}\cH_Q=-\tfrac{1}{2}\partial_x^2-\xi/2$ at time $2t$, using the time change in Remark~\ref{rem:convention} (see also \cite[Remark~2.22, Example~2.28, and Proposition~2.29]{GL21}). The stochastic integral in \cite[(2.14)]{GL21} coincides with our time integral,
$\dP\otimes\dP_{x}$-almost surely. Indeed,
writing $\Lambda_{t}$ for the occupation density of $W$ on $[0,t]$,
we have $\int_{0}^{t}\xi_{\ve}(W_{s})\,ds=\xi(j_{\ve}*\Lambda_{t})$,
and $j_{\ve}*\Lambda_{t}\to\Lambda_{t}$ in $L^{2}(\bR)$ as
$\ve\downarrow0$, so that
\begin{equation*}
    \int_{0}^{t}\xi(W_{s})\,ds = \xi(\Lambda_{t})\,,
\end{equation*}
where $\xi(\Lambda_{t})$ is understood through the
$L^{2}(\bR)$-extension of $\xi$.

The second equality in \eqref{eq:spectral_rep_Dirichlet_PAM} follows from the spectral theorem, since $(\varphi_k)_{k\ge1}$ is an orthonormal basis of $L^2(Q)$ consisting of eigenfunctions of $\cH_Q$.

We next apply \eqref{eq:spatial_integral_spectral_rep} to
$Q=Q_{\cL}:=(-\cL/2,\,\cL/2)$ and estimate the moments
of the spatial integral of the Dirichlet solution.
The upper bound holds for every $\cL\ge1$, 
with constants depending only on $p$. The lower bound relies on
Theorem~\ref{thm:eigenfunction_concentration} and requires
$\cL=\cL(t)$ to lie in the corresponding window.
\begin{proposition}\label{prop:moment_bounds_Dirichlet_PAM}
    Let $p>0$. There exist $C_{1},t_{0}>0$, depending only on
    $p$, such that for every $\cL\ge1$ and every $t\ge t_{0}$,
    \begin{equation}\label{eq:moment_upper_bound}
        \dE\biggl[\Bigl(\int_{Q_{\cL}}
        \tilde{u}_{Q_{\cL}}(t,x)\,dx\Bigr)^{p}\biggr]
        \le C_{1}\,\cL^{p+1}\,t^{5/2}
        \exp\Bigl(\frac{p^{3}}{48}\,t^{3}\Bigr)\,.
    \end{equation}
    In addition, assume that $\cL=\cL(t)$ satisfies
    \begin{equation}\label{eq:moment_window_condition}
        a(t)\exp\bigl(4(\log a(t))^{2}\bigr) \ll \cL(t)
        \qquad\text{and}\qquad
        \limsup_{t\to\infty}
        \frac{\log\cL(t)}{\tfrac{8}{3}\,a(t)^{3/2}} \le 1-\eta
    \end{equation}
    for some $\eta\in(0,1)$, where
    $a(t):=p^{2}t^{2}/16$. Then there exists $C_{2}>0$,
    depending only on $p$, such that for all
    sufficiently large $t$,
    \begin{equation}\label{eq:moment_lower_bound}
        \dE\biggl[\Bigl(\int_{Q_{\cL(t)}}
        \tilde{u}_{Q_{\cL(t)}}(t,x)\,dx\Bigr)^{p}\biggr]
        \ge C_{2}\,\cL(t)\,t^{1-p}
        \exp\Bigl(\frac{p^{3}}{48}\,t^{3}\Bigr)\,.
    \end{equation}
\end{proposition}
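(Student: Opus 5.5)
The plan is to work directly from the spectral representation \eqref{eq:spatial_integral_spectral_rep}. Write $S:=\int_{Q_{\cL}}\tilde u_{Q_{\cL}}(t,x)\,dx=\sum_{k}e^{-t\lambda_k}\langle\varphi_k,\1_{Q_{\cL}}\rangle^2$. The upper bound will come from the crude domination of $S$ by its first exponential, together with the tail bound of Lemma~\ref{lem:left_tail_upper_bound_fixed_L}, which holds for every $\cL$. The lower bound will come from keeping only the $k=1$ term and restricting to the event of Theorem~\ref{thm:eigenfunction_concentration}. In both cases the exponent is produced by the same Laplace-type optimization, namely $\max_{a}\bigl(pta-\tfrac83a^{3/2}\bigr)$. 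It is attained at $a(t)=p^2t^2/16$ with value $p^3t^3/48$.

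\emph{Upper bound.} Since $\lambda_k\ge\lambda_1$ and $\sum_k\langle\varphi_k,\1_{Q_{\cL}}\rangle^2=\|\1_{Q_{\cL}}\|_2^2=\cL$ by Parseval, we have $S\le\cL e^{-t\lambda_1}$. Hence $\dE[S^p]\le\cL^p\,\dE[e^{ptA}]$ with $A:=-\lambda_1(\cL)$, using the equality in law with $\cH_{\cL}$. Fix a large $a_1$ and write
\begin{equation*}
\dE[e^{ptA}]\le e^{pta_1}+\int_{a_1}^{\infty}pt\,e^{pta}\,\dP(A>a)\,da+\dP(A>a_1)e^{pta_1}.
\end{equation*}
Lemma~\ref{lem:left_tail_upper_bound_fixed_L} gives $\dP(A>a)\le e\cL/m(a)$. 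By \eqref{eq:first_explosion_mean_asymptotics}, after enlarging $a_1$, we have $m(a)\ge\frac{\pi}{2}a^{-1/2}e^{\frac83a^{3/2}}$ for $a\ge a_1$. The remaining integral is then bounded by $C\cL\,t\int_{a_1}^\infty\sqrt a\,e^{pta-\frac83a^{3/2}}da$. The exponent $g(a)=pta-\frac83a^{3/2}$ is concave with maximizer $a(t)$, maximum $p^3t^3/48$, and $g''(a(t))=-2a(t)^{-1/2}\asymp-1/t$. A standard Laplace bound (split into $|a-a(t)|\le a(t)/2$, where $g\le g(a(t))-c(a-a(t))^2/t$, and the complement, where $g$ is smaller by order $t^3$) therefore gives $\int\le C\sqrt{a(t)}\sqrt t\,e^{p^3t^3/48}\le Ct^{3/2}e^{p^3t^3/48}$. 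Multiplying by the prefactor $t$ gives $t^{5/2}$. The terms $e^{pta_1}$ are absorbed for $t\ge t_0$ since $\cL\ge1$. Altogether $\dE[S^p]\le C_1\cL^{p+1}t^{5/2}e^{p^3t^3/48}$.

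\emph{Lower bound.} All terms in $S$ are nonnegative, so $S\ge e^{-t\lambda_1}\|\varphi_1\|_1^2$, using $\varphi_1\ge0$. Set $a=a(t)$. The map $t\mapsto a(t)$ is an increasing bijection, so $\cL(t)$ can be regarded as a family $\cL_a$. Condition \eqref{eq:moment_window_condition} is then exactly \eqref{eq:concentration_window}. Apply Theorem~\ref{thm:eigenfunction_concentration} with a fixed $\ve<\pi/(2\sqrt2)$ and an admissible $\theta$. On the resulting event $G$ we have $\lambda_1<-a$ and $\|\varphi_1\|_1\ge\frac{\pi}{2\sqrt2}a^{-1/4}$, so $S^p\ge c\,a^{-p/2}e^{pta}$. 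Moreover $\dP(G)\sim\dP(\lambda_1<-a)\sim\cL\frac{\sqrt a}{\pi}e^{-\frac83a^{3/2}}$ by Theorem~\ref{thm:sharp_lower_tail_lowest_eigenvalue}. Hence
\begin{equation*}
\dE[S^p]\ge c\,\cL\,a^{(1-p)/2}e^{pta-\frac83a^{3/2}}=C_2\,\cL(t)\,t^{1-p}e^{p^3t^3/48},
\end{equation*}
because $a^{1/2}=pt/4$ and $g(a(t))=p^3t^3/48$.

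The conceptual work is thus already contained in the two spectral theorems. I expect the main obstacle to be the bookkeeping in the upper bound: the Laplace integral must be controlled uniformly in $\cL\ge1$, which is why Lemma~\ref{lem:left_tail_upper_bound_fixed_L} is used rather than Theorem~\ref{thm:sharp_lower_tail_lowest_eigenvalue}. It also requires a lower bound on $m(a)$ that is valid for all $a\ge a_1$, not just asymptotically, and this I would take directly from \eqref{eq:first_explosion_mean_asymptotics} by choosing $a_1$ large, with the region $a<a_1$ handled trivially.
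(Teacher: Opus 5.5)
Your proposal is correct and follows the paper's proof essentially step for step. For the upper bound, both use Parseval and the non-asymptotic tail bound of Lemma~\ref{lem:left_tail_upper_bound_fixed_L}, followed by a Laplace estimate around $a(t)=p^{2}t^{2}/16$. For the lower bound, both keep only the first spectral term on the event of Theorem~\ref{thm:eigenfunction_concentration}; the paper takes exactly $\ve=\pi/(2\sqrt{2})$ there. The only cosmetic difference is in the Laplace integral: the paper substitutes $r=\sqrt{x}$ and uses the exact identity $sr^{2}-\frac{8}{3}r^{3}=\frac{s^{3}}{48}-\frac{8}{3}\bigl(r-\frac{s}{4}\bigr)^{2}\bigl(r+\frac{s}{8}\bigr)$, whereas you use a concavity/Taylor split; both yield the same $t^{3/2}$ factor.
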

\begin{proof}
    Throughout the proof, write
    $\lambda_{k}:=\lambda_{k}(Q_{\cL})$, and let $C>0$ denote a
    constant depending only on $p$, whose value may change from
    line to line. We begin with the upper bound. Since
    $\sum_{k\ge1}\bigl(\int_{Q_{\cL}}\varphi_{k}\bigr)^{2}
    =\|\1_{Q_{\cL}}\|_{2}^{2}=\cL$ by Parseval's identity,
    \eqref{eq:spatial_integral_spectral_rep} gives
    \begin{equation*}
        \dE\biggl[\Bigl(\int_{Q_{\cL}}
        \tilde{u}_{Q_{\cL}}(t,x)\,dx\Bigr)^{p}\biggr]
        \le \cL^{p}\,\dE\bigl[e^{-pt\lambda_{1}}\bigr]\,.
    \end{equation*}
    We estimate the exponential moment. By
    Lemma~\ref{lem:left_tail_upper_bound_fixed_L} and
    \eqref{eq:first_explosion_mean_asymptotics}, there exist
    $C_{0},x_{0}>0$, independent of $\cL$, such that
    \begin{equation}\label{eq:lower_tail_nonasymptotic}
        \dP\bigl(-\lambda_{1}>x\bigr)
        \le C_{0}\,\cL\,\sqrt{x}\,
        \exp\Bigl(-\frac{8}{3}x^{3/2}\Bigr)\,,
        \qquad x\ge x_{0}\,.
    \end{equation}
    By integration by parts and \eqref{eq:lower_tail_nonasymptotic}, we obtain 
    \begin{equation}\label{eq:lambda_1_Laplace_transform_bound}
        \dE\bigl[e^{-pt\lambda_{1}}\bigr]
        \le 2\,e^{ptx_{0}}
        + pt\int_{x_{0}}^{\infty}e^{ptx}\,
        \dP\bigl(-\lambda_{1}>x\bigr)\,dx
        \le 2\,e^{ptx_{0}}
        + C_{0}\,\cL\,pt\,\cZ(x_{0},pt)\,,
    \end{equation}
    where
    \begin{equation*}
        \cZ(x_{0},s)
        := \int_{x_{0}}^{\infty}\sqrt{x}\,
        \exp\Bigl(sx-\frac{8}{3}x^{3/2}\Bigr)\,dx\,.
    \end{equation*}
    After the substitution $r=\sqrt{x}$, we write the
exponent around its maximizer $r=s/4$ as
\begin{equation*}
    sr^{2}-\frac{8}{3}r^{3}
    = \frac{s^{3}}{48}
    -\frac{8}{3}\Bigl(r-\frac{s}{4}\Bigr)^{2}
    \Bigl(r+\frac{s}{8}\Bigr)\,.
\end{equation*}
This gives
    \begin{equation*}
        \cZ(x_{0},s)
        = 2\,e^{s^{3}/48}\int_{\sqrt{x_{0}}}^{\infty}r^{2}
        \exp\Bigl(-\frac{8}{3}\Bigl(r-\frac{s}{4}\Bigr)^{2}
        \Bigl(r+\frac{s}{8}\Bigr)\Bigr)\,dr\,.
    \end{equation*}
    Since $r+s/8\ge s/8$ on the domain of integration and
    $r^{2}\le2(r-s/4)^{2}+s^{2}/8$, the last integral is at
    most
    \begin{equation*}
        \int_{\bR}\Bigl(2v^{2}+\frac{s^{2}}{8}\Bigr)
        e^{-sv^{2}/3}\,dv
        \le C\,s^{3/2}\,,
        \qquad s\ge1\,.
    \end{equation*}
    Hence $\cZ(x_{0},s)\le C\,s^{3/2}\exp\bigl(s^{3}/48\bigr)$,
    and taking $s=pt$ in
    \eqref{eq:lambda_1_Laplace_transform_bound}, we
    obtain
    \begin{equation*}
        \dE\bigl[e^{-pt\lambda_{1}}\bigr]
        \le C\,\cL\,t^{5/2}
        \exp\Bigl(\frac{p^{3}}{48}\,t^{3}\Bigr)\,,
    \end{equation*}
    since $\cL\ge1$ and $e^{ptx_{0}}\le\exp(p^{3}t^{3}/48)$ for
    all $t\ge t_{0}(p)$. Multiplying by $\cL^{p}$ proves
    \eqref{eq:moment_upper_bound}.
    We now turn to the lower bound. Fix
    $\ve:=\pi/(2\sqrt{2})$ and
    $\theta\in\bigl(0,\,1-(1-\eta/2)^{2/3}\bigr)$, and let
    $G_{t}$ denote the event
    \begin{equation*}
        G_{t} := \Bigl\{\lambda_{1}<-a(t)\,,\
        \lambda_{2}\ge-(1-\theta)a(t)\,,\
        \bigl|a(t)^{1/4}\|\varphi_{1}\|_{1}
        -\pi/\sqrt{2}\bigr|\le\ve\Bigr\}\,.
    \end{equation*}
    The condition \eqref{eq:moment_window_condition} is
    \eqref{eq:concentration_window} with $a=a(t)$, so
    Theorem~\ref{thm:eigenfunction_concentration},
    Theorem~\ref{thm:sharp_lower_tail_lowest_eigenvalue}, and
    \eqref{eq:first_explosion_mean_asymptotics} give
    \begin{equation}\label{eq:G_t_asymptotics}
        \dP(G_{t})
        \sim \dP\bigl(\lambda_{1}<-a(t)\bigr)
        \sim \frac{pt}{4\pi}\,\cL(t)
        \exp\Bigl(-\frac{p^{3}}{24}\,t^{3}\Bigr)\,.
    \end{equation}
    Recall from
    Section~\ref{subsec:eigenvalue_problem_Riccati} that
    $\varphi_{1}\ge0$, so
    $\langle\varphi_{1},\1_{Q_{\cL(t)}}\rangle
    =\|\varphi_{1}\|_{1}$, and on $G_{t}$,
    $\|\varphi_{1}\|_{1}\ge(\pi/\sqrt{2}-\ve)\,a(t)^{-1/4}$.
    Since every term in
    \eqref{eq:spatial_integral_spectral_rep} is nonnegative,
    the first term alone gives
    \begin{equation*}
        \dE\biggl[\Bigl(\int_{Q_{\cL(t)}}
        \tilde{u}_{Q_{\cL(t)}}(t,x)\,dx\Bigr)^{p}\biggr]
        \ge \dE\Bigl[\|\varphi_{1}\|_{1}^{2p}\,
        e^{-pt\lambda_{1}}\,;\,G_{t}\Bigr]
        \ge \Bigl(\frac{\pi^{2}}{8}\Bigr)^{p}
        a(t)^{-p/2}\,e^{pt\,a(t)}\,\dP(G_{t})\,,
    \end{equation*}
    and \eqref{eq:G_t_asymptotics} yields
    \eqref{eq:moment_lower_bound}, since
    $pt\,a(t)-\tfrac{p^{3}}{24}t^{3}=\tfrac{p^{3}}{48}t^{3}$.
\end{proof}

We now partition $Q_{L(t)}$ into blocks of the intermediate
scale $\ell(t)$ and associate to each block its Dirichlet block
integral. Recall $n(t)$ and $\ell(t)$ from \eqref{eq:block_number}, so
that $n(t)\,\ell(t)=L(t)$. For $i=1,\ldots,n(t)+1$,
define the grid points
\begin{equation*}
    q_{i} := -\frac{L(t)}{2}+(i-1)\,\ell(t)\,,
\end{equation*}
so that $q_{1}=-L(t)/2$ and $q_{n(t)+1}=L(t)/2$. For
$i\in\cI(t):=\{1,\ldots,n(t)\}$, define the blocks and block
integrals
\begin{equation*}
    Q_{i} := \bigl(q_{i},\,q_{i+1}\bigr)\,,
    \qquad
    U_{i}(t) := \int_{Q_{i}}u(t,x)\,dx\,.
\end{equation*}
By the stationarity of $u(t,\cdot)$, each $U_{i}(t)$ has the
same law as $U_{0}(t)$ of \eqref{eq:block_integral}. For
$i\in\cI(t)$, let $\tilde{u}_{Q_{i}}$ denote the Dirichlet
solution \eqref{eq:def_Dirichlet_solution} on $Q_{i}$, and
define the Dirichlet block integrals (compare
\eqref{eq:spatial_integral})
\begin{equation}\label{eq:def_Dirichlet_block_integral}
    \tilde{U}_{i}(t) := \int_{Q_{i}}\tilde{u}_{Q_{i}}(t,x)\,dx\,,
    \qquad
    \tilde{U}_{0}(t) := \int_{Q_{\ell(t)}}
    \tilde{u}_{Q_{\ell(t)}}(t,x)\,dx\,.
\end{equation}
For each fixed $t$, the family
$\bigl(\tilde{U}_{i}(t)\bigr)_{i\in\cI(t)}$ is i.i.d., and each
$\tilde{U}_{i}(t)$ has the same law as $\tilde{U}_{0}(t)$.
Indeed, $\tilde{U}_{i}(t)$ is a measurable function of
the restriction of $\xi$ to $Q_{i}$, and these restrictions
are independent and, by translation invariance, identically distributed.
Finally, we define
\begin{equation}\label{eq:tilde_U}
    \tilde{U}(t)
    := \sum_{i\in\cI(t)}\tilde{U}_{i}(t)\,.
\end{equation}

\begin{remark}\label{rem:window_condition_ell}
Since $a(t)\exp\bigl(4(\log a(t))^{2}\bigr)
=e^{O((\log t)^{2})}$ and
$\log\ell(t)=t(1+o(1))=o\bigl(a(t)^{3/2}\bigr)$, the
window condition \eqref{eq:moment_window_condition} holds
with $\cL(t)=\ell(t)$ for every fixed $p>0$ and every
$\eta\in(0,1)$, so
Proposition~\ref{prop:moment_bounds_Dirichlet_PAM} applies
to $\tilde{U}_{0}(t)$.
\end{remark}

In the proofs of the main theorems, we show that
$U(t)$ and $\tilde{U}(t)$ differ by an error that vanishes in
probability after the centering and scaling of each theorem.
To this end, we first compare $U_{i}(t)$ with
$\tilde{U}_{i}(t)$. For $i\in\cI(t)$, split $Q_{i}$ into
\begin{equation*}
    \fR_{i} := \bigl(q_{i}+t^{4},\,q_{i+1}-t^{4}\bigr)\,,
    \qquad
    \fT_{i}^{L} := \bigl(q_{i},\,q_{i}+t^{4}\bigr)\,,
    \qquad
    \fT_{i}^{R} := \bigl(q_{i+1}-t^{4},\,q_{i+1}\bigr)\,,
\end{equation*}
and, for $i=1,\ldots,n(t)+1$, set
\begin{equation*}
    \fS_{i} := \bigl(q_{i}-e^{t/100},\,q_{i}+e^{t/100}\bigr)\,.
\end{equation*}
For all sufficiently large $t$,
\begin{equation}\label{eq:layer_window_inclusion}
    \fT_{i}^{L}\subset\fS_{i}\,,
    \qquad
    \fT_{i}^{R}\subset\fS_{i+1}\,.
\end{equation}

\begin{figure}[ht]
\centering
\begin{tikzpicture}[x=0.8cm,y=0.8cm]
  \fill[gray!30] (0,-0.16) rectangle (1,0.16);
  \fill[gray!30] (6,-0.16) rectangle (7,0.16);
  \fill[gray!30] (7,-0.16) rectangle (8,0.16);
  \fill[gray!30] (13,-0.16) rectangle (14,0.16);
  \draw[thick] (-0.9,0) -- (14.9,0);
  \foreach \x in {0,7,14}{\draw[thick] (\x,-0.3) -- (\x,0.3);}
  \node[below] at (0,-0.42) {$q_{i}$};
  \node[below] at (7,-0.42) {$q_{i+1}$};
  \node[below] at (14,-0.42) {$q_{i+2}$};
  \draw[decorate,decoration={brace,amplitude=4pt}] (1,0.42) -- (6,0.42);
  \draw[decorate,decoration={brace,amplitude=4pt}] (8,0.42) -- (13,0.42);
  \node at (3.5,0.98) {$\fR_{i}$};
  \node at (10.5,0.98) {$\fR_{i+1}$};
  \node[scale=0.85] at (0.5,0.62) {$\fT_{i}^{L}$};
  \node[scale=0.85,anchor=east] at (6.6,0.85) {$\fT_{i}^{R}$};
  \node[scale=0.85,anchor=west] at (7.4,0.85) {$\fT_{i+1}^{L}$};
  \node[scale=0.85] at (13.5,0.62) {$\fT_{i+1}^{R}$};
  \draw[decorate,decoration={brace,amplitude=6pt}] (0,1.55) -- (7,1.55);
  \draw[decorate,decoration={brace,amplitude=6pt}] (7,1.55) -- (14,1.55);
  \node at (3.5,2.15) {$Q_{i}$};
  \node at (10.5,2.15) {$Q_{i+1}$};
  \draw[decorate,decoration={brace,mirror,amplitude=6pt}] (5.2,-1.15) -- (8.8,-1.15);
  \node at (7,-1.85) {$\fS_{i+1}$};
  \draw[dashed,gray] (5.2,-1.15) -- (5.2,0.16);
  \draw[dashed,gray] (8.8,-1.15) -- (8.8,0.16);
\end{tikzpicture}
\caption{The blocks $Q_{i}=(q_{i},q_{i+1})$ and
$Q_{i+1}$. The shaded regions are the sets $\fT_{j}^{L}$
and $\fT_{j}^{R}$, $j\in\{i,i+1\}$, of length $t^{4}$. The set
$\fS_{i+1}$ has length $2e^{t/100}$ and is centered at
$q_{i+1}$, so that it contains the two adjacent shaded
regions.}
\label{fig:blocks}
\end{figure}

We first separate the interior and boundary contributions:
\begin{equation}\label{eq:interior_boundary_split}
    U(t)-\tilde{U}(t)
    = \sum_{i\in\cI(t)}\int_{\fR_{i}}
    \bigl[u(t,x)-\tilde{u}_{Q_{i}}(t,x)\bigr]\,dx
    + \sum_{i\in\cI(t)}\int_{Q_{i}\setminus\fR_{i}}
    \bigl[u(t,x)-\tilde{u}_{Q_{i}}(t,x)\bigr]\,dx\,.
\end{equation}
Further decomposing the boundary contribution, we obtain
\begin{equation}\label{eq:U_tildeU_error_decomposition}
    \begin{aligned}
        U(t)-\tilde{U}(t)
        &= \sum_{i\in\cI(t)}\int_{\fR_{i}}
        \bigl[u(t,x)-\tilde{u}_{Q_{i}}(t,x)\bigr]\,dx
        + \sum_{i\in\cI(t)}\int_{\fT_{i}^{L}}
        \bigl[u(t,x)-\tilde{u}_{\fS_{i}}(t,x)\bigr]\,dx\\
        &\qquad
        + \sum_{i\in\cI(t)}\int_{\fT_{i}^{R}}
        \bigl[u(t,x)-\tilde{u}_{\fS_{i+1}}(t,x)\bigr]\,dx
        + \sum_{i\in\cI(t)}\int_{\fT_{i}^{L}}
        \bigl[\tilde{u}_{\fS_{i}}(t,x)
        -\tilde{u}_{Q_{i}}(t,x)\bigr]\,dx\\
        &\qquad
        + \sum_{i\in\cI(t)}\int_{\fT_{i}^{R}}
        \bigl[\tilde{u}_{\fS_{i+1}}(t,x)
        -\tilde{u}_{Q_{i}}(t,x)\bigr]\,dx\,.
    \end{aligned}
\end{equation}
By \eqref{eq:def_Dirichlet_solution},
$0\le\tilde{u}_{Q}\le u$ on $Q$ for every interval $Q$,
so the integrands in the first three sums of
\eqref{eq:U_tildeU_error_decomposition} are nonnegative.
The following lemma shows that the corresponding
integrals have small moments: for every fixed $p>0$,
their $p$-th moments are $o(e^{-t^{6}})$,
uniformly in the block index. Since $n(t)=e^{O(t^{3})}$, each of these sums
therefore converges to zero in every fixed positive moment.
The remaining terms in either decomposition require
estimates adapted to the centering and scaling of
each theorem. We establish these estimates in
Lemmas~\ref{lem:WLLN_difference},
\ref{lem:CLT_difference}, and~\ref{lem:Stable_difference}.

\begin{lemma}\label{lem:approx_U_to_tilde_U}
    Let $p>0$ be fixed. As $t\to\infty$,
    \begin{enumerate}
        \item[\textup{(i)}]
        $\displaystyle
        \sup_{i\in\cI(t)}\dE\biggl[\biggl(\int_{\fR_{i}}
        \bigl[u(t,x)-\tilde{u}_{Q_{i}}(t,x)\bigr]\,dx
        \biggr)^{p}\biggr]
        = o\bigl(e^{-t^{6}}\bigr)\,,$
        \item[\textup{(ii)}]
        $\displaystyle
        \sup_{i\in\cI(t)}\dE\biggl[\biggl(\int_{\fT_{i}^{L}}
        \bigl[u(t,x)-\tilde{u}_{\fS_{i}}(t,x)\bigr]\,dx
        \biggr)^{p}\biggr]
        = o\bigl(e^{-t^{6}}\bigr)\,,$
        \item[\textup{(iii)}]
        $\displaystyle
        \sup_{i\in\cI(t)}\dE\biggl[\biggl(\int_{\fT_{i}^{R}}
        \bigl[u(t,x)-\tilde{u}_{\fS_{i+1}}(t,x)\bigr]\,dx
        \biggr)^{p}\biggr]
        = o\bigl(e^{-t^{6}}\bigr)\,.$
    \end{enumerate}
\end{lemma}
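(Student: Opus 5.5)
All three parts have the same structure, so I will treat them together. In each case we integrate over a set $A$ ($\fR_i$, $\fT_i^L$, or $\fT_i^R$) that sits inside an interval $Q$ ($Q_i$, $\fS_i$, or $\fS_{i+1}$) at distance at least $t^4$ from $\partial Q$. For $\fR_i\subset Q_i$ this is immediate. For $\fT_i^L\subset\fS_i$ the distance is at least $e^{t/100}-t^4\ge t^4$ for large $t$.

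\textbf{Step 1: the Feynman--Kac difference.} By \eqref{eq:FK_solution} and \eqref{eq:def_Dirichlet_solution}, for $x\in A$,
\begin{equation*}
    0\le u(t,x)-\tilde u_Q(t,x)
    = \dE_x\Bigl[\exp\Bigl(\int_0^t\xi(W_s)\,ds\Bigr);\tau_Q\le t\Bigr]
    \le \dE_x\Bigl[\exp\Bigl(2\int_0^t\xi(W_s)\,ds\Bigr)\Bigr]^{1/2}\,
    \dP_x\bigl(\tau_Q\le t\bigr)^{1/2},
\end{equation*}
where the inequality is Cauchy--Schwarz in $\dP_x$. Since $\operatorname{dist}(x,\partial Q)\ge t^4$, the reflection principle for $W$ (with generator $\partial_x^2$, so variance $2s$) gives
\begin{equation*}
    \dP_x(\tau_Q\le t)\le 4\exp\bigl(-t^8/(4t)\bigr)=4e^{-t^7/4}.
\end{equation*}

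\textbf{Step 2: moments of the integral.} Write $v(t,x):=\dE_x[\exp(2\int_0^t\xi(W_s)ds)]^{1/2}$. Then
\begin{equation*}
    \int_A\bigl(u-\tilde u_Q\bigr)\,dx\le 2e^{-t^7/8}\int_A v(t,x)\,dx .
\end{equation*}
The key point is that $v$ is built from the whole-line quantity, with no boundary. Hence its moments do not depend on $i$, and by stationarity $v(t,\cdot)$ is stationary in $x$.

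For $p\ge1$, I would apply Jensen (or Minkowski) to get
\begin{equation*}
    \dE\Bigl[\Bigl(\int_A v\Bigr)^p\Bigr]\le |A|^{p}\,\dE\bigl[v(t,0)^p\bigr]
    \le |A|^{p}\,\dE\Bigl[\exp\Bigl(2\int_0^t\xi(W_s)ds\Bigr)^{p/2}\Bigr]
\end{equation*}
after a further Jensen step when $p/2\le1$. For $p/2>1$, I would instead bound by $\dE_x\dE[\exp(p\int_0^t\xi(W_s)ds)]$ using Jensen in $\dP_x$. For $p<1$, I would use the bound for $p=1$ together with Jensen in $\dE$. Annealing in the second case, $\dE\exp(p\int_0^t\xi(W_s)\,ds)=\dE_x\exp\bigl(\tfrac{p^2}{2}\int L_t(y)^2\,dy\bigr)$, where $L_t$ is the local time of $W$ on $[0,t]$. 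This is $\exp(O(t^3))$ with constants depending only on $p$, by \cite[(3.3)]{Chen14} (finite exponential moments), rescaled via Remark~\ref{rem:convention}, or directly from \eqref{eq:Lyapunov_exponent} via \cite{GLGL23}.

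\textbf{Step 3: conclusion.} We have $|A|\le\ell(t)\le 2e^{t}$, so the $p$-th moment is at most
\begin{equation*}
    C\,e^{-pt^7/8}\,e^{pt}\,e^{C_pt^3}=o(e^{-t^6}),
\end{equation*}
uniformly in $i$. This proves (i)--(iii).

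\textbf{Main obstacle.} The only delicate point is a clean, uniform $\exp(O(t^3))$ bound for the exponential moment of $\int_0^t\xi(W_s)\,ds$ at an arbitrary multiple $p$. This should follow from the cited moment asymptotics \eqref{eq:Lyapunov_exponent}, applied with $u$ replaced by the solution with potential $c\xi$ by scaling, or alternatively from the Gaussian computation with the self-intersection local time bound $\dE_x\exp\bigl(c\int L_t^2\bigr)\le e^{C t^3}$. Everything else is routine, since the $t^4$ buffer gives $e^{-t^7}$ decay, which beats any $e^{O(t^3)}$ growth.
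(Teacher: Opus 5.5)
Your proposal is correct and follows the paper's proof essentially step for step. Both arguments use the Feynman--Kac difference with Cauchy--Schwarz, the reflection-principle bound $4e^{-t^{7}/4}$ coming from the $t^{4}$ buffer, and the Jensen/H\"older split into the cases $p\le1$, $1<p\le2$, $p>2$. The annealed bound $e^{O(t^{3})}$ on the exponential moment of $\int_0^t\xi(W_s)\,ds$, which you flagged as the main obstacle, is taken in the paper from \cite[Theorem~2.6]{GLGL23} for the potential $b\xi$.

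One point to keep explicit: for $p<1$ you must raise the explicit bound $Ce^{-t^{7}/8}e^{t}e^{Ct^{3}}$ to the power $p$, not merely the statement $o(e^{-t^{6}})$, since $o(e^{-t^{6}})^{p}$ is not $o(e^{-t^{6}})$. Your Step~3 does use the explicit bound, so the argument is complete.
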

\begin{proof}
    We begin with \textup{(i)}. Fix $i\in\cI(t)$ and
    $x\in\fR_{i}$. By the Feynman--Kac representations
    \eqref{eq:FK_solution} and
    \eqref{eq:def_Dirichlet_solution}, and the
    Cauchy--Schwarz inequality,
    \begin{equation}\label{eq:FK_difference}
        \begin{aligned}
            u(t,x)-\tilde{u}_{Q_{i}}(t,x)
            &= \dE_{x}\Bigl[\exp\Bigl(\int_{0}^{t}
            \xi(W_{s})\,ds\Bigr)
            \,;\,\tau_{Q_{i}}\le t\Bigr]\\
            &\le \dP_{x}\bigl(\tau_{Q_{i}}\le t\bigr)^{1/2}\,
            \dE_{x}\Bigl[\exp\Bigl(2\int_{0}^{t}
            \xi(W_{s})\,ds\Bigr)\Bigr]^{1/2}\,.
        \end{aligned}
    \end{equation}
     Since $x$ is at distance at least $t^{4}$
    from $\partial Q_{i}$, the reflection principle gives
    \begin{equation}\label{eq:exit_estimate}
        \dP_{x}\bigl(\tau_{Q_{i}}\le t\bigr)
        \le 4\exp\Bigl(-\frac{t^{7}}{4}\Bigr)\,.
    \end{equation}
    For $b>0$, the inner expectation in
    \eqref{eq:noise_exponential_moment} below is the
    Feynman--Kac representation of the solution to
    \eqref{eq:PAM} with $\xi$ replaced by $b\,\xi$. Hence
    \cite[Theorem~2.6]{GLGL23} gives
    \begin{equation}\label{eq:noise_exponential_moment}
        \dE\Bigl[\dE_{x}\Bigl[\exp\Bigl(b\int_{0}^{t}
        \xi(W_{s})\,ds\Bigr)\Bigr]\Bigr]
        = e^{O(t^{3})}\,,
    \end{equation}
    where the implied constant depends only on $b$.
    Finally, recall from \eqref{eq:block_number} that
    $|\fR_{i}|\le\ell(t)=e^{t(1+o(1))}$.
    For $0<p\le1$, Jensen's inequality gives
    \begin{equation*}
        \begin{aligned}
            \dE\biggl[\biggl(\int_{\fR_{i}}
            \bigl[u(t,x)-\tilde{u}_{Q_{i}}(t,x)\bigr]\,dx
            \biggr)^{p}\biggr]
            &\le \biggl(\int_{\fR_{i}}
            \dE\bigl[u(t,x)-\tilde{u}_{Q_{i}}(t,x)\bigr]\,dx
            \biggr)^{p}\\
            &\le 2^{p}\,|\fR_{i}|^{p}\,e^{-p\,t^{7}/8}
            \sup_{x\in\fR_{i}}
            \dE\biggl[\dE_{x}\biggl[\exp\biggl(2\int_{0}^{t}
            \xi(W_{s})\,ds\biggr)\biggr]^{1/2}\biggr]^{p}\\
            &= o\bigl(e^{-t^{6}}\bigr)\,.
        \end{aligned}
    \end{equation*}
    Here the second inequality uses
    \eqref{eq:FK_difference} and \eqref{eq:exit_estimate}. The
    last equality follows from Jensen's inequality applied to
    the square root and \eqref{eq:noise_exponential_moment} with
    $b=2$. For $p>1$, H\"older's inequality gives
    \begin{equation*}
        \begin{aligned}
            \dE\biggl[\biggl(\int_{\fR_{i}}
            \bigl[u(t,x)-\tilde{u}_{Q_{i}}(t,x)\bigr]\,dx
            \biggr)^{p}\biggr]
            &\le |\fR_{i}|^{p-1}\int_{\fR_{i}}
            \dE\bigl[\bigl(u(t,x)
            -\tilde{u}_{Q_{i}}(t,x)\bigr)^{p}\bigr]\,dx\\
            &\le 2^{p}\,|\fR_{i}|^{p}\,e^{-p\,t^{7}/8}
            \sup_{x\in\fR_{i}}
            \dE\biggl[\dE_{x}\biggl[\exp\biggl(2\int_{0}^{t}
            \xi(W_{s})\,ds\biggr)\biggr]^{p/2}\biggr]\\
            &= o\bigl(e^{-t^{6}}\bigr)\,.
        \end{aligned}
    \end{equation*}
    Here the last equality follows from
    \eqref{eq:noise_exponential_moment}. For $1<p\le2$ we
    apply Jensen's inequality under $\dE$ and take $b=2$, and
    for $p>2$ we apply it under $\dE_{x}$ and take $b=p$.
    All bounds are
    independent of $i$, which proves \textup{(i)}.
    For \textup{(ii)}, the same argument applies with $Q_{i}$
    and $\fR_{i}$ replaced by $\fS_{i}$ and $\fT_{i}^{L}$.
    Each $x\in\fT_{i}^{L}$ is at distance at least
    $e^{t/100}-t^{4}$ from $\partial\fS_{i}$, so the reflection
    principle gives
    \begin{equation*}
        \dP_{x}\bigl(\tau_{\fS_{i}}\le t\bigr)
        \le 4\exp\Bigl(-\frac{(e^{t/100}-t^{4})^{2}}{4t}\Bigr)
        = o\bigl(e^{-t^{7}}\bigr)\,,
    \end{equation*}
    and the remaining estimates are unchanged. Finally,
    \textup{(iii)} follows in the same way, with
    $\fS_{i}$ and $\fT_{i}^{L}$ replaced by $\fS_{i+1}$ and
    $\fT_{i}^{R}$.
\end{proof}

\section{Proof of Theorem~\ref{thm:PAM_WLLN}}
\label{sec:proof_PAM_WLLN}
We first prove a weak law of large numbers for the sum
$\tilde{U}(t)$ (see \eqref{eq:tilde_U}) of the i.i.d.\ Dirichlet block integrals
\eqref{eq:def_Dirichlet_block_integral}. We then compare
$U(t)$ with $\tilde{U}(t)$, which gives
Theorem~\ref{thm:PAM_WLLN}.
\begin{theorem}\label{thm:PAM_WLLN_Dirichlet}
    Let $L(t)$ be as in \eqref{eq:box_size} with $\alpha>1$.
    Then, as $t\to\infty$,
    \begin{equation*}
        \frac{\tilde{U}(t)}{\dE[\tilde{U}(t)]}
        \xrightarrow{\ \dP\ } 1\,.
    \end{equation*}
\end{theorem}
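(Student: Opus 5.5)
We will prove the weak law for the i.i.d.\ sum $\tilde U(t)=\sum_{i\in\cI(t)}\tilde U_i(t)$ by a truncated first/second moment argument, in the style of \cite{BABM05}. The reason truncation is needed is that for $1<\alpha\le 2$ the variance of $\tilde U_0(t)$ is too large compared with $n(t)\,\dE[\tilde U_0(t)]^2$. The moment bounds of Proposition~\ref{prop:moment_bounds_Dirichlet_PAM} will supply all inputs; by Remark~\ref{rem:window_condition_ell} they apply with $\cL=\ell(t)=e^{t(1+o(1))}$.

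\textbf{First moment.} Proposition~\ref{prop:moment_bounds_Dirichlet_PAM} with $p=1$ gives
\[
\dE[\tilde U_0(t)]=e^{t^3/48+O(t)},
\]
since both the upper bound $C_1\ell^2t^{5/2}$ and the lower bound $C_2\ell$ are $e^{O(t)}$. Hence $\dE[\tilde U(t)]=n(t)\,e^{t^3/48+O(t)}$, with $n(t)=L(t)e^{-t(1+o(1))}=\exp(\alpha^3t^3/24+O(t))$.

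\textbf{Truncation and the truncated part.} Fix a level $M(t)=\exp(\beta t^3)$ and put $V_i:=\tilde U_i(t)\1_{\{\tilde U_i(t)\le M(t)\}}$. We will show three things.
\begin{enumerate}
\item[(a)] $n(t)\,\dP(\tilde U_0(t)>M(t))\to0$, so that $\tilde U(t)=\sum_i V_i$ with probability tending to one.
\item[(b)] $\dE[\tilde U_0(t);\tilde U_0(t)>M(t)]=o(\dE[\tilde U_0(t)])$, so that $\sum_i\dE[V_i]\sim\dE[\tilde U(t)]$.
\item[(c)] $n(t)\Var(V_1)\le n(t)\,\dE[V_1^2]=o\bigl(n(t)^2\dE[\tilde U_0]^2\bigr)$.
\end{enumerate}
Chebyshev's inequality then finishes the proof.

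\textbf{Moment estimates.} For (a) and (b) we use Markov's inequality with a $p$-th moment for some $p>1$:
\[
n(t)\,\dP(\tilde U_0>M)\le n(t)\,M^{-p}e^{p^3t^3/48+O(t)},
\]
\[
\dE[\tilde U_0;\tilde U_0>M]\le M^{1-p}e^{p^3t^3/48+O(t)}.
\]
For (c) we use $\dE[V_1^2]\le M^{2-q}\,\dE[\tilde U_0^q]$ for some $q\in(0,2]$, i.e.
\[
\dE[V_1^2]\le M^{2-q}e^{q^3t^3/48+O(t)}.
\]
Writing $M=e^{\beta t^3}$, the conditions reduce to three exponent inequalities at scale $t^3$ (the $O(t)$ and polynomial factors are harmless):
\begin{enumerate}
\item[(a)] $\alpha^3/24<\min_{p}\bigl(p\beta-p^3/48\bigr)$;
\item[(b)] $\min_{p>1}\bigl((1-p)\beta+p^3/48\bigr)<1/48$;
\item[(c)] $\min_{q}\bigl((2-q)\beta+q^3/48\bigr)<\alpha^3/24+1/24$.
\end{enumerate}

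\textbf{Choice of $\beta$.} Optimizing in (a) at $p=\sqrt{16\beta}$ gives the value $\tfrac{2}{3}p^3/48$ with $\beta=p^2/16$. Write $\beta=p_*^2/16$. Then (a) becomes $p_*>\alpha$. In (b) the derivative at $p=1$ is $-\beta+1/16<0$ as soon as $p_*>1$, so a slightly larger $p$ gives a value below $1/48$. In (c), the choice $q=p_*$ gives $(2-p_*)p_*^2/16+p_*^3/48=p_*^2/8-p_*^3/24$, which must be $<(\alpha^3+1)/24$. Since $p^2/8-p^3/24$ has maximum $1/24$ at $p=2$ and is decreasing for $p>2$, the choice $p_*$ slightly above $\alpha$ gives at most $\max(1/24,\text{value at }\alpha)$. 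This should be strictly below $(\alpha^3+1)/24$, using $\alpha>1$ and $\alpha^2/8-\alpha^3/24<(1+\alpha^3)/24$. If $q=p_*>2$ is not admissible in (c), take $q=2$ instead, which gives exponent $1/24<(\alpha^3+1)/24$ directly.

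\textbf{Main obstacle.} The hard part is precisely this exponent bookkeeping, in particular verifying (c) uniformly across $1<\alpha\le 2$ versus $\alpha>2$, and checking that the $\ell(t)^{p+1}t^{5/2}$ versus $\ell(t)t^{1-p}$ slack in Proposition~\ref{prop:moment_bounds_Dirichlet_PAM} is only $e^{O(t)}$. This slack is negligible against the strict inequalities at scale $t^3$. The case $\alpha>2$ is easier: there (c) holds without truncation, since $n\,\dE[\tilde U_0^2]\approx n\,e^{8t^3/48}\ll n^2e^{2t^3/48}$.
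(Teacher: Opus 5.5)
Your proof is correct, but it takes a different route from the paper. The paper does not truncate. For $\alpha\in(1,2]$ it applies Markov's inequality to $\bigl|\tilde U(t)-\dE[\tilde U(t)]\bigr|^{\alpha}$ and then the von Bahr--Esseen inequality. This needs only the upper bound \eqref{eq:moment_upper_bound} at $p=\alpha$ and the lower bound \eqref{eq:moment_lower_bound} at $p=1$, and gives the ratio $e^{-\alpha(\alpha-1)^{2}(2\alpha+1)t^{3}/48+O(t)}$. For $\alpha>2$ it uses plain Chebyshev, as you do.

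Your truncation at $M=e^{\beta t^{3}}$ with $\beta=p_*^{2}/16$ uses only Markov and Chebyshev, at the cost of three exponent conditions and a case split on $q$. At $p_*=\alpha$, your condition (c) reduces to the same inequality $(\alpha-1)^{2}(2\alpha+1)>0$ that drives the paper's bound. So the two proofs rest on the same estimate, with von Bahr--Esseen doing the truncation implicitly. The paper's version is shorter; yours is more elementary and closer to \cite{BABM05}.

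Several arithmetic slips need fixing in the write-up, though none breaks the argument.
\begin{itemize}
\item In (a) you need $p\beta-p^{3}/48>\alpha^{3}/24$ for some $p$ (a maximum, not a minimum). The optimal value at $p=4\sqrt{\beta}$ is $2p^{3}/48=p^{3}/24$, not $\tfrac23 p^{3}/48$. Your conclusion $p_*>\alpha$ is right only with $p^{3}/24$.
\item In (c) the maximum of $p^{2}/8-p^{3}/24$ is $1/6$ (at $p=2$), not $1/24$. The ``at most $\max(\cdot)$'' justification therefore fails for $\alpha$ near $1$. The correct justification: $3p^{2}-p^{3}<\alpha^{3}+1$ holds strictly at $p=\alpha$, since this is equivalent to $(\alpha-1)^{2}(2\alpha+1)>0$, and hence for $p_*\in(\alpha,2]$ close to $\alpha$ by continuity.
\item The choice $q=2$ gives exponent $8/48=1/6$, not $1/24$. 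It works only when $\alpha^{3}>3$, which suffices because you need it only for $\alpha\ge2$.
\item In (b) you can simply take $p=p_*$: the condition becomes $(p_*-1)^{2}(2p_*+1)>0$.
\end{itemize}
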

\begin{proof}
    Throughout the proof, $C>0$ depends only on $\alpha$
    and may change from line to line.
    By Markov's inequality applied to
    $\bigl|\tilde{U}(t)-\dE[\tilde{U}(t)]\bigr|^{\alpha}$, it suffices to show that
    \begin{equation}\label{eq:WLLN_moment_ratio}
        \lim_{t\to\infty}
        \frac{\dE\Bigl[\bigl|\tilde{U}(t)
        -\dE[\tilde{U}(t)]\bigr|^{\alpha}\Bigr]}
        {\bigl(\dE[\tilde{U}(t)]\bigr)^{\alpha}}
        = 0\,.
    \end{equation}
    Since each $\tilde{U}_{i}(t)$ has the same law as
    $\tilde{U}_{0}(t)$, Remark~\ref{rem:window_condition_ell}
    and the lower bound \eqref{eq:moment_lower_bound} with
    $p=1$ give, for all sufficiently large $t$,
    \begin{equation}\label{eq:WLLN_denominator}
        \dE\bigl[\tilde{U}(t)\bigr]
        = n(t)\,\dE\bigl[\tilde{U}_{0}(t)\bigr]
        \ge C\,L(t)\,\exp\Bigl(\frac{t^{3}}{48}\Bigr)\,.
    \end{equation}
    By convexity of $x\mapsto x^{\alpha}$
    and \eqref{eq:moment_upper_bound} with $p=\alpha$,
    \begin{equation}\label{eq:WLLN_centered_moment}
        \dE\Bigl[\bigl|\tilde{U}_{0}(t)
        -\dE[\tilde{U}_{0}(t)]\bigr|^{\alpha}\Bigr]
        \le 2^{\alpha}\,\dE\bigl[\tilde{U}_{0}(t)^{\alpha}\bigr]
        \le C\,\ell(t)^{\alpha+1}\,t^{5/2}
        \exp\Bigl(\frac{\alpha^{3}}{48}\,t^{3}\Bigr)\,.
    \end{equation}
    For $\alpha\in(1,2]$, since the $\tilde{U}_{i}(t)$ are
    independent, the von Bahr--Esseen inequality \cite[Theorem~2]{vBE65}
    and \eqref{eq:WLLN_centered_moment} yield
    \begin{equation*}
        \dE\Bigl[\bigl|\tilde{U}(t)
        -\dE[\tilde{U}(t)]\bigr|^{\alpha}\Bigr]
        \le 2\sum_{i\in\cI(t)}
        \dE\Bigl[\bigl|\tilde{U}_{i}(t)
        -\dE[\tilde{U}_{i}(t)]\bigr|^{\alpha}\Bigr]
        \le C\,n(t)\,\ell(t)^{\alpha+1}\,t^{5/2}
        \exp\Bigl(\frac{\alpha^{3}}{48}\,t^{3}\Bigr)\,.
    \end{equation*}
    Since $n(t)\,\ell(t)^{\alpha+1}
    =L(t)\,\ell(t)^{\alpha}$ and
    $\ell(t)^{\alpha}\,t^{5/2}=e^{O(t)}$, combining this with
    \eqref{eq:WLLN_denominator} bounds the ratio in
    \eqref{eq:WLLN_moment_ratio} by
    \begin{equation*}
        C\,L(t)^{1-\alpha}
        \exp\Bigl(\frac{\alpha^{3}-\alpha}{48}\,t^{3}
        +O(t)\Bigr)\,.
    \end{equation*}
    By the definition of $L(t)$,
    $\log L(t)=\alpha^{3}t^{3}/24$, so the exponent equals
    \begin{equation*}
        \frac{t^{3}}{48}
        \Bigl(2(1-\alpha)\alpha^{3}+\alpha^{3}-\alpha\Bigr)+O(t)
        = -\frac{\alpha(\alpha-1)^{2}(2\alpha+1)}{48}\,t^{3}
        +O(t)
        \longrightarrow -\infty\,,
    \end{equation*}
    since $\alpha>1$.
    For $\alpha>2$, we apply Chebyshev's inequality
    instead, so it suffices to show
    \eqref{eq:WLLN_moment_ratio} with the exponent $\alpha$
    replaced by $2$. Since the $\tilde{U}_{i}(t)$ are
    independent and
    $\Var\bigl(\tilde{U}_{0}(t)\bigr)
    \le\dE\bigl[\tilde{U}_{0}(t)^{2}\bigr]$, the upper bound
    \eqref{eq:moment_upper_bound} with $p=2$ gives
    \begin{equation*}
        \dE\Bigl[\bigl(\tilde{U}(t)
        -\dE[\tilde{U}(t)]\bigr)^{2}\Bigr]
        = n(t)\,\Var\bigl(\tilde{U}_{0}(t)\bigr)
        \le C\,n(t)\,\ell(t)^{3}\,t^{5/2}
        \exp\Bigl(\frac{8}{48}\,t^{3}\Bigr)\,.
    \end{equation*}
    Since $n(t)\,\ell(t)^{3}=L(t)\,\ell(t)^{2}$ and
    $\ell(t)^{2}\,t^{5/2}=e^{O(t)}$, combining this with
    \eqref{eq:WLLN_denominator} bounds the resulting ratio by
    \begin{equation*}
        C\,L(t)^{-1}\exp\Bigl(\frac{6}{48}\,t^{3}+O(t)\Bigr)
        = C\exp\Bigl(-\frac{\alpha^{3}-3}{24}\,t^{3}+O(t)\Bigr)
        \longrightarrow 0\,.
    \end{equation*}
\end{proof}

We now compare the normalized sums appearing in
Theorems~\ref{thm:PAM_WLLN} and~\ref{thm:PAM_WLLN_Dirichlet}.
\begin{lemma}\label{lem:WLLN_difference}
    Let $L(t)$ be as in \eqref{eq:box_size}.
    Then, as $t\to\infty$,
    \begin{equation}\label{eq:WLLN_difference}
        \frac{U(t)}{\dE[U(t)]}
        - \frac{\tilde{U}(t)}{\dE[\tilde{U}(t)]}
        \xrightarrow{\ \dP\ } 0\,.
    \end{equation}
\end{lemma}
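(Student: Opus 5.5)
The plan is an $L^{1}$ comparison: show that $\dE|U(t)-\tilde U(t)|=o(\dE[U(t)])$, and then handle the two different normalizations. No restriction on $\alpha$ is needed, since only first moments enter.

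\emph{Step 1: bound $U(t)-\tilde U(t)$ in $L^{1}$.} Start from the decomposition \eqref{eq:U_tildeU_error_decomposition}.
\begin{itemize}
\item The first three sums have nonnegative integrands. By Lemma~\ref{lem:approx_U_to_tilde_U} with $p=1$, their expectations together are at most $3n(t)\,o(e^{-t^{6}})$.
\item For the last two sums, use that $0\le\tilde u_{Q}\le u$ on $Q$ for every interval $Q$. This holds for $Q=\fS_{i}$, $\fS_{i+1}$ and $Q_{i}$, so each integrand is bounded in absolute value by $u(t,x)$. The sets $\fT_{i}^{L},\fT_{i}^{R}$ have length $t^{4}$, and $\dE[u(t,x)]=\dE[u(t,0)]$ by stationarity. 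Hence these two sums contribute at most $2n(t)\,t^{4}\,\dE[u(t,0)]$ in expectation.
\end{itemize}
Altogether,
\[
\dE\bigl|U(t)-\tilde U(t)\bigr|\le 3n(t)\,o(e^{-t^{6}})+2n(t)\,t^{4}\,\dE[u(t,0)].
\]

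\emph{Step 2: compare with $\dE[U(t)]$.} By Fubini, $\dE[U(t)]=L(t)\,\dE[u(t,0)]$. We also have $\dE[u(t,0)]\ge1$. To see this, apply Jensen's inequality under $\dE$ to the Feynman--Kac formula \eqref{eq:FK_solution}, using that $\int_0^t\xi(W_s)\,ds$ is centered under $\dP$ for each path. Since $n(t)/L(t)=\ell(t)^{-1}=e^{-t(1+o(1))}$, Step 1 gives
\[
\frac{\dE|U(t)-\tilde U(t)|}{\dE[U(t)]}\le o(e^{-t^{6}})+2t^{4}e^{-t(1+o(1))}\longrightarrow0.
\]
In particular $|\dE[U(t)]-\dE[\tilde U(t)]|=o(\dE[U(t)])$, so $\dE[\tilde U(t)]/\dE[U(t)]\to1$.

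\emph{Step 3: conclude.} Write
\[
\frac{U(t)}{\dE[U(t)]}-\frac{\tilde U(t)}{\dE[\tilde U(t)]}
=\frac{U(t)-\tilde U(t)}{\dE[U(t)]}
+\frac{\tilde U(t)}{\dE[\tilde U(t)]}\Bigl(\frac{\dE[\tilde U(t)]}{\dE[U(t)]}-1\Bigr).
\]
The first term tends to zero in $L^{1}$ by Step 2, hence in probability. In the second term, $\tilde U(t)/\dE[\tilde U(t)]$ is nonnegative with mean one, so it is tight by Markov's inequality. It is multiplied by a deterministic factor tending to zero, so the second term also tends to zero in probability. This gives \eqref{eq:WLLN_difference}.

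The only point needing care is the boundary layers, where the sign of $\tilde u_{\fS_i}-\tilde u_{Q_i}$ is unknown. I expect the crude domination by $u$ to suffice, because the total length $2n(t)t^{4}$ of the layers is negligible relative to $L(t)$.
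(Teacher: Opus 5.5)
Your proof is correct and follows the paper's argument essentially step for step. You use the same decomposition into $(U-\tilde U)/\dE[U]$ plus a term with a deterministic factor tending to zero, the same bound $\dE[U]\ge n(t)$ via $\dE[u(t,0)]\ge1$, and the same ratio $2t^{4}/\ell(t)\to0$ for the boundary layers. The only difference is that the paper uses the simpler split \eqref{eq:interior_boundary_split} together with $U(t)-\tilde U(t)\ge0$ and $u-\tilde u_{Q_i}\le u$ on $Q_i\setminus\fR_i$. With that split the windows $\fS_i$ never enter, and neither does the sign issue you flag.
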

\begin{proof}
    Decompose the difference in \eqref{eq:WLLN_difference} as
    \begin{equation}\label{eq:WLLN_difference_decomposition}
        \frac{U(t)}{\dE[U(t)]}
        - \frac{\tilde{U}(t)}{\dE[\tilde{U}(t)]}
        = \frac{U(t)-\tilde{U}(t)}{\dE[U(t)]}
        + \frac{\tilde{U}(t)}{\dE[\tilde{U}(t)]}
        \Biggl(\frac{\dE[\tilde{U}(t)]}{\dE[U(t)]}-1\Biggr)\,.
    \end{equation}
    Since $\tilde{u}_{Q_{i}}\le u$ pointwise,
    $U(t)-\tilde{U}(t)\ge0$. It suffices to show that
    \begin{equation}\label{eq:WLLN_error_ratio}
        \lim_{t\to\infty}
        \frac{\dE\bigl[U(t)-\tilde{U}(t)\bigr]}{\dE[U(t)]}
        = 0\,.
    \end{equation}
    Indeed, the first term of
    \eqref{eq:WLLN_difference_decomposition} is nonnegative with
    expectation $\dE[U(t)-\tilde{U}(t)]/\dE[U(t)]$. The absolute
    value of the second term has the same expectation, since
    $\dE\bigl[\tilde{U}(t)/\dE[\tilde{U}(t)]\bigr]=1$. By
    Markov's inequality, both terms converge to $0$ in
    probability.
    We now prove \eqref{eq:WLLN_error_ratio}. By
    \eqref{eq:interior_boundary_split},
    Lemma~\ref{lem:approx_U_to_tilde_U}\,\textup{(i)} with $p=1$
    for the first sum, and $u-\tilde{u}_{Q_{i}}\le u$ for the
    second,
    \begin{equation}\label{eq:WLLN_error_split}
        \dE\bigl[U(t)-\tilde{U}(t)\bigr]
        \le n(t)\,o\bigl(e^{-t^{6}}\bigr)
        + \dE\Bigl[\sum_{i\in\cI(t)}
        \int_{Q_{i}\setminus\fR_{i}}u(t,x)\,dx\Bigr]\,.
    \end{equation}
    By Fubini's theorem and the stationarity of $u(t,\cdot)$,
    for every $i\in\cI(t)$ we have
    $\dE\bigl[\int_{Q_{i}\setminus\fR_{i}}u(t,x)\,dx\bigr]
    =2t^{4}\,\dE[u(t,0)]$ and
    $\dE[U(t)]=L(t)\,\dE[u(t,0)]$.
    Since $n(t)\le L(t)$ and $\dE[u(t,0)]\ge1$ by Jensen's
    inequality, $\dE[U(t)]\ge n(t)$,
    so the first term of \eqref{eq:WLLN_error_split} contributes
    $o(e^{-t^{6}})$ to \eqref{eq:WLLN_error_ratio}. For the
    second, we have
    \begin{equation*}
        \frac{\dE\bigl[\sum_{i\in\cI(t)}
        \int_{Q_{i}\setminus\fR_{i}}u(t,x)\,dx\bigr]}
        {\dE[U(t)]}
        = \frac{n(t)\cdot2t^{4}\,\dE[u(t,0)]}
        {L(t)\,\dE[u(t,0)]}
        = \frac{2t^{4}}{\ell(t)}
        \longrightarrow 0\,.
    \end{equation*}
    This proves \eqref{eq:WLLN_error_ratio}.
\end{proof}
\begin{proof}[Proof of Theorem~\ref{thm:PAM_WLLN}]
    By Lemma~\ref{lem:WLLN_difference} and
    Theorem~\ref{thm:PAM_WLLN_Dirichlet},
    \begin{equation*}
        \frac{U(t)}{\dE[U(t)]}
        = \Biggl(\frac{U(t)}{\dE[U(t)]}
        - \frac{\tilde{U}(t)}{\dE[\tilde{U}(t)]}\Biggr)
        + \frac{\tilde{U}(t)}{\dE[\tilde{U}(t)]}
        \xrightarrow{\ \dP\ } 1\,,
    \end{equation*}
    which is \eqref{eq:PAM_WLLN}.
\end{proof}

\section{Proof of Theorem~\ref{thm:PAM_CLT}}
\label{sec:proof_PAM_CLT}
As in the proof of Theorem~\ref{thm:PAM_WLLN}, we first prove a
central limit theorem for the sum $\tilde{U}(t)$. We
then compare $U(t)$ with $\tilde{U}(t)$ and conclude by
Slutsky's theorem.
\begin{theorem}\label{thm:PAM_CLT_Dirichlet}
    Let $L(t)$ be as in \eqref{eq:box_size} with $\alpha>2$.
    Then, as $t\to\infty$,
    \begin{equation*}
        \frac{\tilde{U}(t)-\dE[\tilde{U}(t)]}
        {\sqrt{\Var(\tilde{U}(t))}}
        \xrightarrow{\ d\ } \cN(0,1)\,.
    \end{equation*}
\end{theorem}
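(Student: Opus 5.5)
The proof will use the Lyapunov central limit theorem for the triangular array $(\tilde U_i(t))_{i\in\cI(t)}$. These variables are i.i.d.\ for each fixed $t$, with the law of $\tilde U_0(t)$. Fix $\delta>0$ small, to be chosen depending on $\alpha$. Writing $s_t^2:=\Var(\tilde U(t))=n(t)\Var(\tilde U_0(t))$, it suffices to show
\begin{equation*}
    \frac{n(t)\,\dE\bigl[|\tilde U_0(t)-\dE[\tilde U_0(t)]|^{2+\delta}\bigr]}{\bigl(n(t)\Var(\tilde U_0(t))\bigr)^{1+\delta/2}}\longrightarrow0\,.
\end{equation*}
If this Lyapunov condition is too lossy, I would fall back on a truncated Lindeberg argument. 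The exponents should, however, leave plenty of room.

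For the numerator, convexity and the upper bound \eqref{eq:moment_upper_bound} with $p=2+\delta$ give
\begin{equation*}
    \dE\bigl[|\tilde U_0-\dE\tilde U_0|^{2+\delta}\bigr]\le C\,\ell(t)^{3+\delta}t^{5/2}\exp\bigl(\tfrac{(2+\delta)^3}{48}t^3\bigr)\,.
\end{equation*}
For the denominator, I need a lower bound on the variance. I write $\Var(\tilde U_0)=\dE[\tilde U_0^2]-(\dE\tilde U_0)^2$. By Remark~\ref{rem:window_condition_ell}, the lower bound \eqref{eq:moment_lower_bound} with $p=2$ applies and gives $\dE[\tilde U_0^2]\ge C\ell(t)t^{-1}e^{8t^3/48}$. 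The upper bound with $p=1$ gives $(\dE\tilde U_0)^2\le C\ell(t)^4t^5e^{2t^3/48}$. Since $\ell(t)=e^{t(1+o(1))}$, the second-moment term dominates by a factor $e^{6t^3/48-O(t)}$. Hence $\Var(\tilde U_0(t))\ge c\,\ell(t)t^{-1}e^{t^3/6}$ for large $t$.

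Inserting both bounds, and using $n(t)=L(t)e^{-O(t)}$, $\ell(t)=e^{O(t)}$ and $\log L(t)=\alpha^3t^3/24$, the ratio is at most
\begin{equation*}
    \exp\Bigl(-\tfrac{\delta}{2}\tfrac{\alpha^3}{24}t^3+\tfrac{(2+\delta)^3}{48}t^3-\bigl(1+\tfrac\delta2\bigr)\tfrac{8}{48}t^3+O(t)\Bigr)\,.
\end{equation*}
The coefficient of $t^3/48$ in the exponent is
\begin{equation*}
    -\delta\alpha^3+(2+\delta)^3-8-4\delta=\delta(8-\alpha^3)+6\delta^2+\delta^3\,.
\end{equation*}
Since $\alpha>2$, we have $8-\alpha^3<0$, so this coefficient is negative once $\delta$ is small enough. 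The ratio then tends to zero, which verifies the Lyapunov condition and proves the theorem.

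The main obstacle is the variance lower bound. It is the only place where the sharp spectral input enters, through Proposition~\ref{prop:moment_bounds_Dirichlet_PAM}, which rests on Theorem~\ref{thm:eigenfunction_concentration}. That step requires checking that the window condition \eqref{eq:moment_window_condition} holds at $p=2$ with $\cL=\ell(t)$, which Remark~\ref{rem:window_condition_ell} supplies. It also requires that the squared mean is exponentially negligible compared with the second moment. The remaining steps are bookkeeping of the exponents.
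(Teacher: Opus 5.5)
Your proposal is correct and follows the paper's proof essentially step for step. Both use Lyapunov's condition with a small $\delta$, bound the numerator via \eqref{eq:moment_upper_bound} at $p=2+\delta$, and lower-bound the variance by \eqref{eq:moment_lower_bound} at $p=2$ minus the squared $p=1$ upper bound. Your exponent $\delta\bigl((\delta+2)(\delta+4)-\alpha^{3}\bigr)$ is exactly the paper's, so the Lindeberg fallback is never needed.
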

\begin{proof}
    By Lyapunov's central limit theorem
    \cite[Theorem~27.3]{Bil95}, it suffices to find $\delta>0$
    such that
    \begin{equation}\label{eq:Lyapunov_condition_tilde_U}
        \lim_{t\to\infty}
        \frac{\dE\Bigl[\bigl|\tilde{U}_{0}(t)
        -\dE[\tilde{U}_{0}(t)]\bigr|^{2+\delta}\Bigr]}
        {n(t)^{\delta/2}\,
        \Var\bigl(\tilde{U}_{0}(t)\bigr)^{1+\delta/2}}
        = 0\,.
    \end{equation}
    Since $\alpha^{3}>8$, we may fix $\delta>0$ with
    $(\delta+2)(\delta+4)<\alpha^{3}$. Throughout the proof,
    $C>0$ depends only on $\alpha$ and $\delta$ and may change
    from line to line.
    By convexity of $x\mapsto x^{2+\delta}$ and
    \eqref{eq:moment_upper_bound} with $p=2+\delta$,
    \begin{equation}\label{eq:CLT_moment_upper_tilde_U}
        \dE\Bigl[\bigl|\tilde{U}_{0}(t)
        -\dE[\tilde{U}_{0}(t)]\bigr|^{2+\delta}\Bigr]
        \le 2^{2+\delta}\,
        \dE\bigl[\tilde{U}_{0}(t)^{2+\delta}\bigr]
        \le C\,\ell(t)^{3+\delta}\,t^{5/2}
        \exp\Bigl(\frac{(2+\delta)^{3}}{48}\,t^{3}\Bigr)\,.
    \end{equation}
    For the variance, Remark~\ref{rem:window_condition_ell},
    \eqref{eq:moment_lower_bound} with $p=2$, and
    \eqref{eq:moment_upper_bound} with $p=1$ give
    \begin{equation}\label{eq:CLT_variance_lower_tilde_U}
        \Var\bigl(\tilde{U}_{0}(t)\bigr)
        = \dE\bigl[\tilde{U}_{0}(t)^{2}\bigr]
        - \bigl(\dE[\tilde{U}_{0}(t)]\bigr)^{2}
        \ge C\,\ell(t)\,t^{-1}
        \exp\Bigl(\frac{8}{48}\,t^{3}\Bigr)
    \end{equation}
    for all sufficiently large $t$. Indeed, the
    subtracted term is at most
    $C\,\ell(t)^{4}\,t^{5}\exp\bigl(\tfrac{2}{48}t^{3}\bigr)$.
    Combining \eqref{eq:CLT_moment_upper_tilde_U} and
    \eqref{eq:CLT_variance_lower_tilde_U} with
    $n(t)=L(t)/\ell(t)$ and $\ell(t)=e^{t(1+o(1))}$
    bounds the ratio in \eqref{eq:Lyapunov_condition_tilde_U}
    by
    \begin{equation*}
        C\,L(t)^{-\delta/2}
        \exp\Bigl(\frac{(2+\delta)^{3}-(8+4\delta)}{48}\,t^{3}
        +O(t)\Bigr)\,.
    \end{equation*}
    By the definition of $L(t)$,
    $\log L(t)=\alpha^{3}t^{3}/24$, so the exponent equals
    \begin{equation*}
        \frac{t^{3}}{48}
        \Bigl((2+\delta)^{3}-8-4\delta-\delta\alpha^{3}\Bigr)+O(t)
        = \frac{\delta\,t^{3}}{48}
        \Bigl((\delta+2)(\delta+4)-\alpha^{3}\Bigr)+O(t)
        \longrightarrow -\infty
    \end{equation*}
    by the choice of $\delta$. This proves
    \eqref{eq:Lyapunov_condition_tilde_U}.
\end{proof}

We now compare the centered and normalized sums appearing
in Theorems~\ref{thm:PAM_CLT} and~\ref{thm:PAM_CLT_Dirichlet}.
\begin{lemma}\label{lem:CLT_difference}
    Let $L(t)$ be as in \eqref{eq:box_size}.
    Then, as $t\to\infty$,
    \begin{equation}\label{eq:CLT_difference}
        \frac{U(t)-\dE[U(t)]}{\sqrt{\Var(U(t))}}
        - \frac{\tilde{U}(t)-\dE[\tilde{U}(t)]}
        {\sqrt{\Var(\tilde{U}(t))}}
        \xrightarrow{\ \dP\ } 0\,.
    \end{equation}
\end{lemma}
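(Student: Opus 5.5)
Writing $D(t):=U(t)-\tilde U(t)$ and $\bar X:=X-\dE X$, the difference in \eqref{eq:CLT_difference} equals
\begin{equation*}
    \frac{\bar D(t)}{\sqrt{\Var(U(t))}}
    + \frac{\bar{\tilde U}(t)}{\sqrt{\Var(\tilde U(t))}}
    \biggl(\frac{\sqrt{\Var(\tilde U(t))}}{\sqrt{\Var(U(t))}}-1\biggr)\,.
\end{equation*}
The second factor of the second term is tight with unit variance. It therefore suffices to prove
\begin{equation*}
    \Var(D(t))=o\bigl(\Var(\tilde U(t))\bigr)\,.
\end{equation*}
Indeed, by Minkowski's inequality in $L^{2}$, this gives $\sqrt{\Var U}/\sqrt{\Var\tilde U}\to1$, and it also makes the first term vanish in $L^{2}$. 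By independence and \eqref{eq:CLT_variance_lower_tilde_U}, we have $\Var(\tilde U(t))\ge C\,L(t)\,t^{-1}\exp(t^{3}/6)$. So the target bound is
\begin{equation*}
    \Var(D(t))\ll L(t)\,t^{-1}e^{t^{3}/6}\,.
\end{equation*}

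For the variance of $D$ I will use the five-term decomposition \eqref{eq:U_tildeU_error_decomposition}, so that $\Var(D)\le5\sum_j\Var(\Sigma_j)$ for the five sums $\Sigma_j$. The first three sums are handled by Lemma~\ref{lem:approx_U_to_tilde_U} with $p=2$. Cauchy--Schwarz gives $\dE[\Sigma_j^{2}]\le n(t)^{2}\,o(e^{-t^{6}})=e^{O(t^{3})-t^{6}}\to0$, which is negligible.

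The remaining two sums need more care. Consider $\Sigma_4=\sum_i\int_{\fT_i^L}[\tilde u_{\fS_i}-\tilde u_{Q_i}]\,dx$, and note that $\Sigma_5$ is analogous. Each summand $V_i$ is a functional of $\xi$ restricted to $\fS_i\cup Q_i\subset(q_i-e^{t/100},q_{i+1})$. Hence $V_i$ and $V_j$ are independent whenever $|i-j|\ge2$, because the windows $\fS_i$ have length $2e^{t/100}\ll\ell(t)$. Consequently
\begin{equation*}
    \Var(\Sigma_4)\le 3\sum_i\dE[V_i^{2}]\,.
\end{equation*}
To bound $\dE[V_i^{2}]$, I will use $0\le\tilde u\le u$, which gives $|V_i|\le\int_{\fT_i^L}u(t,x)\,dx$. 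Cauchy--Schwarz and stationarity then give $\dE[V_i^{2}]\le t^{8}\,\dE[u(t,0)^{2}]$. By \eqref{eq:Lyapunov_exponent} with $p=2$ (or \cite{GLGL23} directly), $\dE[u(t,0)^{2}]=e^{t^{3}/6+o(t^{3})}$.

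I expect this step to be the main obstacle. The resulting bound $\Var(\Sigma_4)\le 3\,n(t)\,t^{8}\,e^{t^{3}/6+o(t^{3})}$ is only useful if the $o(t^{3})$ error is controlled by a polynomial. I need $n(t)\,t^{8}\,\dE[u(t,0)^{2}]\ll L(t)\,t^{-1}e^{t^{3}/6}$. Since $n(t)=L(t)/\ell(t)$ and $\ell(t)=e^{t(1+o(1))}$, this requires $\dE[u(t,0)^{2}]\le e^{t^{3}/6+O(t^{1-})}$, more precisely a subexponential prefactor $e^{o(t)}$. To get this I will use the sharper moment asymptotics of \cite[Theorems~2.6--2.7]{GLGL23}, which I expect give a polynomial prefactor, or at worst $e^{O(\log t)}$.

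If that sharp form is unavailable, my fallback is to bound $\dE[u(t,0)^{2}]$ by a block argument. I would write $u$ as the Dirichlet solution on a box of length $e^{t/100}$ plus an error controlled as in Lemma~\ref{lem:approx_U_to_tilde_U}. Then I would apply \eqref{eq:moment_upper_bound} with $p=2$, using the pointwise analogue of the spectral bound, namely $\tilde u(t,x)\le\cL^{1/2}e^{-t\lambda_1}$. This should give $\dE[u^{2}]\le e^{O(t/100)}t^{5/2}e^{t^{3}/6}$.

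With this bound, the ratio becomes $t^{O(1)}e^{t/50-t(1+o(1))}\to0$. This proves $\Var(D)=o(\Var\tilde U)$ and hence the lemma.
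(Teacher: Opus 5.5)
Your overall architecture matches the paper's. The algebraic splitting of the difference, the reduction to $\Var(D)=o(\Var(\tilde U))$ via Minkowski, the bound $\Var(\tilde U(t))\ge C L(t)t^{-1}e^{t^{3}/6}$, the five-term decomposition, and the treatment of the first three sums by Lemma~\ref{lem:approx_U_to_tilde_U} with $p=2$ are all correct. The $1$-dependence bound $\Var(\Sigma_4)\le 3\sum_i\dE[V_i^{2}]$ is also valid, since $e^{t/100}\ll\ell(t)$.

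The gap is exactly at the step you flag. After $|V_i|\le\int_{\fT_i^L}u$ and Cauchy--Schwarz, you need $\dE[u(t,0)^2]\ll t^{-9}\ell(t)e^{t^{3}/6}$. That is a bound beyond the exponential rate, up to a factor smaller than about $e^{t}$, and neither of your routes establishes it.

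Your primary route relies on \cite{GLGL23}. The paper uses those results only for the rate in \eqref{eq:Lyapunov_exponent} and the crude bound \eqref{eq:noise_exponential_moment}. These leave an $e^{o(t^{3})}$ error, which swamps the margin $\ell(t)\approx e^{t}$, so a polynomial prefactor is only a hope.

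Your fallback rests on $\tilde u_Q(t,x)\le\cL^{1/2}e^{-t\lambda_1}$, which does not follow from \eqref{eq:spectral_rep_Dirichlet_PAM}. Parseval controls only $\|\tilde u_Q(t,\cdot)\|_{L^2(Q)}\le\cL^{1/2}e^{-t\lambda_1}$. Pointwise, Cauchy--Schwarz gives only $\tilde u_Q(t,x)\le\cL^{1/2}(\sum_k e^{-2t\lambda_k}\varphi_k(x)^2)^{1/2}$, which involves the values $\varphi_k(x)$, and nothing in the paper controls these. Also, \eqref{eq:moment_upper_bound} bounds moments of the spatial integral, not pointwise moments.

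Either defect is easy to repair.

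\emph{Fix 1: average in space.} By stationarity, $\dE[u(t,0)^2]=|A|^{-1}\int_A\dE[u(t,x)^2]\,dx$, with $A$ the middle half of an interval $Q$ of length $\cL=e^{t/100}$. Combine the following:
\begin{itemize}
\item the splitting $u^2\le 2\tilde u_Q^2+2(u-\tilde u_Q)^2$;
\item the $L^2$ Parseval bound;
\item the estimate $\dE[e^{-2t\lambda_1}]\le C\cL t^{5/2}e^{t^{3}/6}$ from the proof of Proposition~\ref{prop:moment_bounds_Dirichlet_PAM};
\item the exit-time bound of Lemma~\ref{lem:approx_U_to_tilde_U}.
\end{itemize}
This gives $\dE[u(t,0)^2]\le Ce^{t/100}t^{5/2}e^{t^{3}/6}$, which is what your last paragraph needs.

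\emph{Fix 2: avoid pointwise moments altogether.} On $\fT_i^L$ we have $\tilde u_{Q_i}\le u=\tilde u_{\fS_i}+(u-\tilde u_{\fS_i})$, so $|V_i|\le 2\int_{\fS_i}\tilde u_{\fS_i}\,dx+\int_{\fT_i^L}(u-\tilde u_{\fS_i})\,dx$. Then \eqref{eq:moment_upper_bound} with $p=2$ and $\cL=2e^{t/100}$, together with Lemma~\ref{lem:approx_U_to_tilde_U}\,(ii), gives $\Var(\Sigma_4)\le Cn(t)e^{3t/100}t^{5/2}e^{t^{3}/6}$. The ratio to $\Var(\tilde U(t))$ is at most $Ct^{7/2}e^{3t/100}/\ell(t)\to0$.

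This is the paper's bound. The paper reaches it differently: it splits $V_i$ into the two families $\int_{\fT_i^L}\tilde u_{\fS_i}$ and $\int_{\fT_i^L}\tilde u_{Q_i}$, each i.i.d. For the second family it compares $\tilde u_{Q_1}$ with the Dirichlet solution on $(q_1,q_1+e^{t/100})$, instead of using $1$-dependence. With either fix, your $1$-dependent version is a valid and slightly shorter alternative; the paper uses the same parity trick in Lemma~\ref{lem:Stable_difference}\,(iii).
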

\begin{proof}
    Write $X(t):=U(t)-\dE[U(t)]$ and
    $\tilde{X}(t):=\tilde{U}(t)-\dE[\tilde{U}(t)]$.
    Throughout the proof, $C>0$ depends only on $\alpha$
    and may change from line to line. Since the
    $\tilde{U}_{i}(t)$ are i.i.d.,
    \eqref{eq:CLT_variance_lower_tilde_U} gives, for all
    sufficiently large $t$,
    \begin{equation}\label{eq:Var_tilde_U_lower}
        \Var\bigl(\tilde{U}(t)\bigr)
        = n(t)\,\Var\bigl(\tilde{U}_{0}(t)\bigr)
        \ge C\,n(t)\,\ell(t)\,t^{-1}
        \exp\Bigl(\frac{t^{3}}{6}\Bigr)\,.
    \end{equation}
    We first claim that
    \begin{equation}\label{eq:centered_error_var_negligible}
        \lim_{t\to\infty}
        \frac{\dE\bigl[\bigl(X(t)-\tilde{X}(t)\bigr)^{2}\bigr]}
        {\Var(\tilde{U}(t))}
        = 0\,.
    \end{equation}
    By \eqref{eq:U_tildeU_error_decomposition},
    $X(t)-\tilde{X}(t)$ is the sum of five centered terms, and
    it suffices to show that the variance of each is
    $o(\Var(\tilde{U}(t)))$. Three of them are covered by
    Lemma~\ref{lem:approx_U_to_tilde_U}, which with
    $p=2$ bounds the second moment of each of their $n(t)$
    summands by $o(e^{-t^{6}})$. Since $n(t)=e^{O(t^{3})}$, the
    variance of each of these three sums is at most
    $n(t)^{2}\,o(e^{-t^{6}})=o(1)$, hence
    $o(\Var(\tilde{U}(t)))$ by
    \eqref{eq:Var_tilde_U_lower}.
    The two remaining sums compare $\tilde{u}_{\fS_{i}}$
    with $\tilde{u}_{Q_{i}}$ on $\fT_{i}^{L}$, and
    $\tilde{u}_{\fS_{i+1}}$ with $\tilde{u}_{Q_{i}}$ on
    $\fT_{i}^{R}$. We treat the first, the second being
    analogous. Since $\Var(Z-Z')\le2\Var(Z)+2\Var(Z')$,
    \begin{equation}\label{eq:boundary_sum_split}
        \begin{aligned}
            &\Var\biggl(\sum_{i\in\cI(t)}\int_{\fT_{i}^{L}}
            \bigl[\tilde{u}_{\fS_{i}}(t,x)
            -\tilde{u}_{Q_{i}}(t,x)\bigr]\,dx\biggr)\\
            &\qquad\le 2\Var\biggl(\sum_{i\in\cI(t)}
            \int_{\fT_{i}^{L}}\tilde{u}_{\fS_{i}}(t,x)\,dx
            \biggr)
            + 2\Var\biggl(\sum_{i\in\cI(t)}\int_{\fT_{i}^{L}}
            \tilde{u}_{Q_{i}}(t,x)\,dx\biggr)\,.
        \end{aligned}
    \end{equation}
    For the first term on the right-hand side of
    \eqref{eq:boundary_sum_split},
    the $\fS_{i}$ are pairwise disjoint for all sufficiently large
    $t$, so the integrals over the $\fT_{i}^{L}$ are
    i.i.d., and, since $\fT_{1}^{L}\subset\fS_{1}$ and
    $\tilde{u}_{\fS_{1}}\ge0$,
    \begin{equation}\label{eq:window_term_bound}
        \begin{aligned}
            \Var\biggl(\sum_{i\in\cI(t)}\int_{\fT_{i}^{L}}
            \tilde{u}_{\fS_{i}}(t,x)\,dx\biggr)
            &= n(t)\,\Var\Bigl(\int_{\fT_{1}^{L}}
            \tilde{u}_{\fS_{1}}(t,x)\,dx\Bigr)\\
            &\le n(t)\,\dE\biggl[\Bigl(\int_{\fS_{1}}
            \tilde{u}_{\fS_{1}}(t,x)\,dx\Bigr)^{2}\biggr]\\
            &\le C\,n(t)\,e^{3t/100}\,t^{5/2}
            \exp\Bigl(\frac{t^{3}}{6}\Bigr)\,.
        \end{aligned}
    \end{equation}
    Here the last step is \eqref{eq:moment_upper_bound} with
    $p=2$ and $\cL=2e^{t/100}$. Combining
    \eqref{eq:window_term_bound} with
    \eqref{eq:Var_tilde_U_lower},
    \begin{equation*}
        \frac{\Var\Bigl(\sum_{i\in\cI(t)}
        \int_{\fT_{i}^{L}}\tilde{u}_{\fS_{i}}(t,x)\,dx\Bigr)}
        {\Var(\tilde{U}(t))}
        \le C\,t^{7/2}\,\frac{e^{3t/100}}{\ell(t)}
        \longrightarrow 0\,,
    \end{equation*}
    since $\ell(t)=e^{t(1+o(1))}$.
    For the second term, the $Q_{i}$ are disjoint, so
    these integrals are again i.i.d. Set
    $D_{1}:=(q_{1},\,q_{1}+e^{t/100})\subset Q_{1}$, so that
    $\fT_{1}^{L}\subset D_{1}$ and
    $\tilde{u}_{D_{1}}\le\tilde{u}_{Q_{1}}$ on $\fT_{1}^{L}$,
    with
    \begin{equation*}
        \tilde{u}_{Q_{1}}(t,x)-\tilde{u}_{D_{1}}(t,x)
        = \dE_{x}\Bigl[\exp\Bigl(\int_{0}^{t}
        \xi(W_{s})\,ds\Bigr)
        \,;\,\tau_{D_{1}}\le t<\tau_{Q_{1}}\Bigr]\,,
        \qquad x\in\fT_{1}^{L}\,.
    \end{equation*}
    On this event, $W$ reaches $q_{1}+e^{t/100}$ before
    time $t$, at distance at least $e^{t/100}-t^{4}$ from $x$.
    As in the proof of
    Lemma~\ref{lem:approx_U_to_tilde_U}\,\textup{(ii)}, the
    second moment of $\int_{\fT_{1}^{L}}
    \bigl[\tilde{u}_{Q_{1}}(t,x)
    -\tilde{u}_{D_{1}}(t,x)\bigr]\,dx$ is $o(e^{-t^{6}})$.
    Hence, as in \eqref{eq:window_term_bound},
    \begin{equation*}
        \begin{aligned}
            \Var\biggl(\sum_{i\in\cI(t)}\int_{\fT_{i}^{L}}
            \tilde{u}_{Q_{i}}(t,x)\,dx\biggr)
            &\le 2\,n(t)\,\dE\biggl[\Bigl(\int_{D_{1}}
            \tilde{u}_{D_{1}}(t,x)\,dx\Bigr)^{2}\biggr]
            + n(t)\,o\bigl(e^{-t^{6}}\bigr)\\
            &\le C\,n(t)\,e^{3t/100}\,t^{5/2}
            \exp\Bigl(\frac{t^{3}}{6}\Bigr)\,,
        \end{aligned}
    \end{equation*}
    and, combining this with \eqref{eq:Var_tilde_U_lower} as
    before,
    \begin{equation*}
        \frac{\Var\Bigl(\sum_{i\in\cI(t)}
        \int_{\fT_{i}^{L}}\tilde{u}_{Q_{i}}(t,x)\,dx\Bigr)}
        {\Var(\tilde{U}(t))}
        \le C\,t^{7/2}\,\frac{e^{3t/100}}{\ell(t)}
        \longrightarrow 0\,.
    \end{equation*}
    This proves \eqref{eq:centered_error_var_negligible}.
 
    We now conclude. By Minkowski's inequality and
    \eqref{eq:centered_error_var_negligible},
    \begin{equation*}
        \Bigl|\sqrt{\Var(U(t))}-\sqrt{\Var(\tilde{U}(t))}\Bigr|
        \le \dE\bigl[\bigl(X(t)-\tilde{X}(t)\bigr)^{2}\bigr]^{1/2}
        = o\Bigl(\sqrt{\Var(\tilde{U}(t))}\Bigr)\,,
    \end{equation*}
    so that $\Var(U(t))/\Var(\tilde{U}(t))\to1$. We decompose
    \begin{equation*}
        \frac{X(t)}{\sqrt{\Var(U(t))}}
        - \frac{\tilde{X}(t)}{\sqrt{\Var(\tilde{U}(t))}}
        = \frac{X(t)-\tilde{X}(t)}{\sqrt{\Var(U(t))}}
        + \tilde{X}(t)
        \biggl(\frac{1}{\sqrt{\Var(U(t))}}
        -\frac{1}{\sqrt{\Var(\tilde{U}(t))}}\biggr)\,.
    \end{equation*}
    By \eqref{eq:centered_error_var_negligible} and the
    convergence $\Var(U(t))/\Var(\tilde{U}(t))\to1$, the first
    term on the right-hand side satisfies
    \begin{equation*}
        \dE\Biggl[\biggl(
        \frac{X(t)-\tilde{X}(t)}{\sqrt{\Var(U(t))}}\biggr)^{2}
        \Biggr]
        = \frac{\dE\bigl[\bigl(X(t)-\tilde{X}(t)\bigr)^{2}\bigr]}
        {\Var(\tilde{U}(t))}\,
        \frac{\Var(\tilde{U}(t))}{\Var(U(t))}
        \longrightarrow 0\,,
    \end{equation*}
    so, by Chebyshev's inequality, it converges to $0$ in
    probability. The second term satisfies
    \begin{equation*}
        \dE\Biggl[\tilde{X}(t)^{2}
        \biggl(\frac{1}{\sqrt{\Var(U(t))}}
        -\frac{1}{\sqrt{\Var(\tilde{U}(t))}}\biggr)^{2}\Biggr]
        = \biggl(\sqrt{\frac{\Var(\tilde{U}(t))}{\Var(U(t))}}
        -1\biggr)^{2}
        \longrightarrow 0\,,
    \end{equation*}
    so it converges to $0$ in probability as well, which proves
    \eqref{eq:CLT_difference}.
\end{proof}

\begin{proof}[Proof of Theorem~\ref{thm:PAM_CLT}]
    By Lemma~\ref{lem:CLT_difference} and
    Theorem~\ref{thm:PAM_CLT_Dirichlet}, Slutsky's theorem
    gives
    \begin{equation*}
            \frac{U(t)-\dE[U(t)]}{\sqrt{\Var(U(t))}}
            = \Biggl(\frac{U(t)-\dE[U(t)]}{\sqrt{\Var(U(t))}}
            - \frac{\tilde{U}(t)-\dE[\tilde{U}(t)]}
            {\sqrt{\Var(\tilde{U}(t))}}\Biggr)
            + \frac{\tilde{U}(t)-\dE[\tilde{U}(t)]}
            {\sqrt{\Var(\tilde{U}(t))}}
            \xrightarrow{\ d\ } \cN(0,1)\,,
    \end{equation*}
    which is \eqref{eq:PAM_CLT}.
\end{proof}

\section{Proof of Theorem~\ref{thm:PAM_Stable}}
\label{sec:proof_PAM_Stable}
As in the previous two sections, we first prove the
stable limit theorem for $\tilde{U}(t)$ and then compare
$U(t)$ with $\tilde{U}(t)$. The limit law is identified through
a classical convergence criterion for triangular arrays, which
we now recall. A random variable is infinitely divisible if
and only if its characteristic function has the form
\begin{equation}\label{eq:LK_representation}
    \phi(z) = \exp\biggl(i\nu z - \frac{\sigma^{2}z^{2}}{2}
        + \int_{|x|>0}\Bigl(e^{izx}-1-\frac{izx}{1+x^{2}}\Bigr)
    \,d\fL(x)\biggr)\,,
    \qquad z\in\bR\,,
\end{equation}
where $\nu\in\bR$, $\sigma^{2}\ge0$, and $\fL$ is nondecreasing
on $(-\infty,0)$ and on $(0,\infty)$, with
$\lim_{x\to-\infty}\fL(x)=\lim_{x\to\infty}\fL(x)=0$ and
$\int_{0<|x|\le\ve}x^{2}\,d\fL(x)<\infty$ for every $\ve>0$. We
call $\fL$ the \emph{L\'evy--Khintchine spectral function} (see
\cite[Chapter~II, Theorem~5]{Pet75}), and denote by
$\dX_{\nu,\sigma^{2},\fL}$ the infinitely divisible random
variable with characteristic function
\eqref{eq:LK_representation}.
The following proposition states this criterion in the
notation of our setting (see
\cite[Chapter~IV, Theorems~7 and~8]{Pet75} and
\cite[Theorem~5]{BAMR19}).

\begin{proposition}\label{prop:stable_convergence_criterion}
    For each $t\ge0$, let $(\tilde{Y}_{i}(t))_{i\in\cI(t)}$ be
    i.i.d.\ random variables, where $n(t)=|\cI(t)|\to\infty$ as
    $t\to\infty$, and let $\tilde{Y}_{0}(t)$ denote a random
    variable with their common law. Assume that, for every
    $\ve>0$,
    \begin{equation}\label{eq:UAN_condition}
        \lim_{t\to\infty}\dP\bigl(|\tilde{Y}_{0}(t)|>\ve\bigr)
        = 0\,.
    \end{equation}
    Let $\fL$ be a L\'evy--Khintchine spectral function,
    $\nu\in\bR$, $\sigma^{2}\ge0$, and let $\tilde{A}(t)$ be a
    real-valued function. For $y>0$, set
    $Z_{y}(t):=\tilde{Y}_{0}(t)\1_{\{|\tilde{Y}_{0}(t)|\le y\}}$.
    Then the following are equivalent.
    \begin{enumerate}
        \item[\textup{(i)}] As $t\to\infty$,
        \begin{equation*}
            \sum_{i\in\cI(t)}\tilde{Y}_{i}(t)-\tilde{A}(t)
            \xrightarrow{\ d\ } \dX_{\nu,\sigma^{2},\fL}\,.
        \end{equation*}
        \item[\textup{(ii)}] For every continuity point $x$ of
        $\fL$,
        \begin{equation}
        \label{eq:stable_criterion_spectral_function}
            \fL(x) =
            \begin{cases}
                \phantom{-}\lim_{t\to\infty}
                n(t)\,\dP\bigl(\tilde{Y}_{0}(t)\le x\bigr)\,,
                & x<0\,,\\
                -\lim_{t\to\infty}
                n(t)\,\dP\bigl(\tilde{Y}_{0}(t)>x\bigr)\,,
                & x>0\,,
            \end{cases}
        \end{equation}
        \begin{equation}\label{eq:stable_criterion_variance}
            \sigma^{2}
            = \lim_{y\to0}\limsup_{t\to\infty}
            n(t)\,\Var\bigl(Z_{y}(t)\bigr)
            = \lim_{y\to0}\liminf_{t\to\infty}
            n(t)\,\Var\bigl(Z_{y}(t)\bigr)\,,
        \end{equation}
        and, for any $y>0$ such that $\pm y$ are continuity
        points of $\fL$,
        \begin{equation}\label{eq:stable_criterion_centering}
            \nu = \lim_{t\to\infty}
            \Bigl(n(t)\,\dE\bigl[Z_{y}(t)\bigr]
            -\tilde{A}(t)\Bigr)
            + \int_{|x|>y}\frac{x}{1+x^{2}}\,d\fL(x)
            - \int_{0<|x|\le y}\frac{x^{3}}{1+x^{2}}\,d\fL(x)\,.
        \end{equation}
    \end{enumerate}
\end{proposition}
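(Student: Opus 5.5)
This proposition is the classical Gnedenko--Kolmogorov criterion for row sums of infinitesimal triangular arrays. The plan is to derive it from the general statement in \cite[Chapter~IV, Theorems~7 and~8]{Pet75}, specialised to identically distributed rows, rather than reprove it. The reduction runs in four steps.

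First, we index the array by $t$ instead of $n$. Convergence in distribution as $t\to\infty$ is equivalent to convergence along every sequence $t_k\to\infty$. Each of the conditions in (ii) is a limit statement in $t$, so it likewise holds if and only if it holds along every such sequence. It therefore suffices to fix a sequence $t_k\to\infty$ and consider the array with $k_n:=n(t_k)\to\infty$ summands per row. The condition \eqref{eq:UAN_condition} is exactly the infinitesimality (uniform asymptotic negligibility) hypothesis of \cite{Pet75}; for identically distributed rows the maximum over $i$ reduces to the single law $\tilde Y_0(t)$.

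Second, we match Petrov's conditions with ours. His theorem states that $\sum_i \tilde Y_i-\tilde A$ converges to the infinitely divisible law with parameters $(\nu,\sigma^2,\fL)$ if and only if three things hold. The first is $\sum_i F_{ni}(x)\to\fL(x)$ for $x<0$ and $\sum_i(F_{ni}(x)-1)\to\fL(x)$ for $x>0$, at continuity points. The second is that the truncated variances satisfy the double limit in $y$. The third is that the truncated means minus $\tilde A$ converge to the appropriate constant. In the i.i.d.\ case these sums become $n(t)\dP(\tilde Y_0\le x)$ and $-n(t)\dP(\tilde Y_0>x)$, which is \eqref{eq:stable_criterion_spectral_function}. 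Likewise $\sum_i\Var(Z_y)=n(t)\Var(Z_y(t))$, which gives \eqref{eq:stable_criterion_variance}.

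Third, we handle the centering constant. Petrov's version of this condition is stated for one fixed truncation level $\tau$ (continuity points $\pm\tau$) and reads $\sum_i\dE[Z_\tau]-\tilde A\to \nu+\int_{0<|x|\le\tau}x^3/(1+x^2)\,d\fL-\int_{|x|>\tau}x/(1+x^2)\,d\fL$. This identity comes from rewriting the Lévy--Khintchine exponent \eqref{eq:LK_representation} with the compensator $izx\1_{\{|x|\le\tau\}}$ in place of $izx/(1+x^2)$. Rearranging it gives exactly \eqref{eq:stable_criterion_centering}. Under \eqref{eq:stable_criterion_spectral_function} and \eqref{eq:stable_criterion_variance}, the value of the limit is independent of which continuity level $y$ is used. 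This is because moving $y$ to $y'$ changes $n(t)\dE[Z_y(t)]$ by $n(t)\dE[\tilde Y_0;\,y<|\tilde Y_0|\le y']$, which converges to $\int_{y<|x|\le y'}x\,d\fL$ by the first condition. That change is exactly compensated by the two integrals. Hence "for any $y$" and "for some $y$" are equivalent.

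The main obstacle is purely bookkeeping. The sign conventions for $\fL$ on $(0,\infty)$ and the form of the centering integral differ slightly across references. The fix is to check them against the specialization already written out in \cite[Theorem~5]{BAMR19}, which treats the same i.i.d.\ triangular-array setting, and to confirm the compensation identity of the third step by direct computation.
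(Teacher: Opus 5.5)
Your proposal is correct and takes the same route as the paper. The paper gives no independent proof: it obtains the proposition by specializing \cite[Chapter~IV, Theorems~7 and~8]{Pet75} to identically distributed rows (as in \cite[Theorem~5]{BAMR19}), which is exactly your matching of the spectral-function, truncated-variance and centering conditions; your sign bookkeeping for \eqref{eq:stable_criterion_centering} and your argument that the truncation level $y$ does not matter both check out. The one step you state a little quickly is moving the double-limit condition \eqref{eq:stable_criterion_variance} between the continuous parameter $t$ and sequences $t_k\to\infty$, since the sequence realizing $\limsup_{t}$ may depend on $y$; a short diagonal construction that interleaves, for a sequence $y_j\to0$, times witnessing the failure at each $y_j$ into a single sequence $t_k\to\infty$ closes this.
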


Throughout this section, $L(t)$ is as in
\eqref{eq:box_size} with $\alpha\in(0,2)$.
We now apply Proposition~\ref{prop:stable_convergence_criterion}
to the family
\begin{equation}\label{eq:def_tilde_Y_i}
    \tilde{Y}_{i}(t) := \frac{\tilde{U}_{i}(t)}{B_{\alpha}(t)}\,,
    \qquad i\in\cI(t)\,,
\end{equation}
with $B_{\alpha}(t)$ as in \eqref{eq:stable_scale_B_alpha}, and
write $\tilde{Y}_{0}(t):=\tilde{U}_{0}(t)/B_{\alpha}(t)$.
Since
$\sum_{i\in\cI(t)}\tilde{Y}_{i}(t)=\tilde{U}(t)/B_{\alpha}(t)$,
the following theorem is the stable limit theorem for $\tilde{U}(t)$.
\begin{theorem}\label{thm:PAM_Stable_Dirichlet}
    Let $L(t)$ be as in \eqref{eq:box_size} with
    $\alpha\in(0,2)$. Then, as $t\to\infty$,
    \begin{equation}\label{eq:PAM_Stable_Dirichlet}
        \sum_{i\in\cI(t)}\tilde{Y}_{i}(t)-\tilde{A}(t)
        \xrightarrow{\ d\ } \dS_{\alpha}\,,
    \end{equation}
    where $\dS_{\alpha}$ is as in Theorem~\ref{thm:PAM_Stable}
    and
    \begin{equation}\label{eq:def_stable_centering}
        \tilde{A}(t) :=
        \begin{cases}
            0\,, & \alpha\in(0,1)\,,\\
            n(t)\,\dE\bigl[\tilde{Y}_{0}(t)\,;\,
            \tilde{Y}_{0}(t)\le1\bigr]\,, & \alpha=1\,,\\
            n(t)\,\dE\bigl[\tilde{Y}_{0}(t)\bigr]\,,
            & \alpha\in(1,2)\,.
        \end{cases}
    \end{equation}
\end{theorem}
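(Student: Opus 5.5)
We apply Proposition~\ref{prop:stable_convergence_criterion} to $\tilde Y_i(t)=\tilde U_i(t)/B_\alpha(t)$ with $\sigma^2=0$ and spectral function $\fL(x)=-x^{-\alpha}$ for $x>0$, $\fL\equiv0$ for $x<0$. This $\fL$ is the L\'evy measure $\alpha x^{-\alpha-1}dx$ on $(0,\infty)$, which produces the totally skewed law $\dS_\alpha$.

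\textbf{Core tail estimate.} We first prove \eqref{eq:intro_block_tail}, namely
\[
n(t)\,\dP\bigl(\tilde U_0(t)>xB_\alpha(t)\bigr)\to x^{-\alpha}\quad\text{for each } x>0 .
\]
Since $\tilde Y_0\ge0$, this gives \eqref{eq:stable_criterion_spectral_function} for $x>0$ directly and the case $x<0$ trivially. It also gives the negligibility condition \eqref{eq:UAN_condition}, since $n(t)\to\infty$.

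\emph{Spectral representation.} By \eqref{eq:spatial_integral_spectral_rep}, with $\lambda_k=\lambda_k(Q_{\ell})$ and $\ell=\ell(t)$,
\[
\tilde U_0=\|\varphi_1\|_1^2e^{-t\lambda_1}+R,\qquad 0\le R\le \ell\, e^{-t\lambda_2},
\]
using Parseval for the remainder. The level $xB_\alpha$ corresponds to $-\lambda_1\approx a_x$, where $a_x$ solves
\[
\tfrac{\pi^2}{2}a_x^{-1/2}e^{ta_x}=xB_\alpha .
\]
Writing $a_0=\alpha^2t^2/16$, this gives $a_x=a_0+(\log x+\text{lower order})/t$. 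A Taylor expansion of $\tfrac83a^{3/2}$ around $a_0$ contributes $\tfrac83\cdot\tfrac32a_0^{1/2}\cdot\tfrac{\log x}{t}=\alpha\log x$. This is where the factor $x^{-\alpha}$ comes from, and it is what fixes the constants and the power of $t$ in $B_\alpha$. The remaining lower-order terms must cancel against $n(t)=L(t)/\ell(t)$ and the prefactor $\ell\sqrt a/\pi$ of Theorem~\ref{thm:sharp_lower_tail_lowest_eigenvalue}; we expect $a_0$ to be shifted by an $O(\log t/t)$ correction, and this is a routine bookkeeping check.

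\emph{Upper bound.} Fix a small $\ve'$. The event $\{\tilde U_0>xB_\alpha\}$ is contained in the union of three events:
\begin{itemize}
\item[(a)] $\{\lambda_1<-a_x^{-}\}$, where $a_x^-$ is defined like $a_x$ but with $\|\varphi_1\|_1^2$ replaced by $(\pi/\sqrt2+\ve')^2a^{-1/2}$;
\item[(b)] $\{\lambda_1<-a_x+O(\log t/t),\ |a^{1/4}\|\varphi_1\|_1-\pi/\sqrt2|>\ve'\}$;
\item[(c)] $\{\lambda_2<-(1-\theta)a_0\}$, or $R$ comparable to $B_\alpha$.
\end{itemize}
On (c), $R\le\ell e^{-t\lambda_2}$ is at most $e^{(1-\theta)ta_0+O(t)}\ll B_\alpha$ unless $\lambda_2<-(1-\theta)a_0$, so (c) reduces to the $\lambda_2$ event. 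Multiplying by $n(t)$:
\begin{itemize}
\item Theorem~\ref{thm:sharp_lower_tail_lowest_eigenvalue}, with $\cL=\ell(t)$ in the window as in Remark~\ref{rem:window_condition_ell}, gives the main term for (a). A shift of $a$ by $O(1/t)$ changes the probability by a factor $x^{\mp\alpha}(1+O(\ve'))$.
\item Theorem~\ref{thm:eigenfunction_concentration} (or Proposition~\ref{prop:eigenfunction_L1_norm}) makes (b) negligible.
\item Corollary~\ref{cor:second_eigenvalue_left_tail} makes (c) negligible.
\end{itemize}

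\emph{Lower bound.} We intersect with the good event $G$ of Theorem~\ref{thm:eigenfunction_concentration} and use $\tilde U_0\ge\|\varphi_1\|_1^2e^{-t\lambda_1}$. Letting $\ve'\to0$ closes the gap between the two bounds.

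\textbf{Centering and identification.} For $\sigma^2=0$, we bound
\[
n\,\Var(Z_y)\le n\,\dE[\tilde Y_0^2;\tilde Y_0\le y]
\]
using the tail asymptotics, and we need this to be $O(y^{2-\alpha})\to0$. This requires the tail estimate uniformly for $x$ in a range $x\ge e^{-ct^2}$ or so. We plan to get the small-$x$ range by splitting at some $x_0(t)\to0$. For $x\ge x_0(t)$ we use monotone uniformity of the tail estimate on compacts, together with a uniform version of the upper bound coming from Lemma~\ref{lem:left_tail_upper_bound_fixed_L}. For $\tilde Y_0\le x_0$ we use the moment bounds of Proposition~\ref{prop:moment_bounds_Dirichlet_PAM} with $p=2$. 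The same truncated-moment estimates give $n\,\dE[Z_y]$ for $\alpha<1$, for $\alpha=1$ after subtracting $\tilde A$, and for $\alpha\in(1,2)$ through $n\,\dE[\tilde Y_0;\tilde Y_0>y]\to\tfrac{\alpha}{\alpha-1}y^{1-\alpha}$; the last one needs a uniform tail bound of the form $n\,\dP(\tilde Y_0>x)\le Cx^{-\alpha'}$ for large $x$, with $\alpha'>1$. Plugging into \eqref{eq:stable_criterion_centering} gives an explicit $\nu$. Matching the resulting L\'evy--Khintchine exponent with \eqref{eq:stable_char_fn} is a standard Gamma-function integral, and for $\alpha=1$ it yields the $1-\gamma_{\mathrm E}$ drift.

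\textbf{Main obstacle.} We expect the hardest step to be these uniform tail bounds: controlling $n\,\dP(\tilde Y_0>x)$ uniformly for $x$ near $0$ (for the variance condition) and for large $x$ (for the means). Theorem~\ref{thm:sharp_lower_tail_lowest_eigenvalue} is only pointwise. We would first try using Lemma~\ref{lem:left_tail_upper_bound_fixed_L} with the crude bound $\tilde U_0\le\ell e^{-t\lambda_1}$, accepting the loss of a factor $\ell=e^{t}$. That loss is absorbed only if it shifts $a$ by $O(1)$, which costs $e^{-ct}$ in probability; if this is not enough, we would instead truncate at a level $x_0(t)=e^{-t^{1/2}}$, where the crude and sharp regimes overlap.
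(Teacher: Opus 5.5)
Your architecture is the paper's: Petrov's criterion with $\fL(x)=-x^{-\alpha}$ and $\sigma^{2}=0$, the pointwise tail $n(t)\,\dP(\tilde Y_0(t)>x)\to x^{-\alpha}$ from the spectral sandwich, then truncated moments via uniform tail bounds and dominated convergence.

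\textbf{A correction in the tail step.} On your event (b) the only information about $\|\varphi_1\|_1$ is $\|\varphi_1\|_1^2\le\ell(t)$. So the level shifts by $\log\ell(t)/t\approx1$, not by $O(\log t/t)$. At that level $n(t)\,\dP(\lambda_1<-a)=e^{\Theta(t)}$, so the relative $o(1)$ of Theorem~\ref{thm:eigenfunction_concentration} is not enough. You need the quantitative margin $e^{-c_1a^{3/2}}$ of Proposition~\ref{prop:eigenfunction_L1_norm}. The paper does this at the single level $\tilde a(t)=(\log B_\alpha(t)-\log\ell(t)-t^{3/2})/t$.

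\textbf{The genuine gap: the extreme ranges of the truncated moments.} You flag this as the main obstacle, but the calibration you propose would fail, for two reasons.

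First, the $p=2$ moment bound cannot control $n(t)\,\dE[\tilde Y_0^2;\tilde Y_0\le x_0]$. A $p$-th moment only gives $x_0^{2-p}\,\dE[\tilde Y_0^p]$, so at $p=2$ the truncation gains nothing. By both bounds of Proposition~\ref{prop:moment_bounds_Dirichlet_PAM}, $n(t)\,\dE[\tilde Y_0(t)^2]$ grows like $\exp\bigl((\alpha+1)(2-\alpha)^2t^3/24\bigr)$ up to $e^{O(t)}$ factors. Taking $p=\alpha$ does work, since $n(t)\,\dE[\tilde Y_0(t)^\alpha]\le C\,\ell(t)^\alpha t^{\alpha+3/2}$.

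Second, this bound, like the crude inclusion $\{\tilde Y_0>x\}\subset\{\lambda_1<-\tilde a_x(t)\}$ from $\tilde U_0\le\ell e^{-t\lambda_1}$, loses a factor $e^{\Theta(t)}$. An $O(1)$ downward shift of the level multiplies $\dP(\lambda_1<-a)$ by $e^{4\sqrt a\,O(1)}=e^{\Theta(t)}$. This is a loss, not a gain, and it is not absorbed. Hence the region below the cutoff contributes about $e^{\Theta(t)}x_0^{2-\alpha}$, or $e^{\Theta(t)}x_0^{1-\alpha}$ for the first moment when $\alpha<1$. That vanishes only if $\log(1/x_0)\gg t$, so your fallback $x_0=e^{-t^{1/2}}$ fails.

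\textbf{What the fix requires.} The sharp uniform bound $n(t)\,\dP(\tilde Y_0>x)\le Cx^{-\alpha}$ must hold down to such an $x_0(t)$; uniformity on compacts says nothing there. The paper proves it on $[e^{-t^{3/2}},e^{t^{3/2}}]$ in Lemma~\ref{lem:uniform_tail_bound}\textup{(i)}, by three observations.
\begin{enumerate}
\item[\textup{(1)}] With $\tilde a(t)$ as above, $G_1^c$ forces $\tilde Y_0\le e^{-t^{3/2}}$.
\item[\textup{(2)}] The bad events cost at most $e^{-ct^3}\le x^{-\alpha}$ on this range.
\item[\textup{(3)}] Lemma~\ref{lem:left_tail_upper_bound_fixed_L} at $a_2(t)$ gives $Cx^{-\alpha}$, because the second-order term of $\tfrac83a_2(t)^{3/2}$ is $O((\log x)^2/t^3)=O(1)$ there.
\end{enumerate}
Outside this range the $e^{\Theta(t)}$ loss is beaten by $e^{-(2-\alpha)t^{3/2}}$, by $e^{-(1-\alpha)t^{3/2}}$ when $\alpha<1$, and on the large side by $e^{-(\alpha-1)t^{3/2}}$ when $\alpha\in(1,2)$. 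Either the paper's crude tail bound or the $p=\alpha$ moment suffices for this last step.
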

We first verify condition
\eqref{eq:stable_criterion_spectral_function} with the spectral function
\begin{equation}\label{eq:tilde_Y_alpha_spectral_function}
    \fL(x) :=
    \begin{cases}
        0\,, & x<0\,,\\
        -x^{-\alpha}\,, & x>0\,.
    \end{cases}
\end{equation}
Since $\tilde{Y}_{0}(t)\ge0$, the condition for $x<0$
holds trivially, and it remains to verify the case $x>0$.
\begin{proposition}\label{prop:tilde_Y_positive_tail}
    For every $x>0$,
    \begin{equation}\label{eq:tilde_Y_positive_tail}
        \lim_{t\to\infty}
        n(t)\,\dP\bigl(\tilde{Y}_{0}(t)>x\bigr)
        = x^{-\alpha}\,.
    \end{equation}
\end{proposition}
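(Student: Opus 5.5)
The plan is to reduce \eqref{eq:tilde_Y_positive_tail} to the sharp lower tail of $\lambda_{1}$ (Theorem~\ref{thm:sharp_lower_tail_lowest_eigenvalue}) combined with the eigenfunction concentration (Theorem~\ref{thm:eigenfunction_concentration} and Proposition~\ref{prop:eigenfunction_L1_norm}). The link is the spectral representation \eqref{eq:spatial_integral_spectral_rep},
\begin{equation*}
    \tilde{U}_{0}(t)=\sum_{k\ge1}e^{-t\lambda_{k}}\Bigl(\int\varphi_{k}\Bigr)^{2},
\end{equation*}
whose leading term is $\|\varphi_{1}\|_{1}^{2}e^{-t\lambda_{1}}$.

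\textbf{Calibration.} Fix $x>0$ and $\kappa>0$, and define $a=a_{\kappa}(t)$ by
\begin{equation*}
    \kappa^{2}\,a^{-1/2}e^{ta}=x\,B_{\alpha}(t).
\end{equation*}
Write $a=a_{0}+s$ with $a_{0}:=\alpha^{2}t^{2}/16$. Since $ta_{0}=\alpha^{2}t^{3}/16$, the shift is $s=O(\log t/t)$. Expanding $\tfrac{8}{3}a^{3/2}=\tfrac{8}{3}a_{0}^{3/2}+4a_{0}^{1/2}s+o(1)$ and using $\tfrac83a_{0}^{3/2}=\log L(t)$ and $4a_{0}^{1/2}=\alpha t$, we get
\begin{equation*}
    n(t)\,\ell(t)\,\frac{\sqrt a}{\pi}\,e^{-\frac83 a^{3/2}}
    =\frac{\sqrt{a_{0}}}{\pi}\,e^{-\alpha t s}\,(1+o(1)).
\end{equation*}
Substituting $e^{ts}=x B_{\alpha}\sqrt a\,\kappa^{-2}e^{-ta_{0}}$ together with the explicit prefactor $\frac{\pi}{2}(4\pi/(\alpha t))^{1-1/\alpha}$ of $B_{\alpha}$, this becomes $x^{-\alpha}(\kappa^{2}/(\pi^{2}/2))^{\alpha}(1+o(1))$. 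This identity is what fixes the constant in $B_{\alpha}(t)$. By Remark~\ref{rem:window_condition_ell}, $\ell(t)$ lies in the window \eqref{eq:concentration_window} for $a=a_{\kappa}(t)$, so
\begin{equation*}
    n(t)\,\dP\bigl(\lambda_{1}(\ell(t))<-a_{\kappa}(t)\bigr)\to x^{-\alpha}\Bigl(\frac{2\kappa^{2}}{\pi^{2}}\Bigr)^{\alpha}.
\end{equation*}
The whole argument then amounts to sandwiching with $\kappa=\pi/\sqrt2\pm\ve$ and letting $\ve\to0$.

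\textbf{Lower bound.} Take $\kappa=\pi/\sqrt2-\ve$. On the event $G$ of Theorem~\ref{thm:eigenfunction_concentration} at level $a=a_{\kappa}$, we have $\|\varphi_{1}\|_{1}^{2}\ge\kappa^{2}a^{-1/2}$ and $e^{-t\lambda_{1}}>e^{ta}$. Since all terms of the spectral sum are nonnegative, this gives $\tilde{U}_{0}>xB_{\alpha}$. Theorem~\ref{thm:eigenfunction_concentration} gives $\dP(G)\sim\dP(\lambda_{1}<-a)$, so
\begin{equation*}
    \liminf_{t\to\infty} n\,\dP(\tilde{Y}_{0}>x)\ge x^{-\alpha}\bigl(1-\sqrt2\ve/\pi\bigr)^{2\alpha}.
\end{equation*}

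\textbf{Upper bound.} Take $\kappa=\pi/\sqrt2+\ve$ and $a=a_{\kappa}$, and fix $\theta$ small. Split $\{\tilde{U}_{0}>xB_{\alpha}\}$ into three pieces.
\begin{enumerate}
\item[(a)] The event $\{\lambda_{2}<-(1-\theta)a\}$. By Corollary~\ref{cor:second_eigenvalue_left_tail}, $n$ times its probability is at most $L\,\ell\,m((1-\theta)a)^{-2}$. This is $\exp\bigl(\tfrac83a^{3/2}(1-2(1-\theta)^{3/2})+O(t)\bigr)\to0$.
\item[(b)] On $\{\lambda_{2}\ge-(1-\theta)a\}$, Parseval gives $\sum_{k\ge2}\le\ell\,e^{t(1-\theta)a}$, which is $o(B_{\alpha})$ because $ta$ dominates $\log B_{\alpha}$ only by $O(\log t)$ while $\theta ta\gg t$. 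Hence $\|\varphi_{1}\|_{1}^{2}e^{-t\lambda_{1}}>(1-\ve)xB_{\alpha}$. If in addition $a^{1/4}\|\varphi_{1}\|_{1}\le\pi/\sqrt2+\ve$, then $\lambda_{1}<-a'$, where $a'$ is $a_{\kappa}$ computed with $x$ replaced by $(1-\ve)x$. This differs from $a_{\kappa}$ by $O(\ve/t)$ and contributes $x^{-\alpha}(1-\ve)^{-\alpha}(2\kappa^{2}/\pi^{2})^{\alpha}$ in the limit.
\item[(c)] Otherwise $a^{1/4}\|\varphi_{1}\|_{1}>\pi/\sqrt2+\ve$. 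Here use $\|\varphi_{1}\|_{1}^{2}\le\ell$ to get $\lambda_{1}<-(a-C)$ for a constant $C$ (since $\log\ell/t\to1$). Apply Proposition~\ref{prop:eigenfunction_L1_norm} at level $a-C$; the change of level costs only $e^{O(\sqrt a)}=e^{O(t)}$ relative to level $a$, so $n$ times this probability is at most $e^{-c_{1}a^{3/2}+O(t^{3/2})}\to0$.
\end{enumerate}
Letting $\ve\to0$ closes the sandwich.

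The main obstacle is step (c), and more generally keeping the level shifts uniform. The eigenfunction proposition must be applied at a level slightly different from $a_{\kappa}$, and $\varphi_{1}$ is only known to be close to the sech profile at the level where $\lambda_{1}$ actually sits. I will first try the crude bound $\|\varphi_{1}\|_{1}^{2}\le\ell$ as above. If needed, I will instead partition $\lambda_{1}\in[-a-k/t,\,-a-(k-1)/t)$ and apply the proposition at each level $a+(k-1)/t$, summing the geometric-type bounds.
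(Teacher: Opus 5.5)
Your proof is correct. It uses the same ingredients as the paper: the spectral representation with Parseval for the modes $k\ge2$, Corollary~\ref{cor:second_eigenvalue_left_tail} for $\lambda_{2}$, Proposition~\ref{prop:eigenfunction_L1_norm} for the $L^{1}$ norm, and Theorem~\ref{thm:sharp_lower_tail_lowest_eigenvalue} at a calibrated level. Your calibration identity is exactly the paper's expansion \eqref{eq:level_a_1_expansion}--\eqref{eq:level_event_asymptotics}. The difference is in how the levels are organized.

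\textbf{The paper's route.} It fixes one $x$-independent reference level $\tilde a(t)=(\log B_{\alpha}(t)-\log\ell(t)-t^{3/2})/t$, which lies about $t^{1/2}$ below the calibrated level. On $\{\lambda_{1}\ge-\tilde a(t)\}$ one then has $\tilde Y_{0}(t)\le e^{-t^{3/2}}$ deterministically. The gap event $G_{2}$ and the $L^{1}$ event $G_{3}$ are defined at this single level. Both bounds are then run on $G_{1}\cap G_{2}\cap G_{3}$, and Theorem~\ref{thm:eigenfunction_concentration} is never invoked.

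\textbf{Your route.} You tie everything to the $x$-dependent level $a_{\kappa}(t)$.
\begin{itemize}
\item Your lower bound is a one-line application of Theorem~\ref{thm:eigenfunction_concentration} at that level, which is shorter than the paper's.
\item Your upper bound needs the residual step (c), with a level shift of order one.
\end{itemize}
What the paper's arrangement buys is uniformity in $x$. The bounds on $n(t)\,\dP(G_{2}^{c})$ and $n(t)\,\dP(G_{1}\cap G_{2}\cap G_{3}^{c})$ do not involve $x$, and they are reused verbatim for the uniform tail bound in Lemma~\ref{lem:uniform_tail_bound}\,(i). With your version, that lemma would need its own bookkeeping.

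\textbf{Two adjustments in step (c).} Both are needed but routine.
\begin{itemize}
\item Your event only gives $\lambda_{2}\ge-(1-\theta)a$. This does not imply $\lambda_{2}\ge-(1-\theta)b$ for $b:=a-C$. Apply Proposition~\ref{prop:eigenfunction_L1_norm} at level $b$ with $\theta/2$ instead; this is legitimate because $(1-\theta)a\le(1-\theta/2)b$ for large $a$.
\item Likewise use $\ve/2$, since $b^{1/4}\|\varphi_{1}\|_{1}\ge(1-C/a)^{1/4}(\pi/\sqrt2+\ve)>\pi/\sqrt2+\ve/2$ for large $a$.
\end{itemize}

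Your stated worry, that $\varphi_{1}$ is only controlled at the level where $\lambda_{1}$ actually sits, is unfounded. Proposition~\ref{prop:eigenfunction_L1_norm} bounds the joint event over all of $\{\lambda_{1}<-b\}$, wherever $\lambda_{1}$ lies below $-b$. So the crude bound $\|\varphi_{1}\|_{1}^{2}\le\ell(t)$ already suffices, and the fallback partition over levels is unnecessary.
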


\begin{proof}
    Throughout the proof, we denote by
    $(\lambda_{k},\varphi_{k})_{k\ge1}$ the eigenpairs of
    $\cH_{Q_{\ell(t)}}$, so that, by
    \eqref{eq:spatial_integral_spectral_rep} and
    $\varphi_{1}\ge0$,
    \begin{equation}\label{eq:spectral_representation_tilde_U}
        \tilde{U}_{0}(t)
        = \sum_{k\ge1}e^{-t\lambda_{k}}
        \Bigl(\int_{Q_{\ell(t)}}\varphi_{k}(x)\,dx\Bigr)^{2}
        \ge e^{-t\lambda_{1}}\,\|\varphi_{1}\|_{1}^{2}\,.
    \end{equation}
    Fix $\theta\in(0,\,1-2^{-2/3})$ and
    $\ve\in(0,\,\pi/\sqrt{2})$, and set
    \begin{equation}\label{eq:def_stable_level}
        \tilde{a}(t)
        := \frac{\log B_{\alpha}(t)-\log\ell(t)
        -t^{3/2}}{t}\,.
    \end{equation}
    By \eqref{eq:stable_scale_B_alpha} and
    $\log\ell(t)=t+o(1)$,
    \begin{equation}\label{eq:stable_level_expansion}
        \tilde{a}(t)=\frac{\alpha^{2}t^{2}}{16}+O(t^{1/2})\,,
        \qquad
        \tilde{a}(t)^{1/2}=\frac{\alpha t}{4}+O(t^{-1/2})\,.
    \end{equation}
    Define the events
    \begin{equation*}
        \begin{aligned}
            G_{1} &:= \bigl\{\lambda_{1}<-\tilde{a}(t)\bigr\}\,,
            \qquad
            G_{2} := \bigl\{\lambda_{2}\ge
            -(1-\theta)\,\tilde{a}(t)\bigr\}\,,\\
            G_{3} &:= \biggl\{\Bigl|\tilde{a}(t)^{1/4}
            \|\varphi_{1}\|_{1}-\frac{\pi}{\sqrt{2}}\Bigr|
            \le\ve\biggr\}\,,
        \end{aligned}
    \end{equation*}
    and decompose
    \begin{equation}\label{eq:tilde_Y_tail_decomposition}
        \begin{aligned}
            \dP\bigl(\tilde{Y}_{0}(t)>x\bigr)
            &= \dP\bigl(\tilde{Y}_{0}(t)>x,\,G_{1}^{c}\bigr)
            + \dP\bigl(\tilde{Y}_{0}(t)>x,\,G_{1},\,
            G_{2}^{c}\bigr)\\
            &\qquad
            + \dP\bigl(\tilde{Y}_{0}(t)>x,\,G_{1},\,G_{2},\,
            G_{3}^{c}\bigr)
            + \dP\bigl(\tilde{Y}_{0}(t)>x,\,G_{1},\,G_{2},\,
            G_{3}\bigr)\,.
        \end{aligned}
    \end{equation}
    We show that the first three terms are negligible after
    multiplication by $n(t)$, and that the last term produces
    the limit $x^{-\alpha}$.

    The first term of \eqref{eq:tilde_Y_tail_decomposition}
    vanishes for deterministic reasons.
    Since $\sum_{k\ge1}\bigl(\int_{Q_{\ell(t)}}\varphi_{k}(x)\,dx \bigr)^{2}=\ell(t)$
    by Parseval's identity, on $G_{1}^{c}$ we have
    \begin{equation}\label{eq:G_1_complement_bound}
        \tilde{Y}_{0}(t)
        \le \frac{\ell(t)\,e^{-t\lambda_{1}}}{B_{\alpha}(t)}
        \le \frac{\ell(t)\,e^{t\tilde{a}(t)}}{B_{\alpha}(t)}
        = e^{-t^{3/2}}\,,
    \end{equation}
    by the definition \eqref{eq:def_stable_level} of
    $\tilde{a}(t)$. Hence, for all sufficiently large $t$,
    \begin{equation}\label{eq:G_1_complement_negligible}
        n(t)\,\dP\bigl(\tilde{Y}_{0}(t)>x,\,G_{1}^{c}
        \bigr) = 0\,.
    \end{equation}
    For the second term, set
    $\tilde{b}(t):=(1-\theta)\,\tilde{a}(t)$.
    Corollary~\ref{cor:second_eigenvalue_left_tail} with
    $\cL=\ell(t)$ and $a=\tilde{b}(t)$ gives
    \begin{equation}\label{eq:G_2_complement_bound}
        \dP\bigl(G_{2}^{c}\bigr)
        = \dP\bigl(\lambda_{2}<-\tilde{b}(t)\bigr)
        \le \biggl(\frac{e\,\ell(t)}{m(\tilde{b}(t))}
        \biggr)^{2}\,.
    \end{equation}
    By \eqref{eq:stable_level_expansion} and
    \eqref{eq:first_explosion_mean_asymptotics},
    $\log m(\tilde{b}(t))
    =(1-\theta)^{3/2}\alpha^{3}t^{3}/24+O(t^{3/2})$. As
    $\log n(t)\le\log L(t)=\alpha^{3}t^{3}/24$ and
    $\log\ell(t)=O(t)$, \eqref{eq:G_2_complement_bound} 
    gives
    \begin{equation*}
        \log\Bigl(n(t)\,\dP\bigl(G_{2}^{c}\bigr)\Bigr)
        \le -\frac{2(1-\theta)^{3/2}-1}{24}\,\alpha^{3}t^{3}
        +O(t^{3/2})
        \longrightarrow -\infty
    \end{equation*}
    by the choice of $\theta$. In particular,
    \begin{equation}\label{eq:G_2_complement_negligible}
        \lim_{t\to\infty} n(t)\,\dP\bigl(G_{2}^{c}\bigr)
        = 0\,.
    \end{equation}
    For the third term,
    Proposition~\ref{prop:eigenfunction_L1_norm} with
    $\cL=\ell(t)$ and $a=\tilde{a}(t)$ gives, for all
    sufficiently large $t$,
    \begin{equation}\label{eq:G_3_complement_bound}
        \dP\bigl(G_{1}\cap G_{2}\cap G_{3}^{c}\bigr)
        \le \bigl(\ell(t)\,\tilde{a}(t)^{1/2}+2\bigr)
        \exp\Bigl(-\Bigl(\frac{8}{3}+c_{1}\Bigr)
        \tilde{a}(t)^{3/2}
        +c_{2}\,\tilde{a}(t)^{3/4}\Bigr)\,,
    \end{equation}
    where $c_{1},c_{2}>0$ depend only on $\theta$ and $\ve$.
    By \eqref{eq:stable_level_expansion},
    $\frac{8}{3}\tilde{a}(t)^{3/2}
    =\alpha^{3}t^{3}/24+O(t^{3/2})$. Since
    $\log n(t)\le\alpha^{3}t^{3}/24$,
    \eqref{eq:G_3_complement_bound} gives
    \begin{equation*}
        \log\Bigl(n(t)\,\dP\bigl(G_{1}\cap G_{2}\cap
        G_{3}^{c}\bigr)\Bigr)
        \le -\frac{c_{1}\,\alpha^{3}}{64}\,t^{3}+O(t^{3/2})
        \longrightarrow -\infty\,,
    \end{equation*}
    and hence
    \begin{equation}\label{eq:G_3_complement_negligible}
        \lim_{t\to\infty}
        n(t)\,\dP\bigl(G_{1}\cap G_{2}\cap G_{3}^{c}\bigr)
        = 0\,.
    \end{equation}
    Combining \eqref{eq:G_1_complement_negligible},
    \eqref{eq:G_2_complement_negligible}, and
    \eqref{eq:G_3_complement_negligible} with
    \eqref{eq:tilde_Y_tail_decomposition},
    \begin{equation}
    \label{eq:tilde_Y_tail_reduction_to_good_event}
        n(t)\,\dP\bigl(\tilde{Y}_{0}(t)>x\bigr)
        = n(t)\,\dP\bigl(\tilde{Y}_{0}(t)>x,\,G_{1},\,G_{2},\,
        G_{3}\bigr)
        + o(1)\,.
    \end{equation}
    On the event $G_{1}\cap G_{2}\cap G_{3}$, we bound
    $\tilde{U}_{0}(t)$ from both sides. The spectral
    representation \eqref{eq:spectral_representation_tilde_U}
    and the definition of $G_{3}$ give the lower bound
    \begin{equation}
    \label{eq:lower_bound_spectral_representation_tilde_U}
        \tilde{U}_{0}(t)
        \ge \|\varphi_{1}\|_{1}^{2}\,e^{-t\lambda_{1}}
        \ge \Bigl(\frac{\pi}{\sqrt{2}}-\ve\Bigr)^{2}
        \tilde{a}(t)^{-1/2}\,e^{-t\lambda_{1}}\,.
    \end{equation}
    For the upper bound, the sum over $k\ge2$ in
    \eqref{eq:spectral_representation_tilde_U} is at most
    $\ell(t)\,e^{-t\lambda_{2}}$ by Parseval's identity.
    Moreover $\lambda_{2}-\lambda_{1}>\theta\,\tilde{a}(t)$ on
    $G_{1}\cap G_{2}$. Hence
    \begin{equation}
    \label{eq:upper_bound_spectral_representation_tilde_U}
        \begin{aligned}
            \tilde{U}_{0}(t)
            &\le \|\varphi_{1}\|_{1}^{2}\,e^{-t\lambda_{1}}
            \biggl(1+\frac{\ell(t)}{\|\varphi_{1}\|_{1}^{2}}\,
            e^{-t(\lambda_{2}-\lambda_{1})}\biggr)\\
            &\le \Bigl(\frac{\pi}{\sqrt{2}}+\ve\Bigr)^{2}
            \tilde{a}(t)^{-1/2}\,e^{-t\lambda_{1}}
            \biggl(1+\Bigl(\frac{\pi}{\sqrt{2}}-\ve\Bigr)^{-2}
            \ell(t)\,\tilde{a}(t)^{1/2}\,
            e^{-\theta t\tilde{a}(t)}\biggr)\,,
        \end{aligned}
    \end{equation}
    where the second factor is $1+o(1)$, since
    $\log\ell(t)=O(t)$ while $\theta t\tilde{a}(t)\ge ct^{3}$
    for some $c>0$.

    We now bound the right-hand side of
    \eqref{eq:tilde_Y_tail_reduction_to_good_event} from
    below. Define
    \begin{equation}\label{eq:def_H_t}
        H_{t} := \biggl\{
        \frac{(\pi/\sqrt{2}-\ve)^{2}\,e^{-t\lambda_{1}}}
        {\tilde{a}(t)^{1/2}\,B_{\alpha}(t)}>x\biggr\}\,.
    \end{equation}
    By \eqref{eq:lower_bound_spectral_representation_tilde_U},
    $H_{t}\cap G_{3}\subset
    \{\tilde{Y}_{0}(t)>x\}\cap G_{3}$. Together with the
    union bound, this gives
    \begin{equation*}
        \dP\bigl(\tilde{Y}_{0}(t)>x,\,G_{1},\,G_{2},\,
        G_{3}\bigr)
        \ge \dP\bigl(H_{t}\bigr)
        - \dP\bigl(H_{t}\cap G_{1}^{c}\bigr)
        - \dP\bigl(G_{2}^{c}\bigr)
        - \dP\bigl(G_{1}\cap G_{2}\cap G_{3}^{c}\bigr)\,.
    \end{equation*}
    As in \eqref{eq:G_1_complement_bound},
    $H_{t}\cap G_{1}^{c}$ is empty for all sufficiently large
    $t$. Multiplying by $n(t)$ and using
    \eqref{eq:G_2_complement_negligible} and
    \eqref{eq:G_3_complement_negligible},
    \begin{equation}\label{eq:liminf_reduction}
        \liminf_{t\to\infty}
        n(t)\,\dP\bigl(\tilde{Y}_{0}(t)>x,\,G_{1},\,G_{2},\,
        G_{3}\bigr)
        \ge \liminf_{t\to\infty}
        n(t)\,\dP\bigl(H_{t}\bigr)\,.
    \end{equation}
    By the definition \eqref{eq:def_H_t} of $H_{t}$,
    \begin{equation}\label{eq:level_event_identification}
        \dP\bigl(H_{t}\bigr)
        = \dP\bigl(\lambda_{1}<-a_{1}(t)\bigr)\,,
        \qquad
        a_{1}(t)
        := \frac{1}{t}\log\biggl(
        \frac{\tilde{a}(t)^{1/2}\,B_{\alpha}(t)\,x}
        {(\pi/\sqrt{2}-\ve)^{2}}\biggr)\,.
    \end{equation}
    From \eqref{eq:stable_scale_B_alpha} and
    \eqref{eq:stable_level_expansion}, we obtain
    \begin{equation}\label{eq:level_a_1_expansion}
        \begin{aligned}
            a_{1}(t)
            &= \frac{\alpha^{2}t^{2}}{16}
            + \frac{1}{t}\biggl(\frac{1}{\alpha}
            \log\Bigl(\frac{\alpha t}{4\pi}\Bigr)
            + \log x
            - 2\log\Bigl(1-\frac{\sqrt{2}\,\ve}{\pi}\Bigr)
            \biggr) + O(t^{-5/2})\,,\\
            \frac{8}{3}\,a_{1}(t)^{3/2}
            &= \frac{\alpha^{3}t^{3}}{24}
            + \log\Bigl(\frac{\alpha t}{4\pi}\Bigr)
            + \alpha\log x
            - 2\alpha\log\Bigl(1-\frac{\sqrt{2}\,\ve}{\pi}\Bigr)
            + o(1)\,.
        \end{aligned}
    \end{equation}
    Since
    $a_{1}(t)\exp\bigl(4(\log a_{1}(t))^{2}\bigr)
    =e^{O((\log t)^{2})}$ and
    $\log m(a_{1}(t))\sim\alpha^{3}t^{3}/24$,
    $\ell(t)$ satisfies the window condition
    \eqref{eq:admissible_window} at $a_{1}(t)$, so
    Theorem~\ref{thm:sharp_lower_tail_lowest_eigenvalue}
    applies with $a=a_{1}(t)$ and $\cL_{a}=\ell(t)$. Together
    with \eqref{eq:level_event_identification},
    \eqref{eq:level_a_1_expansion}, and
    $n(t)\,\ell(t)=L(t)=e^{\alpha^{3}t^{3}/24}$, it gives
    \begin{equation}\label{eq:level_event_asymptotics}
        \begin{aligned}
            n(t)\,\dP\bigl(\lambda_{1}<-a_{1}(t)\bigr)
            &= n(t)\,\frac{\ell(t)\sqrt{a_{1}(t)}}{\pi}
            \exp\Bigl(-\frac{8}{3}a_{1}(t)^{3/2}\Bigr)
            (1+o(1))\\
            &= x^{-\alpha}
            \Bigl(1-\frac{\sqrt{2}\,\ve}{\pi}\Bigr)^{2\alpha}
            (1+o(1))\,,
        \end{aligned}
    \end{equation}
    where $\sqrt{a_{1}(t)}/\pi
    =\bigl(\alpha t/(4\pi)\bigr)(1+o(1))$ cancels the term
    $\log\bigl(\alpha t/(4\pi)\bigr)$ in the exponent. Combining
    \eqref{eq:tilde_Y_tail_reduction_to_good_event},
    \eqref{eq:liminf_reduction}, and
    \eqref{eq:level_event_asymptotics} yields
    \begin{equation}\label{eq:tilde_Y_liminf_bound}
        \liminf_{t\to\infty}
        n(t)\,\dP\bigl(\tilde{Y}_{0}(t)>x\bigr)
        \ge x^{-\alpha}
        \biggl(1-\frac{\sqrt{2}\,\ve}{\pi}\biggr)^{2\alpha}\,.
    \end{equation}

    We next bound the right-hand side of
    \eqref{eq:tilde_Y_tail_reduction_to_good_event} from
    above. Since the
    second factor in
    \eqref{eq:upper_bound_spectral_representation_tilde_U} is
    $1+o(1)$, for every $\delta>0$ and all sufficiently large
    $t$,
    \begin{equation}\label{eq:limsup_reduction}
        \dP\bigl(\tilde{Y}_{0}(t)>x,\,G_{1},\,G_{2},\,
        G_{3}\bigr)
        \le \dP\biggl(
        \frac{(\pi/\sqrt{2}+\ve)^{2}\,(1+\delta)\,
        e^{-t\lambda_{1}}}
        {\tilde{a}(t)^{1/2}\,B_{\alpha}(t)}>x\biggr)
        = \dP\bigl(\lambda_{1}<-a_{2}(t)\bigr)\,,
    \end{equation}
    where
    \begin{equation}\label{eq:def_level_a_2}
        a_{2}(t)
        := \frac{1}{t}\log\biggl(
        \frac{\tilde{a}(t)^{1/2}\,B_{\alpha}(t)\,x}
        {(\pi/\sqrt{2}+\ve)^{2}\,(1+\delta)}\biggr)\,.
    \end{equation}
    The expansion \eqref{eq:level_a_1_expansion} and the
    asymptotics \eqref{eq:level_event_asymptotics} apply
    to $a_{2}(t)$ with $-\ve$ replaced by $+\ve$. The factor
    $(1+\delta)$ contributes $(1+\delta)^{\alpha}$, and we
    obtain
    \begin{equation}\label{eq:limsup_level_asymptotics}
        n(t)\,\dP\bigl(\lambda_{1}<-a_{2}(t)\bigr)
        = x^{-\alpha}
        \Bigl(1+\frac{\sqrt{2}\,\ve}{\pi}\Bigr)^{2\alpha}
        (1+\delta)^{\alpha}\,(1+o(1))\,.
    \end{equation}
    Combining
    \eqref{eq:tilde_Y_tail_reduction_to_good_event},
    \eqref{eq:limsup_reduction}, and
    \eqref{eq:limsup_level_asymptotics}, and letting
    $\delta\downarrow0$, we obtain
    \begin{equation}\label{eq:tilde_Y_limsup_bound}
        \limsup_{t\to\infty}
        n(t)\,\dP\bigl(\tilde{Y}_{0}(t)>x\bigr)
        \le x^{-\alpha}
        \biggl(1+\frac{\sqrt{2}\,\ve}{\pi}\biggr)^{2\alpha}\,.
    \end{equation}
    Since $\ve\in(0,\pi/\sqrt{2})$ was arbitrary,
    \eqref{eq:tilde_Y_liminf_bound} and
    \eqref{eq:tilde_Y_limsup_bound} prove
    \eqref{eq:tilde_Y_positive_tail}.
\end{proof}

It remains to identify $\sigma^{2}$ in
\eqref{eq:stable_criterion_variance} and $\nu$ in
\eqref{eq:stable_criterion_centering}. These follow
from the truncated moments of $\tilde{Y}_{0}(t)$ in the
following lemma, which is the analogue of
\cite[Lemma~8]{BAMR19}.

\begin{lemma}\label{lem:truncated_moments}
    For every $y>0$, the following hold as $t\to\infty$.
    \begin{enumerate}
        \item[\textup{(i)}] The truncated second moment
        satisfies
        \begin{equation}\label{eq:truncated_second_moment}
            n(t)\,\dE\bigl[\tilde{Y}_{0}(t)^{2}\,;\,
            \tilde{Y}_{0}(t)\le y\bigr]
            \longrightarrow
            \frac{\alpha}{2-\alpha}\,y^{2-\alpha}\,.
        \end{equation}
        \item[\textup{(ii)}] The first moments
        satisfy
        \begin{equation}\label{eq:truncated_first_moment}
            \begin{cases}
                \;n(t)\,\dE\bigl[\tilde{Y}_{0}(t)\,;\,
                \tilde{Y}_{0}(t)\le y\bigr]
                \longrightarrow
                \dfrac{\alpha}{1-\alpha}\,y^{1-\alpha}\,,
                & \alpha\in(0,1)\,,\\[10pt]
                \;n(t)\,\Bigl(\dE\bigl[\tilde{Y}_{0}(t)\,;\,
                \tilde{Y}_{0}(t)\le y\bigr]
                -\dE\bigl[\tilde{Y}_{0}(t)\,;\,
                \tilde{Y}_{0}(t)\le1\bigr]\Bigr)
                \longrightarrow \log y\,,
                & \alpha=1\,,\\[10pt]
                \;n(t)\,\dE\bigl[\tilde{Y}_{0}(t)\,;\,
                \tilde{Y}_{0}(t)>y\bigr]
                \longrightarrow
                \dfrac{\alpha}{\alpha-1}\,y^{1-\alpha}\,,
                & \alpha\in(1,2)\,.
            \end{cases}
        \end{equation}
    \end{enumerate}
\end{lemma}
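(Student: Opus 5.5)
The plan is to write each truncated moment as an integral of the tail function and then pass to the limit using Proposition~\ref{prop:tilde_Y_positive_tail}. The only real work is a uniform domination of $n(t)\,\dP(\tilde Y_0(t)>s)$ near $s=0$ and near $s=\infty$.

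We start from the layer-cake identities. For $y>0$,
\begin{equation*}
n(t)\,\dE\bigl[\tilde Y_0^2;\tilde Y_0\le y\bigr]=\int_0^y 2s\,n(t)\dP(s<\tilde Y_0\le y)\,ds ,\qquad n(t)\,\dE\bigl[\tilde Y_0;\tilde Y_0\le y\bigr]=\int_0^y n(t)\dP(s<\tilde Y_0\le y)\,ds ,
\end{equation*}
and
\begin{equation*}
n(t)\,\dE\bigl[\tilde Y_0;\tilde Y_0>y\bigr]=y\,n(t)\dP(\tilde Y_0>y)+\int_y^\infty n(t)\dP(\tilde Y_0>s)\,ds .
\end{equation*}
For $\alpha=1$ the relevant difference is $\int_1^y n(t)\dP(s<\tilde Y_0\le y)\,ds-(1-y)\,n(t)\dP(\tilde Y_0>1)$ when $y>1$, with the symmetric formula when $y<1$. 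This integral runs over a compact interval away from $0$. So the $\alpha=1$ case follows directly from Proposition~\ref{prop:tilde_Y_positive_tail} and bounded convergence, since $n\dP(\tilde Y_0>s)\le n\dP(\tilde Y_0>\min(1,y))$ is bounded. The limit is $\int_1^y(s^{-1}-y^{-1})\,ds-(1-y)y^{-1}=\log y$.

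For the other cases, pointwise convergence of the integrand to $s^{-\alpha}-y^{-\alpha}$ (respectively $s^{-\alpha}$) is given by Proposition~\ref{prop:tilde_Y_positive_tail}. Dominated convergence then yields the constants $\tfrac{\alpha}{2-\alpha}y^{2-\alpha}$, $\tfrac{\alpha}{1-\alpha}y^{1-\alpha}$ and $\tfrac{\alpha}{\alpha-1}y^{1-\alpha}$ after elementary integration. What remains is an integrable dominating function, uniform in large $t$. We need:
\begin{itemize}
\item[(a)] near $s=0$, a bound $n(t)\dP(\tilde Y_0>s)\le C s^{-\beta}$ on $s\in[s_t,y]$ with some $\beta<2$ (for (i)) or $\beta<1$ (for $\alpha<1$);
\item[(b)] near $\infty$, $n(t)\dP(\tilde Y_0>s)\le Cs^{-\beta'}$ with $\beta'>1$ (for $\alpha\in(1,2)$);
\item[(c)] a separate treatment of very small $s$.
\end{itemize}

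Both (a) and (b) come from the spectral bound $\tilde U_0\le\ell(t)e^{-t\lambda_1}$ (Parseval) combined with Lemma~\ref{lem:left_tail_upper_bound_fixed_L}. We have $\dP(\tilde Y_0>s)\le\dP(\lambda_1<-a_s)$ with $a_s=t^{-1}\log(sB_\alpha/\ell)$, and $\dP(\lambda_1<-a)\le C\ell\sqrt a\,e^{-\frac83a^{3/2}}$. Write $a_s=\tilde a+ t^{-1}\log s+O(t^{1/2})$, where the $O(t^{1/2})$ term is the $t^{3/2}$ shift in \eqref{eq:def_stable_level}. Expanding $\frac83a_s^{3/2}$ as in \eqref{eq:level_a_1_expansion} gives $\frac83 a_s^{3/2}=\frac{\alpha^3t^3}{24}+\alpha\log s\,(1+O(t^{-3}\log s))-O(t\log\ell)$. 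Here the loss from $\ell(t)$ in place of $\|\varphi_1\|_1^2\approx a^{-1/2}$ costs a factor $e^{\alpha t(1+o(1))}$. This loss is fatal for a constant bound but harmless for powers.

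So I would \emph{not} use the crude bound on the whole range. For $s$ in a moderate range $[e^{-\kappa t^{2}},e^{\kappa t^2}]$ with small $\kappa$, the argument of Proposition~\ref{prop:tilde_Y_positive_tail} should be rerun with $x=s$ varying. The estimates there (Corollary~\ref{cor:second_eigenvalue_left_tail}, Proposition~\ref{prop:eigenfunction_L1_norm}, Theorem~\ref{thm:sharp_lower_tail_lowest_eigenvalue}) hold uniformly since $a_s$ stays $\sim\alpha^2t^2/16$ up to $O(\kappa t)$. This gives $n\dP(\tilde Y_0>s)\le C s^{-\alpha(1\pm\eta)}$ with $\eta\to0$ as $\kappa\to0$. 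That bound is integrable against $2s$ at $0$ since $\alpha<2$, against $1$ at $0$ if $\alpha<1$, and at $\infty$ if $\alpha>1$. Beyond $e^{\kappa t^2}$, the crude bound already has super-polynomial decay in $s$, because the exponent gains $\frac83(\cdot)^{3/2}$ growth. Below $e^{-\kappa t^2}$, I would bound the contribution of the small range directly. For (i) this is at most $n(t)\,s_t^2\to$? That fails since $n$ is huge, so instead use $n\,\dE[\tilde Y_0^2;\tilde Y_0\le s_t]\le n\,s_t\,\dE[\tilde Y_0]$. With $\dE\tilde Y_0\le C\ell\,t^{5/2}e^{t^3/48}/B_\alpha$ from \eqref{eq:moment_upper_bound}, we get $n\dE\tilde Y_0= L\,e^{t^3/48-\alpha^2t^3/16+O(t\log t)}$. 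This is $e^{\frac{t^3}{48}(2\alpha^3-3\alpha^2+1)}$ times lower order, and $2\alpha^3-3\alpha^2+1=(\alpha-1)^2(2\alpha+1)>0$.

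That is the main obstacle: the small-$s$ regime for $\alpha<1$ and $\alpha\in(1,2)$ (first moment) and for (i). Here the mass near the typical value of $\tilde Y_0$ is exponentially larger in $t^3$. My first attempt is to choose the cutoff as $s_t=e^{-c t^3}$, with $c$ just large enough that $s_t\,n\,\dE[\tilde Y_0]\to0$ (respectively $s_t^{2-1}$ in (i)). I would then verify that the uniform tail comparison still holds down to $s_t$. This requires extending the uniformity of Theorem~\ref{thm:sharp_lower_tail_lowest_eigenvalue} and Proposition~\ref{prop:eigenfunction_L1_norm} to levels $a_s=\tilde a-ct^2$. These still lie in the admissible window as long as $a_s\gtrsim t^2$, giving $n\dP(\tilde Y_0>s)\le Cs^{-\alpha'}$ with $\alpha'$ the local exponent $\frac{d}{d\log s}\frac83a_s^{3/2}$, which stays $<2$ (resp. $<1$) if $c$ is small. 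If the needed $c$ exceeds what keeps $\alpha'<2$, the fallback is a direct Chebyshev-type bound $\dE[\tilde Y_0^2;\tilde Y_0\le s]\le s^{2-p}\dE[\tilde Y_0^p]$ with $p$ slightly above $\alpha$, using \eqref{eq:moment_upper_bound}. That is exactly the computation in Theorem~\ref{thm:PAM_WLLN_Dirichlet} and should close the range.
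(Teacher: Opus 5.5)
Your plan works and has the same skeleton as the paper's proof. You write each truncated moment as an integral of $n(t)\,\dP(\tilde{Y}_{0}(t)>s)$ and take pointwise limits from Proposition~\ref{prop:tilde_Y_positive_tail}. You dominate by $Cs^{-\alpha}$ on a moderate window by rerunning the good-event argument; in the paper this is Lemma~\ref{lem:uniform_tail_bound}\,(i) on $[e^{-t^{3/2}},e^{t^{3/2}}]$. Further out, you use the Parseval bound $\tilde{U}_{0}\le\ell(t)e^{-t\lambda_{1}}$ with Lemma~\ref{lem:left_tail_upper_bound_fixed_L}, which loses a factor $e^{\alpha t(1+o(1))}$.

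The difference is in the extreme ranges. The paper splits at $e^{-\alpha^{2}t^{3}/24}$, using the trivial bound $n(t)x^{2}$ below and the integrated crude bound above, and it uses the second moment beyond $e^{t^{3}}$. You instead use Markov-type bounds with the moments of Proposition~\ref{prop:moment_bounds_Dirichlet_PAM}. This is the shorter route if you take $p=\alpha$ exactly rather than slightly above. Since $\frac{\alpha^{3}}{24}+\frac{p^{3}}{48}-\frac{p\alpha^{2}}{16}=\frac{(p-\alpha)^{2}(p+2\alpha)}{48}$ vanishes at $p=\alpha$, \eqref{eq:moment_upper_bound} gives $n(t)\,\dE[\tilde{Y}_{0}(t)^{\alpha}]\le C\,\ell(t)^{\alpha}t^{\alpha+3/2}$; this is the computation behind \eqref{eq:window_contribution_negligible}. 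Hence $n(t)\,\dE[\tilde{Y}_{0}^{2};\tilde{Y}_{0}\le e^{-t^{3/2}}]\le e^{-(2-\alpha)t^{3/2}}\,n(t)\,\dE[\tilde{Y}_{0}^{\alpha}]\to0$. The same one-line bound gives Lemma~\ref{lem:uniform_tail_bound}\,(iii) and (iv). The paper's window then suffices, and you need neither the crude bound nor any extension of the good-event argument. With a fixed $p\neq\alpha$, $n(t)\,\dE[\tilde{Y}_{0}^{p}]$ is of order $e^{(p-\alpha)^{2}(p+2\alpha)t^{3}/48}$, which is what pushes your cutoff down to $e^{-ct^{3}}$.

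Several of your diagnostics are off, though none is fatal once you use the fallback.

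\emph{Typical values.} The mass at typical values is negligible: $n(t)\,\dE[\tilde{Y}_{0}^{2};\tilde{Y}_{0}\le e^{-\alpha^{2}t^{3}/24}]\le n(t)e^{-\alpha^{2}t^{3}/12}\to0$ because $\alpha<2$. The quantity $n(t)\,\dE[\tilde{Y}_{0}]$ is large because of large values, not typical ones.

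\emph{Local exponent.} By the tangent bound \eqref{eq:cost_convexity}, every tail bound you derive is at most a $t$-dependent factor times $s^{-\alpha}$, so the criterion $\alpha'<2$ never binds. The real constraint is that the level $a_{s}$ stay large, and your first attempt violates it for small $\alpha$. The condition $c>(1-\alpha)^{2}(1+2\alpha)/48$ exceeds $\alpha^{2}/16$ for $\alpha$ below about $0.44$, so $a_{s}<0$ and no eigenvalue estimate applies. The trivial bound $n(t)\,\dP(\tilde{Y}_{0}>s)\le n(t)\le s^{-\alpha}$ for $s\le e^{-\alpha^{2}t^{3}/24}$ would do instead.

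\emph{Large $s$.} Beyond $e^{\kappa t^{2}}$ the crude bound is only of order $e^{\alpha t(1+o(1))}\sqrt{a_{s}}\,s^{-\alpha}$, which is polynomial, not super-polynomial, in $s$. Domination still holds because $e^{\alpha t}\le s^{\alpha/(\kappa t)}$ there.

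\emph{Which estimates to use.} For domination you need only upper bounds. Use Lemma~\ref{lem:left_tail_upper_bound_fixed_L}, which is non-asymptotic, rather than a uniform version of Theorem~\ref{thm:sharp_lower_tail_lowest_eigenvalue}, which is only an asymptotic statement along a family. Below $e^{-t^{3/2}}$ the events $G_{1},G_{2},G_{3}$ must be re-centered at $\tilde{a}(t)+t^{-1}\log s$; otherwise $\{\tilde{Y}_{0}>s\}\cap G_{1}^{c}$ need not be empty.

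\emph{The case $\alpha=1$.} For $y>1$ the boundary term is $n(t)\,\dP(1<\tilde{Y}_{0}\le y)\to1-y^{-1}$, not $(y-1)\,n(t)\,\dP(\tilde{Y}_{0}>1)$. Your stated limit $\log y$ is nonetheless correct.
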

We prove Lemma~\ref{lem:truncated_moments} by
integrating the tail of $\tilde{Y}_{0}(t)$.
To pass to the limit under the integral, we supplement
the pointwise tail asymptotics of
Proposition~\ref{prop:tilde_Y_positive_tail}
with a uniform tail bound and estimates on the
contributions from very small and very large values
of $\tilde{Y}_{0}(t)$.
The following lemma provides these estimates,
using the nonasymptotic eigenvalue bound in
Lemma~\ref{lem:left_tail_upper_bound_fixed_L}.

\begin{lemma}\label{lem:uniform_tail_bound}
    The following estimates hold.
    \begin{enumerate}
        \item[\textup{(i)}] There exist $C,t_{0}>0$, depending
        only on $\alpha$, such that for all $t\ge t_{0}$ and
        all $x\in[e^{-t^{3/2}},\,e^{t^{3/2}}]$,
        \begin{equation*}
            n(t)\,\dP\bigl(\tilde{Y}_{0}(t)>x\bigr)
            \le C\,x^{-\alpha}\,.
        \end{equation*}
        \item[\textup{(ii)}] As $t\to\infty$,
        \begin{equation*}
            n(t)\,\dE\bigl[\tilde{Y}_{0}(t)^{2}\,;\,
            \tilde{Y}_{0}(t)\le e^{-t^{3/2}}\bigr]
            \longrightarrow 0\,.
        \end{equation*}
        \item[\textup{(iii)}] If $\alpha\in(0,1)$, as
        $t\to\infty$,
        \begin{equation*}
            n(t)\,\dE\bigl[\tilde{Y}_{0}(t)\,;\,
            \tilde{Y}_{0}(t)\le e^{-t^{3/2}}\bigr]
            \longrightarrow 0\,.
        \end{equation*}
        \item[\textup{(iv)}] If $\alpha\in(1,2)$, as
        $t\to\infty$,
        \begin{equation*}
            n(t)\,\dE\bigl[\tilde{Y}_{0}(t)\,;\,
            \tilde{Y}_{0}(t)\ge e^{t^{3/2}}\bigr]
            \longrightarrow 0\,.
        \end{equation*}
    \end{enumerate}
\end{lemma}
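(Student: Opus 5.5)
The plan is to prove one tail bound, valid for $y$ in a range much wider than in (i), and then obtain (ii)--(iv) by integrating it. Two ingredients give the bound: the nonasymptotic eigenvalue estimates (Lemma~\ref{lem:left_tail_upper_bound_fixed_L} and Corollary~\ref{cor:second_eigenvalue_left_tail}), and a deterministic bound on $\|\varphi_{1}\|_{1}$ on the spectral-gap event. The crude Parseval bound $\tilde{U}_{0}(t)\le\ell(t)e^{-t\lambda_{1}}$ used in \eqref{eq:G_1_complement_bound} is too lossy. It costs a factor $\ell(t)^{\alpha}=e^{\alpha t(1+o(1))}$, which is not $O(1)$. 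So I will replace it by the first spectral term with $\|\varphi_{1}\|_{1}^{2}\lesssim a^{-1/2}$.

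\emph{Step 1 (deterministic bound on the eigenfunction).} Fix $\theta\in(0,1-2^{-2/3})$. On $\{\lambda_{2}-\lambda_{1}\ge\theta a\}$, use \eqref{eq:L1_tail} with $r=a^{-1/2}$ outside $\{|t-\fm_{\varphi_{1}}|\le r\}$, and Cauchy--Schwarz with $\|\varphi_{1}\|_{2}=1$ inside. This gives $\|\varphi_{1}\|_{1}\le C_{\theta}a^{-1/4}$.

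For $y>0$ define the level $a_{y}:=\frac1t\log\bigl(yB_{\alpha}(t)a_{0}^{1/2}/(2C_{\theta}^{2})\bigr)$, where $a_{0}:=\alpha^{2}t^{2}/16$. Put $G(y):=\{\lambda_{2}\ge-(1-\theta)a_{y}\}$. Repeating \eqref{eq:upper_bound_spectral_representation_tilde_U} with $a_{y}$ in place of $\tilde{a}(t)$, we get
\[
\{\tilde{Y}_{0}(t)>y\}\cap G(y)\subset\{\lambda_{1}<-a_{y}\}.
\]
This holds as long as $a_{y}\ge a_{0}/2$, because then $\ell(t)a_{y}^{1/2}e^{-\theta ta_{y}}=o(1)$. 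The condition $a_{y}\ge a_{0}/2$ holds for all $y\ge e^{-ct^{3}}$ with a small $c=c(\alpha)>0$.

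\emph{Step 2 (the tail bound).} For $y\ge e^{-ct^{3}}$, Lemma~\ref{lem:left_tail_upper_bound_fixed_L} and \eqref{eq:first_explosion_mean_asymptotics} give
\[
n(t)\,\dP(\lambda_{1}<-a_{y})\le C\,n(t)\,\ell(t)\,a_{y}^{1/2}\exp\bigl(-\tfrac83a_{y}^{3/2}\bigr).
\]
The key point is convexity of $a\mapsto\frac83a^{3/2}$. Let $\bar a:=a_{1}$ be the level at $y=1$. The tangent-line inequality gives
\[
\tfrac83a_{y}^{3/2}\ge\tfrac83\bar a^{3/2}+4\bar a^{1/2}(a_{y}-\bar a)=\tfrac83\bar a^{3/2}+4\bar a^{1/2}t^{-1}\log y .
\]
From \eqref{eq:stable_scale_B_alpha} one checks $\bar a^{1/2}=\alpha t/4+O(t^{-2}\log t)$, so that $4\bar a^{1/2}/t=\alpha+O(t^{-2}\log t)$. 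As in \eqref{eq:level_event_asymptotics}, $n(t)\ell(t)\bar a^{1/2}e^{-\frac83\bar a^{3/2}}=O(1)$. Also $a_{y}^{1/2}/\bar a^{1/2}\le C(1+|\log y|/t^{3})^{1/2}$. Together these give, uniformly in $y\ge e^{-ct^{3}}$,
\[
n(t)\,\dP(\lambda_{1}<-a_{y})\le C\,y^{-\alpha}\,e^{O(t^{-2}\log t)|\log y|}\,(1+|\log y|/t^{3})^{1/2}.
\]
On $|\log y|\le t^{3/2}$ the correction factors are $O(1)$, and on $|\log y|\le ct^{3}$ they are at most $e^{O(t\log t)}$.

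For $G(y)^{c}$, Corollary~\ref{cor:second_eigenvalue_left_tail} gives $n(t)\dP(G(y)^{c})\le n(t)\bigl(e\ell(t)/m((1-\theta)a_{y})\bigr)^{2}\le e^{-c't^{3}}$, by the choice of $\theta$ exactly as for \eqref{eq:G_2_complement_negligible}. This is also $\le Cy^{-\alpha}$ whenever $y\le e^{t^{3/2}}$. Hence (i) follows directly. For large $y$, the same convexity argument applied at the level $(1-\theta)a_{y}$ gives $n\dP(G(y)^{c})\le Cy^{-2\alpha(1-\theta)^{3/2}}e^{-c't^{3}}$, where $2(1-\theta)^{3/2}>1$.

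\emph{Step 3 (the integrals).} For (ii), write the truncated second moment as at most $2\int_{0}^{\epsilon}y\,\dP(\tilde{Y}_{0}(t)>y)\,dy$ with $\epsilon:=e^{-t^{3/2}}$, and split at $y_{0}:=e^{-ct^{3}}$. On $[0,y_{0}]$, use $\dP\le1$. This gives $n(t)y_{0}^{2}$, which tends to $0$ once $c$ is reduced if necessary so that $2c>\alpha^{3}/24$ while $a_{y_{0}}\ge a_{0}/2$ still holds. This tension in choosing $c$ is the main obstacle; if it fails, I will handle the lower range instead by a further split using Lemma~\ref{lem:left_tail_upper_bound_fixed_L} at levels $a\in[0,a_{0}/2]$, where $\ell(t)/m(a)$ still gives a bound $e^{-\frac83a^{3/2}}$ good enough to beat $n(t)y^{2}$. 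On $[y_{0},\epsilon]$, Step~2 gives a contribution at most $e^{O(t\log t)}\int_{0}^{\epsilon}y^{1-\alpha}dy=e^{O(t\log t)}\epsilon^{2-\alpha}\to0$. Part (iii) is identical, with $\int_{0}^{\epsilon}\dP\,dy$ and exponent $1-\alpha>0$.

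For (iv), write $\dE[\tilde{Y}_{0}(t);\tilde{Y}_{0}(t)\ge\epsilon^{-1}]\le\epsilon^{-1}\dP(\tilde{Y}_{0}(t)\ge\epsilon^{-1})+\int_{\epsilon^{-1}}^{\infty}\dP(\tilde{Y}_{0}(t)>y)\,dy$. For large $y$ the correction factors in Step~2 grow at most like $y^{O(t^{-2}\log t)}$, and $\alpha>1$, so the whole expression is bounded by $Ce^{-(\alpha-1-o(1))t^{3/2}}\to0$. The $G(y)^{c}$ part is even smaller, since its exponent is $2\alpha(1-\theta)^{3/2}>\alpha>1$ and it carries the extra factor $e^{-c't^{3}}$.
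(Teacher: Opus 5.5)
There is a genuine gap in (ii) and (iii). Your (i) and (iv) work, up to a fixable detail in Step~1.

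\emph{The gap: small $y$ in (ii) and (iii).} Both ingredients of your refined bound force $a_y$ to stay near $a_0$.

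First, you claim the inclusion of Step~1 only for $a_y\ge a_0/2$. At $y_0=e^{-ct^3}$ this means $c\le\alpha^2/32$.

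Second, the estimate $n(t)\,\dP(G(y)^c)\le e^{-c't^3}$ is more restrictive still. With $a_y=s\,a_0$, the bound from Corollary~\ref{cor:second_eigenvalue_left_tail} has exponent $\frac{\alpha^3t^3}{24}\bigl(1-2(1-\theta)^{3/2}s^{3/2}\bigr)+O(t)$. This is negative only for $s>2^{-2/3}/(1-\theta)$, where $2^{-2/3}\approx0.63$.

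Below $y_0$ you use $\dP\le1$. That requires $2c>\alpha^3/24$ in (ii) and $c>\alpha^3/24$ in (iii), so $c$ must be increased, not reduced. Even against $c\le\alpha^2/32$ alone, these fail for $\alpha\ge3/2$ in (ii) and for $\alpha\ge3/4$ in (iii).

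Your fallback does not close this. It does not say which inclusion $\{\tilde Y_0(t)>y\}\subset\{\lambda_1<-a\}$ you would use at levels $a\le a_0/2$. If it goes through $G(y)$, your bound on $n(t)\,\dP(G(y)^c)$ is false there.

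The missing observation is that no gap event is needed in this range. Parseval alone gives $\{\tilde Y_0(t)>x\}\subset\{\lambda_1<-\tilde a_x\}$, where $\tilde a_x:=(\log B_\alpha(t)-\log\ell(t)+\log x)/t$. You rightly rejected its $e^{O(t)}$ loss for (i). In (ii) and (iii) the loss is harmless, because the integrals carry $e^{-(2-\alpha)t^{3/2}}$ and $e^{-(1-\alpha)t^{3/2}}$ respectively.

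Combine this inclusion with Lemma~\ref{lem:left_tail_upper_bound_fixed_L} and your own tangent-line inequality at $r=\alpha^2t^2/16$. This gives $n(t)\,\dP(\tilde Y_0(t)>x)\le C\,t\,e^{(1+\alpha)t}x^{-\alpha}$ for $x\in[e^{-\alpha^2t^3/24},e^{t^3}]$. Below $e^{-\alpha^2t^3/24}$ the trivial bound costs only $n(t)e^{-\alpha^2t^3/12}\le e^{\alpha^2(\alpha-2)t^3/24}$ in (ii), and $n(t)e^{-\alpha^2t^3/24}\le e^{\alpha^2(\alpha-1)t^3/24}$ in (iii). This is exactly the paper's proof of (ii) and (iii).

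\emph{The fixable detail in Step~1.} The inclusion $\{\tilde Y_0(t)>y\}\cap G(y)\subset\{\lambda_1<-a_y\}$ does not follow by repeating \eqref{eq:upper_bound_spectral_representation_tilde_U}. That bound is derived on $G_1$, where the gap is automatic. On $\{\lambda_1\ge-a_y\}\cap G(y)$ the gap can be arbitrarily small, so your $L^1$ bound is unavailable. Split instead according to whether $\lambda_1<-(1-\theta/2)a_y$:
\begin{itemize}
\item If it holds, the gap is at least $\theta a_y/2$, and Step~1 applies with $\theta/2$.
\item If it fails, Parseval gives $\tilde U_0(t)\le\ell(t)e^{t(1-\theta/2)a_y}$, which is negligible.
\end{itemize}
Then enlarge the constant in the definition of $a_y$.

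\emph{Comparison with the paper for (i) and (iv).} With this fix, your (i) is the paper's argument with two simplifications. The deterministic bound $\|\varphi_1\|_1\le C_\theta a^{-1/4}$ from \eqref{eq:L1_tail} replaces $G_3$ and Proposition~\ref{prop:eigenfunction_L1_norm}, since only a constant is needed. The tangent line replaces the second-order expansion.

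Your (iv) is a valid alternative to the paper's route, which splits at $e^{t^3}$ and uses the $p=2$ moment bound. You instead extend the refined bound to all large $y$ and control $G(y)^c$ through the exponent $2\alpha(1-\theta)^{3/2}>1$.
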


\begin{proof}
    Throughout, $(\lambda_{k},\varphi_{k})_{k\ge1}$ denote the
    eigenpairs of $\cH_{Q_{\ell(t)}}$, and $C>0$ depends only
    on $\alpha$ and may change from line to line.

    We begin with \textup{(i)}. Fix
    $\theta\in(0,\,1-2^{-2/3})$ and $\ve:=\pi/(2\sqrt{2})$, and
    define $\tilde{a}(t)$ and the events $G_{1}$, $G_{2}$,
    $G_{3}$ as in the proof of
    Proposition~\ref{prop:tilde_Y_positive_tail}. These do not
    involve $x$, and the arguments for
    \eqref{eq:G_2_complement_negligible} and
    \eqref{eq:G_3_complement_negligible} give, for all
    sufficiently large $t$,
    \begin{equation}\label{eq:uniform_bad_events}
        n(t)\,\dP\bigl(G_{2}^{c}\bigr)
        + n(t)\,\dP\bigl(G_{1},\,G_{2},\,G_{3}^{c}\bigr)
        \le e^{-ct^{3}}\,,
    \end{equation}
    with $c>0$ depending only on $\alpha$. For every
    $x>0$, \eqref{eq:limsup_reduction} with $\delta=1$ gives
    \begin{equation}\label{eq:uniform_good_inclusion}
        \dP\bigl(\tilde{Y}_{0}(t)>x,\,G_{1},\,G_{2},\,
        G_{3}\bigr)
        \le \dP\bigl(\lambda_{1}<-a_{2}(t)\bigr)\,,
    \end{equation}
    with $a_{2}(t)$ as in \eqref{eq:def_level_a_2}.

    Let $x\in[e^{-t^{3/2}},\,e^{t^{3/2}}]$. By
    \eqref{eq:G_1_complement_bound},
    $\tilde{Y}_{0}(t)\le e^{-t^{3/2}}\le x$ on
    $G_{1}^{c}$, so $\{\tilde{Y}_{0}(t)>x\}\cap G_{1}^{c}=\emptyset$.
    Since $x\le e^{t^{3/2}}$, for all sufficiently large $t$ we also
    have $e^{-ct^{3}}\le x^{-\alpha}$. Since
    $|\log x|\le t^{3/2}$, expanding as in
    \eqref{eq:level_a_1_expansion} gives, with an error uniform
    in $x$,
    \begin{equation}\label{eq:uniform_level_expansion}
        \frac{8}{3}\,a_{2}(t)^{3/2}
        = \frac{\alpha^{3}t^{3}}{24}
        + \log\Bigl(\frac{\alpha t}{4\pi}\Bigr)
        + \alpha\log x + O(1)\,.
    \end{equation}
    Indeed, the second-order term of the expansion is
    of order $(\log x)^{2}/t^{3}$, which stays bounded
    when $|\log x|\le t^{3/2}$. This is the reason for the
    range in \textup{(i)}.
    Hence
    Lemma~\ref{lem:left_tail_upper_bound_fixed_L},
    \eqref{eq:first_explosion_mean_asymptotics},
    \eqref{eq:uniform_level_expansion},
    $n(t)\,\ell(t)=L(t)=e^{\alpha^{3}t^{3}/24}$, and
    $\sqrt{a_{2}(t)}\le\alpha t$ give
    \begin{equation}\label{eq:uniform_good_bound}
        n(t)\,\dP\bigl(\lambda_{1}<-a_{2}(t)\bigr)
        \le C\,e^{\alpha^{3}t^{3}/24}\,\sqrt{a_{2}(t)}\,
        e^{-\frac{8}{3}a_{2}(t)^{3/2}}
        \le C\,x^{-\alpha}\,.
    \end{equation}
    Combining \eqref{eq:uniform_bad_events},
    \eqref{eq:uniform_good_inclusion}, and
    \eqref{eq:uniform_good_bound} with the decomposition
    \eqref{eq:tilde_Y_tail_decomposition} proves \textup{(i)}.

    We next prove \textup{(ii)}. We split the
    expectation at $e^{-\alpha^{2}t^{3}/24}$, writing
    \begin{equation}\label{eq:deep_zone_split}
        \begin{aligned}
            n(t)\,\dE\bigl[\tilde{Y}_{0}(t)^{2}\,;\,
            \tilde{Y}_{0}(t)\le e^{-t^{3/2}}\bigr]
            &= n(t)\,\dE\bigl[\tilde{Y}_{0}(t)^{2}\,;\,
            \tilde{Y}_{0}(t)\le e^{-\alpha^{2}t^{3}/24}\bigr]\\
            &\qquad
            + n(t)\,\dE\bigl[\tilde{Y}_{0}(t)^{2}\,;\,
            e^{-\alpha^{2}t^{3}/24}<\tilde{Y}_{0}(t)
            \le e^{-t^{3/2}}\bigr]\,.
        \end{aligned}
    \end{equation}
    Since $n(t)\le e^{\alpha^{3}t^{3}/24}$ and
    $\alpha<2$, the first term is at most
    $n(t)\,e^{-\alpha^{2}t^{3}/12}
    \le e^{\alpha^{2}(\alpha-2)t^{3}/24}\longrightarrow0$.

    To bound the second term, we control the tail of
    $\tilde{Y}_{0}(t)$ below the range of \textup{(i)}. For
    $x>0$, set
    \begin{equation}\label{eq:def_crude_level}
        \tilde{a}_{x}(t)
        := \frac{\log B_{\alpha}(t)-\log\ell(t)+\log x}{t}\,,
    \end{equation}
    so that, bounding the spectral representation by Parseval's
    identity as in \eqref{eq:G_1_complement_bound},
    \begin{equation}\label{eq:crude_inclusion}
        \bigl\{\tilde{Y}_{0}(t)>x\bigr\}
        \subset \bigl\{\lambda_{1}<-\tilde{a}_{x}(t)\bigr\}\,.
    \end{equation}
    Since $\log\ell(t)=t+o(1)$ by \eqref{eq:block_number}, the
    definitions \eqref{eq:stable_scale_B_alpha} and
    \eqref{eq:def_crude_level} give
    \begin{equation}\label{eq:crude_level_value}
        \tilde{a}_{x}(t)
        = \frac{\alpha^{2}t^{2}}{16}
        + \frac{\log x-t}{t}
        + O\Bigl(\frac{\log t}{t}\Bigr)\,,
    \end{equation}
    where the error term does not depend on $x$. In
    particular, $\tilde{a}_{x}(t)\to\infty$ uniformly on
    $[e^{-\alpha^{2}t^{3}/24},e^{t^{3}}]$. Hence
    Lemma~\ref{lem:left_tail_upper_bound_fixed_L},
    \eqref{eq:first_explosion_mean_asymptotics}, and
    \eqref{eq:crude_inclusion} give
    \begin{equation*}
        n(t)\,\dP\bigl(\tilde{Y}_{0}(t)>x\bigr)
        \le C\,n(t)\,\ell(t)\,\sqrt{\tilde{a}_{x}(t)}\,
        e^{-\frac{8}{3}\tilde{a}_{x}(t)^{3/2}}\,,
        \qquad x\in[e^{-\alpha^{2}t^{3}/24},\,e^{t^{3}}]\,.
    \end{equation*}
    By \eqref{eq:crude_level_value},
    $\tilde{a}_{x}(t)^{1/2}\le Ct$ on this range. For the
    exponential factor, $\log x$ may now be of order $t^{3}$,
    so the expansion \eqref{eq:level_a_1_expansion} is no
    longer available. Instead we use the tangent bound at
    $r=\alpha^{2}t^{2}/16$ for the convex function
    $r\mapsto\frac{8}{3}r^{3/2}$,
    \begin{equation}\label{eq:cost_convexity}
        \frac{8}{3}\,r^{3/2}
        \ge \frac{\alpha^{3}t^{3}}{24}
        + \alpha t\,\Bigl(r-\frac{\alpha^{2}t^{2}}{16}\Bigr)\,,
        \qquad r\ge0\,.
    \end{equation}
    Combining \eqref{eq:cost_convexity} at
    $r=\tilde{a}_{x}(t)$ with \eqref{eq:crude_level_value} and
    $n(t)\,\ell(t)=L(t)=e^{\alpha^{3}t^{3}/24}$, we obtain
    \begin{equation}\label{eq:crude_zone_bound}
        n(t)\,\dP\bigl(\tilde{Y}_{0}(t)>x\bigr)
        \le C\,t\,e^{(1+\alpha)t}\,x^{-\alpha}\,,
        \qquad x\in[e^{-\alpha^{2}t^{3}/24},\,e^{t^{3}}]\,.
    \end{equation}
    By Fubini's theorem and the monotonicity of
    $x\mapsto\dP(\tilde{Y}_{0}(t)>x)$, the second term
    of \eqref{eq:deep_zone_split} is at most
    \begin{equation}\label{eq:crude_zone_second_moment}
        \int_{0}^{e^{-t^{3/2}}}2x\cdot
        C\,t\,e^{(1+\alpha)t}\,x^{-\alpha}\,dx
        = \frac{2C}{2-\alpha}\,t\,e^{(1+\alpha)t}\,
        e^{-(2-\alpha)t^{3/2}}
        \longrightarrow 0\,,
    \end{equation}
    where the integral converges at $0$ since
    $\alpha<2$.
    This proves \textup{(ii)}.

    We turn to \textup{(iii)}. Let $\alpha\in(0,1)$. As
    in \eqref{eq:deep_zone_split}, we split the expectation at
    $e^{-\alpha^{2}t^{3}/24}$. Since $\alpha<1$, the
    first term is at most
    $n(t)\,e^{-\alpha^{2}t^{3}/24}
    \le e^{\alpha^{2}(\alpha-1)t^{3}/24}\longrightarrow0$.
    As in \eqref{eq:crude_zone_second_moment}, the
    second term is at most
    \begin{equation*}
        \int_{0}^{e^{-t^{3/2}}}
        C\,t\,e^{(1+\alpha)t}\,x^{-\alpha}\,dx
        = \frac{C}{1-\alpha}\,t\,e^{(1+\alpha)t}\,
        e^{-(1-\alpha)t^{3/2}}
        \longrightarrow 0\,,
    \end{equation*}
    where the integral converges at $0$ since
    $\alpha<1$.
    This proves \textup{(iii)}.

    Finally, we prove \textup{(iv)}. Let
    $\alpha\in(1,2)$. Since \eqref{eq:crude_zone_bound} holds
    only for $x\le e^{t^{3}}$, we split the expectation at
    $e^{t^{3}}$, writing
    \begin{equation*}
        n(t)\,\dE\bigl[\tilde{Y}_{0}(t)\,;\,
        \tilde{Y}_{0}(t)\ge e^{t^{3/2}}\bigr]
        = n(t)\,\dE\bigl[\tilde{Y}_{0}(t)\,;\,
        e^{t^{3/2}}\le\tilde{Y}_{0}(t)\le e^{t^{3}}\bigr]
        + n(t)\,\dE\bigl[\tilde{Y}_{0}(t)\,;\,
        \tilde{Y}_{0}(t)>e^{t^{3}}\bigr]\,.
    \end{equation*}
    By Fubini's theorem and \eqref{eq:crude_zone_bound},
    the first term is at most
    \begin{equation*}
        e^{t^{3/2}}\,n(t)\,
        \dP\bigl(\tilde{Y}_{0}(t)\ge e^{t^{3/2}}\bigr)
        + n(t)\int_{e^{t^{3/2}}}^{e^{t^{3}}}
        \dP\bigl(\tilde{Y}_{0}(t)>x\bigr)\,dx
        \le \frac{C\alpha}{\alpha-1}\,
        t\,e^{(1+\alpha)t}\,e^{-(\alpha-1)t^{3/2}}
        \longrightarrow 0\,,
    \end{equation*}
    where the integral converges at infinity since
    $\alpha>1$.
    For the second term,
    the moment bound \eqref{eq:moment_upper_bound} with $p=2$
    and the definition \eqref{eq:stable_scale_B_alpha} of
    $B_{\alpha}(t)$ give
    $n(t)\,\dE\bigl[\tilde{Y}_{0}(t)^{2}\bigr]
    \le e^{(\alpha+1)(\alpha-2)^{2}t^{3}/24+O(t)}$. Hence
    \begin{equation*}
        n(t)\,\dE\bigl[\tilde{Y}_{0}(t)\,;\,
        \tilde{Y}_{0}(t)>e^{t^{3}}\bigr]
        \le e^{-t^{3}}\,n(t)\,
        \dE\bigl[\tilde{Y}_{0}(t)^{2}\bigr]
        \longrightarrow 0\,,
    \end{equation*}
    since $(\alpha+1)(\alpha-2)^{2}<24$.
    This proves
    \textup{(iv)} and completes the proof.
\end{proof}

We now provide the proof of Lemma~\ref{lem:truncated_moments}. 

\begin{proof}[Proof of Lemma~\ref{lem:truncated_moments}]
    We begin with \textup{(i)}. Fix $y>0$. For every $t$
    with $e^{-t^{3/2}}<y$, Fubini's theorem gives
    \begin{equation}\label{eq:truncated_second_moment_fubini}
        \begin{aligned}
            n(t)\,\dE\bigl[\tilde{Y}_{0}(t)^{2}\,;\,
            e^{-t^{3/2}}<\tilde{Y}_{0}(t)\le y\bigr]
            &= \int_{e^{-t^{3/2}}}^{y}2x\,n(t)\,
            \dP\bigl(\tilde{Y}_{0}(t)>x\bigr)\,dx\\
            &\qquad
            + e^{-2t^{3/2}}\,n(t)\,
            \dP\bigl(\tilde{Y}_{0}(t)>e^{-t^{3/2}}\bigr)
            - y^{2}\,n(t)\,
            \dP\bigl(\tilde{Y}_{0}(t)>y\bigr)\,.
        \end{aligned}
    \end{equation}
    We evaluate the first term on the right-hand side by
    dominated convergence. Set
    \begin{equation*}
        g_{t}(x):=2x\,n(t)\,
        \dP\bigl(\tilde{Y}_{0}(t)>x\bigr)\,
        \1_{\{x>e^{-t^{3/2}}\}}\,,
        \qquad x\in(0,y]\,.
    \end{equation*}
    For every $x\in(0,y]$, the indicator equals $1$ for
    all sufficiently large $t$, so
    Proposition~\ref{prop:tilde_Y_positive_tail} gives
    $g_{t}(x)\to2x^{1-\alpha}$ as $t\to\infty$.
    By Lemma~\ref{lem:uniform_tail_bound}\,\textup{(i)},
    $g_{t}(x)\le2C\,x^{1-\alpha}$ on $(0,y]$ for all
    sufficiently large $t$.
    Since $\alpha<2$, the
    dominating function is integrable on $(0,y]$. Hence
    \begin{equation*}
        \int_{e^{-t^{3/2}}}^{y}2x\,n(t)\,
        \dP\bigl(\tilde{Y}_{0}(t)>x\bigr)\,dx
        = \int_{0}^{y}g_{t}(x)\,dx
        \longrightarrow \int_{0}^{y}2x^{1-\alpha}\,dx
        = \frac{2}{2-\alpha}\,y^{2-\alpha}\,.
    \end{equation*}
    By Lemma~\ref{lem:uniform_tail_bound}\,\textup{(i)},
    the second term on the right-hand side of
    \eqref{eq:truncated_second_moment_fubini} is at most
    $C\,e^{-(2-\alpha)t^{3/2}}\to0$. It follows from
    Proposition~\ref{prop:tilde_Y_positive_tail} that the third
    term converges to $-y^{2-\alpha}$. Combining these limits
    with Lemma~\ref{lem:uniform_tail_bound}\,\textup{(ii)}, we
    obtain
    \begin{equation*}
        n(t)\,\dE\bigl[\tilde{Y}_{0}(t)^{2}\,;\,
        \tilde{Y}_{0}(t)\le y\bigr]
        \longrightarrow
        \frac{2}{2-\alpha}\,y^{2-\alpha}-y^{2-\alpha}
        = \frac{\alpha}{2-\alpha}\,y^{2-\alpha}\,,
    \end{equation*}
    which is \eqref{eq:truncated_second_moment}.

    We next prove \textup{(ii)}, beginning with
    $\alpha\in(0,1)$. The argument of \textup{(i)} applies to
    the decomposition
    \begin{equation*}
        \begin{aligned}
            n(t)\,\dE\bigl[\tilde{Y}_{0}(t)\,;\,
            e^{-t^{3/2}}<\tilde{Y}_{0}(t)\le y\bigr]
            &= \int_{e^{-t^{3/2}}}^{y}n(t)\,
            \dP\bigl(\tilde{Y}_{0}(t)>x\bigr)\,dx\\
            &\qquad
            + e^{-t^{3/2}}\,n(t)\,
            \dP\bigl(\tilde{Y}_{0}(t)>e^{-t^{3/2}}\bigr)
            - y\,n(t)\,\dP\bigl(\tilde{Y}_{0}(t)>y\bigr)
        \end{aligned}
    \end{equation*}
    with two changes. The dominating function is
    $C\,x^{-\alpha}$, integrable on $(0,y]$ since
    $\alpha<1$, and
    Lemma~\ref{lem:uniform_tail_bound}\,\textup{(iii)} replaces
    Lemma~\ref{lem:uniform_tail_bound}\,\textup{(ii)}. Hence
    \begin{equation*}
        n(t)\,\dE\bigl[\tilde{Y}_{0}(t)\,;\,
        \tilde{Y}_{0}(t)\le y\bigr]
        \longrightarrow
        \int_{0}^{y}x^{-\alpha}\,dx-y^{1-\alpha}
        = \frac{\alpha}{1-\alpha}\,y^{1-\alpha}\,,
    \end{equation*}
    which proves \eqref{eq:truncated_first_moment} for
    $\alpha\in(0,1)$.

    Let $\alpha=1$. Since the case $y>1$ is analogous
    and the case $y=1$ is trivial, we assume $y<1$. The difference in
    \eqref{eq:truncated_first_moment} equals
    $-n(t)\,\dE\bigl[\tilde{Y}_{0}(t)\,;\,
    y<\tilde{Y}_{0}(t)\le1\bigr]$, and Fubini's theorem gives
    \begin{equation*}
        \begin{aligned}
            n(t)\,\dE\bigl[\tilde{Y}_{0}(t)\,;\,
            y<\tilde{Y}_{0}(t)\le1\bigr]
            &= \int_{y}^{1}n(t)\,
            \dP\bigl(\tilde{Y}_{0}(t)>x\bigr)\,dx\\
            &\qquad
            + y\,n(t)\,\dP\bigl(\tilde{Y}_{0}(t)>y\bigr)
            - n(t)\,\dP\bigl(\tilde{Y}_{0}(t)>1\bigr)\,.
        \end{aligned}
    \end{equation*}
    On $[y,1]$, the integrand converges pointwise to $x^{-1}$
    by Proposition~\ref{prop:tilde_Y_positive_tail} and is
    bounded by $C\,y^{-1}$ by
    Lemma~\ref{lem:uniform_tail_bound}\,\textup{(i)} for
    all sufficiently large $t$, so dominated convergence gives
    \begin{equation*}
        \int_{y}^{1}n(t)\,
        \dP\bigl(\tilde{Y}_{0}(t)>x\bigr)\,dx
        \longrightarrow \int_{y}^{1}x^{-1}\,dx
        = -\log y\,.
    \end{equation*}
    By Proposition~\ref{prop:tilde_Y_positive_tail}, the
    remaining two terms converge to $1$ and $-1$, and cancel in
    the limit. Hence the difference converges to $\log y$,
    which proves \eqref{eq:truncated_first_moment} for
    $\alpha=1$.

    It remains to prove
    \eqref{eq:truncated_first_moment} for $\alpha\in(1,2)$. For
    every $t$ with $e^{t^{3/2}}>y$, Fubini's theorem gives
    \begin{equation*}
        \begin{aligned}
            n(t)\,\dE\bigl[\tilde{Y}_{0}(t)\,;\,
            y<\tilde{Y}_{0}(t)\le e^{t^{3/2}}\bigr]
            &= \int_{y}^{e^{t^{3/2}}}n(t)\,
            \dP\bigl(\tilde{Y}_{0}(t)>x\bigr)\,dx
            + y\,n(t)\,\dP\bigl(\tilde{Y}_{0}(t)>y\bigr)\\
            &\qquad
            - e^{t^{3/2}}\,n(t)\,
            \dP\bigl(\tilde{Y}_{0}(t)>e^{t^{3/2}}\bigr)\,.
        \end{aligned}
    \end{equation*}
    The argument of \textup{(i)} applies with two
    changes. The dominating function is $C\,x^{-\alpha}$,
    integrable on $[y,\infty)$ since $\alpha>1$, and
    Lemma~\ref{lem:uniform_tail_bound}\,\textup{(iv)} replaces
    Lemma~\ref{lem:uniform_tail_bound}\,\textup{(ii)}. Hence
    \begin{equation*}
        n(t)\,\dE\bigl[\tilde{Y}_{0}(t)\,;\,
        \tilde{Y}_{0}(t)>y\bigr]
        \longrightarrow
        \int_{y}^{\infty}x^{-\alpha}\,dx+y^{1-\alpha}
        = \frac{\alpha}{\alpha-1}\,y^{1-\alpha}\,,
    \end{equation*}
    which proves \eqref{eq:truncated_first_moment} for
    $\alpha\in(1,2)$ and completes the proof.
\end{proof}

With Proposition~\ref{prop:tilde_Y_positive_tail} and
Lemma~\ref{lem:truncated_moments}, we can now verify all
hypotheses of
Proposition~\ref{prop:stable_convergence_criterion}.
\begin{proof}[Proof of Theorem~\ref{thm:PAM_Stable_Dirichlet}]
    We apply
    Proposition~\ref{prop:stable_convergence_criterion} with
    the spectral function $\fL$ of
    \eqref{eq:tilde_Y_alpha_spectral_function}.
    By Proposition~\ref{prop:tilde_Y_positive_tail},
    for every $x>0$,
    \begin{equation}\label{eq:tail_condition_verified}
        \lim_{t\to\infty}n(t)\,
        \dP\bigl(\tilde{Y}_{0}(t)>x\bigr)
        = x^{-\alpha} = -\fL(x)\,.
    \end{equation}
    This proves
    \eqref{eq:stable_criterion_spectral_function}.
    Since $n(t)\to\infty$,
    \eqref{eq:tail_condition_verified} also gives
    $\dP(\tilde{Y}_{0}(t)>\ve)\to0$ for every $\ve>0$, which is
    \eqref{eq:UAN_condition}.

    We turn to $\sigma^{2}$. Since
    $0\le\Var(Z_{y}(t))\le\dE[Z_{y}(t)^{2}]
    =\dE[\tilde{Y}_{0}(t)^{2}\,;\,\tilde{Y}_{0}(t)\le y]$,
    Lemma~\ref{lem:truncated_moments}\,\textup{(i)} gives
    \begin{equation*}
        0\le
        \lim_{y\to0}\liminf_{t\to\infty}
        n(t)\,\Var\bigl(Z_{y}(t)\bigr)
        \le
        \lim_{y\to0}\limsup_{t\to\infty}
        n(t)\,\Var\bigl(Z_{y}(t)\bigr)
        \le \lim_{y\to0}
        \frac{\alpha}{2-\alpha}\,y^{2-\alpha}
        = 0\,,
    \end{equation*}
    so \eqref{eq:stable_criterion_variance} holds with
    $\sigma^{2}=0$.

    Following \cite[Proposition~6.4]{BABM05} (see also
    \cite[Section~6.6]{BAMR19}), we verify
    \eqref{eq:stable_criterion_centering} with
    \begin{equation}\label{eq:tilde_Y_stable_drift}
        \nu = \begin{cases}
            \dfrac{\alpha\pi}{2\cos(\frac{\alpha\pi}{2})}\,,
            & \alpha\in(0,1)\cup(1,2)\,,\\[6pt]
            0\,, & \alpha=1\,.
        \end{cases}
    \end{equation}
    Let $y>0$ be arbitrary. Since
    $\tilde{Y}_{0}(t)\ge0$, we have $\dE[Z_{y}(t)]
    =\dE[\tilde{Y}_{0}(t)\,;\,\tilde{Y}_{0}(t)\le y]$. On
    $(0,\infty)$, $d\fL(x)=\alpha x^{-\alpha-1}\,dx$.
    For the case $\alpha\in(0,1)$, we have
    $\tilde{A}(t)=0$, and
    Lemma~\ref{lem:truncated_moments}\,\textup{(ii)} turns
    \eqref{eq:stable_criterion_centering} into
    \begin{equation}\label{eq:nu_computation_small_alpha}
        \nu = \frac{\alpha}{1-\alpha}\,y^{1-\alpha}
        + \alpha\int_{y}^{\infty}
        \frac{x^{-\alpha}}{1+x^{2}}\,dx
        - \alpha\int_{0}^{y}
        \frac{x^{2-\alpha}}{1+x^{2}}\,dx\,.
    \end{equation}
    Substituting
    $\frac{x^{2-\alpha}}{1+x^{2}}
    =x^{-\alpha}-\frac{x^{-\alpha}}{1+x^{2}}$ in the second
    integral and using
    $\alpha\int_{0}^{y}x^{-\alpha}\,dx
    =\frac{\alpha}{1-\alpha}\,y^{1-\alpha}$, we reduce the
    right-hand side of \eqref{eq:nu_computation_small_alpha}
    to
    \begin{equation*}
        \alpha\int_{0}^{\infty}
        \frac{x^{-\alpha}}{1+x^{2}}\,dx
        = \frac{\alpha\pi}{2\cos(\alpha\pi/2)}\,.
    \end{equation*}
    In particular, the right-hand side does not depend
    on $y$.
    For the case $\alpha\in(1,2)$, we have
    $\tilde{A}(t)=n(t)\,\dE\bigl[\tilde{Y}_{0}(t)\bigr]$, and
    Lemma~\ref{lem:truncated_moments}\,\textup{(ii)} gives
    \begin{equation*}
        n(t)\,\dE\bigl[Z_{y}(t)\bigr]-\tilde{A}(t)
        = -n(t)\,\dE\bigl[\tilde{Y}_{0}(t)\,;\,
        \tilde{Y}_{0}(t)>y\bigr]
        \longrightarrow
        -\frac{\alpha}{\alpha-1}\,y^{1-\alpha}
        = -\alpha\int_{y}^{\infty}x^{-\alpha}\,dx\,.
    \end{equation*}
        The same substitution, applied to the first
    integral in \eqref{eq:stable_criterion_centering}, again
    yields
    \begin{equation*}
        \nu=-\alpha\int_{0}^{\infty}
        \frac{x^{2-\alpha}}{1+x^{2}}\,dx
        =\frac{\alpha\pi}{2\cos(\alpha\pi/2)}\,.
    \end{equation*}
    For the case $\alpha=1$, we have
    $\tilde{A}(t)=n(t)\,\dE\bigl[Z_{1}(t)\bigr]$, so
    Lemma~\ref{lem:truncated_moments}\,\textup{(ii)} gives
    $n(t)\,\dE[Z_{y}(t)]-\tilde{A}(t)\to\log y$. The integrals
    in \eqref{eq:stable_criterion_centering} equal
    \begin{equation*}
        \int_{|x|>y}\frac{x}{1+x^{2}}\,d\fL(x)
        - \int_{0<|x|\le y}\frac{x^{3}}{1+x^{2}}\,d\fL(x)
        = \int_{y}^{\infty}\frac{dx}{x(1+x^{2})}
        - \int_{0}^{y}\frac{x\,dx}{1+x^{2}}
        = -\log y\,,
    \end{equation*}
    where the last equality follows from the antiderivatives
    $\log\frac{x}{\sqrt{1+x^{2}}}$ and
    $\frac{1}{2}\log(1+x^{2})$. Hence $\nu=0$.

    Therefore
    Proposition~\ref{prop:stable_convergence_criterion} gives
    \begin{equation*}
        \sum_{i\in\cI(t)}\tilde{Y}_{i}(t)-\tilde{A}(t)
        \xrightarrow{\ d\ } \dX_{\nu,0,\fL}\,,
    \end{equation*}
    where, by \eqref{eq:LK_representation}, the characteristic
    function of $\dX_{\nu,0,\fL}$ is
    \begin{equation}
    \label{eq:tilde_Y_infinitely_divisible_char_fn}
        \phi(z) = \exp\biggl(i\nu z
        + \alpha\int_{0}^{\infty}
        \Bigl(e^{izx}-1-\frac{izx}{1+x^{2}}\Bigr)
        \frac{dx}{x^{\alpha+1}}\biggr)\,,
    \end{equation}
    with $\nu$ as in \eqref{eq:tilde_Y_stable_drift}.
    The function
    \eqref{eq:tilde_Y_infinitely_divisible_char_fn} is the
    characteristic function (6.6)--(6.7) of
    \cite{BABM05}. By \cite[Theorem~6.2]{BABM05}, it
    corresponds to the stable law with exponent $\alpha$ 
    and skewness parameter $1$, whose canonical form is 
    \eqref{eq:stable_char_fn}. Hence
    $\dX_{\nu,0,\fL}$ has law $\dS_{\alpha}$, and
    \eqref{eq:PAM_Stable_Dirichlet} follows.
\end{proof}

We now compare the centered and normalized sums appearing in
Theorems~\ref{thm:PAM_Stable}
and~\ref{thm:PAM_Stable_Dirichlet}. As in
\eqref{eq:def_tilde_Y_i}, set
\begin{equation*}
    Y_{i}(t) := \frac{U_{i}(t)}{B_{\alpha}(t)}\,,
    \qquad
    Y_{0}(t) := \frac{U_{0}(t)}{B_{\alpha}(t)}\,.
\end{equation*}
Since the blocks $Q_{i}$, $i\in\cI(t)$, partition
$Q_{L(t)}$, we have
$\sum_{i\in\cI(t)}Y_{i}(t)=U(t)/B_{\alpha}(t)$. Each
$Y_{i}(t)$ has the same law as $Y_{0}(t)$.

\begin{lemma}\label{lem:Stable_difference}
    Let $L(t)$ be as in \eqref{eq:box_size}. As $t\to\infty$, the
    following hold.
    \begin{enumerate}
        \item[\textup{(i)}] If $\alpha\in(0,1)$, then
        \begin{equation*}
            \sum_{i\in\cI(t)}
            \bigl(Y_{i}(t)-\tilde{Y}_{i}(t)\bigr)
            \xrightarrow{\ \dP\ } 0\,.
        \end{equation*}
        \item[\textup{(ii)}] If $\alpha=1$, then
        \begin{equation}\label{eq:stable_approximation_error_alpha_1}
            \sum_{i\in\cI(t)}
            \bigl(Y_{i}(t)-\tilde{Y}_{i}(t)\bigr)
            - n(t)\Bigl(
            \dE\bigl[Y_{0}(t)\,;\,Y_{0}(t)\le1\bigr]
            -\dE\bigl[\tilde{Y}_{0}(t)\,;\,
            \tilde{Y}_{0}(t)\le1\bigr]\Bigr)
            \xrightarrow{\ \dP\ } 0\,.
        \end{equation}
        \item[\textup{(iii)}] If $\alpha\in(1,2)$, then
        \begin{equation*}
            \sum_{i\in\cI(t)}
            \bigl(Y_{i}(t)-\tilde{Y}_{i}(t)\bigr)
            - n(t)\Bigl(\dE\bigl[Y_{0}(t)\bigr]
            -\dE\bigl[\tilde{Y}_{0}(t)\bigr]\Bigr)
            \xrightarrow{\ \dP\ } 0\,.
        \end{equation*}
    \end{enumerate}
\end{lemma}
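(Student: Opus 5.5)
Write $D(t):=U(t)-\tilde U(t)=\sum_{i\in\cI(t)}B_\alpha(t)\bigl(Y_i(t)-\tilde Y_i(t)\bigr)$. We use the five-term decomposition \eqref{eq:U_tildeU_error_decomposition}. By Lemma~\ref{lem:approx_U_to_tilde_U}, the first three sums are nonnegative and have $p$-th moments $n(t)^{\max(p,1)}o(e^{-t^6})$. Taking $p=1$ and dividing by $B_\alpha(t)\ge1$, these three sums vanish in $L^1$. Their expectations also vanish after multiplication by $n(t)/n(t)$. So they can be dropped in all three cases, together with their contribution to the centering. The remaining terms are the boundary sums
\begin{equation*}
    \Sigma^{L}:=\sum_{i\in\cI(t)}\int_{\fT_i^L}\bigl[\tilde u_{\fS_i}-\tilde u_{Q_i}\bigr]\,dx\,,
\end{equation*}
together with the analogous $\Sigma^{R}$. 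Each of these is a difference of two sums of the form $\sum_i V_i$, where the $V_i=\int_{\fT_i^L}\tilde u_{\fS_i}$ (respectively $\int_{\fT_i^L}\tilde u_{Q_i}$) are nonnegative and i.i.d. For the second family we replace $Q_i$ by $D_i=(q_i,q_i+e^{t/100})$ at cost $o(e^{-t^6})$ in every moment, exactly as in the proof of Lemma~\ref{lem:CLT_difference}. Each such sum is then dominated by a sum of i.i.d.\ copies of $W:=\int_{S}\tilde u_S(t,x)\,dx$ over an interval $S$ of length $\cL_S=2e^{t/100}$, restricted to a subset of length $t^4$.

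The key step is to show that these i.i.d.\ sums, divided by $B_\alpha(t)$, are negligible after the appropriate centering.

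\emph{Case (i), $\alpha\in(0,1)$.} We show $\dE[(\sum_iV_i/B_\alpha)^{q}]\to0$ for some $q\in(\alpha,1)$. Subadditivity of $x\mapsto x^q$ gives the bound $n(t)\,\dE[V_1^q]/B_\alpha(t)^q$. Apply \eqref{eq:moment_upper_bound} with $p=q$ and $\cL=2e^{t/100}$, but only to the interval-$S$ integral, which dominates $V_1$. This bound is $e^{O(t)}\exp(q^3t^3/48)$. Combined with $n(t)\le e^{\alpha^3t^3/24}$ and $B_\alpha^q=e^{q\alpha^2t^3/16+O(\log t)}$, the exponent is
\begin{equation*}
    \tfrac{t^3}{48}\bigl(2\alpha^3+q^3-3q\alpha^2\bigr)+O(t)\,.
\end{equation*}
The polynomial $2\alpha^3+q^3-3q\alpha^2=(q-\alpha)^2(q+2\alpha)$ is positive, so this alone fails. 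The tail of $\tilde U_0$ has index $\alpha$, so only $q<\alpha$ moments are usable. Instead we exploit the gain in length: the single-block tail bound from Lemma~\ref{lem:left_tail_upper_bound_fixed_L} with $\cL=2e^{t/100}$ instead of $\ell(t)$ gives
\begin{equation*}
    n(t)\,\dP\bigl(W>xB_\alpha(t)\bigr)\le Ct\,e^{(1+\alpha)t}e^{-99t/100}x^{-\alpha}
\end{equation*}
over the uniform range, as in \eqref{eq:crude_zone_bound} but with $\ell(t)$ replaced by $e^{t/100}$. The crude Parseval inclusion also costs a factor $\cL$ instead of $\ell(t)$. Redoing that bound carefully, the dependence on length is linear, giving $n\,\dP(W>xB_\alpha)\le C\,t\,e^{\alpha t}\,e^{-(1-1/50)t}\ldots$. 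The honest plan is to redo Lemma~\ref{lem:uniform_tail_bound} for $W$ and obtain
\begin{equation*}
    n(t)\,\dP\bigl(W>xB_\alpha(t)\bigr)\le \eta(t)\,x^{-\alpha}
\end{equation*}
for $x$ in the relevant range, with $\eta(t)\to0$. This uses the sharp Theorem~\ref{thm:sharp_lower_tail_lowest_eigenvalue}-type level $a_2$ (so no $e^{t}$ loss from Parseval, since $\|\varphi_1\|_1^2\approx a^{-1/2}$ on the good event). It also uses Corollary~\ref{cor:second_eigenvalue_left_tail} and Proposition~\ref{prop:eigenfunction_L1_norm} for the bad events, which hold for every $\cL$, giving $\eta(t)\asymp e^{t/100}/\ell(t)$. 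With such a tail bound, the truncated first moment (for $\alpha<1$) and the sum of tails give $\sum_iV_i/B_\alpha\to0$ in probability, by the standard split into $\max_i V_i$ (probability $\le\eta(t)x^{-\alpha}$) plus truncated mean $\le\eta(t)\,C y^{1-\alpha}$.

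\emph{Cases (ii) and (iii).} The same tail bound gives a truncated second moment of order $\eta(t)y^{2-\alpha}$ and, for $\alpha>1$, a tail mean of order $\eta(t)y^{1-\alpha}$. Hence each $\sum_i(V_i-\dE V_i)/B_\alpha$ (for $\alpha=1$, with truncation at level $1$) tends to zero in probability. For $\alpha=1$ we must also check that replacing truncation of $Y_0,\tilde Y_0$ at $1$ by the untruncated boundary differences costs $o(1)$. The map $n(t)\dE[\,\cdot\,;\cdot\le1]$ differs from $n(t)\dE[\cdot]$ on $Y_0$ versus $\tilde Y_0$ only through the events $\{Y_0>1\}\triangle\{\tilde Y_0>1\}$ and the boundary mass. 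We control this by writing $Y_0-\tilde Y_0$ as the boundary contribution, whose $n(t)$-scaled mean and tail are $O(\eta(t))$, and using $n\,\dP(\tilde Y_0\in(1-\delta,1+\delta))\to 1-\ldots$, which is small in $\delta$ by Proposition~\ref{prop:tilde_Y_positive_tail}. The main obstacle is the uniform tail bound for the short-interval boundary integrals with the vanishing prefactor $\eta(t)$; everything else is bookkeeping with the five-term decomposition.
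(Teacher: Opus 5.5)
Your plan can be completed, but it is a detour, and the detour comes from misreading your own computation. The exponent you find for the subadditive bound, $(q-\alpha)^{2}(q+2\alpha)$, vanishes at $q=\alpha$, and that is exactly the paper's route. Applying \eqref{eq:moment_upper_bound} with $p=\alpha$ and $\cL=2e^{t/100}$ gives \eqref{eq:window_contribution_negligible}:
\[
\frac{n(t)}{B_{\alpha}(t)^{\alpha}}\,\dE\Bigl[\Bigl(\int_{\fS_{1}}\tilde u_{\fS_{1}}(t,x)\,dx\Bigr)^{\alpha}\Bigr]
\le C\,t^{\alpha+3/2}\,\frac{e^{(\alpha+1)t/100}}{\ell(t)}\longrightarrow0\,.
\]
Here the factors $L(t)$, $e^{\alpha^{3}t^{3}/48}$ and $B_{\alpha}(t)^{-\alpha}$ cancel exactly. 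The remaining length factor $\cL^{\alpha+1}/\ell(t)$ is small because the windows are much shorter than the blocks.

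Your remark that only $q<\alpha$ is usable is wrong on both counts. For $q<\alpha$ the subadditive bound fails for the same reason as for $q>\alpha$. And the blow-up of $\alpha$-th moments concerns full blocks, for which the same computation gives $t^{O(1)}\ell(t)^{\alpha}$, not the windows.

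From this one estimate the paper concludes all three cases:
\begin{itemize}
\item (i) follows by subadditivity of $x\mapsto x^{\alpha}$ and Markov's inequality at exponent $\alpha$.
\item (iii) follows by the von Bahr--Esseen inequality at exponent $\alpha$. The paper splits $i$ by parity because $V_i^{L}$ depends on the noise on $Q_i\cup\fS_i$; your separation of $\Sigma^{L}$ into two genuinely i.i.d.\ families avoids that split.
\item (ii) follows from the case $p=1=\alpha$, which gives $n(t)\,\dE[Y_0(t)-\tilde Y_0(t)]\to0$. This is followed by the argument on $\{\tilde Y_0\le1<Y_0\}$ via Markov's inequality and Proposition~\ref{prop:tilde_Y_positive_tail}, which is the same argument you sketch.
\end{itemize}

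Your alternative, a tail bound $n(t)\,\dP(W>xB_{\alpha}(t))\le\eta(t)x^{-\alpha}$ followed by truncation, is sound. However, your crude estimate is miscomputed. Running the Parseval inclusion and the tangent bound \eqref{eq:cost_convexity} with length $\cL=2e^{t/100}$ in place of $\ell(t)$ gives
\[
n(t)\,\dP\bigl(W>xB_{\alpha}(t)\bigr)\le t^{O(1)}\,e^{(1+\alpha)t/100}\,\ell(t)^{-1}\,x^{-\alpha}
\]
uniformly on $[e^{-\alpha^{2}t^{3}/24},e^{t^{3}}]$, and this already vanishes. So the sharp level $a_2$, Corollary~\ref{cor:second_eigenvalue_left_tail} and Proposition~\ref{prop:eigenfunction_L1_norm} are not needed.

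You need the crude bound anyway. The sharp expansion only covers $|\log x|\le t^{3/2}$, while your truncated moments require the window analogues of Lemma~\ref{lem:uniform_tail_bound}\,(ii)--(iv), since $n(t)e^{-t^{3/2}}$ does not tend to zero. In short, your route re-proves Lemma~\ref{lem:uniform_tail_bound} for the windows to obtain a tail bound that is never used. The paper's moment estimate at the critical exponent $p=\alpha$ closes all three cases in a few lines.
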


\begin{proof}
    Throughout the proof, we use the following
    estimate, valid for every $\alpha\in(0,2)$. By
    \eqref{eq:moment_upper_bound} with $p=\alpha$ and
    $\cL=2e^{t/100}$,
    \begin{equation}\label{eq:window_contribution_negligible}
        \frac{n(t)}{B_{\alpha}(t)^{\alpha}}
        \dE\biggl[\Bigl(\int_{\fS_{1}}
        \tilde{u}_{\fS_{1}}(t,x)\,dx\Bigr)^{\alpha}\biggr]
        \le C\,t^{\alpha+3/2}\,
        \frac{e^{(\alpha+1)t/100}}{\ell(t)}\,
        L(t)\exp\Bigl(-\frac{\alpha^{3}}{24}\,t^{3}\Bigr)
        \longrightarrow 0\,,
    \end{equation}
    since $L(t)=e^{\alpha^{3}t^{3}/24}$ and
    $\ell(t)=e^{t(1+o(1))}$.

    We begin with \textup{(i)}. Since
    $U_{i}(t)\ge\tilde{U}_{i}(t)$ for each $i\in\cI(t)$,
    it suffices to prove
    \begin{equation*}
        \lim_{t\to\infty}\frac{1}{B_{\alpha}(t)^{\alpha}}
        \dE\Bigl[\Bigl(\sum_{i\in\cI(t)}
        \bigl(U_{i}(t)-\tilde{U}_{i}(t)\bigr)\Bigr)^{\alpha}
        \Bigr]
        = 0\,.
    \end{equation*}
    By \eqref{eq:interior_boundary_split}, the
    nonnegativity of the Dirichlet solutions, and the
    inclusions \eqref{eq:layer_window_inclusion}, we have
    \begin{equation}\label{eq:boundary_layer_window_split}
        \begin{aligned}
            \sum_{i\in\cI(t)}
            \bigl(U_{i}(t)-\tilde{U}_{i}(t)\bigr)
            &\le \sum_{i\in\cI(t)}\biggl(
            \int_{\fR_{i}}
            \bigl[u(t,x)-\tilde{u}_{Q_{i}}(t,x)\bigr]\,dx
            + \int_{\fT_{i}^{L}}
            \bigl[u(t,x)-\tilde{u}_{\fS_{i}}(t,x)\bigr]\,dx\\
            &\qquad
            + \int_{\fT_{i}^{R}}
            \bigl[u(t,x)-\tilde{u}_{\fS_{i+1}}(t,x)\bigr]\,dx
            \biggr)
            + 2\sum_{i=1}^{n(t)+1}
            \int_{\fS_{i}}\tilde{u}_{\fS_{i}}(t,x)\,dx\,.
        \end{aligned}
    \end{equation}
    Since $\alpha<1$ and each term on the right-hand
    side is nonnegative, subadditivity of
    $x\mapsto x^{\alpha}$ and
    Lemma~\ref{lem:approx_U_to_tilde_U}\,\textup{(i)}--\textup{(iii)}
    with $p=\alpha$ give
    \begin{equation*}
        \dE\Bigl[\Bigl(\sum_{i\in\cI(t)}
        \bigl(U_{i}(t)-\tilde{U}_{i}(t)\bigr)\Bigr)^{\alpha}
        \Bigr]
        \le n(t)\,o\bigl(e^{-t^{6}}\bigr)
        + 2\sum_{i=1}^{n(t)+1}
        \dE\biggl[\Bigl(\int_{\fS_{i}}
        \tilde{u}_{\fS_{i}}(t,x)\,dx\Bigr)^{\alpha}\biggr]\,.
    \end{equation*}
    Dividing by $B_{\alpha}(t)^{\alpha}$ and using
\eqref{eq:window_contribution_negligible}
completes the proof of \textup{(i)}.

    We next prove \textup{(iii)}. For each $i\in\cI(t)$, set
    \begin{equation*}
        V_{i}^{L}(t) := \int_{\fT_{i}^{L}}
        \bigl[\tilde{u}_{\fS_{i}}(t,x)
        -\tilde{u}_{Q_{i}}(t,x)\bigr]\,dx\,,
        \qquad
        V_{i}^{R}(t) := \int_{\fT_{i}^{R}}
        \bigl[\tilde{u}_{\fS_{i+1}}(t,x)
        -\tilde{u}_{Q_{i}}(t,x)\bigr]\,dx\,.
    \end{equation*}
    With this notation, the decomposition
    \eqref{eq:U_tildeU_error_decomposition} becomes
    \begin{equation*}
        \begin{aligned}
            \sum_{i\in\cI(t)}
            \bigl(U_{i}(t)-\tilde{U}_{i}(t)\bigr)
            &= \sum_{i\in\cI(t)}\biggl(
            \int_{\fR_{i}}
            \bigl[u(t,x)-\tilde{u}_{Q_{i}}(t,x)\bigr]\,dx
            + \int_{\fT_{i}^{L}}
            \bigl[u(t,x)-\tilde{u}_{\fS_{i}}(t,x)\bigr]\,dx\\
            &\qquad
            + \int_{\fT_{i}^{R}}
            \bigl[u(t,x)-\tilde{u}_{\fS_{i+1}}(t,x)\bigr]\,dx
            \biggr)
            + \sum_{i\in\cI(t)}
            \bigl(V_{i}^{L}(t)+V_{i}^{R}(t)\bigr)\,.
        \end{aligned}
    \end{equation*}
    By Lemma~\ref{lem:approx_U_to_tilde_U}
\textup{(i)}--\textup{(iii)} with $p=1$,
the first sum on the right-hand side converges
to zero in $L^{1}$ after centering and division
by $B_{\alpha}(t)$. It therefore suffices to verify
    that
    \begin{equation}\label{eq:centered_window_errors_negligible}
        \frac{1}{B_{\alpha}(t)}\sum_{i\in\cI(t)}
        \bigl(V_{i}^{L}(t)-\dE[V_{i}^{L}(t)]\bigr)
        \xrightarrow{\ \dP\ } 0\,,
        \qquad
        \frac{1}{B_{\alpha}(t)}\sum_{i\in\cI(t)}
        \bigl(V_{i}^{R}(t)-\dE[V_{i}^{R}(t)]\bigr)
        \xrightarrow{\ \dP\ } 0\,.
    \end{equation}
    We prove the first convergence. The second is obtained in
    the same way. Each $V_{i}^{L}(t)$ is measurable with
    respect to the noise on $Q_{i}\cup\fS_{i}$. Since
    $|\fS_{i}|=2e^{t/100}$ and $|Q_{i}|=\ell(t)$, for all
    sufficiently large $t$,
    \begin{equation}\label{eq:parity_disjointness}
        \bigl(Q_{i}\cup\fS_{i}\bigr)\cap
        \bigl(Q_{j}\cup\fS_{j}\bigr)=\emptyset\,,
        \qquad |i-j|\ge2\,.
    \end{equation}
    We split the sum in
    \eqref{eq:centered_window_errors_negligible} into the sums
    over even and odd $i$. By \eqref{eq:parity_disjointness},
    each partial sum consists of independent random variables,
    and it suffices to treat each of them. By Markov's
    inequality with exponent $\alpha$, the von Bahr--Esseen
    inequality \cite[Theorem~2]{vBE65}, and the convexity of
    $x\mapsto x^{\alpha}$, for every $\ve>0$,
    \begin{equation*}
        \dP\biggl(\Bigl|\sum_{i}
        \bigl(V_{i}^{L}(t)-\dE[V_{i}^{L}(t)]\bigr)\Bigr|
        >\ve\,B_{\alpha}(t)\biggr)
        \le \frac{C}{\ve^{\alpha}B_{\alpha}(t)^{\alpha}}
        \sum_{i}\dE\bigl[|V_{i}^{L}(t)|^{\alpha}\bigr]\,.
    \end{equation*}
    Since $0\le\tilde{u}_{Q_{i}}\le u$ on $\fT_{i}^{L}$, we
    have
    \begin{equation*}
        |V_{i}^{L}(t)|
        \le\int_{\fT_{i}^{L}}\tilde{u}_{\fS_{i}}\,dx
        +\int_{\fT_{i}^{L}}u\,dx
        \le2\int_{\fT_{i}^{L}}\tilde{u}_{\fS_{i}}\,dx
        +\int_{\fT_{i}^{L}}
        \bigl[u-\tilde{u}_{\fS_{i}}\bigr]\,dx\,,
    \end{equation*}
    and hence
    \begin{equation*}
        \dE\bigl[|V_{i}^{L}(t)|^{\alpha}\bigr]
        \le C\,\dE\biggl[\Bigl(\int_{\fT_{i}^{L}}
        \tilde{u}_{\fS_{i}}(t,x)\,dx\Bigr)^{\alpha}\biggr]
        + C\,\dE\biggl[\Bigl(\int_{\fT_{i}^{L}}
        \bigl[u(t,x)-\tilde{u}_{\fS_{i}}(t,x)\bigr]\,dx
        \Bigr)^{\alpha}\biggr]\,,
    \end{equation*}
    where the second term is $o(e^{-t^{6}})$ by
    Lemma~\ref{lem:approx_U_to_tilde_U}\,\textup{(ii)}.
    Since $\tilde{u}_{\fS_{i}}\ge0$ and
    $\fT_{i}^{L}\subset\fS_{i}$, we may enlarge the domain of
    integration in the first term to $\fS_{i}$. Since the
    integrals over $\fS_{i}$ have the same law,
    \begin{equation*}
        \frac{1}{B_{\alpha}(t)^{\alpha}}
        \sum_{i}\dE\bigl[|V_{i}^{L}(t)|^{\alpha}\bigr]
        \le \frac{C\,n(t)}{B_{\alpha}(t)^{\alpha}}
        \dE\biggl[\Bigl(\int_{\fS_{1}}
        \tilde{u}_{\fS_{1}}(t,x)\,dx\Bigr)^{\alpha}\biggr]
        + o(1)
        \longrightarrow 0\,,
    \end{equation*}
    by \eqref{eq:window_contribution_negligible}. This
    proves \textup{(iii)}.

    Finally, we prove \textup{(ii)}. Since
    $Y_{0}(t)\ge\tilde{Y}_{0}(t)$, the event
    $\{\tilde{Y}_{0}(t)\le1\}$ is the disjoint union of
    $\{Y_{0}(t)\le1\}$ and
    $\{\tilde{Y}_{0}(t)\le1<Y_{0}(t)\}$, so
    \begin{equation*}
        \begin{aligned}
            \dE\bigl[Y_{0}(t)\,;\,Y_{0}(t)\le1\bigr]
            -\dE\bigl[\tilde{Y}_{0}(t)\,;\,
            \tilde{Y}_{0}(t)\le1\bigr]
            &= \dE\bigl[Y_{0}(t)-\tilde{Y}_{0}(t)\,;\,
            Y_{0}(t)\le1\bigr]\\
            &\qquad
            -\dE\bigl[\tilde{Y}_{0}(t)\,;\,
            \tilde{Y}_{0}(t)\le1<Y_{0}(t)\bigr]\,.
        \end{aligned}
    \end{equation*}
    Hence the left-hand side of
    \eqref{eq:stable_approximation_error_alpha_1} is bounded in
    absolute value by
    \begin{equation*}
        \sum_{i\in\cI(t)}\bigl(Y_{i}(t)-\tilde{Y}_{i}(t)\bigr)
        + n(t)\,\dE\bigl[Y_{0}(t)-\tilde{Y}_{0}(t)\bigr]
        + n(t)\,\dE\bigl[\tilde{Y}_{0}(t)\,;\,
        \tilde{Y}_{0}(t)\le1<Y_{0}(t)\bigr]\,.
    \end{equation*}
    Taking expectations in \eqref{eq:boundary_layer_window_split}
    and applying Lemma~\ref{lem:approx_U_to_tilde_U}\,\textup{(i)}--\textup{(iii)} with $p=1$
    and \eqref{eq:window_contribution_negligible},
    we obtain 
    \[ 
    n(t)\,\dE[Y_0(t)-\tilde{Y}_0(t)]\to0\,.
    \] Hence, the first two terms in the preceding bound converge to zero in probability, and it remains to bound the third.

    Fix $\eta\in(0,1)$. On
    $\{\tilde{Y}_{0}(t)\le1<Y_{0}(t)\}$, we have
    $\tilde{Y}_{0}(t)\le1$, and either
    $Y_{0}(t)-\tilde{Y}_{0}(t)>\eta$ or
    $\tilde{Y}_{0}(t)>1-\eta$. Hence
    \begin{equation*}
        \dE\bigl[\tilde{Y}_{0}(t)\,;\,
        \tilde{Y}_{0}(t)\le1<Y_{0}(t)\bigr]
        \le \dP\bigl(Y_{0}(t)-\tilde{Y}_{0}(t)>\eta\bigr)
        + \dP\bigl(1-\eta<\tilde{Y}_{0}(t)\le1\bigr)\,.
    \end{equation*}
    By Markov's inequality,
    $n(t)\,\dP\bigl(Y_{0}(t)-\tilde{Y}_{0}(t)>\eta\bigr)
    \le\eta^{-1}\,n(t)\,
    \dE\bigl[Y_{0}(t)-\tilde{Y}_{0}(t)\bigr]\to0$, and
    Proposition~\ref{prop:tilde_Y_positive_tail} gives
    \begin{equation*}
        \lim_{t\to\infty}
        n(t)\,\dP\bigl(1-\eta<\tilde{Y}_{0}(t)\le1\bigr)
        = \frac{1}{1-\eta}-1
        = \frac{\eta}{1-\eta}\,.
    \end{equation*}
    Hence
    \begin{equation*}
        \limsup_{t\to\infty}\,
        n(t)\,\dE\bigl[\tilde{Y}_{0}(t)\,;\,
        \tilde{Y}_{0}(t)\le1<Y_{0}(t)\bigr]
        \le \frac{\eta}{1-\eta}\,,
    \end{equation*}
    and letting $\eta\downarrow0$ proves \textup{(ii)}
    and completes the proof.
\end{proof}

We are now ready to prove Theorem~\ref{thm:PAM_Stable}.
\begin{proof}[Proof of Theorem~\ref{thm:PAM_Stable}]
    By stationarity, Fubini's theorem, \eqref{eq:stable_centering},
    and $n(t)\ell(t)=L(t)$,
    \begin{equation*}
        \frac{L(t)\,A(t)}{B_{\alpha}(t)}
        = \begin{cases}
            0\,, & \alpha\in(0,1)\,,\\[4pt]
            n(t)\,\dE\bigl[Y_{0}(t)\,;\,Y_{0}(t)\le1\bigr]\,,
            & \alpha=1\,,\\[4pt]
            n(t)\,\dE\bigl[Y_{0}(t)\bigr]\,,
            & \alpha\in(1,2)\,,
        \end{cases}
    \end{equation*}
    which is $\tilde{A}(t)$ of \eqref{eq:def_stable_centering}
    with $\tilde{Y}_{0}(t)$
    replaced by $Y_{0}(t)$. Since
    $\sum_{i\in\cI(t)}Y_{i}(t)=U(t)/B_{\alpha}(t)$,
    \begin{equation*}
        \frac{U(t)-L(t)\,A(t)}{B_{\alpha}(t)}
        - \Biggl(\sum_{i\in\cI(t)}\tilde{Y}_{i}(t)
        - \tilde{A}(t)\Biggr)
        = \sum_{i\in\cI(t)}
        \bigl(Y_{i}(t)-\tilde{Y}_{i}(t)\bigr)
        - \Biggl(\frac{L(t)\,A(t)}{B_{\alpha}(t)}
        - \tilde{A}(t)\Biggr)\,,
    \end{equation*}
    which is the left-hand side of the corresponding case of
    Lemma~\ref{lem:Stable_difference}, and hence converges to
    $0$ in probability. By
    Theorem~\ref{thm:PAM_Stable_Dirichlet} and Slutsky's
    theorem, \eqref{eq:stable_convergence} follows.
\end{proof}

\section*{Acknowledgement} JY was supported by a KIAS Individual Grant (HP090401) and by the National Science Foundation under Grant No. DMS-2424139 while in residence at the Simons Laufer Mathematical Sciences Institute in Berkeley, California, during the Fall 2025 semester. KK and UK were supported in part by the National Research Foundation of Korea (RS-2026-25479024).

\appendix
\section{Auxiliary estimates for
Theorem~\ref{thm:sharp_lower_tail_lowest_eigenvalue}}
\label{apx:Auxiliary_estimates}
In this appendix, we prove several auxiliary estimates
used in the proof of Theorem~\ref{thm:sharp_lower_tail_lowest_eigenvalue}.
Recall from Section~\ref{sec:Anderson_Hamiltonian_finite_interval}
the definitions of $\delta(a)$, $x_{\pm}$, $I(a)$,
$T_{y}$, and $m(a)$ in \eqref{eq:def_delta_x_pm},
\eqref{eq:def_interval_I}, \eqref{eq:def_hitting_time}, and
\eqref{eq:first_explosion_mean}, the hitting time
$H=T_{x_{-}}\wedge T_{x_{+}}$, the first explosion time
$\zeta_{a}^{(1)}$, and the notation $\dP_{z}$ and $\dE_{z}$.
Throughout, the diffusion $X_{a}$
denotes the solution of \eqref{eq:Riccati_SDE}, and we write
\begin{equation*}
    \rho := \inf\bigl\{t\ge0\,:\,X_{a}(t)\notin I(a)\bigr\}\,.
\end{equation*}
Under $\dP_{x_{+}}$, $\rho$ coincides with $\rho_{1}$ of
\eqref{eq:def_rho_sigma}.
We begin with the first two moments of $\rho$ for
$X_{a}$ started from $x_{+}$.

\begin{lemma}\label{lem:moment_bounds_exit_time_rho}
    For every $a>1$, the following bounds hold.
    \begin{enumerate}
        \item[\textup{(i)}]
        $\dE_{x_{+}}[\rho] \le \dfrac{1}{2\sqrt{a}}$\,.
        \item[\textup{(ii)}]
        $\dE_{x_{+}}\bigl[\rho^{2}\bigr]
        \le \dfrac{1}{4a}+\dfrac{1}{2a(\log a)^{2}}$\,.
    \end{enumerate}
\end{lemma}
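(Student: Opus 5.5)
Both bounds will come from comparing $\rho$ with the exit time of a process with a uniformly strong drift, through Itô's formula for simple test functions. On $I(a)=(x_{+}-\delta/2,\,x_{+}+\delta/2)$, with $\delta=\delta(a)$, every point satisfies $x\ge\sqrt a+\delta/2$. Hence the drift obeys
\[
a-x^{2}\le -(x^{2}-a)\le -\sqrt a\,\delta\,,
\]
so it points downward with magnitude at least $\sqrt a\,\delta=a^{1/4}\log a$. The natural comparison is therefore a Brownian motion with drift $-\kappa$, $\kappa:=\sqrt a\,\delta$, exiting an interval of half-width $\delta/2$. Its mean exit time is at most $(\delta/2)/\kappa=1/(2\sqrt a)$, which is exactly the bound in (i).

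For (i), I apply Itô to $F(x)=x$ stopped at $\rho\wedge n$. This gives
\[
\dE_{x_+}[X_a(\rho\wedge n)]-x_+=\dE_{x_+}\int_0^{\rho\wedge n}(a-X_a^2)\,ds\le -\sqrt a\,\delta\,\dE_{x_+}[\rho\wedge n].
\]
The left side is at least $-\delta/2$, because $X_a$ stays in the closure of $I(a)$ up to time $\rho$. Letting $n\to\infty$ by monotone convergence yields $\dE_{x_+}[\rho]\le \delta/(2\sqrt a\,\delta)=1/(2\sqrt a)$. Running the same computation from an arbitrary starting point $z\in I(a)$, the increment $X_a(\rho)-z$ is at least $-(z-x_{+}+\delta/2)\ge-\delta$. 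This gives the uniform bound
\[
\sup_{z\in I(a)}\dE_z[\rho]\le \delta/(\sqrt a\,\delta)=1/\sqrt a,
\]
which I will need in step (ii).

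For (ii), I use the standard identity $\dE_z[\rho^2]=2\,\dE_z\!\int_0^\rho (\rho-s)\,ds=2\,\dE_z\!\int_0^\rho \dE_{X_a(s)}[\rho]\,ds$, which follows from the Markov property applied at each time $s$. To obtain the sharp constant $1/(4a)$ as the leading term, a crude sup bound is too lossy. Instead I use a function $g(z)\ge \dE_z[\rho]$ that is linear in the distance to the lower endpoint, namely
\[
g(z)=\frac{z-(x_+-\delta/2)}{\kappa}.
\]
The earlier Itô argument shows $\dE_z[\rho]\le g(z)$, since the lower endpoint is the only way the process can decrease by that amount. Then $\dE_{x_+}[\rho^2]\le 2\,\dE_{x_+}\!\int_0^\rho g(X_a(s))\,ds$. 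I compute this last expectation by applying Itô to $h(x)=\big(x-(x_+-\delta/2)\big)^2/(2\kappa^2)$. We have $h'=g/\kappa$ and $h''=1/\kappa^2$, so
\[
\dE\int_0^\rho \frac{g(X_a)}{\kappa}\,\kappa\,ds\le h(x_+)+\frac{1}{2\kappa^2}\,\dE[\rho],
\]
using once more that $-(a-x^2)\ge\kappa$. Here $h(x_+)=\delta^2/(8\kappa^2)=1/(8a)$. Combining with (i),
\[
\dE_{x_+}[\rho^2]\le 2\Big(\frac1{8a}+\frac{1}{2\kappa^2}\cdot\frac1{2\sqrt a}\Big)=\frac1{4a}+\frac{1}{2\kappa^2\sqrt a}.
\]
Since $\kappa^2\sqrt a=a^{3/2}\delta^2=a(\log a)^2$, the second term equals $1/(2a(\log a)^2)$, which matches the claim.

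The main obstacle is justifying the sign conventions and the monotonicity: I need the drift term to enter with the correct sign, which requires $h'\ge0$ on $I(a)$. This holds because $h'$ vanishes at the lower endpoint and is positive inside. The other delicate point is the localization by $\rho\wedge n$, needed before the expectations are known to be finite. Both follow from boundedness of $h$ on $I(a)$ together with monotone convergence.
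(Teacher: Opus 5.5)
Your argument is correct. For (i) it coincides with the paper's: you integrate the Riccati SDE up to $t\wedge\rho$, use the drift bound $a-X_a^2\le-\kappa$ with $\kappa=\sqrt{a}\,\delta(a)$, use $X_a(t\wedge\rho)\ge x_+-\delta(a)/2$, and take expectations.

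For (ii) you take a genuinely different route. The paper squares the single pathwise inequality
\[
t\wedge\rho\le\frac{1}{\kappa}\Bigl(\frac{\delta(a)}{2}+B(t\wedge\rho)\Bigr),
\]
which is legitimate because the left side is nonnegative. It then applies optional stopping in the form $\dE_{x_+}[B(t\wedge\rho)^2]=\dE_{x_+}[t\wedge\rho]$. Both moments thus come from one line, with no Markov property and no auxiliary test function.

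You instead combine three ingredients:
\begin{itemize}
\item the Kac-type identity $\dE_{x_+}[\rho^2]=2\,\dE_{x_+}\int_0^\rho\dE_{X_a(s)}[\rho]\,ds$;
\item the linear majorant $\dE_z[\rho]\le g(z)$, valid from every $z\in I(a)$;
\item the quadratic test function $h$, which needs $h'\ge0$ on $\bar I(a)$.
\end{itemize}
Both routes land on exactly the same intermediate inequality $\kappa^2\,\dE_{x_+}[\rho^2]\le\delta(a)^2/4+\dE_{x_+}[\rho]$, which is why the constants agree.

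The paper's version is shorter and exploits the additive noise directly. Yours is a generator and Lyapunov-function argument. It would iterate to higher moments and adapt to diffusions where no clean pathwise comparison with Brownian motion is available. The cost is the extra estimate that must hold uniformly in the starting point.

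One small remark: the bound $\sup_{z\in I(a)}\dE_z[\rho]\le 1/\sqrt{a}$ that you record is never used, since only the sharper majorant $g$ enters step (ii).
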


\begin{proof}
    For $s<\rho$ we have $X_{a}(s)\ge\sqrt{a}+\delta(a)/2$, and
    hence $a-X_{a}(s)^{2}\le-\sqrt{a}\,\delta(a)$.
    Integrating the equation \eqref{eq:Riccati_SDE} for
    $X_{a}$ up to $t\wedge\rho$, and using this drift bound
    together with $X_{a}(t\wedge\rho)\ge x_{+}-\delta(a)/2$, we
    obtain
    \begin{equation}\label{eq:apx_rho_pathwise_bound}
        t\wedge\rho
        \le \frac{1}{\sqrt{a}\,\delta(a)}
        \Bigl(\frac{\delta(a)}{2}+B(t\wedge\rho)\Bigr)\,.
    \end{equation}
    Since $t\wedge\rho$ is a bounded stopping time, optional
    stopping gives $\dE_{x_{+}}[B(t\wedge\rho)]=0$ and
    $\dE_{x_{+}}\bigl[B(t\wedge\rho)^{2}\bigr]
    =\dE_{x_{+}}[t\wedge\rho]$. Taking expectations in \eqref{eq:apx_rho_pathwise_bound} and
    letting $t\to\infty$, we obtain \textup{(i)} by monotone
    convergence.
    For \textup{(ii)}, squaring \eqref{eq:apx_rho_pathwise_bound}
    and taking expectations, we obtain
    \begin{equation*}
        \dE_{x_{+}}\bigl[(t\wedge\rho)^{2}\bigr]
        \le \frac{1}{a\,\delta(a)^{2}}
        \Bigl(\frac{\delta(a)^{2}}{4}
        +\dE_{x_{+}}[t\wedge\rho]\Bigr)
        \le \frac{1}{4a}
        +\frac{1}{a\,\delta(a)^{2}}\cdot\frac{1}{2\sqrt{a}}
        = \frac{1}{4a}+\frac{1}{2a(\log a)^{2}}\,.
    \end{equation*}
    Letting $t\to\infty$, we obtain \textup{(ii)} by monotone
    convergence.
\end{proof}

The next lemma bounds moments of $H$ for $X_{a}$ started from
either endpoint of $I(a)$.

\begin{lemma}\label{lem:H_moment_bounds}
    There exist $C_{1},C_{2}>0$ such that, for all sufficiently
    large $a$, the following bounds hold.
    \begin{enumerate}
        \item[\textup{(i)}]
        $\dE_{x_{+}+\delta(a)/2}\bigl[H^{2}\bigr]
        \le \dfrac{1}{16a}+\dfrac{1}{16a(\log a)^{2}}$\,.
        \item[\textup{(ii)}]
        $\dE_{x_{+}-\delta(a)/2}\bigl[H^{2}\bigr]
        \le C_{1}\exp\bigl(4(\log a)^{2}\bigr)\,
        a^{1/2}(\log a)^{2}$\,.
        \item[\textup{(iii)}]
        $\dE_{x_{+}-\delta(a)/2}\bigl[H\bigr]
        \ge \dfrac{\delta(a)^{2}}{8}$\,.
        \item[\textup{(iv)}]
        $\dE_{x_{+}-\delta(a)/2}\bigl[H\mid T_{x_{-}}<T_{x_{+}}\bigr]
        \le C_{2}\exp\bigl(2(\log a)^{2}\bigr)\,a^{1/4}\log a$\,.
    \end{enumerate}
\end{lemma}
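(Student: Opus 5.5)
The plan is to treat all four bounds with one-dimensional diffusion theory on $[x_{-},x_{+}]$, plus one pathwise argument for \textup{(i)}. Write $V(x)=\frac{x^{3}}{3}-ax$, with scale density $s'(x)=e^{2V(x)}$ and speed density $2e^{-2V(x)}$. The Green function of $X_{a}$ killed at $\{x_{-},x_{+}\}$ is
\begin{equation*}
    G(z,u)=\frac{2\bigl(s(z\wedge u)-s(x_{-})\bigr)\bigl(s(x_{+})-s(z\vee u)\bigr)}{s(x_{+})-s(x_{-})}\,,
\end{equation*}
so that $\dE_{z}[H]=\int_{x_{-}}^{x_{+}}G(z,u)\,e^{-2V(u)}\,du$ and $\dE_{z}[H^{2}]=2\int_{x_{-}}^{x_{+}}G(z,u)\,\dE_{u}[H]\,e^{-2V(u)}\,du$.

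For \textup{(i)}, started from $x_{+}+\delta(a)/2$ we have $H=T_{x_{+}}$. Before that time $X_{a}\ge x_{+}$, so the drift is at most $-(x_{+}^{2}-a)\le-2\sqrt a\,\delta(a)$. Exactly as in Lemma~\ref{lem:moment_bounds_exit_time_rho}, this gives
\begin{equation*}
    t\wedge H\le\frac{\delta(a)/2+B(t\wedge H)}{2\sqrt a\,\delta(a)}\,.
\end{equation*}
Taking expectations gives $\dE[H]\le1/(4\sqrt a)$. Squaring and using optional stopping gives
\begin{equation*}
    \dE[H^{2}]\le\frac{1}{4a\delta^{2}}\Bigl(\frac{\delta^{2}}{4}+\frac{1}{4\sqrt a}\Bigr)=\frac{1}{16a}+\frac{1}{16a(\log a)^{2}}\,.
\end{equation*}

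For \textup{(ii)}, I would not compute $\dE_{z}[H^{2}]$ exactly. Instead, the second-moment formula gives $\dE_{z}[H^{2}]\le2\bigl(\sup_{u}\dE_{u}[H]\bigr)^{2}$, so it suffices to show
\begin{equation*}
    \sup_{u\in[x_{-},x_{+}]}\dE_{u}[H]\le C\,e^{2(\log a)^{2}}a^{1/4}\log a\,.
\end{equation*}
Bounding $G$ crudely, $\dE_{u}[H]\le2\int_{x_{-}}^{x_{+}}dw\int_{w}^{x_{+}}e^{2(V(v)-V(w))}\,dv$. This is the usual "go up from $w$ to $x_{+}$" bound, valid because $s(x_{-})\le s(\cdot)$. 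The key observation is the estimate
\begin{equation*}
    V(v)-V(w)\le V(x_{+})-V(\sqrt a)=\sqrt a\,\delta^{2}+\tfrac{1}{3}\delta^{3}\le(\log a)^{2}+o(1)\qquad\text{for }x_{-}\le w\le v\le x_{+}\,.
\end{equation*}
It holds because $V$ is decreasing on $[-\sqrt a,\sqrt a]$, and $V(w)\ge V(\sqrt a)$ whenever $w\le\sqrt a$. The remaining double integral is then a Laplace-type estimate. The effective region is $v\in[\sqrt a,x_{+}]$, whose weight is controlled by $\delta(a)$. The region $w$ near the well has width $a^{-1/4}$ (since $V''(\sqrt a)=2\sqrt a$). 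For $w$ far from the well, $e^{-2V(w)}$ is exponentially small relative to $e^{-2V(\sqrt a)}$. Carrying this out should give the prefactor $a^{1/4}\log a$ up to constants. I expect this Laplace bookkeeping, uniform in the starting point, to be the main obstacle, in particular handling $w$ near the barrier $-\sqrt a$.

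For \textup{(iii)}, I would use the Green representation from below. Start at $z=x_{+}-\delta/2$ and keep only $u\in(\sqrt a,z)$. There $s(x_{-})$ is negligible relative to $s(u)$, since $e^{2V}$ is exponentially smaller on the left of the barrier. Also $V$ varies by at most $O((\log a)^{2}/\dots)$ only through the factor $(s(x_{+})-s(z))$. A direct lower bound $G(z,u)\gtrsim(s(x_{+})-s(z))\cdot\frac{s(u)-s(x_{-})}{s(x_{+})-s(x_{-})}$ then yields a quantity at least $\delta^{2}/8$ after a one-line integration, using monotonicity of $e^{2V}$ on $(\sqrt a,x_{+})$. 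If this gets messy, a fallback is a comparison of $H$ with the exit time of $(\sqrt a,x_{+})$ together with Itô on a quadratic.

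For \textup{(iv)}, condition via Doob's $h$-transform with $h(u)=\dP_{u}(T_{x_{-}}<T_{x_{+}})=\frac{s(x_{+})-s(u)}{s(x_{+})-s(x_{-})}$. This gives
\begin{equation*}
    \dE_{z}[H\mid T_{x_{-}}<T_{x_{+}}]=\int G(z,u)\,\frac{h(u)}{h(z)}\,e^{-2V(u)}\,du\,.
\end{equation*}
The same exponent bound $V(v)-V(w)\le(\log a)^{2}+o(1)$ and the same Laplace estimates as in \textup{(ii)} then give $C\,e^{2(\log a)^{2}}a^{1/4}\log a$.
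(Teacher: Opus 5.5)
Your overall route is the paper's. Part \textup{(i)} is exactly its pathwise argument. For \textup{(ii)}--\textup{(iv)} you use the Green kernel of the diffusion killed at $\{x_{-},x_{+}\}$ and the crude bound $G(z,u)\le 2\bigl(s(x_{+})-s(u)\bigr)$, hence $\sup_{u}\dE_{u}[H]\le 2\cD(a)$ with
\[
\cD(a)=\int_{x_{-}}^{x_{+}}\int_{w}^{x_{+}}e^{2(V(v)-V(w))}\,dv\,dw\,.
\]
Two of your variations are valid. First, the moment formula $\dE_{z}[H^{2}]=2\dE_{z}\bigl[\int_{0}^{H}\dE_{X_{a}(s)}[H]\,ds\bigr]\le 2\sup_{u}\dE_{u}[H]\,\dE_{z}[H]$ is a cleaner replacement for the paper's Markov-inequality iteration and geometric tail. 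Second, in \textup{(iii)} keeping $u\in(\sqrt a,z)$ instead of the paper's $u\in(z,x_{+})$ also works. There $G(z,u)=2\bigl(s(x_{+})-s(z)\bigr)\frac{s(u)-s(x_{-})}{s(x_{+})-s(x_{-})}$, and the fraction is at least $1/2$, since $s(x_{+})-s(u)\le\delta(a)$ while $s(u)-s(x_{-})\ge\delta(a)e^{2V(x_{-})}$. Moreover $\bigl(s(x_{+})-s(z)\bigr)\int_{\sqrt a}^{z}e^{-2V}\ge(\delta(a)/2)^{2}$ because $V$ increases on $[\sqrt a,x_{+}]$. This even gives $\delta(a)^{2}/4$.

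The step that fails as written is \textup{(iv)}. ``The same estimates as in \textup{(ii)}'' do not control the weight $h(u)/h(z)$, which is unbounded. At $z=x_{+}-\delta(a)/2$ one has $h(z)=e^{-\frac{8}{3}a^{3/2}+O((\log a)^{2})}$, while $h(u)$ is close to $1$ near $x_{-}$. So the crude bound on $G$ times this ratio loses the entire Kramers factor. The missing observation is the factorization of the kernel. For $u\le z$ one has $G(z,u)=2\bigl(s(u)-s(x_{-})\bigr)h(z)$, so $h(z)$ cancels and
\[
G(z,u)\,\frac{h(u)}{h(z)}=\frac{2\bigl(s(u)-s(x_{-})\bigr)\bigl(s(x_{+})-s(u)\bigr)}{s(x_{+})-s(x_{-})}\le 2\bigl(s(x_{+})-s(u)\bigr)\,.
\]
For $u\ge z$, the monotonicity $h(u)\le h(z)$ gives the same bound. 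Only then does \textup{(iv)} reduce to $\dE_{z}[H\mid T_{x_{-}}<T_{x_{+}}]\le 2\cD(a)$; this is the paper's split at $y=x$.

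In \textup{(ii)} the estimate of $\cD(a)$ is left open, and your ``key observation'' alone does not suffice. The uniform bound $V(v)-V(w)\le(\log a)^{2}+o(1)$, integrated over the whole triangle of area about $2a$, only gives $\cD(a)\lesssim a\,e^{2(\log a)^{2}}$. That misses the prefactor $a^{1/4}\log a$ needed in \textup{(ii)} and \textup{(iv)} by a factor of order $a^{3/4}/\log a$. No Laplace analysis is needed, however. On $\{-\sqrt a\le w\le v\le\sqrt a\}$ the exponent is nonpositive because $V$ decreases there, so this piece contributes at most $2a=o\bigl(e^{2(\log a)^{2}}\bigr)$. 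The remainder of the triangle, where $w<-\sqrt a$ or $v>\sqrt a$, has area $O(\sqrt a\,\delta(a))=O(a^{1/4}\log a)$, and there your uniform bound applies. For $w\le-\sqrt a$ it rests on the oddness of $V$, namely $V(-\sqrt a)-V(x_{-})=V(x_{+})-V(\sqrt a)$. This gives $\cD(a)\le C e^{2(\log a)^{2}}a^{1/4}\log a$, from which \textup{(ii)} and \textup{(iv)} follow.
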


\begin{proof}
    We begin with \textup{(i)}, which follows as in the proof of
    Lemma~\ref{lem:moment_bounds_exit_time_rho}. For $t<H$ the
    diffusion started from $x_{+}+\delta(a)/2$ stays above
    $x_{+}$, so the drift is bounded above by
    $-2\sqrt{a}\,\delta(a)$, and integrating
    \eqref{eq:Riccati_SDE} up to $t\wedge H$ gives
    \begin{equation*}
        t\wedge H
        \le \frac{1}{2\sqrt{a}\,\delta(a)}
        \Bigl(\frac{\delta(a)}{2}+B(t\wedge H)\Bigr)\,.
    \end{equation*}
    The same optional stopping argument gives
    $\dE_{x_{+}+\delta(a)/2}[H]\le\frac{1}{4\sqrt{a}}$ and then
    \textup{(i)}.
    We now prove \textup{(ii)} in two steps. We first bound
    $\dE_{x}[H]$ uniformly in $x\in[x_{-},x_{+}]$, and then
    pass to the second moment. Define
    \begin{equation*}
        \cT(x) := \dE_{x}[H]\,,\qquad x\in[x_{-},\,x_{+}]\,.
    \end{equation*}
    By Dynkin's formula applied up to $H$, the
    function $\cT$ satisfies
    \begin{equation*}
        \frac{1}{2}\cT''(x)+(a-x^{2})\,\cT'(x) = -1\,,
        \qquad x\in(x_{-},x_{+})\,,
    \end{equation*}
    with boundary conditions $\cT(x_{-})=\cT(x_{+})=0$. Recall the
    potential $V$ from \eqref{eq:Riccati_potential}, and introduce
    the scale function $q$ and the speed density $w$ (see, e.g.,
    \cite[Section~5.5]{KS91}),
    \begin{equation*}
        q(x) := \int_{x_{-}}^{x}e^{2V(y)}\,dy\,,
        \qquad
        w(x) := 2e^{-2V(x)}\,.
    \end{equation*}
    Then $q(x_{-})=0$, and we can write $\cT$ as
    \begin{equation}\label{eq:apx_Green_representation}
        \cT(x) = \int_{x_{-}}^{x_{+}}G(x,y)\,w(y)\,dy\,,
        \qquad
        G(x,y) := \frac{q(x\wedge y)
        \bigl(q(x_{+})-q(x\vee y)\bigr)}{q(x_{+})}\,.
    \end{equation}
    Since $q$ is increasing, $G(x,y)\le q(x_{+})-q(y)$ for all
    $y\in[x_{-},x_{+}]$. Moreover,
    \begin{equation}\label{eq:apx_kernel_identity}
        \bigl(q(x_{+})-q(y)\bigr)e^{-2V(y)}
        =\int_{y}^{x_{+}}
        \exp\Bigl(2\bigl(V(z)-V(y)\bigr)\Bigr)\,dz\,.
    \end{equation}
    Hence
    \begin{equation*}
        \sup_{x\in[x_{-},\,x_{+}]}\cT(x)
        \le 2\,\cD(a)\,,
    \end{equation*}
    where
    \begin{equation*}
        \cD(a) := \int_{x_{-}}^{x_{+}}\!\!\int_{y}^{x_{+}}
        \exp\Bigl(2\bigl(V(z)-V(y)\bigr)\Bigr)\,dz\,dy\,.
    \end{equation*}
    Note that
    \begin{equation*}
        V(z)-V(y) \le
        \begin{cases}
            V(-\sqrt{a}\,)-V(x_{-})\,,
            &y\in[x_{-},-\sqrt{a}\,]\,,\ z\in[y,x_{+}]\,,\\
            0\,,
            &y\in[-\sqrt{a},\sqrt{a}\,]\,,\ z\in[y,\sqrt{a}\,]\,,\\
            V(x_{+})-V(\sqrt{a}\,)\,,
            &y\in[-\sqrt{a},\sqrt{a}\,]\,,\ z\in[\sqrt{a},x_{+}]\,,\\
            V(x_{+})-V(\sqrt{a}\,)\,,
            &y\in[\sqrt{a},x_{+}]\,,\ z\in[y,x_{+}]\,,
        \end{cases}
    \end{equation*}
    and that, since $V$ is odd,
    \begin{equation*}
        V(-\sqrt{a}\,)-V(x_{-})
        = V(x_{+})-V(\sqrt{a}\,)
        = \sqrt{a}\,\delta(a)^{2}+\frac{\delta(a)^{3}}{3}\,.
    \end{equation*}
    Since the region in the second case has area at most $2a$,
    while the remaining regions have total area at most
    $2\delta(a)\bigl(\sqrt{a}+\delta(a)\bigr)
    +2\sqrt{a}\,\delta(a)+\delta(a)^{2}$, it follows that
    \begin{equation}\label{eq:apx_double_integral_bound}
        \begin{aligned}
            \cD(a)
            &\le \exp\Bigl(2\sqrt{a}\,\delta(a)^{2}
            +\frac{2\delta(a)^{3}}{3}\Bigr)
            \Bigl(2\delta(a)\bigl(\sqrt{a}+\delta(a)\bigr)
            +2\sqrt{a}\,\delta(a)+\delta(a)^{2}\Bigr)+2a\\
            &\le C\exp\bigl(2(\log a)^{2}\bigr)\,a^{1/4}\log a
        \end{aligned}
    \end{equation}
    for some $C>0$ and all sufficiently large $a$.
    We now turn to the second moment of $H$.
    Set
    \begin{equation}\label{eq:def_C_sup_H}
        \cC(a) := 2\sup_{y\in[x_{-},\,x_{+}]}\dE_{y}[H]
        \;\le\;4\,\cD(a)\,.
    \end{equation}
    For every $y\in[x_{-},\,x_{+}]$, Markov's inequality yields
    \begin{equation*}
        \dP_{y}\bigl(H>\cC(a)\bigr) \le \frac{1}{2}\,.
    \end{equation*}
    We claim that
    \begin{equation}\label{eq:H_geometric_tail}
        \sup_{y\in[x_{-},\,x_{+}]}\dP_{y}\bigl(H>n\cC(a)\bigr)
        \le 2^{-n}\,,\qquad n\ge1\,.
    \end{equation}
    Indeed, on the event $\{H>\cC(a)\}$ we have
    $X_{a}(\cC(a))\in(x_{-},x_{+})$. Hence, for every
    $y\in[x_{-},x_{+}]$, the Markov property at time $\cC(a)$
    gives
    \begin{equation*}
        \begin{aligned}
            \dP_{y}\bigl(H>n\cC(a)\bigr)
            &\le \dE_{y}\Bigl[\1_{\{H>\cC(a)\}}\,
            \dP_{X_{a}(\cC(a))}\bigl(H>(n-1)\cC(a)\bigr)\Bigr]\\
            &\le \frac{1}{2}\,
            \sup_{z\in[x_{-},\,x_{+}]}
            \dP_{z}\bigl(H>(n-1)\cC(a)\bigr)\,,
        \end{aligned}
    \end{equation*}
    and \eqref{eq:H_geometric_tail} follows by induction. 
    Since $\dE_{y}[H^{2}]=\int_{0}^{\infty}2s\,\dP_{y}(H>s)\,ds$,
    the bound \eqref{eq:H_geometric_tail} gives
    \begin{equation}\label{eq:H_second_moment_bound_by_C}
        \sup_{y\in[x_{-},\,x_{+}]}\dE_{y}\bigl[H^{2}\bigr]
        \le \sum_{n\ge0}2^{-n}
        \int_{n\cC(a)}^{(n+1)\cC(a)}2s\,ds
        = \cC(a)^{2}\sum_{n\ge0}(2n+1)\,2^{-n}
        = 6\,\cC(a)^{2}\,.
    \end{equation}
    Combining \eqref{eq:H_second_moment_bound_by_C} with
    \eqref{eq:def_C_sup_H} and
    \eqref{eq:apx_double_integral_bound} yields
    \begin{equation*}
        \dE_{x_{+}-\delta(a)/2}\bigl[H^{2}\bigr]
        \le 6\,\cC(a)^{2}
        \le 96\,\cD(a)^{2}
        \le C_{1}\exp\bigl(4(\log a)^{2}\bigr)\,
        a^{1/2}(\log a)^{2}\,,
    \end{equation*}
    which proves \textup{(ii)}.
    We now prove \textup{(iii)}. Since $G\ge0$, restricting the integral
    in \eqref{eq:apx_Green_representation} to
    $y\ge x_{+}-\delta(a)/2$ and using
    \eqref{eq:apx_kernel_identity}, we obtain
    \begin{equation*}
        \cT\bigl(x_{+}-\delta(a)/2\bigr)
        \ge \frac{q\bigl(x_{+}-\delta(a)/2\bigr)}{q(x_{+})}\,
        2\int_{x_{+}-\delta(a)/2}^{x_{+}}\!\!\int_{y}^{x_{+}}
        \exp\Bigl(2\bigl(V(z)-V(y)\bigr)\Bigr)\,dz\,dy\,.
    \end{equation*}
    We bound the ratio and the double integral separately,
    taking $a$ sufficiently large throughout. We first show that
    the ratio is at least $1/2$. Since $V$ is increasing on
    $[x_{+}-\delta(a)/2,\,x_{+}]$ and $V(x_{+})<0$, we have
    $q(x_{+})-q(x_{+}-\delta(a)/2)\le\delta(a)/2$. On the other
    hand, since $V$ is increasing on $[x_{-},-\sqrt{a}\,]$
    and odd,
    \begin{equation*}
        q\bigl(x_{+}-\delta(a)/2\bigr)
        \ge \int_{x_{-}}^{-\sqrt{a}}e^{2V(z)}\,dz
        \ge \delta(a)\,e^{-2V(x_{+})}
        \ge \delta(a)\,e^{a^{3/2}}\,,
    \end{equation*}
    and the claim follows. For the double integral, $V$
    is increasing on $[\sqrt{a},x_{+}]$, so the integrand is at
    least $1$ and the double integral is at least
    $\delta(a)^{2}/8$. Combining the two bounds yields
    $\dE_{x_{+}-\delta(a)/2}[H]\ge\delta(a)^{2}/8$.
    Finally, for \textup{(iv)}, define
    \begin{equation*}
        h(x) := \dP_{x}\bigl(T_{x_{-}}<T_{x_{+}}\bigr)\,,
        \qquad
        \cT_{-}(x) := \dE_{x}\bigl[H\,\1_{\{T_{x_{-}}<T_{x_{+}}\}}\bigr]\,.
    \end{equation*}
    Then $\dE_{x}\bigl[H\mid T_{x_{-}}<T_{x_{+}}\bigr]=\cT_{-}(x)/h(x)$ for
    $x<x_{+}$. The function $h$ is given by the standard formula
    \begin{equation*}
        h(x) = \frac{q(x_{+})-q(x)}{q(x_{+})}\,.
    \end{equation*}
    By the Markov property, on the event $\{t<H\}$ we have
    $\dE_{x}\bigl[\1_{\{T_{x_{-}}<T_{x_{+}}\}}\mid\cF_{t}\bigr]
    =h\bigl(X_{a}(t)\bigr)$, so that
    \begin{equation*}
        \cT_{-}(x) = \dE_{x}\biggl[\int_{0}^{H}
        h\bigl(X_{a}(t)\bigr)\,dt\biggr]\,.
    \end{equation*}
    By Dynkin's formula applied up to $H$, the
    function $\cT_{-}$ solves
    \begin{equation*}
        \frac{1}{2}\cT_{-}''(x)+(a-x^{2})\,\cT_{-}'(x) = -h(x)\,,
        \qquad x\in(x_{-},x_{+})\,,
    \end{equation*}
    with boundary conditions $\cT_{-}(x_{-})=\cT_{-}(x_{+})=0$, and hence
    \begin{equation*}
        \cT_{-}(x) = \int_{x_{-}}^{x_{+}}G(x,y)\,h(y)\,w(y)\,dy\,.
    \end{equation*}
    We split the integral at $y=x$. For $y\le x$, the definition
    of $G$ gives $G(x,y)=h(x)\,q(y)$, and since
    $q(y)\,h(y)\le q(x_{+})-q(y)$,
    \begin{equation*}
        \frac{1}{h(x)}\int_{x_{-}}^{x}G(x,y)\,h(y)\,w(y)\,dy
        = \int_{x_{-}}^{x}q(y)\,h(y)\,w(y)\,dy
        \le \int_{x_{-}}^{x_{+}}
        \bigl(q(x_{+})-q(y)\bigr)\,w(y)\,dy
        = 2\,\cD(a)\,.
    \end{equation*}
    For $y\ge x$, since $h$ is nonincreasing we have
    $h(y)\le h(x)$, so
    \begin{equation*}
        \frac{1}{h(x)}\int_{x}^{x_{+}}G(x,y)\,h(y)\,w(y)\,dy
        \le \int_{x}^{x_{+}}G(x,y)\,w(y)\,dy
        \le \int_{x_{-}}^{x_{+}}
        \bigl(q(x_{+})-q(y)\bigr)\,w(y)\,dy
        = 2\,\cD(a)\,.
    \end{equation*}
    Combining the two bounds with \eqref{eq:apx_double_integral_bound} yields
    \begin{equation*}
        \dE_{x_{+}-\delta(a)/2}
        \bigl[H\mid T_{x_{-}}<T_{x_{+}}\bigr]
        \le 4\,\cD(a)
        \le C_{2}\exp\bigl(2(\log a)^{2}\bigr)\,a^{1/4}\log a\,,
    \end{equation*}
    which proves \textup{(iv)} and completes the proof of the lemma.
\end{proof}

The last lemma of this appendix shows that the mean
explosion time of $X_{a}$ started from $x_{-}$ is
negligible compared with $m(a)$. This is used in the
proof of Lemma~\ref{lem:renewal_mean_asymptotics}.

\begin{lemma}\label{lem:mean_explosion_from_x_minus}
    As $a\to\infty$,
    \begin{equation*}
        \dE_{x_{-}}\bigl[\zeta_{a}^{(1)}\bigr] \ll m(a)\,.
    \end{equation*}
\end{lemma}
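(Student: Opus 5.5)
We want $\dE_{x_-}[\zeta_a^{(1)}]\ll m(a)$. The plan is to use the same explicit Green-function formula that gave $\dE_{x_+}[\zeta_a^{(1)}]$ in the proof of Lemma~\ref{lem:renewal_mean_asymptotics}, namely the formula of \cite[Eq.~(3.7)]{AD14}. In our notation it reads
\begin{equation*}
    \dE_{y}\bigl[\zeta_{a}^{(1)}\bigr]
    = 2\int_{-\infty}^{y}\!dx\int_{x}^{\infty}\!du\,
    \exp\Bigl(2a(u-x)+\frac{2}{3}\bigl(x^{3}-u^{3}\bigr)\Bigr)
    = 2\int_{-\infty}^{y}\!dx\int_{x}^{\infty}\!du\,
    e^{2(V(x)-V(u))}\,.
\end{equation*}
This is the identity consistent with $m(a)-\dE_{x_+}[\zeta_a^{(1)}]$ being the integral over $x\ge x_+$. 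Taking $y=x_-$, the task reduces to bounding a deterministic double integral over $x\le x_-$, $u\ge x$, and showing it is $o(m(a))$.

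The key steps are as follows.

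First, split the inner integral at $u=-\sqrt a$ and at $u=\sqrt a$. For $x\le x_-$ and $u\in[x,-\sqrt a]$, $V$ is increasing, so $V(x)-V(u)\le 0$. Combining this with the tangent bound $u^3-x^3\ge 3x^2(u-x)$ as in Lemma~\ref{lem:renewal_mean_asymptotics} gives the integrand bound $e^{-2(x^2-a)(u-x)}$. This part then contributes at most $\int_{-\infty}^{x_-}\frac{dx}{x^2-a}=\tau(a)\to0$.

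Second, for $u\ge -\sqrt a$, use $V(x)-V(u)=(V(x)-V(-\sqrt a))+(V(-\sqrt a)-V(u))$. The inner integral over $u\ge-\sqrt a$ is
\begin{equation*}
    \int_{-\sqrt a}^\infty e^{2(V(-\sqrt a)-V(u))}\,du\;\approx\; e^{2\Delta V}\sqrt{\pi/(2\sqrt a)}\,,
\end{equation*}
by Laplace's method around the well at $\sqrt a$, where $2\Delta V=\tfrac83 a^{3/2}$. The crude bound $C a^{-1/4}e^{\frac83a^{3/2}}$ suffices here. The outer integral over $x\le x_-$ of $e^{2(V(x)-V(-\sqrt a))}$ requires care. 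Since $V(x)-V(-\sqrt a)=-(|x|-\sqrt a)^2(|x|+2\sqrt a)/3\cdot$(sign conventions), one has $V(x)-V(-\sqrt a)\le -\sqrt a\,(|x|-\sqrt a)^2$ for $x\le -\sqrt a$. The outer integral is therefore at most $\int_{\delta(a)}^\infty e^{-2\sqrt a s^2}ds\le \frac{1}{4\sqrt a\,\delta(a)}e^{-2\sqrt a\delta(a)^2}=\frac{1}{4a^{1/4}\log a}e^{-2(\log a)^2}$.

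Third, combine the pieces. The second contribution is bounded by $C a^{-1/2}(\log a)^{-1}e^{-2(\log a)^2}e^{\frac83a^{3/2}}$. By \eqref{eq:first_explosion_mean_asymptotics}, $m(a)\sim \pi a^{-1/2}e^{\frac83a^{3/2}}$, so the ratio is $O(e^{-2(\log a)^2}/\log a)\to0$. The first contribution is $O(\tau(a))=o(1)\ll m(a)$. This proves the lemma.

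The main obstacle is getting the correct form and orientation of the explicit formula from \cite{AD14} (which endpoint carries the outer integral), and ensuring the Laplace/Gaussian bounds near the barrier are uniform. The gain comes precisely from starting $\delta(a)$ past the barrier, which produces the factor $e^{-2\sqrt a\delta(a)^2}=e^{-2(\log a)^2}$. If the formula's orientation differs, I would instead derive $\dE_y[\zeta]$ from the ODE $\tfrac12 f''+(a-y^2)f'=-1$ with boundary behaviour at $\pm\infty$ via scale and speed, as in Lemma~\ref{lem:H_moment_bounds}, which yields the same double integral.
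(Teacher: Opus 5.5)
Your overall route is sound and differs from the paper's, but the bound for the piece $x\le u\le-\sqrt a$ is wrong as written. The tangent bound $u^{3}-x^{3}\ge 3x^{2}(u-x)$ holds only when $u+2x\ge0$, since $u^{3}-x^{3}-3x^{2}(u-x)=(u-x)^{2}(u+2x)$; the paper uses it only for $u\ge x\ge0$.

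For $x\le u\le-\sqrt a<0$ the inequality reverses. $V$ is concave on $(-\infty,0)$, so the tangent at $x$ lies above the graph. This gives $2(V(x)-V(u))\ge-2(x^{2}-a)(u-x)$, a lower bound rather than the upper bound $e^{-2(x^{2}-a)(u-x)}$ you claim. The comparison with $\tau(a)$ is therefore not justified.

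The repair is short. Write $x=-\sqrt a-s$ and $u=-\sqrt a-r$ with $0\le r\le s$ and $s\ge\delta(a)$. Then
\[
2\bigl(V(x)-V(u)\bigr)=-2\sqrt a\,(s^{2}-r^{2})-\frac{2}{3}(s^{3}-r^{3})\le-2(s-r)\Bigl(\sqrt a\,s+\frac{s^{2}}{3}\Bigr),
\]
so the inner integral is at most $1/\bigl(2(\sqrt a\,s+s^{2}/3)\bigr)$. The first piece is then at most $a^{-1/2}\log\bigl(1+3\sqrt a/\delta(a)\bigr)=O(a^{-1/2}\log a)$, which is still $o(1)\ll m(a)$.

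The remaining steps are correct:
\begin{enumerate}
\item[\textup{(a)}] For $u\ge-\sqrt a$, the identity $V(u)-V(\sqrt a)=\frac{1}{3}(u-\sqrt a)^{2}(u+2\sqrt a)\ge\frac{\sqrt a}{3}(u-\sqrt a)^{2}$ makes your crude bound $Ca^{-1/4}e^{\frac{8}{3}a^{3/2}}$ rigorous.
\item[\textup{(b)}] The bound $V(x)-V(-\sqrt a)\le-\sqrt a\,(|x|-\sqrt a)^{2}$ gives the tail factor $e^{-2(\log a)^{2}}/(4a^{1/4}\log a)$.
\end{enumerate}

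Compared with the paper: the paper substitutes $v=u-x$ and integrates out $x$ exactly, producing the factor $1+\operatorname{erf}\bigl(\sqrt{2v}\,(x_{-}+v/2)\bigr)$. It then splits the $v$-integral into four regions around the maximizer $v_{0}=2\sqrt a$ of $\psi_{a}$. It uses the smallness of the erf factor only in a $\delta(a)$-window, and the decay of $e^{\psi_{a}}$ elsewhere.

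Your split at the barrier top $u=-\sqrt a$ makes the dominant piece factorize into a Gaussian tail beyond the barrier times a Laplace integral at the well. This is shorter and shows directly that the gain is the cost $e^{-2\sqrt a\,\delta(a)^{2}}$ of starting $\delta(a)$ beyond the barrier. It also gives a sharper relative error, $O\bigl(e^{-2(\log a)^{2}}/\log a\bigr)$, against the paper's $e^{-(\log a)^{2}/4}$-type factor. The price is the separate piece below the barrier, where the concavity of $V$ must be handled as above.
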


\begin{proof}
    By \cite[Eq.~(3.7)]{AD14}, as in the proof of
    Lemma~\ref{lem:renewal_mean_asymptotics},
    \begin{equation*}
        \dE_{x_{-}}\bigl[\zeta_{a}^{(1)}\bigr]
        = 2\int_{-\infty}^{x_{-}}\!dx\int_{x}^{\infty}\!du\,
        \exp\Bigl(2a(u-x)
        +\frac{2}{3}\bigl(x^{3}-u^{3}\bigr)\Bigr)\,.
    \end{equation*}
    Substituting $v:=u-x\ge0$ and using the identity
    \begin{equation*}
        2a(u-x)+\frac{2}{3}\bigl(x^{3}-u^{3}\bigr)
        = \psi_{a}(v)-2v\Bigl(x+\frac{v}{2}\Bigr)^{2}\,,
        \qquad
        \psi_{a}(v) := 2av-\frac{v^{3}}{6}\,,
    \end{equation*}
    we perform the Gaussian integral in $x$ and obtain
    \begin{equation*}
        \dE_{x_{-}}\bigl[\zeta_{a}^{(1)}\bigr]
        = \sqrt{\frac{\pi}{2}}
        \int_{0}^{\infty}\frac{dv}{\sqrt{v}}\,e^{\psi_{a}(v)}
        \Bigl(1+\operatorname{erf}\Bigl(\sqrt{2v}\,
        \Bigl(x_{-}+\frac{v}{2}\Bigr)\Bigr)\Bigr)\,,
    \end{equation*}
    where
    $\operatorname{erf}(z):=\frac{2}{\sqrt{\pi}}\int_{0}^{z}e^{-s^{2}}\,ds$.
    Note that the same computation with $x_{-}$ replaced by
    $+\infty$ recovers \eqref{eq:first_explosion_mean_formula},
    that is,
    \begin{equation*}
        m(a) = \sqrt{2\pi}\int_{0}^{\infty}
        \frac{dv}{\sqrt{v}}\,e^{\psi_{a}(v)}\,.
    \end{equation*}
    Let $v_{0}:=2\sqrt{a}$ be the maximizer of $\psi_{a}$, and set
    $O_{a}:=\{v\,:\,|v-v_{0}|\le\delta(a)\}$. Note that
    $O_{a}\subset(\sqrt{a},4\sqrt{a}\,)$ for all sufficiently large $a$. 
    Near $v_{0}$ the factor $1+\operatorname{erf}$ is
    exponentially small, while away from $v_{0}$ the integral of
    $e^{\psi_{a}}$ is negligible compared with $m(a)$.
    We split the domain of integration into
    $(0,\sqrt{a}\,)$, $(4\sqrt{a},\infty)$,
    $(\sqrt{a},4\sqrt{a}\,)\cap O_{a}^{c}$, and $O_{a}$, bounding
    $1+\operatorname{erf}\le2$ on the first three regions.
    On $(0,\sqrt{a}\,)$, we have
    $\psi_{a}(v)\le\frac{11}{6}a^{3/2}$, so this part is at most
    \begin{equation*}
        \sqrt{2\pi}\int_{0}^{\sqrt{a}}
        \exp\Bigl(\frac{11}{6}a^{3/2}\Bigr)\frac{dv}{\sqrt{v}}
        = \sqrt{8\pi}\,a^{1/4}
        \exp\Bigl(\frac{11}{6}a^{3/2}\Bigr)\,.
    \end{equation*}
    On $(4\sqrt{a},\infty)$, we have
    $\psi_{a}(v)\le-\frac{2a}{3}v$ and $v^{-1/2}\le1$, so this
    part is at most
    \begin{equation*}
        \sqrt{2\pi}\int_{4\sqrt{a}}^{\infty}
        \exp\Bigl(-\frac{2a}{3}v\Bigr)dv
        \le 3\sqrt{\frac{\pi}{2}}\,
        \exp\Bigl(-\frac{8}{3}a^{3/2}\Bigr)\,.
    \end{equation*}
    On $(\sqrt{a},4\sqrt{a}\,)\cap O_{a}^{c}$, note that
    $\psi_{a}'(v_{0})=0$ and $\psi_{a}''(v)=-v\le-\sqrt{a}$ on
    $(\sqrt{a},4\sqrt{a}\,)$. By Taylor's theorem, for $v$ in
    this range with $|v-v_{0}|>\delta(a)$,
    \begin{equation*}
        \psi_{a}(v)
        \le \psi_{a}(v_{0})-\frac{\sqrt{a}\,\delta(a)^{2}}{2}
        = \frac{8}{3}a^{3/2}-\frac{(\log a)^{2}}{2}\,,
    \end{equation*}
    so this part is at most
    \begin{equation*}
        \sqrt{2\pi}\,
        \exp\Bigl(\frac{8}{3}a^{3/2}
        -\frac{(\log a)^{2}}{2}\Bigr)
        \int_{\sqrt{a}}^{4\sqrt{a}}\frac{dv}{\sqrt{v}}
        \le \sqrt{8\pi}\,a^{1/4}
        \exp\Bigl(\frac{8}{3}a^{3/2}
        -\frac{(\log a)^{2}}{2}\Bigr)\,.
    \end{equation*}
    Finally, on $O_{a}$ we keep the error function.
    For $|v-v_{0}|\le\delta(a)$ we have
    $x_{-}+v/2\le x_{-}+v_{0}/2+\delta(a)/2=-\delta(a)/2$ and
    $\sqrt{2v}\ge a^{1/4}$ for all sufficiently large
    $a$. Since $\operatorname{erf}$ is increasing,
    \begin{equation*}
        1+\operatorname{erf}\Bigl(\sqrt{2v}\,
        \Bigl(x_{-}+\frac{v}{2}\Bigr)\Bigr)
        \le 1+\operatorname{erf}\Bigl(-\frac{\log a}{2}\Bigr)
        \le \frac{2}{\sqrt{\pi}\,\log a}\,
        \exp\Bigl(-\frac{(\log a)^{2}}{4}\Bigr)\,,
    \end{equation*}
    where the last step is the standard Gaussian tail bound
    $1+\operatorname{erf}(-z)\le e^{-z^{2}}/(\sqrt{\pi}\,z)$.
    Hence this part is at most
    \begin{equation*}
        \begin{aligned}
            \sqrt{\frac{\pi}{2}}\cdot
            \frac{2}{\sqrt{\pi}\,\log a}\,
            \exp\Bigl(-\frac{(\log a)^{2}}{4}\Bigr)
            \int_{O_{a}}\frac{dv}{\sqrt{v}}\,e^{\psi_{a}(v)}
            &= \frac{1}{\sqrt{\pi}\,\log a}\,
            \exp\Bigl(-\frac{(\log a)^{2}}{4}\Bigr)\,
            \sqrt{2\pi}\int_{O_{a}}
            \frac{dv}{\sqrt{v}}\,e^{\psi_{a}(v)}\\
            &\le \frac{1}{\sqrt{\pi}\,\log a}\,
            \exp\Bigl(-\frac{(\log a)^{2}}{4}\Bigr)\,m(a)\,.
        \end{aligned}
    \end{equation*}
    Since $m(a)\sim(\pi/\sqrt{a}\,)
    \exp\bigl(\frac{8}{3}a^{3/2}\bigr)$ by
    \eqref{eq:first_explosion_mean_asymptotics}, each of the four
    bounds is $o\bigl(m(a)\bigr)$, which proves the lemma.
\end{proof}

%============================================================================================================
%============================================================================================================
% \printbibliography
\bibliographystyle{alpha}   
\bibliography{refs}

\end{document}